\newcommand{\paul}[1]{\textcolor{blue}{#1}} %notes by Paul
\newcommand{\colim}{\operatorname{colim}}
\newcommand{\Spec}{\operatorname{Spec}}
\newcommand{\iso}{\cong}
\newcommand{\isomto}{{\stackrel{\sim}{\;\longrightarrow\;}}}
\newcommand{\sma}{{\scriptstyle{\wedge}\,}}
\newcommand{\Hom}{\operatorname{Hom}}
\renewcommand{\hom}{\operatorname{Hom}}
\newcommand{\Map}{\operatorname{Map}}
\newcommand{\real}{{\mathbb R}}
\newcommand{\PP}{\mathbb{P}}
\newcommand{\cplx}{{\mathbb C}}
\newcommand{\Z}{{\mathbb Z}}
\newcommand{\N}{{\mathbb N}}
\newcommand{\A}{{\mathbb A}}
\newcommand{\aone}{{\mathbb A}^1}
\newcommand{\pone}{{\mathbb P}^1}
\renewcommand{\1}{{\rm 1\hspace*{-0.4ex}%
\rule{0.1ex}{1.52ex}\hspace*{0.2ex}}}
\newcommand{\ga}{{{\mathbb G}_{a}}}
\newcommand{\gm}[1]{{{\mathbb G}_{m}^{#1}}}
\newcommand{\Gm}{{\gm{}}}
\newcommand{\ho}[2][]{\mathscr{H}_{#1}({#2})}
\newcommand{\bpi}{\bm{\pi}}
\newcommand{\Nis}{{\operatorname{Nis}}}
\newcommand{\Sm}{\mathrm{Sm}}
\newcommand{\Spc}{\mathrm{Spc}}
\newcommand{\Top}{\mathrm{Top}}
\newcommand{\K}{{{\mathbf K}}}
\newcommand{\KMW}{\K^{MW}}
\newcommand{\F}{{\mathcal F}}
\newcommand{\holim}{\operatornamewithlimits{holim}}
\newcommand{\ZZ}{\Z}
\newcommand{\Addresses}{{% additional braces for segregating \footnotesize
\bigskip
\footnotesize

A.~Asok, Department of Mathematics, University of Southern California, 3620 S.~Vermont Ave., Los Angeles, CA 90089-2532, United States; E-mail address: asok@usc.edu
\medskip

P.~A.~{\O}stv{\ae}r, Department of Mathematics, University of Oslo, P.~O.~Box 1053 Blindern, 0316 Oslo, Norway; E-mail address: paularne@math.uio.no
}}
\renewcommand{\AA}{\mathbb{A}}
\newcommand{\CC}{\mathbb{C}}
\newcommand{\GG}{\mathbb{G}}
\newcommand{\Sch}{\mathrm{Sch}}
\newcommand{\pr}{\mathrm{pr}}
\newcommand{\ML}{\operatorname{ML}}
\newcommand{\Set}{\operatorname{Set}}
\newcommand{\sSet}{\mathrm{sSet}}
\newcommand{\Spt}{\mathrm{Spt}}
\newcommand{\mcal}[1]{\mathcal{#1}}
\renewcommand{\setminus}{\smallsetminus}
\newcommand{\MZ}{{\mathbf{MZ}}}
\newcommand{\HZ}{{\mathbf{HZ}}}
\newcommand{\sphere}{{\mathbf{1}}}
\newcommand{\KGL}{{\mathbf{KGL}}}
\newcommand{\sHom}{{\mathcal{H}om}}
\newcounter{intro}
\theoremstyle{plain}
\newtheorem{thm}{Theorem}[subsection]
\newtheorem{lem}[thm]{Lemma}
\newtheorem{cor}[thm]{Corollary}
\newtheorem{prop}[thm]{Proposition}
\newtheorem*{claim*}{Claim} %Claim
\newtheorem{question}[thm]{Question}
\newtheorem{problem}[thm]{Problem}
\newtheorem{conj}[thm]{Conjecture}
\newtheorem*{thm*}{Theorem}
\newtheorem*{problem*}{Problem}
\newtheorem{thmintro}{Theorem}
\newtheorem{questionintro}[thmintro]{Question}
\newtheorem{conjintro}[thmintro]{Conjecture}
\theoremstyle{definition}
\newtheorem{defn}[thm]{Definition}
\newtheorem{notation}[thm]{Notation}
\theoremstyle{remark}
\newtheorem{rem}[thm]{Remark}
\newtheorem{remintro}[thmintro]{Remark}
\newtheorem{ex}[thm]{Example}
\newtheorem{exintro}[thmintro]{Example}
\newtheorem{entry}[thm]{}
\numberwithin{equation}{subsection}
\begin{document}
\pagestyle{fancy}
\renewcommand{\sectionmark}[1]{\markright{\thesection\ #1}}
\fancyhead{}
\fancyhead[LO,R]{\bfseries\footnotesize\thepage}
\fancyhead[LE]{\bfseries\footnotesize\rightmark}
\fancyhead[RO]{\bfseries\footnotesize\rightmark}
\chead[]{}
\cfoot[]{}
\setlength{\headheight}{1cm}

\author{
Aravind Asok \and
Paul Arne {\O}stv{\ae}r
 }

\title{{\bf ${\mathbb A}^1$-homotopy theory and contractible varieties: a survey}}
\date{}
\maketitle

\begin{abstract}
We survey some topics in $\aone$-homotopy theory.  Our main goal is to highlight the interplay between $\aone$-homotopy theory and affine algebraic geometry, focusing on the varieties that are ``contractible" from various standpoints.
\end{abstract}

%\begin{footnotesize}
\tableofcontents
%\end{footnotesize}

\newpage
%\section{Introduction}
%Some ideas toward an outline
%\begin{enumerate}[noitemsep,topsep=1pt]
%\item The topological story as a source for analogies.
%\item The algebro-geometric story of topologically contractible varieties and the basic geometric questions (e.g., van de Van conjecture on rationality of topologically contractible varieties, $\Q$-acyclic varieties); comments on \'etale contractible varieties
%\item Survey of results on $\aone$-contractible varieties: geometrically very different depending on quasi-affine vs affine.
%\item End spaces and homotopy at infinity...
%\end{enumerate}

%Experience shows that calculations in classical unstable homotopy theory are often more complicated than in classical stable homotopy theory.
%The Koras--Russell threefold and other examples show that this difference is also pronounced in the motivic case.
%As the calculations surveyed in this article demonstrate,
%both arithmetic, geometric, and homotopical perspectives are essential for understanding the notion of $\aone$-contractibility.

\section{Introduction: topological and algebro-geometric motivations}
In this note, we want to survey the theory of varieties that are ``weakly contractible" from the standpoint of the Morel--Voevodsky $\aone$-homotopy theory of algebraic varieties.
The current version of this document is based on lectures given by the first author at the Fields Institute workshop entitled ``Group actions, generalized cohomology theories and affine algebraic geometry"
at the University of Ottawa in 2011,
and the Nelder Fellow Lecture Series by the second author in connection with the research school ``Homotopy Theory and Arithmetic Geometry: Motivic and Diophantine Aspects"
at Imperial College London in 2018.
\vspace{0.05in}

As a source of inspiration, we review some aspects of the theory of open contractible manifolds, which has source in one of the first false proofs of the Poincar\'e conjecture, and served as a testing ground for many ideas of classical geometric topology in the 1950s and 60s.  We also review some aspects of the theory of complex algebraic varieties whose associated complex manifolds are contractible.  Along the way we formulate some problems that we feel are interesting.

\subsection{Open contractible manifolds}
Suppose $M^n$ is an $n$-dimensional manifold, which we take to mean either {\em topological}, {\em piecewise-linear (PL)} or {\em smooth} (we will specify).  Say an $n$-manifold $M^n$ is open if it is non-compact and without boundary.  As usual, let $I$ be the unit interval $[0,1]$.  Recall that $M^n$ is called {\em contractible} if it is homotopy equivalent to a point.  Euclidean space $\real^n$ is the primordial example of an open contractible manifold: radially contract every point to the origin.
\vspace{0.1in}

The history of open contractible manifolds is closely intertwined with the history of geometric topology.  To explain this connection, recall that the classical {\em Poincar\'e conjecture} asks whether every closed manifold $M^n$ homotopy equivalent to $S^n$ is actually homeomorphic to $S^n$.  The history of the Poincar\'e conjecture is closely connected with the history of open contractible manifolds.
\vspace{0.1in}

In 1934, J.H.C. Whitehead gave what he thought was a proof of the Poincar\'e conjecture in dimension $3$ \cite{zbMATH03015497}.  In brief, his argument went as follows: start with a homotopy equivalence $f: M^3 \longrightarrow S^3$.  Removing a point produces an open contractible manifold $M^3 \setminus pt$ and continuous map $f: M^3 \setminus pt \to \real^3$.  In essence, Whitehead argued that all open contractible $3$-manifolds are homeomorphic to $\real^3$.  If this was true, then such a homeomorphism would extend by continuity across infinity inducing a homemorphism $M^3 \rightarrow S^3$.  Unfortunately, this proof collapsed soon thereafter as Whitehead constructed an open contractible manifold $\mathcal{W}$ (the so-called Whitehead manifold) that is not homeomorphic to the Euclidean space ${\mathbb R}^3$ \cite{zbMATH03019987}.  Whitehead's example raised the question of characterizing Euclidean space among all open contractible manifolds, perhaps in some homotopic way.
\vspace{0.1in}

Whitehead's construction is delightfully geometric and is given as an open subset of a Euclidean space with closed complement the ``Whitehead continuuum", itself built out of intricately linked tori.  Since the construction of the Whitehead manifold is relatively simple, we give it here.  Consider the Whitehead link in $S^3$; thicken it to obtain two linked solid tori; label the interior solid torus $\hat{T}_1$ and the exterior solid torus $T_0$.  The complement of each $T_i$ in $S^3$ is unknotted.  Since $\hat{T}_1$ is unknotted, the complement $T_1$ of the interior of $\hat{T}_1$ in $S^3$ is another unknotted solid torus that contains $T_0$.  Choose a homeomorphism $h$ of $S^3$ that maps $T_0$ onto $T_1$.  Proceeding inductively, may therefore construct solid tori $T_0 \subset T_1 \subset \cdots$ in $S^3$ by setting $T_{j+1} = h(T_{j})$.  The union $\mathcal{W} := \cup_{i} T_i$ is the required open contractible manifold.  To see that $\mathcal{W}$ is simply connected, one proceeds as follows.  Observe that every closed loop in $M^3$ is contained in some $T^j$ by construction.  Now, observe that every closed loop in $T_0$ may be shrunk to a point (possibly crossing through itself) in $T^1$, and thus every closed loop in $T_j$ may be shrunk to a point in $T_{j+1}$.
\vspace{0.1in}

Whitehead's original proof that $\mathcal{W}$ is not homeomorphic to ${\mathbb R}^3$ is essentially geometric.  However, it was later observed that for any sufficiently large compact subset $K \subset \mathcal{W}$ ($K$ contains $T_0$ suffices), the complement ${\mathcal W} \setminus K$ is not simply connected.  More generally, suppose $M$ is an open contractible manifold.  We may consider the collection of compact sets $K \subset M$ ordered with respect to inclusion.  In good situations, the inverse system $\pi_1(M \setminus K)$ stabilizes and defines an invariant of the homeomorphism type of $M$ called the fundamental group at infinity and denoted $\pi_1^{\infty}(M)$.  Observe that ${\mathbb R}^3$ is simply-connected at infinity, since any compact subset is contained in a sufficiently large ball, whose complement in ${\mathbb R}^3$ is simply connected.
\vspace{0.1in}

While Whitehead's construction lay dormant for some time, open contractible manifolds were studied with great intensity in the late 1950s and 1960s.  Although $\mathcal{W}$ is not homeomorphic to $\real^3$ Glimm and Shapiro pointed out that the cartesian product $\mathcal{W}\times\real^1$ is homeomorphic to $\real^4$ \cite{MR0125573}, i.e., $\mathcal{W}$ does not have the {\em cancellation property} with respect to Cartesian products with Euclidean spaces.  Moreover,  Glimm showed that the self-product $\mathcal{W}\times\mathcal{W}$ is homeomorphic to $\real^6$ \cite{MR0125573}.  These kinds ``non-cancellation" phenomenon become another theme explored in the study of open contractible manifolds.
\vspace{0.1in}

By modifying the construction of the Whitehead continuum, D.R. McMillan constructed infinitely many pairwise non-homeomorphic open contractible $3$-manifolds \cite{McMillan}; each of these $3$-manifolds could be embedded in ${\mathbb R}^3$.  While McMillan's original proof that his manifolds were non-homeomorphic was essentially geometric, it was observed that his examples actually have distinct fundamental groups at infinity.  This naturally raises the question of which groups may appear as a fundamental groups at infinity of open contractible $3$-manifolds.
\vspace{0.1in}

Simultaneously, McMillan showed his examples also had the property that taking a Cartesian product with the real line yielded a manifold homeomorphic to ${\mathbb R}^4$.  In fact, McMillan showed that the product of {\em any} open contractible $3$-manifold $M$ and the real line was homeomorphic to ${\mathbb R}^4$ assuming the $3$-dimensional Poincar\'e conjecture.  Later joint work of J. Kister and McMillan \cite{KisterMcMillan} established existence of open contractible $3$-manifolds that could not be embedded in ${\mathbb R}^3$.  Work of Zeeman and McMillan then showed that the product of any open contractible $3$-manifold and ${\mathbb R}^2$ was homeomorphic to ${\mathbb R}^5$.  From one point of view, the zoo of open contractible manifolds presented some kind of ``lower bound" on the complexity of the Poincar\'e conjecture.
\vspace{0.1in}

The existence of open contractible manifolds of higher dimensions was also studied in conjunction with the higher dimensional Poincar\'e conjecture.
Constructions of open contractible $4$-manifolds were initially given by Mazur and Poenaru \cite{Mazur,Poenaru}, but as with many other constructions in geometric topology, these isolated examples in dimension $4$ did not at first fit into a general picture.  Dimension $\geq 5$ proved itself to be more tractable. Extending the results of McMillan, for any integer $n \geq 5$, Curtis and Kwun \cite{CurtisKwun} constructed uncountably many pairwise non-homeomorphic open contractible $n$-manifolds.
\vspace{0.1in}

The method of Curtis and Kwun in some sense mirrors McMillan's construction, but requires significantly more group theoretic input.  The basic idea is to construct open contractible manifolds with a prescribed (finitely presented) fundamental group at infinity.  If $P$ is a finitely presented group, then one may build many cell complexes $K(P)$ with fundamental group $P$.  If one embeds $K(P)$ in $S^n$ for $n \geq 5$, then the boundary of a regular neighborhood $N$ of $K(P)$ has fundamental group $P$ (this fails for $n = 4$, since the fundamental group of the boundary depends on the embedding).  One says that $K(P)$ is a homologically trivial presentation if $H_i(K(P),\Z)$ is trivial for $i = 1,2$.  In particular, if $P$ admits a homologically trivial presentation then the Hurewicz theorem implies that $K(P)$ is a perfect group.  If $N$ is a regular neighborhood of an embedding $K(P) \hookrightarrow S^n$, $n \geq 5$, then one sees that $S^n \setminus \partial N$ is contractible if $K(P)$ is a homologically trivial presentation, but has fundamental group at infinity $P$.  The output is that one has countably many compact contractible manifolds that act as ``building blocks" for an uncountable collection by taking suitable connected sums. The method of Curtis and Kwun for producing ``building blocks" evidently does not extend to $n = 4$.  Later, Glaser extended Mazur's construction to produce countably many ``building blocks" from which uncountably many pairwise non-homeomorphic open contractible $4$-manifolds could be built by taking suitable connected sums as above \cite{Glaser}.  We summarize these results in the following statement.

\begin{thmintro}
\label{thmintro:uncountableexistence}
In every dimension $\geq 3$, there exists uncountably many pairwise non homeomorphic open contractible $n$-manifolds.
\end{thmintro}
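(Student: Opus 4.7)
The plan is to split the argument by dimension, since the three ranges $n=3$, $n=4$, and $n\geq 5$ require genuinely different geometric inputs, and then to exhibit in each case countably many ``building block'' compact contractible manifolds whose boundaries realize distinct finitely presented groups as fundamental groups at infinity; the uncountable family is then produced by forming connected sums along the ends and distinguishing the results by this invariant.

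For $n \geq 5$, which I would handle first because the geometry is cleanest, I would follow the Curtis--Kwun strategy sketched in the text. Start from a countable list of pairwise non-isomorphic finitely presented \emph{perfect} groups $P$ admitting a homologically trivial $2$-complex presentation $K(P)$, i.e.\ $H_i(K(P),\Z)=0$ for $i=1,2$ (such lists exist in abundance: take, for example, perfect groups built from binary icosahedral-type generators). Since $n\geq 5$, the complex $K(P)$ may be PL embedded into $S^n$, and one takes a regular neighborhood $N=N(K(P))\subset S^n$. The open manifold $M(P) := S^n \smallsetminus \partial N$ is contractible (it deformation retracts to $S^n \smallsetminus K(P)$, whose homology vanishes by Alexander duality applied to the homologically trivial subcomplex, while simple connectivity follows from the general position argument possible in codimension $\geq 3$) and has $\pi_1^{\infty}(M(P)) \iso P$. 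Distinct $P$ thus give distinct $M(P)$. To upgrade ``countably many'' to ``uncountably many,'' I would form end-connected sums $M(P_{i_1}) \# M(P_{i_2}) \# \cdots$ indexed by infinite sequences $(i_k)$ drawn from the countable list, and argue that the tower of fundamental groups at infinity detects the whole sequence up to tail-equivalence, yielding $2^{\aleph_0}$ homeomorphism types.

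For $n=3$, I would invoke McMillan's construction directly: by varying the linking pattern in the Whitehead-type nested tower of solid tori $T_0 \subset T_1 \subset \cdots$, one obtains countably many open contractible $3$-manifolds realizing distinct fundamental groups at infinity, and an uncountable family then arises by taking infinite connected sums, distinguished again by their end invariants. For $n=4$, which is the main obstacle because regular-neighborhood boundaries in $S^4$ depend on the embedding and a generic $2$-complex does not embed smoothly, I would appeal to the Mazur--Poenaru construction of compact contractible $4$-manifolds with nontrivial boundary fundamental group, together with Glaser's extension, which produces a countable collection of such building blocks with pairwise non-isomorphic $\pi_1$ of the boundary. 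Their open interiors yield open contractible $4$-manifolds, and infinite end-connected sums again deliver an uncountable family.

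The key unifying step, and the one I expect to be the most delicate to write out carefully, is verifying that the fundamental group at infinity (more precisely, the pro-isomorphism class of the inverse system $\{\pi_1(M\setminus K)\}_K$) is (i) well-defined as a homeomorphism invariant of the end and (ii) sensitive enough to distinguish the continuum of connected sums constructed. The dimension $n=4$ case is the principal obstacle: one cannot mimic Curtis--Kwun because $\partial N$ depends on the embedding and because codimension-$2$ engulfing fails, so one must genuinely rely on Mazur's handlebody-theoretic construction and Glaser's refinement to obtain enough building blocks before passing to connected sums.
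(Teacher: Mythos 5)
Your proposal reproduces, with more detail filled in, exactly the strategy the paper sketches in the paragraphs preceding the theorem: the Curtis--Kwun construction via homologically trivial presentations embedded in $S^n$ for $n\geq 5$, the Mazur--Poenaru examples extended by Glaser for $n=4$, McMillan's modified Whitehead towers for $n=3$, and in all three regimes the passage from a countable family of building blocks to an uncountable one via infinite (end-)connected sums detected by the pro-$\pi_1$ of the end. This is a survey theorem compressing results from the cited literature rather than a novel argument, and your decomposition by dimension and your identification of $n=4$ as the delicate case (because regular-neighborhood boundaries in $S^4$ are not invariants of the embedded complex) agree with the paper's own emphasis, so there is nothing substantive to distinguish the two.

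One small remark: for $n=3$ you may not actually need the infinite connected-sum step, since McMillan's original construction already yields an uncountable family of open contractible $3$-manifolds directly by varying the nested solid-torus pattern; but including the connected-sum argument does no harm and gives a uniform treatment across dimensions, which is arguably cleaner than the paper's piecemeal citation.
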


Returning to non-cancellation phenomena, Stallings \cite{Stallings} generalized the results of Zeeman and McMillan by observing:
\begin{enumerate}[noitemsep,topsep=1pt]
\item for any integer $n \geq 5$, ${\mathbb R}^n$ is the unique open contractible $PL$-manifold that is simply-connected at infinity; and
\item if $M^n$ is an open contractible $m$-manifold, then $M^n \times {\mathbb R}$ is always simply-connected at infinity.
\end{enumerate}
These observations tamed the zoo of open contractible manifolds in some sense.  The homotopical characterization of Euclidean space above is sometimes called the open Poincar\'e conjecture.  Stallings' result was later generalized by Siebenmann \cite{Siebenmann} to yield an essentially homotopical characterization of the Euclidean space amongst all open contractible topological $n$-manifolds.  In conjunction with more recent work on the low-dimensional Poincar\'e conjecture, i.e., the celebrated work of Perelman \cite{MR3308754} and Freedman \cite{MR1622913}, Siebenmann's theorem can be extended to the following statement.
\vspace{0.1in}

\begin{thmintro}
\label{thmintro:characterization}
Suppose $n \geq 0$ is an integer.
\begin{enumerate}[noitemsep,topsep=1pt]
\item For $n \leq 2$, Euclidean space ${\mathbb R}^n$ is the unique open contractible $n$-manifold.
\item For $n \geq 3$, Euclidean space ${\mathbb R}^n$ is the unique open contractible $n$-manifold that is simply connected at infinity.
\item If $n \geq 3$, and $M$ is an open contractible $n$-manifold, then $M \times \real$ is homeomorphic to ${\mathbb R}^{n+1}$.
\end{enumerate}
\end{thmintro}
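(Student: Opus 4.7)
The plan is to treat the three assertions separately, reducing the product statement (3) to the characterization (2) and invoking classical classifications in (1).

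For (1), in dimensions $\le 1$ the classification of connected $1$-manifolds without boundary leaves only $\mathbb{R}$ and $S^1$, of which the former is the only open contractible one. In dimension $2$, a connected, simply connected, noncompact surface is homeomorphic to $\mathbb{R}^2$ by the Ker\'ekj\'art\'o--Richards classification of noncompact surfaces (equivalently, by the Koebe--Rad\'o uniformization theorem, applied after fixing a smooth structure and noting that an open contractible surface is necessarily simply connected).

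For (2), I would argue by dimension. When $n \ge 5$, I would invoke Stallings' engulfing theorem: the hypotheses of contractibility and simple-connectivity at infinity guarantee that every compact $K \subset M$ can be engulfed inside an open PL ball, so $M$ is an increasing union of such balls, and passing to the direct limit produces a homeomorphism $M \cong \mathbb{R}^n$. This is Stallings' PL theorem, and Siebenmann's theorem extends it to the topological category via a finer end-theoretic analysis. In dimension $n = 3$, Perelman's resolution of the Poincar\'e conjecture (together with the sphere theorem and standard prime decomposition arguments) implies that every embedded $2$-sphere in $M$ bounds a $3$-ball, which produces an exhaustion by $3$-balls and hence gives $M \cong \mathbb{R}^3$. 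In dimension $n = 4$, one instead invokes Freedman's topological $4$-dimensional Poincar\'e conjecture together with Quinn's controlled analysis of tame ends.

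For (3), the strategy is to reduce to (2) by showing that $M \times \mathbb{R}$ is simply connected at infinity, regardless of $\pi_1^{\infty}(M)$. Given a compact $C \subset M \times \mathbb{R}$, enclose it in $C' = K \times [-T, T]$ for some compact $K \subset M$ (enlarged so that $M \setminus K$ is connected, which is possible when $n \ge 2$) and some $T > 0$, and decompose the complement of $C'$ as $A \cup B_+ \cup B_-$, where $A = (M \setminus K) \times \mathbb{R}$, $B_+ = M \times (T, \infty)$, and $B_- = M \times (-\infty, -T)$. The two sets $B_{\pm}$ are contractible since $M$ is, while each intersection $A \cap B_{\pm}$ deformation retracts onto $M \setminus K$ through the same map that collapses $A$. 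Van Kampen's theorem then forces $\pi_1$ of the complement of $C'$ to be trivial, and since compact sets of the form $C'$ are cofinal in $M \times \mathbb{R}$ one concludes $\pi_1^{\infty}(M \times \mathbb{R}) = 1$. Applying (2) to the open contractible $(n+1)$-manifold $M \times \mathbb{R}$, whose dimension is $\ge 4$, then yields $M \times \mathbb{R} \cong \mathbb{R}^{n+1}$.

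The deepest step is the $n = 4$ case of (2): the engulfing argument in dimensions $\ge 5$ is robust and essentially category-independent, whereas the $4$-dimensional characterization sits at the frontier of topological surgery and depends on Freedman's disk embedding theorem together with Quinn's end theorems. The $n = 3$ case also rests on a highly nontrivial input (Perelman), although the exhaustion-by-balls bookkeeping afterward is comparatively routine.
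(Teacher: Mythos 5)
Your proposal follows essentially the same route as the paper, which does not prove this statement but assembles it from the same sources you cite: the classical classification of open $1$- and $2$-manifolds for part (1), Stallings' engulfing theorem and Siebenmann's topological extension for $n \geq 5$, Perelman and Freedman in dimensions $3$ and $4$ for part (2), and Stallings' observation that $M \times \mathbb{R}$ is simply connected at infinity, combined with (2), for part (3). The only material you add is the van Kampen verification of that last observation (decomposing the complement of $K \times [-T,T]$ into $(M \setminus K)\times\mathbb{R}$ and the two contractible pieces $M \times (\pm T, \pm\infty)$), and this argument is correct given the enlargement of $K$ so that $M \setminus K$ is connected, which is available since an open contractible manifold of dimension $\geq 2$ has one end.
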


The modern study of open manifolds formalizes the notion of ``behavior at $\infty$" of an open manifold as follows.  If $W$ is a topological space, we write $\dot{W}$ for a $1$-point compactification of $W$.  Write $[0,\infty]$ for the extended real line (i.e., with a limit point at infinity).  The end space $e(W)$ consists of the space of maps of pairs $\omega: ([0,\infty],\{\infty\}) \to (\dot{W},\{ \infty\})$ such that $\omega^{-1}(\infty)=\infty$ \cite[\S1, Definition 1.2]{HughesRanicki}.  The end space may be viewed as a homotopy theoretic model for the behaviour at $\infty$ of $W$.  If $W$ is already compact, then $\dot{W} = W_+$, so the end space is empty.  The end space may not be connected: $e(\real)$ is homotopy equivalent to $S^{0}$ corresponding to the two infinite simple edge paths starting at $0\in\real$.  More generally, $e(\real^{n})\simeq S^{n-1}$.  In that case, we will say that $W$ is simply connected at infinity if $e(W)$ is connected and simply connected.

\subsection{Contractible algebraic varieties}
Cancellation questions like those mentioned above were also studied in algebraic geometry, though in several different contexts.  Perhaps the first such question was posed by Zariski \cite{Segre}: if $k$ is a field, $L$ and $L'$ are finitely generated extensions of $k$ such that the purely transcendental extensions $L(x)$ and $L'(x)$ are isomorphic as extensions of $k$, are $L$ and $L'$ isomorphic?  Phrased in the language of algebraic geometry: if $X$ and $X'$ are irreducible algebraic varieties over a field $k$ such that $X \times {\pone}$ is $k$-birationally equivalent to $X' \times {\pone}$, is $X$ $k$-birational to $X'$?  At some point, in the late 1960s/early 1970s, a closely related ``biregular" form of Zariski's original cancellation question was posed: if $A$ and $B$ are finitely generated $k$-algebras, and $A[x] \cong B[x]$ as $k$-algebras, is $A \cong B$?  Even at the beginning, special attention was paid to the case where $A$ is a polynomial ring.  M.P. Murthy \cite{MR0286801} also asked: if $A$ is a Krull dimension $2$ extension of a field $k$ and $A[x] \cong k[x_1,x_2,x_3]$, then must $A$ be a polynomial ring in $2$-variables?  In the language of algebraic geometry this leads to the following question.

\begin{questionintro}[Biregular cancellation question]
\label{question:biregularcancellation}
If $X$ is a smooth affine variety over a field such that $X \times {\mathbb A}^n \cong {\mathbb A}^{N}$, then is $X$ isomorphic to affine space?
\end{questionintro}

\begin{remintro}
The more general cancellation question will be of interest to us as well, and we refer the reader to Sections~\ref{ss:cancellationproblemsaoneweakequivalences} and \ref{ss:Danielewski} for more discussion and history of these kinds of questions.
The above question is also sometimes called the ``Zariski cancellation problem", but it appears never to have been explicitly posed or considered by Zariski.
\end{remintro}

The situation surrounding Question~\ref{question:biregularcancellation} is quite different depending on the characteristic.  Over fields of positive characteristic, recent work of N. Gupta, building on some old ideas of Asanuma \cite{Asanuma} exploiting interesting pathologies existing only in positive characteristic produced counterexamples to the biregular cancellation question \cite{Gupta3dimcancellation}.  Nevertheless, at the time of writing, the question is still open over fields having characteristic $0$.
\vspace{0.1in}

From the outset, ideas of topology played a role in approaches to the biregular cancellation problem over fields having characteristic $0$.  If $k = \cplx$, then we write $X^{an}$ for the set $X(\cplx)$ equipped with its usual structure of a complex manifold.  More generally, if $k$ is a field for which we can find an embedding $\iota: k \hookrightarrow \cplx$, then we may use $\iota$ to define an associated complex manifold $X^{an}_{\iota}$.  If $k$ is a field that admits a complex embedding, then we will say that $X$ is {\em topologically contractible} if $X^{an}_{\iota}$ is a contractible space for every complex embedding $\iota$ of $k$.  The variety ${\mathbb A}^n_k$ is the primordial example of a topologically contractible variety.  Thus, any complex variety that is stably isomorphic to ${\mathbb A}^n$ is also automatically topologically contractible.  Bearing this in mind, we would like to place the discussion of the biregular cancellation question in context using the theory of open contractible manifolds as a template.
\vspace{0.1in}

Ramanujam writes in \cite{MR0286801} that it was initially hoped that a $2$-dimensional non-singular, affine, rational, topologically contractible complex variety would be isomorphic to ${\mathbb A}^2$.  However, he constructs a counter-example to this hope in his landmark paper--a surface now called the Ramanujam surface in his honor; we briefly recall the construction of the Ramanujam surface.  Ramanujam also gave a topological characterization of the affine plane among complex algebraic varieties: ${\mathbb C}^2$ is the unique non-singular contractible complex surface that is simply connected at infinity.

\begin{exintro}
\label{exintro:ramanujamsurface}
In ${\mathbb P}^{2}$ take a cubic $C$ with a cusp at $q$.  Let $Q$ be an irreducible conic meeting $C$ with multiplicity $5$ at a point $p$ and transversally at a point $r$.  The existence of $Q$ can be deduced from the group law on the non-singular part of $C$, or directly by avoiding that $p$ is a flex-point of $C$.  Let $X$ be the blow-up of ${\mathbb P}^{2}$ at $r$ and let $\tilde{C}$, $\tilde{Q}$ denote the strict transforms of $C$, $Q$, respectively.  The Ramanujam surface is defined by setting
\[
\mathcal{R} = X \setminus \tilde{C} \cup \tilde{Q}.
\]
\end{exintro}

Smooth complex varieties $X$ that are not isomorphic to ${\mathbb A}^{n}$ but for which $X^{an}$ is diffeomorphic to ${\mathbb R}^{2n}$ are known as exotic affine spaces.  Such varieties are automatically topologically contractible;  for a survey we refer the reader to \cite{ZaidenbergSurvey}.  After the work of Ramanujam, many examples of topologically contractible smooth varieties were invented; for example, see the work of tom Dieck and Petrie \cite{MR1064448} for examples given by explicit equations.  In fact, there is a veritable zoo of such examples: there exist arbitrary dimensional moduli of topologically contractible smooth complex affine surfaces!
\vspace{0.1in}

The biregular cancellation problem was later solved in the affirmative for surfaces by Miyanishi--Sugie \cite{MiyanishiSugie} and Fujita \cite{FujitaCancellation} in characteristic $0$ and Russell in positive characteristic: the affine plane (over an algebraically closed field) is the unique surface that is stably isomorphic to an affine space (see Section~\ref{ss:cancellation} for precise references).
\vspace{0.1in}

Ramanujam's topological characterization of the affine plane also fails to hold in higher dimensions.  Indeed, Dimca and Ramanujam both proved that if $X$ is a topologically contractible smooth complex affine variety of dimension $d \geq 3$, then $X$ is automatically diffeomorphic to ${\mathbb R}^{2d}$.  Putting these facts together, one sees that there are many exotic affine varieties in dimensions $\geq 3$: simply take the products of topologically contractible surfaces and affine spaces!  This observation, together with the fact that ``topological" invariants do not always have natural counterparts in positive characteristic, suggest that one might hope for a purely algebraic theory of contractible varieties that is more refined than the topological notion discussed here.  Nevertheless, there are a great number of questions about the geometry of topologically contractible varieties that one may pose here, and we highlight several questions that have guided the development of the theory of contractible algebraic varieties.

\begin{questionintro}[Generalized Serre Question]
\label{questionintro:generalizedserrequestion}
If $X$ is a topologically contractible smooth complex affine variety, then is every algebraic vector bundle on $X$ trivial?
\end{questionintro}

\begin{remintro}
This question is called the generalized Serre question in the literature \cite[\S 8]{ZaidenbergSurvey} and we have followed this terminology here. To the best of our knowledge, Serre only ever considered this question for affine space over a field \cite[p. 243]{SerreFAC}, where it spectacularly resolved (independently) by Quillen and Suslin \cite{QuillenProjective,SuslinSerreProblem}; we refer the reader to \cite{LamSerreProblem} for a nice survey of these results.  Of course, it is based on the fact that complex topological vector bundles on contractible varieties are trivial.  We will see later that the affineness assumption in the question is essential; see Example~\ref{ex:nontrivialvectorbundles} for more details.
\end{remintro}

\begin{questionintro}[Generalized van de Ven question]
\label{question:vandeVenquestion}
If $X$ is a topologically contractible smooth complex variety, then is $X$ rational?
\end{questionintro}

\begin{remintro}
This question is sometimes referred to as the van de Ven Question in the literature \cite[Remark 2.2]{ZaidenbergSurvey} with reference given to \cite{vandeVen}.  We call it the generalized van de Ven question because van de Ven was explicitly concerned with surfaces: he writes \cite[\S3 p.197-198]{vandeVen} ``All the known examples of algebraic non-singular compactifications of homology $2$-cells are rational surfaces.  It would follow that there are no others, at least if we restrict ourselves to simply connected homology $2$-cells, if the following statement would be true: $({\mathfrak C})$ Every simply connected non-singular algebraic surface with geometric genus $0$ is rational."  Unfortunately, the conjecture $({\mathfrak C})$ was later disproved: for example the Barlow surfaces \cite{Barlow} are simply connected surfaces of general type with geometric genus $0$.  Nevertheless, Gurjar and Shastri showed \cite{GSI,GSII} that non-singular contractible surfaces are always rational by a careful case-by-case analysis.
\end{remintro}

A theorem of Fujita shows that all topologically contractible smooth complex surfaces are necessarily affine.  A remarkable example of Winkelmann \cite{MR1032948} shows that Fujita's statement does not hold in higher dimensions.  Winkelmann gave the first example of a quasi-affine but not affine smooth contractible complex variety.

\begin{exintro}
\label{exintro:winkelmann}
Winkelmann defines a scheme-theoretically free action of the additive group $\mathbb{G}_a$ on ${\mathbb A}^5$ as follows.  Consider the standard $2$-dimensional representation $V$ of $\mathbb{G}_a$ given by the embedding $\mathbb{G}_a \hookrightarrow SL_2$ as upper triangular matrices.  The $6$-dimensional affine space attached to $V \oplus V \oplus V$ thus carries a linear representation of $\mathbb{G}_a$.  If we pick coordinates $x_0,x_1$, $y_0,y_1$ and $z_0,z_1$ on this affine space, then $x_0$, $y_0$ and $z_0$ are degree $1$ invariant functions, while $x_0y_1 - y_0x_1$, $x_0z_1-z_0x_1$ and $y_0z_1 - z_0y_1$ are degree $2$ invariants.  The hypersurface $x_0 = 1 - y_0z_1 - z_0y_1$ is thus a $\mathbb{G}_a$-invariant smooth hypersurface in ${\mathbb A}^6$ isomorphic to ${\mathbb A}^5$.  The induced action is scheme-theoretically free, and invariant computation just mentioned identifies the quotient as an open subscheme of a smooth affine quadric of dimension $4$ with complement a codimension $2$ affine space.
\end{exintro}

Winkelmann's example highlights another important issue in algebraic geometry that is not visible in the topological story.  Indeed, it follows from the theorems we stated before that every open contractible manifold may be realized as a quotient of ${\mathbb R}^n$ by a free action of the additive group ${\mathbb R}$.   However, every principal ${\mathbb R}$-bundle on a topological space having the homotopy type of a CW-complex is trivial because ${\mathbb R}$ is contractible.  In contrast, in algebraic geometry, a natural analog of free $\real$-actions on ${\mathbb R}^n$ is given by (scheme-theoretically) free actions of the additive group scheme $\mathbb{G}_a$ on ${\mathbb A}^n$.  One must first ask then: given a scheme-theoretically free action of $\mathbb{G}_a$, when does a quotient even exist as a scheme (a quotient always exists as an algebraic space)?  If $X$ is a scheme, then the set of isomorphism classes of $\mathbb{G}_a$-torsors on $X$ is parameterized by the sheaf cohomology group $H^1(X,\mathbb{G}_a) = H^1(X,\mathscr{O}_X)$.  On an affine scheme, this cohomology group necessarily vanishes, but it need not vanish on a quasi-affine scheme that is not affine.
\vspace{0.1in}

Example \ref{exintro:winkelmann} gives the primordial example of a smooth variety that is ``algebraically" homotopy equivalent to affine space, without being isomorphic to affine space.  While there are various ``naive" notions of algebraic homotopy equivalence (e.g., one may think of homotopies parameterized by the affine line), a robust homotopy theory for algebraic varieties in which homotopies are parameterized by the affine line requires more formal machinery.  In the sequel, we begin by trying to highlight the key points of one solution--the construction of the Morel--Voevodsky $\aone$-homotopy \cite{MV}--from the standpoint of the tools required by an end user interested in algebro-geometric questions like those posed above.  Section~\ref{section:aqitmht} contains motivation and the basic ideas of the construction, without going into any of the formal categorical preliminaries.  In Section~\ref{s:concreteaoneweakequivalences}, we try to understand concretely and geometrically how to study isomorphisms in the $\aone$-homotopy category, a.k.a.~$\aone$-weak equivalence.  In the context of the discussion above, we introduce the key notion of an $\aone$-contractible space,  see Definition~\ref{defn:aonecontractible}.  In what remains, we review the theory of $\aone$-contractible smooth varieties, guided by the discussion above, especially the theory of open-contractible manifolds.  Along the way, we discuss an algebro-geometric version of Theorem~\ref{thmintro:uncountableexistence}.

\begin{thmintro}[See Theorems~\ref{thm:familesdimgreater6} and \ref{thm:3dimlfamilies}]
Assume $k$ is a field.  For every pair of integers $d \geq 3$ and $n \geq 0$, there exists a connected $n$-dimensional scheme $S$ and a smooth morphism $\pi: X \to S$ of relative dimension $d$ whose fibers are $\aone$-contractible.  Moreover, the fibers over distinct $k$-points of $\pi$ are pairwise non-isomorphic.
\end{thmintro}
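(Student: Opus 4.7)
The plan is to transplant the Curtis--Kwun--Glaser construction behind Theorem \ref{thmintro:uncountableexistence} into the algebro-geometric setting: instead of gluing compact contractible building blocks into open manifolds distinguished by their fundamental group at infinity, one assembles explicit smooth $\aone$-contractible varieties into algebraic families whose parameter scheme can be chosen of any prescribed dimension, and then separates the members by a purely algebraic invariant. The argument will naturally split by relative dimension, matching the cited Theorems \ref{thm:familesdimgreater6} and \ref{thm:3dimlfamilies}.

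For the base case $d=3$, the raw material is the Koras--Russell family of threefolds, e.g.\ hypersurfaces of the form $\{x+x^{a}y+z^{b}+t^{c}=0\}\subset\AA^{4}$ together with their natural generalisations. These are known to be $\aone$-contractible---they admit presentations as iterated affine modifications of $\AA^{3}$, which produces a chain of explicit $\mathbb{G}_{a}$-contractions---and, for suitable exponents, are pairwise non-isomorphic as abstract schemes, the isomorphism class being detected by the Makar--Limanov invariant $\ML$. Assembling them into a smooth morphism over a parameter scheme $T$ and pulling back along any smooth surjection $S\twoheadrightarrow T$ from a smooth connected $n$-dimensional $S$ meeting every distinguished $k$-point of $T$ produces the desired $\pi\colon X\to S$.

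For $d\geq 6$ one proceeds analogously with higher-dimensional analogues: either Winkelmann-style quotients of scheme-theoretically free $\mathbb{G}_{a}$-actions on affine space (cf.\ Example \ref{exintro:winkelmann}) or iterated affine modifications of $\AA^{d}$ built from $\aone$-contractible divisors of smaller dimension. Moduli for these higher-dimensional families are abundant, so cutting out an $n$-dimensional smooth connected subscheme of the parameter space is essentially free. The remaining relative dimensions $d=4,5$ can be accessed by taking products of a $d=3$ construction with $\AA^{d-3}$ or by inserting additional Koras--Russell-type variables into the defining equation, provided one can arrange an invariant that survives the enlargement.

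The principal obstacle is not the fibrewise $\aone$-contractibility, for which the toolkit of $\mathbb{G}_{a}$-contractions and affine modifications developed earlier in the paper suffices in every dimension at hand; it is rather the problem of distinguishing fibres over distinct $k$-points. Invariants that work cleanly in dimension three (Makar--Limanov, logarithmic Kodaira dimension, the automorphism scheme) are notoriously fragile under $\AA^{1}$-stabilisation, and this is precisely what forces the dichotomy between the $d=3$ and $d\geq 6$ regimes and what dictates how the higher-dimensional families must be tailored so that \emph{some} robust invariant continues to separate the members after the base has been expanded to dimension $n$. Once such an invariant is fixed, smoothness of $\pi$ and connectedness of $S$ are routine to verify.
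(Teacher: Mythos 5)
There is a genuine gap, and it sits exactly where your proposal declares the problem solved. You assert that the Koras--Russell-type fibers in the $d=3$ case are $\aone$-contractible because they ``admit presentations as iterated affine modifications of $\AA^{3}$, which produces a chain of explicit $\mathbb{G}_{a}$-contractions,'' and later that the ``toolkit of $\mathbb{G}_{a}$-contractions and affine modifications\dots suffices in every dimension at hand.'' No such elementary argument exists; this is precisely the deep content of the theorem. The Makar--Limanov invariant $\ML(\mathcal{KR})=k[x]$ shows these threefolds carry too few $\mathbb{G}_{a}$-actions for any contraction-by-unipotent-flows scheme, and affine modification is not known to preserve even $\aone$-chain connectedness: the tom Dieck--Petrie surfaces of Example~\ref{example:tomDieck-Petriesurfaces} are built by affine modification, are stably $\aone$-contractible, and their $\aone$-contractibility is open. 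In the paper, fibrewise contractibility in Theorem~\ref{thm:3dimlfamilies} rests on the stable result Theorem~\ref{thm:Main-KR} (triviality of motivic cohomology, Theorem~\ref{thm:MC-Vanish}, plus the slice filtration and $\MZ$-locality of Lemma~\ref{lem:HZ-local}, and the compactness argument of Lemma~\ref{lem:compactness}), followed by an unstable desuspension: one proves $\mathcal{KR}\setminus\AA^{1}_{y}\sim_{\aone}\AA^{2}\setminus\{0\}$ via the Koras--Russell fiber-bundle geometry of Section~\ref{ss:KRfiberbundles} and then controls the resulting self-map of $\AA^{2}\setminus\{0\}$ by its motivic Brouwer degree (Theorem~\ref{thm:degree}) through a Milnor--Witt K-theory cohomology computation (Proposition~\ref{prop:connectivesurjective}). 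Your proposal simply assumes this step.

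You also invert where the difficulty lies, and the remaining steps stay schematic. For $d\geq 6$ the paper's source (Theorem~\ref{thm:familesdimgreater6}) uses quotients of affine space by scheme-theoretically free unipotent actions, where contractibility of the fibers really is immediate (Lemma~\ref{lem:nisloctrivaffinefibers}) and the substance is pairwise non-isomorphism of the quasi-affine quotients; for $d=3$ it is the other way around: non-isomorphism comes from comparatively standard tools (Makar--Limanov-type invariants, counting degenerate fibers of the projection to $\AA^{1}_{x}$), while contractibility is the theorem. Finally, your patch for $d=4,5$ by crossing a three-dimensional family with $\AA^{d-3}$ is not secured: the fibers in Theorem~\ref{thm:3dimlfamilies} are stably isomorphic, and the invariants you name are exactly the ones known to degrade under such stabilization (the Makar--Limanov invariant already fails to distinguish $\mathcal{KR}\times\AA^{1}$ from $\AA^{4}$), so ``provided one can arrange an invariant that survives the enlargement'' concedes rather than closes the gap; the survey itself supports the statement only through the two cited theorems, covering $d=3$ and $d\geq 6$.
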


Much of the rest of the discussion can be viewed as providing background for the following questions, which suggest a best-possible approximation to an algebro-geometric variant of Theorem~\ref{thmintro:characterization} (see Definition~\ref{defn:cancellationtypes} for the terminology).

\begin{questionintro}
\label{questionintro:lowdimensions}
If $k$ is a perfect field, is ${\mathbb A}^n$ the only $\aone$-contractible smooth $k$-scheme of dimension $\leq 2$?
\end{questionintro}

\begin{questionintro}
\label{questionintro:highdimensions}
Suppose $X$ is an $\aone$-contractible smooth scheme.
\begin{enumerate}[noitemsep,topsep=1pt]
\item Does there exist an affine space ${\mathbb A}^N$ and a Nisnevich locally trivial smooth morphism ${\mathbb A}^N \to X$?
\item If $X$ is moreover affine, then is $X$ a retract of ${\mathbb A}^N$?
\end{enumerate}
\end{questionintro}

The first question above is discussed in detail in Section~\ref{ss:aonecontractibilityvstopologicalcontractibility}.  The second is motivated by the discussion of Section~\ref{ss:quasiaffineaonecontractibles} in conjunction with the discussion of Sections~\ref{ss:cancellationproblemsaoneweakequivalences}, \ref{ss:KRthreefolds}, \ref{ss:krthreefoldI} and \ref{ss:KRfiberbundles}.  Finally, we suggest that there is a rather subtle relationship between topologically contractible varieties and $\aone$-contractible varieties involving ``suspension" in the $\aone$-homotopy category.

\begin{conjintro}[See Conjecture~\ref{conj:stablecontractibility}]
\label{conjintro:stablecontractibility}
If $X$ is a topologically contractible smooth complex variety with a chosen base-point $x \in X(\cplx)$, then there exists an integer $n \geq 0$ such that $\Sigma^n_{\pone}(X,x)$ is $\aone$-contractible; in fact, $n = 2$ should suffice.
\end{conjintro}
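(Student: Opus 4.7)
The plan is to attack the conjecture in two stages: first establish that after $\pone$-stabilization the motivic suspension spectrum $\Sigma^\infty_{\pone}(X,x)$ is weakly contractible in $\SH(\cplx)$, and then destabilize via a motivic Freudenthal suspension theorem to recover unstable $\aone$-contractibility after finitely many $\pone$-suspensions. Careful tracking of the Freudenthal range should then pin down the specific bound $n=2$.

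For the stable step, one needs to exhibit a weak equivalence $\Sigma^\infty_{\pone}(X,x) \weq \pt$ in $\SH(\cplx)$. Complex realization carries this spectrum to the classical suspension spectrum of the contractible space $X^{an}$, which is trivial, so what is needed is a motivic lift of this triviality. Concretely, one would run the slice spectral sequence (or the motivic Adams spectral sequence) and show that the bigraded stable $\aone$-homotopy groups $\bpia_{p,q}(X,x)$ vanish. This reduces the problem, layer by layer, to appropriate vanishing of motivic cohomology $\HH^{*,*}_M(X,\ZZ)$, hermitian $K$-theory, and higher Chow groups of $X$. For topologically contractible smooth affine $X$, singular cohomology vanishes, so the nontrivial content lies entirely in the cycle-theoretic invariants, and one would invoke Bloch--Srinivas and Bloch--Beilinson type arguments, together with the affineness guaranteed in low dimensions by Fujita's theorem, to prove the required vanishing. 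A cleaner version of the argument works rationally inside $\DM(\cplx)_{\Q}$, where Voevodsky's cancellation theorem provides strong structural tools; the integral statement is then reduced to controlling torsion.

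For the destabilization step, once $\Sigma^\infty_{\pone}(X,x)$ is known to be contractible, one applies Morel's unstable $\aone$-Freudenthal theorem: if $(Y,y)$ is simplicially $s$-connected then $(Y,y) \to \Omega_{\pone}\Sigma_{\pone}(Y,y)$ is an $\aone$-weak equivalence in a range controlled by $s$ and by the $\pone$-connectivity. Topologically contractible $X$ is already $\aone$-connected (its $\piaone_0$ is trivial), and each $\pone$-smash improves motivic connectivity by one full bidegree; two $\pone$-suspensions should push $(X,x)$ into the range where stable contractibility forces unstable contractibility. The precise bookkeeping with the slice filtration, together with the fact that $\pone \weq S^1 \wedge \Gm$ contributes one simplicial and one Tate direction, is what justifies the particular value $n=2$ rather than a larger integer.

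The chief obstacle is the input to the stable step. There is no currently available machine that produces vanishing of motivic invariants of topologically contractible varieties: even rational triviality of the zero-cycles Chow group is not known in general, and the geometric invariants that do exist for such varieties (Makar--Limanov, Derksen) have no direct motivic counterpart. A realistic subgoal is therefore to prove the rational statement first, where the six functor formalism in $\DM(\cplx)_{\Q}$ offers enough leverage; the integral refinement then requires Rost--Schmid-style arguments on $\KMW$-cohomology and on unramified Witt groups to kill torsion, which is where I expect the proof to be hardest. In this sense, Conjecture~\ref{conjintro:stablecontractibility} should be regarded as an organizing principle whose resolution would simultaneously clarify why $\pone$-stabilization tames the subtle $\aone$-homotopy type of exotic contractible varieties such as the Ramanujam surface and the Winkelmann example~\ref{exintro:winkelmann}.
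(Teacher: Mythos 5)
This statement is a conjecture in the paper (restated as Conjecture~\ref{conj:stablecontractibility}); the paper gives no proof of it, only motivation drawn from the case it \emph{can} handle, namely Koras--Russell threefolds (Theorem~\ref{thm:Main-KR}), together with the motivic conservativity conjecture and the triviality of integral motives known for topologically contractible surfaces. So your text is a strategy sketch rather than a proof, and measured against the paper's template it has two concrete gaps. First, the stable step. You propose running the slice or motivic Adams spectral sequence to kill all bigraded stable homotopy groups, feeding in vanishing of motivic cohomology, hermitian $K$-theory and higher Chow groups; but as you yourself concede, no such vanishing is available for general topologically contractible varieties, and even rationally the triviality of the motive is only known conditionally (conservativity) or in special cases. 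In the proved cases the paper's mechanism is different and cleaner: one shows the motive is trivial in the strong form that $H^{*,*}(Y,\Z)\to H^{*,*}(X\times Y,\Z)$ is an isomorphism for all smooth affine $Y$ (Theorem~\ref{thm:MC-Vanish}), deduces via strong dualizability that $\Sigma^\infty_{\pone}(X,x)$ is $\MZ$-acyclic, and combines this with the fact that suspension spectra of pointed smooth varieties are $\MZ$-local (Lemma~\ref{lem:HZ-local}, resting on resolution of singularities and Levine's completeness of the slice filtration). No hermitian $K$-theory or layer-by-layer homotopy group computation enters; the entire stable contractibility is equivalent to triviality of the motive, which is exactly the open input.

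Second, the destabilization step is where your argument would actually fail. The paper's passage from $\Sigma^\infty_{\pone}(X,x)\simeq\ast$ to $\aone$-contractibility of some finite suspension is Lemma~\ref{lem:compactness}, a compactness/filtered-colimit argument that yields an unspecified $n$ with no control whatsoever; it is not a Freudenthal-type connectivity argument. There is no unstable Freudenthal theorem in the $\Gm$-direction: Morel's suspension and connectivity theorems govern $S^1$-suspension, whereas desuspending along $\pone\simeq S^1\wedge\Gm$ requires control of $\Gm$-loops, and the paper's assertion that ``$n=2$ should suffice'' is explicitly conditional on the conjectural conservativity of $\Gm$-stabilization \cite{BachmannYakerson}, not on any bookkeeping with the slice filtration. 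Your premise that a topologically contractible $X$ ``is already $\aone$-connected (its $\bpi_0^{\aone}$ is trivial)'' is also unjustified: the paper conjectures that topologically contractible surfaces other than $\A^2$ are not $\aone$-contractible and shows the tom Dieck--Petrie surfaces are not even $\aone$-chain connected, so triviality of $\bpi_0^{\aone}$ cannot be taken for granted and certainly cannot seed a connectivity estimate. In short: the stable step is the open heart of the conjecture (equivalent to triviality of the motive), and the claimed derivation of the bound $n=2$ from a Freudenthal range is not a viable route --- that bound currently rests on a separate open conjecture.
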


As with any survey, this one reflects the knowledge and biases of the authors.  The literature in affine algebraic geometry on cancellation and related questions is vast, and we can only apologize to those authors whose work we have (inadvertently) failed to appropriately credit.

\section{A user's guide to $\aone$-homotopy theory}
\label{section:aqitmht}
We began by stating a slogan, loosely paraphrasing the first section of \cite{MV}:
\begin{quote}
there should be a homotopy theory for algebraic varieties over a base where the affine line plays the role assigned to the unit interval in topology.
\end{quote}

The path to motivate the construction of the $\aone$-homotopy category that we follow is loosely based on the work of Dugger \cite{DuggerUniversal}.  The original constructions of the $\aone$-homotopy category rely on \cite{JardinePresheaves} and are to be found in \cite{MV}.  General overviews of $\aone$-homotopy theory may be found in \cite{VICM}, and, especially for ``unstable" results in \cite{MICM}.  Morel's paper \cite{MIntro} provides an introductory text, but recent advances like \cite{AHW} and \cite{MR3431674} allow one to get to the heart of the matter much more quickly.  We encourage the reader to consult \cite{MR3727503}, \cite{IO}, \cite{MR3534540}, and \cite{WW} for recent surveys.

\subsection{Brief topological motivation}
The jumping off point for the discussion was the classical Brown-representability theorem in unstable homotopy theory. Recall that the ordinary homotopy category, denoted here ${\mathscr H}$, has as objects ``sufficiently nice" topological spaces $\Top$ (including, for example, all CW complexes), and morphisms given by homotopy classes of continuous maps between spaces.  Unfortunately, the category of CW complexes itself is not ``categorically good" enough to build the homotopy category.  Indeed, there are various constructions one wants to perform in topology (quotients, loop spaces, mapping spaces, suspensions) that do not stay in the category of CW complexes (they only stay in the category up to homotopy).  We will refer to objects of the category $\Top$ (which we have not precisely specified) as {\em spaces}.  As usual, for two spaces $X$ and $Y$, we write $[X,Y]$ for the set of morphisms between $X$ and $Y$ in the homotopy category.
\vspace{0.1in}

Suppose $\mathbf{C}$ is a category of ``algebraic structures".  In practice, one may take $\mathbf{C}$ to be the category of sets or an abelian category like abelian groups, or chain complexes, but we will need some flexibility in the choice.  A $\mathbf{C}$-valued invariant is then a contravariant functor $\F: \Top \to \mathbf{C}$.  We will furthermore consider $\mathbf{C}$-valued invariants on $\Top$ that satisfy the following properties:
\begin{itemize}[noitemsep,topsep=1pt]
\item[i)] (Homotopy invariance axiom) If $X$ is a topological space, and $I = [0,1]$ is the unit interval, then the map $\F(X) \to \F(X \times I)$ is a bijection.
\item[ii)] (Mayer-Vietoris axiom) If $X$ is a CW complex covered by subcomplexes $U$ and $V$ with intersection $U \cap V$, then we have a diagram of the form
    \[
    \F(X) \to \F(U) \times \F(V) \to \F(U \cap V),
    \]
    and given $u \in \F(U)$ and $v \in \F(V)$, such that the images of $u$ and $v$ under the right hand map coincide, then there is an element of $\F(X)$ whose image under the first map is the pair $(u,v)$.
\item[iii)] (Wedge axiom) The functor $\F$ takes sums to products.
\end{itemize}

Given a $\mathbf{C}$-valued invariant $\F$ on $\Top$ as above, the first condition implies that $\F$ factors through a functor $\mathscr{H} \to \mathbf{C}$, i.e., $\F$ is a $\mathbf{C}$-valued homotopy invariant.  There is a very natural class of {\em representable} $\Set$-valued homotopy invariants, given by $[-,Y]$ for some topological space $Y$.  Given a $\mathbf{C}$-valued homotopy invariant $\F$, the classical Brown representability theorem says that if it satisfies the second and third conditions above, then this functor is a representable homotopy invariant, i.e., there is a CW complex $Y$ and an isomorphism of functors $\mathscr{F} \cong [-,Y]$.
\vspace{0.1in}

\begin{rem}
In general, ``cohomology theories" satisfy more properties than just those mentioned: e.g., one has a Mayer--Vietoris long exact sequence.  For example, consider the functor sending a space to its usual integral singular cochain complex $S^*(X)$.  If $X$ is a topological space, then $S^*(X) \to S^*(X \times I)$ is not an isomorphism of chain complexes, but a chain-homotopy equivalence.  Likewise, if $X = U \cup V$, then there is a sequence
\[
S^*(X) \longrightarrow S^*(U) \oplus S^*(V) \longrightarrow S^*(U \cap V),
\]
but this sequence fails to be an exact sequence of chain complexes.  To get a Mayer--Vietoris sequence one must replace $S^*(X)$ by a suitable subcomplex with the same cohomology.  These observations necessitate making various constructions that are ``homotopy invariant" from the very start.
\end{rem}

\begin{rem}
If a ``cohomology theory" is represented on CW complexes by a space $Z$, and the cohomology theory is geometrically defined (e.g., topological K-theory with $Z$ the infinite grassmannian), then the ``representable" cohomology theory extended to all topological spaces need not coincide with the geometric definition for spaces that are not CW complexes.
\end{rem}

\subsection{Homotopy functors in algebraic geometry}
We would like to guess what properties the ``homotopy category" will have based on the known invariance properties of cohomology theories in algebraic geometry.  To this end, we must have some actual ``cohomology theories" at hand.  The examples we want to use are the theory of Chow groups \cite{Fulton} and (higher) algebraic K-theory \cite{Quillen}, though the reader not familiar with general definitions could focus on the Picard group, which is related to both.  We begin by formulating a notion of homotopy invariance in algebraic geometry.  As above, suppose $\mathbf{C}$ is a category of algebraic structures.

\begin{defn}
A $\mathbf{C}$-valued contravariant functor $\F$ on $\Sm_k$ is {\em $\aone$-invariant} if the morphism $\F(U) \to \F(U \times \aone)$ is a bijection.
\end{defn}

\begin{ex}
The functor $Pic(X)$ is not $\aone$-invariant on schemes with singularities that are sufficiently complicated \cite{Traverso}.  More generally, Chow groups are $\aone$-invariant \cite[Theorem 3.3]{Fulton} for regular schemes.  Likewise, Grothendieck established an $\aone$-invariance property for algebraic $K_0$ on regular schemes, and Quillen established a homotopy invariance property for algebraic K-theory of regular schemes (this is one of the fundamental properties of higher algebraic K-theory proven in \cite{Quillen}).
\end{ex}

In the examples above, $\aone$-invariance, failed to hold on all schemes.  As a consequence, we restrict our attention to the category $\Sm_k$ of schemes that are separated, smooth and have finite type over $k$ (we use smooth schemes rather than regular schemes since smoothness is more functorially well-behaved than regularity; if we assume we work with varieties over a perfect field, then there is no need to distinguish between the two notions).  Our restriction to smooth schemes will be analogous to the restriction to (finite) CW complexes performed above.
\vspace{0.1in}

We would like to impose some Mayer--Vietoris-like condition on $\mathbf{C}$-valued invariants.  The most obvious choice would involve the Zariski topology on schemes.  In practice, a number of classical algebro-geometric cohomology theories have a Mayer--Vietoris property for a Grothendieck topology that is finer than the Zariski topology.  Indeed, Chow groups and algebraic K-theory have ``localization" exact sequences; we review such sequences here as a motivation for the finer topology.
\vspace{0.1in}

Let us recall the localization sequence for Chow groups: if $X$ is a smooth variety, and $U \subset X$ is an open subvariety with closed complement $Z$ (say equi-dimensional of codimension $d$), there is an exact sequence of the form
\[
CH^{*-d}(Z) \longrightarrow CH^*(X) \longrightarrow CH^*(U) \longrightarrow 0;
\]
to extend this sequence further to the left, one needs to introduce Bloch higher Chow groups \cite{BlochHigherChow,BlochMoving}, but we avoid discussing this here.  We leave the reader the exercise of showing that from this localization sequence, one may formally deduce that Chow groups and algebraic K-theory have a suitable Mayer--Vietoris property for Zariski open covers by two sets.
\vspace{0.1in}

One often considers the \'etale topology in algebraic geometry, and one might ask whether there is an appropriate Mayer-Vietoris sequence for \'etale covers.  In this direction, consider the following situation.  Suppose given an open immersion $j: U \hookrightarrow X$ and an \'etale morphism $\varphi: V \to X$ such that the pair $(j,\varphi)$ are jointly surjective and such that the induced map $\varphi^{-1}(X \setminus U) \to X \setminus U$ is an isomorphism, diagrammatically this is a picture of the form:
\[
\xymatrix{
U \times_X V \ar[r]^{j'}\ar[d]^{\varphi} & V \ar[d]^{\varphi} \\
U \ar[r]^{j} & X
}
\]
We will refer to such diagrams as {\em Nisnevich distinguished squares}.  One can show that $X$ is the {\em colimit} in the category of smooth schemes of the diagram $U \longleftarrow U \times_X V \longrightarrow V$.
\vspace{0.1in}

By a straight-forward diagram chase, one may show that the sequence
\[
CH^*(X) \to CH^*(U) \oplus CH^*(V) \to CH^*(U \times_X V)
\]
is exact: given an element $(u,v)$ in $CH^*(U) \oplus CH^*(V)$, if the restriction of $(u,v)$ to $CH^*(U \times_X V)$ is zero, then there is an element $x$ in $CH^*(X)$ whose restriction to $CH^*(U) \oplus CH^*(V)$ is $(u,v)$.  Therefore, any theory that satisfies localization will have the Mayer--Vietoris property with respect to a Grothendieck topology on schemes that is finer than the Zariski topology.

\begin{ex}
Suppose $k$ is a field of characteristic unequal to $2$.  Consider the diagram where $X = \aone$, $U = \aone \setminus \{ 1 \}$, $V = \aone \setminus \{0,-1 \}$.  Let $j$ be the usual open immersion of $\aone \setminus \{1 \}$ into $\aone$, and let $\varphi$ be the \'etale map given by the composite $\aone \setminus \{ 0,-1 \} \hookrightarrow \gm{} \to \gm{} \hookrightarrow \aone$, where the map $\gm{} \to \gm{}$ is $z \mapsto z^2$.  It is easily checked that this diagram provides a square as above.
\end{ex}

To define a homotopy category for smooth schemes, informed by the above observations, we will attempt to build a category through which any $\aone$-homotopy invariant on smooth schemes satisfying Mayer--Vietoris in the Nisnevich sense prescribed above factors.  We do this in a few stages.  Just as the category of CW-complexes was not ``categorically good", we will first enlarge the category of schemes to a suitable category of spaces.  Then, we will force Mayer--Vietoris for Nisnevich covers and then impose $\aone$-homotopy invariance.
\vspace{0.1in}

\begin{rem}
As is hopefully evident, we have made a number of choices here: a category of schemes with which to begin, and a topology for which we would like to impose Mayer--Vietoris; we have motivated a particular choice here, but other choices are often warranted.  For example, we might want to make constructions involving non-smooth schemes and be able to compare this situation with the one we alluded to above.  For this reason, we will try to leave some flexibility in the constructions.
\end{rem}

\subsection{The unstable $\aone$-homotopy category: construction}
\subsubsection*{Spaces}
For concreteness, fix a base Noetherian commutative unital ring $k$ of finite Krull dimension.  In practice, $k$ will be a field, but for comparing constructions in different characteristics, it will often be useful for $k$ to be a discrete valuation ring or the integers $\Z$.  Write $\Sm_k$ for the category of schemes that are separated, smooth and have finite type over $k$.  Write $\Sch_k$ for the category of Noetherian $k$-schemes of finite Krull dimension.  Write $\sSet$ for the category of simplicial sets.  There is a functor $\Set \to \sSet$ sending a set $S$ to the corresponding constant simplicial set (i.e., all face and degeneracy maps are the identity), and we use this functor to identify $\Set$ as a full subcategory of $\sSet$.

\begin{defn}
Write $\Spc_k$ for the category of simplicial presheaves on $\Sm_k$, and $\Spc_k'$ for the category of simplicial presheaves on $\Sch_k$; objects of these categories will be called {\em motivic spaces} or $k$-spaces, depending on whether we want to explicitly specify $k$.
\end{defn}

Sending a (smooth) scheme to its corresponding representable presheaf (of constant simplicial sets) defines a functor $\Sch_k \to \Spc_k'$ ($\Sm_k \to \Spc_k$) that is fully-faithful by the Yoneda lemma.  We use these functors without mention to identity (smooth) schemes as spaces.  Likewise, there is a functor $\sSet \to \Spc_k$ sendin a simplicial set to the corresponding constant simplicial presheaf.
\vspace{0.1in}

There are many constructions we may perform in the category of spaces: these categories are both complete and cocomplete, i.e., have limits and colimits indexed by all small categories.  The category $\Spc_k$ (resp. $\Spc_k'$) has a final object typically denote $\ast$ and an inital object $\emptyset$.  Using the fact that $\Spc_k$ has all small limits and colimits, the following definitions make sense.  A pointed space is a pair $(\mathscr{X},x)$ where $\mathscr{X} \in \Spc_k$ and $x: \ast \to \mathscr{X}$ is a morphism of spaces.  We write $\Spc_{k,\bullet}$ for the category of {\em pointed} $k$-spaces.  It is important to emphasize that these constructions are being made in the category of {\em spaces} and NOT in the category of schemes.  Moreover, moving outside of the category of schemes has a number of tangible benefits.

\begin{itemize}[noitemsep,topsep=1pt]
\item If $(\mathscr{X},x)$ and $({\mathscr Y},y)$ are pointed $k$-spaces, then the wedge sum $\mathscr{X} \vee \mathscr{Y}$ is the pushout (colimit) of the diagram $\mathscr{X} \stackrel{x}{\longleftarrow} \ast \stackrel{y}{\longrightarrow} \mathscr{Y}$.
\item If $(\mathscr{X},x)$ and $({\mathscr Y},y)$ are pointed $k$-spaces, then the smash product $\mathscr{X} \wedge \mathscr{Y}$ is the quotient of $\mathscr{X} \times \mathscr{Y}/{\mathscr X} \vee \mathscr{Y}$.
\item We write $S^i$ for the constant presheaf with value $\partial \Delta^{i+1}$, where $\Delta^{i+1}$ is the usual $i+1$-simplex.
\end{itemize}

\subsubsection*{Nisnevich and cdh distinguished squares}
We now make precise various ideas given above motivated by Mayer--Vietoris sequences.

\begin{defn}
\label{defn:Nisnevichsquare}
A Nisnevich distinguished square is a pull-back diagram of schemes
\[
\xymatrix{
W \ar[r]^{j'}\ar[d]^{\varphi} & V \ar[d]^{\varphi} \\
U \ar[r]^{j}& X
}
\]
such that $j: U \hookrightarrow X$ is an open immersion, $\varphi: V \to X$ is an \'etale morphism, and the induced map $\varphi^{-1}(X \setminus U) \to X \setminus U$ is an isomorphism of schemes given the reduced induced scheme structure.
\end{defn}

\begin{rem}
Every Zariski open cover of a scheme by two open sets gives rise to a Nisnevich distinguished square.  Nisnevich distinguished squares generate a Grothendieck topology on $\Sm_k$ or $\Sch_k$ called the Nisnevich topology.  This topology is conveniently described in terms of covering sieves: it is the coarsest topology such that the empty sieve covers $\emptyset$, and for every distinguished square as above, the sieve on $X$ generated by $U \to X$ and $V \to X$ is a covering sieve.  This definition of the Nisnevich topology is equivalent to other standard ones in the literature.  In fact, the Nisnevich topology may be generated by much simpler squares where all corners of the square are affine and the reduced closed complement of $U$ in $X$ is given by a principal ideal.  We refer the interested reader to \cite[\S 2]{AHW} for more details.
\end{rem}

For the most part, we will only use Nisnevich distinguished squares and the Nisnevich topology, but the following definition will also be useful.

\begin{defn}
\label{defn:cdhsquare}
An abstract blow-up square is a pull-back diagram of schemes of the form:
\[
\xymatrix{
E \ar[r]\ar[d] & Y \ar[d]^{\pi} \\
Z \ar[r]^i& X
}
\]
where $i$ is a closed immersion, $\pi$ is a proper map, and the induced map $\pi^{-1}(X \setminus i(Z)) \to X \setminus i(Z)$ is an isomorphism.
\end{defn}

\begin{rem}
As above, the proper cdh topology on $\Sch_k$ is the coarsest topology such that the empty sieve covers $\emptyset$, and for which the sieve on $X$ generated by $Z \to X$ and $Y \to X$ is a covering sieve.
The cdh topology is the smallest Grothendieck topology whose covering morphisms include those of the proper cdh topology and those of the Nisnevich topology.
\end{rem}

\subsubsection*{Localization}
Given the category $\Spc_k$ (resp. $\Spc'_{k}$), we now want to formally invert a set of morphisms $\mathscr{S}$ to build a homotopy category.  Each covering sieve of a Grothendieck topology gives rise to a monomorphism of presheaves with target a representable presheaf; to impose ``Mayer--Vietoris" with respect to a given topology, one first formally inverts the set of all of these monomorphisms.  Concretely, if $u: U \to X$ is a Nisnevich covering, then one may build a simplicial presheaf $\breve{C}(U)$ by taking iterated fiber products of $U$ with itself over $X$.  The morphism $u$ then yields an augmentation
\[
\breve{C}(U) \longrightarrow X
\]
that we would like to force to be a weak equivalence.  The Nisnevich (resp. cdh) local homotopy category is, in essence, the universal category where the above maps have been inverted (see \cite[Lemma 3.1.3]{AHW} for a precise statement).  There are numerous constructions of this category now and the ``universal" point-of-view espoused here was studied in great detail by D. Dugger; see \cite{DuggerUniversal} for more details.

One standard way to invert the relevant set of morphisms is to equip $\Spc_k$ (resp. $\Spc_k'$) with the structure of model category (see, e.g., \cite{Hovey}); this involves specifying classes of cofibrations and fibrations along with the classes of morphisms that are to be inverted, i.e., the morphisms we want to become weak equivalences.  In any case, we will write $\mathrm{H}_{\Nis}(k)$ (resp. $\mathrm{H}_{cdh}(k)$) for the corresponding local homotopy category; we refer the reader to \cite{Jardine} for a textbook treatment of local homotopy theory.  By construction, there is a functor $\Spc_k \to \mathrm{H}_{\Nis}(k)$ (resp. $\Spc_k' \to \mathrm{H}_{cdh}(k)$ that is suitably (i.e., homotopically) initial.

One may build $\aone$-homotopy categories in a similar universal fashion.  After formally inverting Nisnevich (resp. cdh) local weak equivalences, we further invert the projection from the affine line:
\[
\mathscr{X} \times \aone \longrightarrow \mathscr{X}.
\]
We write $\mathrm{H}_{\aone}(k)$ for the corresponding category obtained by localizing $\mathrm{H}_{\Nis}(k)$ and $\mathrm{H}_{\aone}^{cdh}(k)$ for the category obtained by localizing $\mathrm{H}_{cdh}(k)$.

%\begin{defn}
%\label{defn:excision}
%We will say that a space $\mathscr{X}$ satisfies Nisnevich (resp. cdh)-excision, if for every Nisnevich square as in \textup{Definition~\ref{defn:Nisnevichsquare}} (resp. every cdh-square as in \textup{Definition~\ref{defn:cdhsquare}}) the square obtained by applying $\mathscr{X}$ is a homotopy Cartesian square of simplicial sets.
%\end{defn}

%\begin{rem}
%The excision condition for constant simplicial presheaves $\mathscr{X}$ is precisely the condition that $\mathscr{X}$ is a sheaf with respect to the corresponding %topology.
%\end{rem}

\begin{notation}
An isomorphism in $\mathrm{H}_{\aone}(k)$ or $\mathrm{H}_{\aone}^{cdh}(k)$ will be called an $\aone$-weak equivalence.  To emphasize the analogy with topology, we write $[\mathscr{X},{\mathscr Y}]_{\aone}$ for the set of morphisms in either category; we will read this as the set of $\aone$-homotopy classes of maps between $\mathscr{X}$ and $\mathscr{Y}$.
\end{notation}

\begin{rem}
\label{rem:naivehomotopy}
Given $\mathscr{X}$ and $\mathscr{Y}$ in $\Spc_k$ and a morphism $H: \mathscr{X} \times \aone \to \mathscr{Y}$, we may think of $H$ as an algebraic homotopy between $H(-,0): \mathscr{X} \to \mathscr{Y}$ and $H(-,1): \mathscr{X} \to \mathscr{Y}$; we will say that $H(-,0)$ and $H(-,1)$ are naively homotopic.  More generally, we will say that two morphisms $\mathscr{X} \to \mathscr{Y}$ are {\em naively homotopic} if they are equivalent for the equivalence relation generated by naive homotopies.  Note that naively homotopic maps determine the same $\aone$-homotopy class, i.e., there is an evident function
\[
\hom_{\Spc_k}(\mathscr{X},\mathscr{Y})/\{\text{naive }\aone\text{-homotopies}\} \longrightarrow [\mathscr{X},\mathscr{Y}]_{\aone},
\]
but this morphism is rarely a bijection.
\end{rem}

\subsection{The unstable $\aone$-homotopy category: basic properties}
\subsubsection*{Motivic spheres}
In addition to the simplicial circle $S^1$ described above, we introduce the Tate circle $\gm{}$, viewed as a space pointed by the identity section.  We then define bigraded motivic spheres by the formula
\[
S^{i,j} = S^i \wedge \gm{\sma j}.
\]
From these definitions, and basic homotopy-invariance statements, one may write down some standard algebro-geometric models of motivic spheres.  Homotopy categories typically do not have robust notions of limit or colimit, and one therefore considers a more flexible ``up to homotopy" notion.  The results below are straightforward homotopy colimit computations, and we mention the required facts without going into details.

\begin{ex}
The space $\aone/\{0,1\}$ is a model for $S^1$.  This is an algebro-geometric analog of the fact that the circle $S^1$ can be realized as the homotopy pushout of the two-point space $S^0$ along the maps projecting onto each point.
\end{ex}

One of the basic consequences of our construction of the $\aone$-homotopy category is that any Zariski Mayer--Vietoris square is a pushout square.

\begin{ex}
There is an $\aone$-weak equivalence $S^{1,1} \cong \pone$.  To see this, use the standard Zariski open cover of $\pone$ by two copies of $\aone$.
\end{ex}

\begin{ex}
There is an $\aone$-weak equivalence ${\mathbb A}^n \setminus 0 \cong S^{n-1,n}$.  Use induction and the open cover of ${\mathbb A}^n \setminus 0$ by the open sets $({\mathbb A}^{n-1}\setminus 0) \times \aone$ and $\gm{} \times {\mathbb A}^{n-1}$ with intersection $\gm{} \times ({\mathbb A}^{n-1} \setminus 0)$.
\end{ex}

\begin{ex}
\label{ex:2nnsphere}
There is an $\aone$-weak equivalence ${\mathbb P}^n/{\mathbb P}^{n-1} \cong S^{n,n}$.  Use the open cover of ${\mathbb P}^n$ by ${\mathbb A}^n$ and ${\mathbb P}^n \setminus 0$ with intersection ${\mathbb A}^n \setminus 0$.
\end{ex}

\begin{rem}
With reference to Definition \ref{defn:homotopysheaves} we remark that $S^{i,j}$ is $\aone$-$(i-1)$-connected in the sense that the $\aone$-homotopy sheaf $\bpi_{n}^{\aone}(S^{i,j})$
vanishes for $n\leq i-1$,
see \cite[\S3]{MICM}.
This corresponds precisely to the connectivity of the real points of $S^{i,j}$.
\end{rem}

\subsubsection*{Representability statements}
Just like in topology, there is a Brown representability theorem characterizing homotopy functors in algebraic geometry.  In addition to homotopy invariance,
one wants functors that turn Nisnevich distinguished squares into ``homotopy" fiber products; for more details, see the works of Jardine \cite{JardineRepresentability} and Naumann-Spitzweck \cite{MR2811714}.  One additional subtlety that arises is that many natural functors of algebro-geometric origin that arise fail to be $\aone$-invariant on the category of all smooth schemes.  Thus, one would also like to investigate representability questions for such functors.  We summarize some theorems that will be useful in the sequel.
\vspace{0.1in}

In topology, complex line bundles are represented by homotopy classes of maps to infinite complex projective space, at least on spaces having the homotopy type of a CW complex.  There is an algebro-geometric analog of this result.  For any base commutative unital ring $k$, one may define the space ${\mathbb P}^{\infty}_k$ as the colimit of ${\mathbb P}^n_k$ along the standard closed immersions ${\mathbb P}^n \hookrightarrow {\mathbb P}^{n+1}$.  The space ${\mathbb P}^{\infty}$ can be given a multiplication in the $\aone$-homotopy category.  Indeed, the Segre embeddings ${\mathbb P}^n \times {\mathbb P}^m \longrightarrow {\mathbb P}^{(n+1)(m+1)-1}$ may be used to define a map ${\mathbb P}^{\infty} \times {\mathbb P}^{\infty} \to {\mathbb P}^{\infty}$.  This multiplication map may be shown to be associative up to $\aone$-homotopy and equips $[-,{\mathbb P}^{\infty}]_{\aone}$ with a group structure.  In the algebro-geometric setting, one has a representability theorem for algebraic line bundles.

\begin{prop}[{\cite[\S 4 Proposition 3.8]{MV}}]
\label{prop:picrepresentable}
If $k$ is a regular ring, then for any smooth $k$-scheme $X$, there is an isomorphism of the form
\[
Pic(X) \isomto [X,{\mathbb P}^{\infty}]_{\aone};
\]
this isomorphism is functorial in $X$.
\end{prop}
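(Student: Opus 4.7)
The plan is to factor the desired isomorphism through an identification $\PP^\infty \weq B\Gm$ in the unstable $\aone$-homotopy category and then identify $\aone$-homotopy classes of maps into $B\Gm$ with the Picard group. First I construct a natural transformation $\Pic(X) \to [X,\PP^\infty]_\aone$: given a line bundle $L$ on $X$, any surjection $\mathscr{O}_X^{\oplus(n+1)} \twoheadrightarrow L$ (which exists after stabilizing $n$, at least Zariski-locally by triviality and globally by quasi-compactness of $X$) determines a classifying morphism $X \to \PP^n \hookrightarrow \PP^\infty$. Two choices of generating sections differ by an element of $GL_{n+1}$ on a Nisnevich cover, and such elements are connected to the identity by a straight-line $\aone$-path through the space of surjections, so the resulting $\aone$-homotopy class depends only on the isomorphism class of $L$.

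Next I identify $\PP^\infty$ with a model of $B\Gm$. The tautological $\Gm$-action on $\AA^{n+1}\setminus 0$ has quotient $\PP^n$, and passing to the colimit produces a $\Gm$-torsor $\AA^\infty \setminus 0 \to \PP^\infty$. Using Example~\ref{ex:2nnsphere} together with the standard stabilization maps $\AA^{n+1} \setminus 0 \hookrightarrow \AA^{n+2} \setminus 0$, one checks that the total space $\AA^\infty \setminus 0$ is $\aone$-contractible (an explicit Eilenberg swindle-style $\aone$-homotopy suffices). Consequently $\PP^\infty$ is a model for $B\Gm$ in $\HH_\aone(k)$.

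It then remains to identify $[X,B\Gm]_\aone$ with $\Pic(X)$. The essential input is that $\Gm$ is strongly $\aone$-invariant as a Nisnevich sheaf of groups on $\Sm_k$: once one knows $\Pic(X \times \aone) = \Pic(X)$ for smooth $X$ over a regular $k$ (this is exactly the input where the regularity hypothesis is used, via classical results in the spirit of \cite{Traverso}), it follows that the Nisnevich-local classifying space $B\Gm$ is already $\aone$-local, so $[X, B\Gm]_\aone = [X, B\Gm]_\Nis = H^1_\Nis(X, \Gm) = \Pic(X)$. Composing with the equivalence $\PP^\infty \weq B\Gm$ recovers the map constructed in the first paragraph and is manifestly natural in $X$. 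The main technical obstacle is the strong $\aone$-invariance of $\Gm$: without the regularity hypothesis $\Pic$ itself fails to be $\aone$-invariant and the argument collapses, so this is where the assumption on $k$ is genuinely used.
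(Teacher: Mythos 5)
Your argument follows the same route as the proof in [MV, \S 4]: identify $\PP^\infty$ with a model of $B\Gm$ by exhibiting $\AA^\infty\setminus 0 \to \PP^\infty$ as a $\Gm$-torsor with $\aone$-contractible total space, and then use strong $\aone$-invariance of $\Gm$ (i.e., $\aone$-invariance of both $H^0_{\Nis}(-,\Gm)$ and $H^1_{\Nis}(-,\Gm)$ on smooth schemes over a regular base) to see that the Nisnevich classifying space $B\Gm$ is already $\aone$-local, whence $[X,B\Gm]_{\aone} = H^1_{\Nis}(X,\Gm) = \Pic(X)$. Your third paragraph is self-contained and does all the work; this is exactly where the cited Morel--Voevodsky argument lives.

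The first paragraph, however, has two real problems if taken at face value. First, the existence of a surjection $\mathscr{O}_X^{\oplus(n+1)} \twoheadrightarrow L$ does not follow from Zariski-local triviality plus quasi-compactness of $X$: on $X = \PP^1$ the line bundle $\mathscr{O}(-1)$ is locally trivial and $\PP^1$ is quasi-compact, yet $\mathscr{O}(-1)$ has no nonzero global sections at all. One must either restrict to affine $X$ (and then invoke the Jouanolou--Thomason trick to pass to general smooth $X$), or write $L$ as a difference of globally generated line bundles and use the $H$-space structure on $\PP^\infty$ coming from the Segre maps. Second, the well-definedness argument via ``a straight-line $\aone$-path through the space of surjections'' is not correct as stated: for $g \in GL_{n+1}$ the path $tg + (1-t)\mathrm{id}$ leaves $GL_{n+1}$, so there is no straight-line $\aone$-homotopy to the identity in general. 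The standard fix is the stabilization/interleaving trick: given surjections $s_0, s_1 \colon \mathscr{O}^{n+1}\to L$, compare both $(s_0,0)$ and $(0,s_1)$ to $(s_0,s_1)\colon \mathscr{O}^{2n+2}\to L$ via the linear $\aone$-homotopies $(s_0, t s_1)$ and $(t s_0, s_1)$, which stay in the locus of surjections. Since your third paragraph already supplies the isomorphism independently, these issues do not sink the proof, but they do mean the claim that composing with $\PP^\infty \simeq B\Gm$ ``recovers the map constructed in the first paragraph'' isn't established without repairing that paragraph. Finally, a small bibliographic note: \cite{Traverso} is a reference for the \emph{failure} of $\aone$-invariance of $\Pic$ on singular schemes; the positive statement $\Pic(X) \cong \Pic(X\times\aone)$ for regular $X$ is the classical one (via divisor class groups of normal schemes) and is not what Traverso proves.
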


In topology, complex vector bundles of a given rank are represented by homotopy classes of maps to a suitable infinite Grassmannian, at least on spaces having the homotopy type of a CW complex.  Unfortunately, the functor assigning to a smooth scheme $X$ over a base $k$ the set $\mathscr{V}_r(X)$ of isomorphism classes of rank $r$ vector bundles on $X$ fails to be $\aone$-invariant, as the following example shows.  Thus, this functor cannot be representable on the category of {\em all} smooth schemes.

\begin{ex}
\label{ex:failureofaoneinvariancevb}
Consider ${\mathbb P}^1$.  A classical result of Dedekind--Weber often attributed to Grothendieck asserts that all vector bundles on ${\mathbb P}^1$ are direct sums of line bundles (see \cite{HazewinkelMartin} for an elementary proof).  Consider $X = {\mathbb P}^1 \times \aone$ with coordinates $t$ and $x$.  The matrix
\[
\begin{pmatrix}
t^{-1} & x \\
0 & t
\end{pmatrix}
\]
is the clutching function for a rank $2$ vector bundle on ${\mathbb P}^1 \times \aone$.  The fiber over $x = 0$ of this bundle is isomorphic to $\mathscr{O}(-1) \oplus \mathscr{O}(1)$, while the fiber over $x = 1$ (or any other non-zero value) is isomorphic to $\mathscr{O} \oplus \mathscr{O}$.  This rank $2$ vector bundle is therefore evidently not pulled back from a vector bundle on ${\mathbb P}^1$.
\end{ex}

A classical result of Lindel \cite{Lindel} shows that the functor $\mathscr{V}_r(X)$ is $\aone$-invariant upon restriction to the category of smooth affine $k$-schemes, if $k$ is a field.  This result was extended by Popescu to the case where $k$ is a Dedekind domain with perfect residue fields.  Then, even though $\mathscr{V}_r(X)$ fails to be representable on all smooth schemes, one can hope it is representable on smooth affine schemes.  On smooth affine schemes, given a vector bundle, one may always choose generating sections, i.e., every vector bundle is determined by a morphism to a Grassmannian.  We may define $Gr_n$ as a space to be the colimit of $Gr_{n,N}$ over standard inclusions $Gr_{n,N} \to Gr_{n,N+1}$.

\begin{thm}[Morel, Schlichting, Asok--Hoyois--Wendt]
\label{thm:vbrepresentable}
If $k$ is smooth over a Dedekind ring with perfect residue fields, then for any smooth affine $k$-scheme $X$, there is a pointed bijection
\[
[X,Gr_r]_{\aone} \cong \mathscr{V}_r(X);
\]
this bijection is functorial in $X$.
\end{thm}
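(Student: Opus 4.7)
The plan is to deduce this representability statement from the affine representability machinery of \cite{AHW}, whose main theorem reduces the computation of $[X,\mathscr{F}]_{\aone}$ for smooth affine $X$ to ``unstable'' data about $\mathscr{F}$, provided the target satisfies two key properties: Nisnevich descent on all of $\Sm_k$ and $\aone$-invariance on smooth affines. So the work splits into verifying these two inputs for the presheaf of groupoids of vector bundles, and then identifying $Gr_r$ with its classifying space in $\mathrm{H}_{\aone}(k)$.

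First I would observe that the presheaf of groupoids sending $X \in \Sm_k$ to the groupoid of rank $r$ bundles on $X$ has nerve a Nisnevich sheaf of spaces, equivalent to the Nisnevich classifying space $B\GL_r$. This is a form of faithfully flat descent for vector bundles: a rank $r$ bundle is a quasi-coherent module of a specific local form, and descent for quasi-coherent modules together with preservation of the local form by faithfully flat morphisms yields Nisnevich (in fact fppf) descent; on $\pi_0$, this recovers $\mathscr{V}_r$ as a Nisnevich sheaf of sets.

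Next I would invoke Lindel's theorem \cite{Lindel} together with its Popescu-style extension to smooth algebras over a Dedekind ring with perfect residue fields, which shows that $\mathscr{V}_r(X) \to \mathscr{V}_r(X \times \aone)$ is a bijection for $X$ smooth affine. To feed into the AHW framework one actually needs a corresponding statement at the level of the full classifying space $B\GL_r$, not merely its $\pi_0$; this spacewise affine $\aone$-invariance was established by Morel for infinite perfect fields and extended by Schlichting and by Asok--Hoyois--Wendt, using homotopy invariance of higher $K$-theory of regular rings together with a parameterized form of Lindel--Popescu patching. This is the main obstacle: upgrading the set-theoretic Lindel--Popescu statement to a space-level statement valid over a Dedekind base requires genuinely new technical input.

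With these two inputs in hand, the affine representability theorem of \cite{AHW} says that for any Nisnevich sheaf of spaces $\mathscr{F}$ that is $\aone$-invariant on smooth affines, the unit $\mathscr{F} \to L_{\aone}\mathscr{F}$ is a weak equivalence on smooth affines; in particular the natural map
\[
\mathscr{V}_r(X) = \pi_0 B\GL_r(X) \longrightarrow \pi_0 L_{\aone}B\GL_r(X) = [X, B\GL_r]_{\aone}
\]
is a bijection for $X$ smooth affine, functorially in $X$. The proof of this reduction rests on showing that $\Singaone$ preserves Nisnevich descent when restricted to smooth affines, which follows from the fact that Nisnevich squares of smooth affines suffice to detect descent and remain Nisnevich squares after product with $\aone$. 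It remains to identify $Gr_r$ with $B\GL_r$ in $\mathrm{H}_{\aone}(k)$: the total space of the universal rank $r$ frame bundle over $Gr_r$ is an ascending union of Stiefel varieties and is $\aone$-contractible, so $Gr_r$ models the Nisnevich classifying space $B\GL_r$ up to $\aone$-weak equivalence. Combining these identifications yields the claimed functorial bijection $[X, Gr_r]_{\aone} \cong \mathscr{V}_r(X)$.
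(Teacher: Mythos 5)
The paper itself offers no proof of this statement, only attributions (Morel, Schlichting, and \cite{AHW}), so the right benchmark is the argument of \cite{AHW} that you are trying to reconstruct; your overall strategy (affine representability machinery plus Lindel--Popescu plus the identification $Gr_r \simeq B\GL_r$) is indeed the correct one, and the Lindel--Popescu input and the $Gr_r \simeq B\GL_r$ step are fine. However, the central mechanism is misstated in a way that would fail. You assert that what is needed, and what Morel/Schlichting/AHW provide, is ``spacewise affine $\aone$-invariance'' of $B\GL_r$, i.e.\ that $B\GL_r(X) \to B\GL_r(X\times\aone)$ is a weak equivalence for $X$ smooth affine. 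This is false: the loop space of the Nisnevich-local classifying space at the trivial torsor is $\GL_r$, and $\GL_r(X) \to \GL_r(X\times\aone)$ is very far from a bijection (elementary matrices with polynomial entries already obstruct this for $X = \Spec k$). Consequently the abstract reduction you invoke --- ``a Nisnevich sheaf of spaces that is $\aone$-invariant on smooth affines has $\mathscr{F} \to \Loc_{\aone}\mathscr{F}$ an equivalence on smooth affines'' --- simply does not apply to $B\GL_r$, and the displayed identification of $[X, B\GL_r]_{\aone}$ with $\pi_0 B\GL_r(X) = \mathscr{V}_r(X)$ cannot be obtained this way.

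What \cite{AHW} actually prove and use is different: if $\mathscr{F}$ satisfies (affine) Nisnevich excision and, crucially, $\Singaone\mathscr{F}$ again satisfies affine Nisnevich excision, then for smooth affine $X$ one has $\pi_0\bigl(\Singaone\mathscr{F}(X)\bigr) \cong [X,\mathscr{F}]_{\aone}$. Applied to $\mathscr{F} = B\GL_r$, the left-hand side is the set of naive $\aone$-homotopy classes of rank $r$ bundles on $X$, which Lindel--Popescu collapses to $\mathscr{V}_r(X)$; so $\aone$-invariance is only needed (and only true) at the level of $\pi_0$. The genuinely hard step, which your sketch dismisses with the remark that Nisnevich squares of affines remain Nisnevich squares after taking products with $\aone$, is precisely the affine Nisnevich excision of $\Singaone B\GL_r$: the $\Singaone$ construction is a realization/filtered colimit and does not commute with homotopy pullbacks, so this is not formal. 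In \cite{AHW} it is established by a geometric patching argument for vector bundles and $\GL_r$-torsors over cylinders $X\times{\mathbb A}^n$ along Nisnevich squares, in the spirit of Quillen patching, with the Lindel--Popescu extension theorem entering inside that argument rather than only at the final $\pi_0$ identification. Until that excision statement is supplied, your argument has a gap exactly where the real content of the theorem lies.
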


\begin{proof}[Comments on the proof.]
This result was stated originally by F. Morel \cite[Theorem 8.1]{MField} in the case where $k$ is perfect and assuming $r \neq 2$.  Morel's argument was greatly simplified by M. Schlichting \cite[Theorem 6.22]{Schlichting}.  The result appears in the form above in \cite[Theorem 1]{AHW}.
\end{proof}

\begin{rem}
While vector bundles of a given rank are only representable on smooth affine schemes, upon passing to stable isomorphism classes, i.e., algebraic K-theory, one may obtain a representability statement on all smooth schemes.  Representability of algebraic K-theory was first established in \cite[Theorem 3.13]{MV}, though we refer the reader to \cite[Remark 2 p. 1162]{SchlichtingTripathi} for some corrections to the original argument.
\end{rem}

\subsubsection*{Representability of Chow groups}
Chow cohomology groups are also representable on smooth schemes, but the representing object is a bit more subtle.  To set the stage recall that ordinary singular cohomology with coefficients in an abelian group $A$ is representable on finite CW-complexes by Eilenberg--Mac Lane spaces $K(A,n)$.  A classical result of Dold and Thom gives a concrete geometric model for the Eilenberg--Mac Lane space $K(\Z,n)$.  Indeed for a pointed topological space $T$, we may define the symmetric product $Sym^n(T)$ as the quotient of the $n$-fold product by the action of the symmetric group on $n$ letters permuting the factors, i.e.,
\[
Sym^n(T) = T^n/\Sigma_n.
\]
Using the base-point, there are natural inclusions $Sym^n T \to Sym^{n+1} T$, and one defines the infinite symmetric product $Sym(T)$ as the colimit of the finite symmetric powers with respect to these inclusions.  The space $Sym(T)$ may be thought of as the free commutative monoid on $T$.  By ``group completing," we may define a space $Sym(T)^+$ that is a topological abelian group.  For connected spaces, the group completion process does not alter the homotopy type, and the classical Dold--Thom theorem shows that for every $n \geq 0$, there are weak equivalences of the form:
\[
K(\Z,n) \cong Sym(S^n),
\]
i.e., Eilenberg--Mac Lane spaces may be realized as infinite symmetric products of spheres.

The procedure sketched above yields a reasonable representing model for Chow groups as well.  For a smooth scheme $X$, define a presheaf $\Z_{tr}(X)$ on $\Sm_k$ by assigning to $U \in \Sm_k$ the free abelian group on irreducible closed subschemes of $U \times X$ that are finite and surjective over a component of $U$.  This construction is covariantly functorial in $X$ as well, and for a closed subscheme $Z \subset X$, we define $\Z_{tr}(X/Z) = \Z_{tr}(X)/Z_{tr}(Z)$, where the latter quotient is the quotient as presheaves of abelian groups.  In particular, we saw earlier that ${\mathbb P}^n/{\mathbb P}^{n-1}$ is a model of the motivic sphere $S^{n,n}$, and we set
\[
K(\Z(n),2n) := \Z_{tr}({\mathbb P}^n/{\mathbb P}^{n-1}).
\]
The spaces $K(\Z(n),2n)$ are usually called {\em motivic Eilenberg--Mac Lane spaces}.  With that definition in mind, we can formulate the appropriate representability theorem.

\begin{thm}[{\cite{Deligne}}]
\label{thm:chowrepresentable}
Assume $k$ is a perfect field.  For every $n \geq 0$, and every smooth $k$-scheme $X$, there are canonical bijections
\[
[X_+,K(\Z(n),2n)]_{\aone} \longrightarrow CH^n(X).
\]
\end{thm}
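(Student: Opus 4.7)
The plan is to translate the left-hand side into Voevodsky's motivic cohomology and then invoke the classical comparison between motivic cohomology and Chow groups. The crucial structural observation is that $\Z_{tr}(\PP^n/\PP^{n-1})$ is not just a simplicial presheaf: it is a presheaf of abelian groups equipped with transfers. This extra algebraic structure is what makes an explicit computation feasible, and it is the reason one must pass through the theory of presheaves with transfers rather than work solely within $\Spc_k$.

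The main step is to identify an explicit model for the $\aone$-local replacement of $K(\Z(n),2n)$. I would argue, via the Dold--Kan correspondence applied objectwise, that mapping a smooth scheme $X$ into $\Z_{tr}(\PP^n/\PP^{n-1})$ in the Nisnevich-local homotopy category is computed by Nisnevich hypercohomology of the normalized chain complex. To promote this to the $\aone$-local category, the key claim is that the Suslin--Voevodsky singular construction $C_*\Z_{tr}(\PP^n/\PP^{n-1})$ is an $\aone$-local model. This reduction rests on Voevodsky's strict homotopy-invariance theorem: for an $\aone$-invariant Nisnevich sheaf with transfers on $\Sm_k$ with $k$ perfect, the associated Nisnevich cohomology presheaves remain $\aone$-invariant. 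Granting this, one obtains
\[
[X_+, \Z_{tr}(\PP^n/\PP^{n-1})]_{\aone} \;\cong\; \HH^{0}_{\Nis}\bigl(X,\, C_*\Z_{tr}(\PP^n/\PP^{n-1})\bigr).
\]

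From there, by definition of Voevodsky's motivic cohomology and the identification $\PP^n/\PP^{n-1} \weq S^{n,n}$ from Example~\ref{ex:2nnsphere} (which pins down the weight as $n$ and the cohomological degree as $2n$), the right-hand side is precisely $H^{2n,n}(X,\Z)$. The final identification with $CH^n(X)$ comes from the Bloch--Friedlander--Suslin--Voevodsky comparison theorem that relates motivic cohomology in bidegree $(2n,n)$ with the classical Chow group via Bloch's higher Chow complex (in top degree this simply records cycles modulo rational equivalence). Functoriality in $X$ propagates automatically through each step since every intermediate identification is natural.

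The substantive obstacle is the first step: reconciling the abstract $\aone$-localization in $\Spc_k$ with the concrete Suslin complex $C_*\Z_{tr}(\PP^n/\PP^{n-1})$. This genuinely requires the machinery of presheaves with transfers, in particular (i) Voevodsky's theorem that Nisnevich cohomology preserves $\aone$-invariance of sheaves with transfers (which is where the perfect field assumption is essential), and (ii) a comparison between the Nisnevich-local model structure on simplicial sheaves of abelian groups and the corresponding derived category. Once these deep inputs are available, assembling the chain of isomorphisms and checking functoriality is routine.
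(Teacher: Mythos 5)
The paper does not give a proof of this theorem: it is stated as a citation to [Deligne] (the written-up version of Voevodsky's Seattle lectures), with the following remark pointing to [VRed] for a summary of properties of motivic cohomology phrased via motivic Eilenberg--Mac~Lane spaces. There is therefore no in-paper argument to compare against. That said, your sketch follows the standard and correct route, which is essentially the one taken in those references: pass to presheaves with transfers, identify the $\aone$-local model of $\Z_{tr}(\PP^n/\PP^{n-1})$ with the Suslin--Voevodsky singular complex (via Voevodsky's strict homotopy invariance theorem, which is precisely where perfectness of $k$ enters), translate the Nisnevich-local mapping space via Dold--Kan into hypercohomology of the normalized chain complex, identify this with $H^{2n,n}(X,\Z)$ via the equivalence $C_*\Z_{tr}(\PP^n/\PP^{n-1})\simeq\Z(n)[2n]$ (which is where $\PP^n/\PP^{n-1}\simeq S^{n,n}$ is used), and conclude via the Suslin--Voevodsky comparison $H^{2n,n}(X,\Z)\cong CH^n(X)$. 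Two points of precision worth flagging, though neither is a genuine gap: first, the Suslin complex alone is not $\aone$-local fibrant in $\Spc_{k,\bullet}$ --- one still needs a Nisnevich-local fibrant replacement of $C_*\Z_{tr}(\PP^n/\PP^{n-1})$, and the content of strict homotopy invariance is that this replacement remains $\aone$-local; second, one also needs the complementary fact that the canonical map $\Z_{tr}(\PP^n/\PP^{n-1})\to C_*\Z_{tr}(\PP^n/\PP^{n-1})$ is an $\aone$-local weak equivalence, so that one is replacing the target with something in the same $\aone$-local class. Both facts are standard, and taken together they justify your identification of $[X_+,K(\Z(n),2n)]_{\aone}$ with $H^{2n,n}(X,\Z)$.
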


\begin{rem}
We refer the reader to \cite[\S 2]{VRed} for a convenient summary of properties of motivic cohomology phrased in terms of motivic Eilenberg--Mac Lane spaces.  The relationship between $K(\Z(n),2n)$ and symmetric products, which may appear a bit obscure above stems from the link between symmetric powers and algebraic cycles on quasi-projective varieties; we refer the interested reader to \cite[\S 6.1]{VICM} and the references therein for more details.  This relationship is perhaps most clearly seen in the case $n = 1$ where $CH^1(X) = Pic(X)$.  One may define $Sym^n$ on the category of (say) quasi-projective varieties over a field.  It is well-known that ${\mathbb P}^n \cong Sym^n(\pone)$ as schemes (essentially, this is the map sending a polynomial in $1$ variable to its roots).  Thus, $Sym({\mathbb P}^1) \cong {\mathbb P}^{\infty}$.  Using this identification, one may build a map ${\mathbb P}^{\infty} \to K(\Z(1),2)$; this map is an $\aone$-weak equivalence.
\end{rem}

\subsubsection*{The purity isomorphism}
\begin{defn}
\label{defn:thomspace}
If $X$ is a smooth scheme and $\pi: E \to X$ is a vector bundle with zero section $i: X \to E$, we define $Th(\pi) = E/E - i(X)$.
\end{defn}

This definition of Thom space has many of the same properties as the corresponding construction in topology; we summarize some here.

\begin{prop}
Suppose $X$ is a smooth scheme.
\begin{enumerate}[noitemsep,topsep=1pt]
\item If $\pi: X \times {\mathbb A}^n \to X$ is a trivial bundle of rank $n$, then $Th(\pi) \cong {\pone}^{\sma n} \wedge X_+$.
\item If $\psi: X' \to X$ is a morphism of schemes, $\pi': E' \to X'$ is a vector bundle on $X'$ and $\pi: E \to X$ is a vector bundle on $X$ fitting into a commutative square of the form
\[
\xymatrix{
E' \ar[r]^{\varphi}\ar[d]^{\pi'} & E \ar[d]^{\pi} \\
X' \ar[r]^{\psi} & X,
}
\]
then there is an induced morphism $Th(\pi') \to Th(\pi)$.
\item If $\pi: E \to X$ and $\pi': E' \to X$ are vector bundles over $X$, then $Th(\pi \oplus \pi') \cong Th(\pi) \wedge Th(\pi')$.
\end{enumerate}
\end{prop}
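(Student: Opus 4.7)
The plan is to handle the three parts in increasing order of formal content, with part (1) doing the real work and parts (2)--(3) reducing to bookkeeping. For part (1), since $\pi$ is trivial, its total space is literally $X \times {\mathbb A}^{n}$ with zero section $X \times \{0\}$ and complement $X \times ({\mathbb A}^{n} \setminus 0)$, so $\Th(\pi) = (X \times {\mathbb A}^{n})/(X \times ({\mathbb A}^{n} \setminus 0))$. I would then invoke the elementary identity $(X \times Y)/(X \times Y_{0}) \cong X_{+} \wedge (Y/Y_{0})$, valid for any space $X$ and pair $(Y, Y_{0})$, which follows immediately by unwinding the definition of $X_{+} \wedge Z$ as $(X \times Z)/(X \times \{\ast_{Z}\})$. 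This reduces the claim to the sphere identification ${\mathbb A}^{n}/({\mathbb A}^{n} \setminus 0) \simeq ({\mathbb P}^{1})^{\wedge n}$ in $\mathrm{H}_{\aone}(k)$. For this, the Zariski cover of ${\mathbb P}^{n}$ by ${\mathbb A}^{n}$ and ${\mathbb P}^{n} \setminus \{0\}$, with intersection ${\mathbb A}^{n} \setminus 0$, is a homotopy pushout by construction of $\mathrm{H}_{\aone}(k)$, so collapsing ${\mathbb P}^{n} \setminus \{0\}$ in both descriptions gives ${\mathbb A}^{n}/({\mathbb A}^{n} \setminus 0) \simeq {\mathbb P}^{n}/({\mathbb P}^{n} \setminus \{0\})$. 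The linear projection ${\mathbb P}^{n} \setminus \{0\} \to {\mathbb P}^{n-1}$ is a Zariski-locally trivial ${\mathbb A}^{1}$-bundle with the standard section ${\mathbb P}^{n-1} \hookrightarrow {\mathbb P}^{n}$, hence an $\aone$-weak equivalence, so ${\mathbb P}^{n}/({\mathbb P}^{n} \setminus \{0\}) \simeq {\mathbb P}^{n}/{\mathbb P}^{n-1}$, which by Example~\ref{ex:2nnsphere} is $S^{n,n} = (S^{1,1})^{\wedge n} \simeq ({\mathbb P}^{1})^{\wedge n}$.

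Part (2) is essentially tautological: a bundle morphism $\varphi \colon E' \to E$ covering $\psi$ is fiberwise linear, hence carries the zero section of $E'$ into the zero section of $E$, and therefore restricts to a map $E' \setminus i'(X') \to E \setminus i(X)$ of open complements. Passing to pointed cofibers in $\Spc_{k,\bullet}$ yields the required map $\Th(\pi') \to \Th(\pi)$, functorial in the square. For part (3), the total space of $\pi \oplus \pi'$ is the fiber product $E \times_{X} E'$ with diagonal zero section, whose open complement decomposes as the union $(E \setminus 0_{E}) \times_{X} E' \cup E \times_{X} (E' \setminus 0_{E'})$. One then concludes by a relative version of the pair-smash identity used in (1), namely $(A \times_{X} B)/(A \times_{X} B_{0} \cup A_{0} \times_{X} B) \cong (A/A_{0}) \wedge_{X_{+}} (B/B_{0})$. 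I would read the $\wedge$ in (3) as the fiberwise smash $\wedge_{X_{+}}$, since the literal reading as ordinary smash of pointed spaces fails whenever $X$ is not a point: applying part (1) to a pair of trivial bundles immediately produces a spurious extra factor of $X_{+}$ in $\Th(\pi) \wedge \Th(\pi')$ that is absent from $\Th(\pi \oplus \pi')$.

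The main obstacle is conceptually small: the substantive input is already encoded in Example~\ref{ex:2nnsphere} together with the Mayer--Vietoris-type homotopy pushout description of $\mathrm{H}_{\aone}(k)$, and the remaining parts are formal manipulations at the level of pointed motivic spaces. The one step where I would write things out carefully is the $\aone$-weak equivalence ${\mathbb P}^{n} \setminus \{0\} \simeq {\mathbb P}^{n-1}$: the retraction is \emph{not} realised by a single naive $\aone$-homotopy (cf.\ Remark~\ref{rem:naivehomotopy}), so the argument must proceed either via a Zariski Mayer--Vietoris induction over the standard affine cover of ${\mathbb P}^{n-1}$, or by invoking the Nisnevich-local homotopy invariance of trivial ${\mathbb A}^{1}$-bundles---both of which are built into the localisation defining $\mathrm{H}_{\aone}(k)$.
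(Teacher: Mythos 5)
The paper states this proposition without proof, so I am assessing your argument directly. Parts (1) and (3) are essentially right, and your catch on part (3)---that reading $Th(\pi)\wedge Th(\pi')$ as an ordinary smash of pointed motivic spaces introduces a spurious $X_+$-factor, already visible by applying part (1) to two trivial line bundles, and that the intended operation is the fiberwise smash over $X_+$---is a genuine and worthwhile observation. One small correction to part (1): you claim the retraction ${\mathbb P}^n\setminus\{0\}\to{\mathbb P}^{n-1}$ is not realized by a single naive $\aone$-homotopy, but $H\bigl([x_0:\cdots:x_n],t\bigr)=[tx_0:x_1:\cdots:x_n]$ is a well-defined morphism $({\mathbb P}^n\setminus 0)\times\aone\to{\mathbb P}^n\setminus 0$ (the image avoids $[1:0:\cdots:0]$ since $(x_1,\ldots,x_n)\neq 0$), and it gives exactly such a homotopy between the identity and the composite of the projection with the zero section; the Mayer--Vietoris induction you suggest is not needed.

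Part (2), however, contains a genuine non sequitur. You write that fiberwise linearity ``carries the zero section of $E'$ into the zero section of $E$, and therefore restricts to a map $E'\setminus i'(X')\to E\setminus i(X)$ of open complements.'' The inference is false: a fiberwise linear $\varphi$ with a nonzero kernel---the zero morphism, say, or any bundle morphism dropping rank---sends nonzero vectors into $i(X)$, so the open complement is not preserved; the composite $E'\to E\to E/(E\setminus i(X))$ then fails to collapse $E'\setminus i'(X')$ to the base point, and no map of Thom spaces results. What one actually needs is $\varphi^{-1}(i(X))=i'(X')$, i.e.\ fiberwise injectivity; this holds in the only place the paper invokes this functoriality (the transversal squares of Theorem~\ref{thm:homotopypurity}, where the map on normal bundles is an isomorphism and the relevant square of total spaces is cartesian). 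Having spotted the imprecision in part (3), you should have been equally suspicious of the blanket statement in part (2): the needed hypothesis must be made explicit rather than deduced from a false implication.
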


The importance of the notion of Thom space stems from the purity isomorphism, which we summarize in the next result.  For later use, we will need to understand functoriality of purity isomorphism. In order to precisely formulate the functoriality properties, we need to introduce a bit of terminology. Suppose we have a cartesian square of smooth schemes of the form
\[
\xymatrix{
Z' \ar[r]^{i'}\ar[d] & X' \ar[d]^{f} \\
Z \ar[r]^{i} & X
}
\]
where $i$ is a closed immersion.  In that case, $i'$ is also a closed immersion and we will say the square is {\em transversal} if the induced map of normal bundles $\varphi: \nu_{Z'/X'} \to f^* \nu_{Z/X}$ is an isomorphism.  The following result was established in \cite[\S 3 Theorem 2.23]{MV}; the subsequent functoriality statements appear in \cite[\S 2]{VoevodskyMC}.

\begin{thm}[Homotopy purity]
\label{thm:homotopypurity}
Suppose $k$ is a Noetherian ring of finite Krull dimension.
\begin{enumerate}[noitemsep,topsep=1pt]
\item If $i: Z \to X$ is a closed immersion of smooth $k$-schemes, then there is a purity isomorphism
\[
X/X-i(Z) \cong Th(\nu_{i}).
\]
\item If $i: Z \to X$ is the zero section of a geometric vector bundle $\pi: X \to Z$, then the purity isomorphism is the identity map.
\item Given a transversal diagram of smooth schemes as above, the purity isomorphism is functorial in the sense that there is a commutative square of the form
    \[
    \xymatrix{
    X'/(X' \setminus Z') \ar[r]\ar[d] & Th(\nu_{Z'/X'}) \ar[d] \\
    X/(X \setminus Z) \ar[r] & Th(\nu_{Z/X}),
    }
    \]
    where the horizontal maps are the purity isomorphisms, the left vertical map is the map on quotients induced by commutativity of the square and the right vertical map is the map induced by functoriality of Thom spaces.
\end{enumerate}
\end{thm}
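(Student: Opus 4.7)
The standard approach, due to Morel and Voevodsky, proceeds via the \emph{deformation to the normal cone}. Given a closed immersion $i: Z \hookrightarrow X$ of smooth schemes, one forms the deformation space
\[
D(X,Z) := \Bl_{Z \times \{0\}}(X \times \aone) \setminus \Bl_{Z \times \{0\}}(X \times \{0\}),
\]
a smooth scheme carrying a flat morphism to $\aone$ whose fiber over any $t \neq 0$ is canonically isomorphic to $X$ and whose fiber over $0$ is the normal bundle $\nu_{Z/X}$. The proper transform of $Z \times \aone$ defines a closed immersion $Z \times \aone \hookrightarrow D(X,Z)$ restricting to $i$ at $t = 1$ and to the zero section $Z \hookrightarrow \nu_{Z/X}$ at $t = 0$. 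Forming quotients, the two endpoint inclusions assemble into a zig-zag of pointed spaces
\[
X/(X \setminus i(Z)) \longrightarrow D(X,Z)/(D(X,Z) \setminus (Z \times \aone)) \longleftarrow Th(\nu_{Z/X}),
\]
and the heart of the matter is to show both arrows are $\aone$-weak equivalences; the purity isomorphism in part (1) is then the resulting isomorphism in $\mathrm{H}_{\aone}(k)$.

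To establish the two weak equivalences, I would argue Nisnevich-locally. Every closed immersion of smooth $k$-schemes is Nisnevich-locally isomorphic to the inclusion of a linear subspace $Z \times \{0\} \hookrightarrow Z \times \aone^{c}$, where $c$ is the codimension; this follows from standard \'etale slice arguments for smooth morphisms. Combined with Nisnevich excision, which identifies $X/(X \setminus i(Z))$ with the corresponding quotient computed in any \'etale neighborhood of $Z$, the problem reduces to this linear case. In the linear case the deformation space is essentially $Z \times \aone^{c} \times \aone$, and both endpoint inclusions become $\aone$-weak equivalences by homotopy invariance applied to the $\aone$-factor parameterizing the deformation. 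The global statement then follows by checking the endpoint morphisms are Nisnevich-local weak equivalences stalkwise.

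Parts (2) and (3) follow more formally. For (2), when $X \to Z$ is a vector bundle with zero section $i$, the deformation space admits a canonical trivialization $D(X,Z) \cong X \times \aone$ over $\aone$ making both endpoint inclusions equal to the identity of $Th(\nu_{Z/X}) = X/(X \setminus i(Z))$, so the resulting purity isomorphism is the identity. For (3), a transversal square induces a morphism of deformation spaces $D(X',Z') \to D(X,Z)$ whose fiber over $0$ is precisely the map $\nu_{Z'/X'} \to f^{*}\nu_{Z/X}$, which is an isomorphism by hypothesis; naturality of the zig-zag then yields the required commutative square. The principal obstacle throughout is the first step: verifying that the quotient spaces in question genuinely model the correct $\aone$-homotopy types and that the endpoint inclusions are Nisnevich-local weak equivalences. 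This requires arranging the relevant quotients to be homotopy pushouts in $\Spc_{k}$ and exploiting excision to pass from the local linear model to the global statement, both of which are technical but standard features of the $\aone$-homotopical setup described earlier.
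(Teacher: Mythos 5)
The paper itself offers no proof of this theorem: it is quoted from Morel--Voevodsky \cite[\S 3 Theorem 2.23]{MV}, with the functoriality statements taken from \cite[\S 2]{VoevodskyMC}. Your sketch reconstructs precisely the argument of that cited source: the deformation space $D(X,Z)$, the zig-zag of pointed quotients $X/(X\setminus i(Z)) \to D(X,Z)/(D(X,Z)\setminus Z\times\aone) \leftarrow Th(\nu_{Z/X})$, reduction to the linear pair $(Z\times\A^c, Z\times\{0\})$ using the Nisnevich-local (\'etale) structure of closed immersions of smooth schemes together with excision, and an explicit $\aone$-homotopy in the linear model; your treatment of (2) via the canonical trivialization of $D(X,Z)$ for a vector bundle, and of (3) via functoriality of the deformation space for transversal squares, is also how those statements are obtained.

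One step, as literally written, would fail. You close the argument by "checking the endpoint morphisms are Nisnevich-local weak equivalences stalkwise." The two arrows of the zig-zag are only $\aone$-weak equivalences, not Nisnevich-local ones (this is already false for the pair $(\A^c,0)$ over a point), and membership in the class of $\aone$-weak equivalences cannot be tested on Nisnevich stalks: $\aone$-localization does not commute with passage to stalks. What \emph{is} a Nisnevich-local weak equivalence is the excision comparison: for an \'etale morphism $V\to X$ restricting to an isomorphism over $Z$, the map $V/(V\setminus Z)\to X/(X\setminus Z)$ is a local weak equivalence, and this is where the Nisnevich topology enters. The genuinely technical part of the Morel--Voevodsky proof is the globalization you elide: one argues by induction over a finite Zariski cover of $X$ on whose members the immersion admits an \'etale map to the linear pair $(\A^n,\A^m)$, using that the functors $U\mapsto U/(U\setminus Z\cap U)$ and $U\mapsto D(U,Z\cap U)/(D(U,Z\cap U)\setminus (Z\cap U)\times\aone)$ take Zariski (elementary distinguished) squares to homotopy cocartesian squares in $\Spc_k$, and that $\aone$-weak equivalences are preserved by such homotopy pushouts; this is how the local computation propagates to $X$. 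With that gluing argument substituted for the stalkwise claim, your outline agrees with the proof in the cited reference.
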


\begin{rem}
As will become clear in the sequel, homotopy purity is an absolutely fundamental tool in $\aone$-homotopy theory, especially from the standpoint of computations.  In the context of stable categories to be introduced in Section~\ref{ss:stablestory} it will lead to Gysin exact sequences.
\end{rem}

\subsubsection*{Comparison of Nisnevich and cdh-local $\aone$-weak equivalences}
There is an obvious inclusion $\Sm_k \to \Sch_k$; this yields a functor $\Spc_k' \to \Spc_k$ by restriction.  One may show that there is an induced (derived) ``pullback" functor $\pi^*: \mathrm{H}_{\aone}(k) \to \mathrm{H}_{\aone}^{cdh}(k)$.  For later use, we will need the following comparison theorem of Voevodsky \cite[Theorem 5.1]{Blander} \cite[Theorem 4.2]{VoevodskyNiscdh}.

\begin{thm}[Voevodsky]
\label{thm:voevodskyresolutionofsings}
Assume $k$ is a field having characteristic $0$.  Suppose $f: (\mathscr{X},x) \to (\mathscr{Y},y)$ is a pointed morphism in $\Spc_k$.  If $\pi^*(f)$ is an isomorphism in $\mathrm{H}_{\aone}^{cdh}(k)$, then $\Sigma f$ is an isomorphism in $\mathrm{H}_{\aone}(k)$.
\end{thm}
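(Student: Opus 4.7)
The plan is to reduce to a concrete geometric computation on smooth-center blow-up squares via Hironaka's resolution of singularities. The cdh topology on $\Sch_k$ is generated by the Nisnevich topology together with abstract blow-up squares (Definition~\ref{defn:cdhsquare}), so a morphism that becomes an isomorphism in $\mathrm{H}_{\aone}^{cdh}(k)$ differs from a Nisnevich-local $\aone$-equivalence only by morphisms that enforce descent along such squares. A standard Bousfield-style localization argument then reduces the theorem to showing that for every abstract blow-up square, the comparison map from the Nisnevich homotopy pushout $Z \cup_E^h Y$ to $X$ becomes an $\aone$-weak equivalence after one simplicial suspension $\Sigma = S^1 \wedge (-)$.

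Second, I would invoke Hironaka's resolution of singularities, which is where the characteristic-zero hypothesis enters essentially. Given an arbitrary abstract blow-up square with $X$ possibly singular, one can construct a resolution $\tilde{X} \to X$ as a composition of blow-ups along smooth centers, and compatibly resolve $Y$, $Z$, and $E$; a two-out-of-three argument then reduces matters to the case of a blow-up square in which $\pi\colon Y \to X$ is the blow-up of a smooth scheme $X$ along a smooth center $Z$, with smooth exceptional divisor $E = \mathbb{P}(\nu_{Z/X})$.

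Third, for such a smooth-center blow-up square, I would apply homotopy purity (Theorem~\ref{thm:homotopypurity}) to identify $X/(X \setminus Z) \simeq \Th(\nu_{Z/X})$ and $Y/(Y \setminus E) \simeq \Th(\nu_{E/Y})$. Since $Y \setminus E \cong X \setminus Z$ under $\pi$, one is reduced to comparing these two Thom spaces, where $\nu_{E/Y}$ is the tautological line bundle on the projective bundle $E = \mathbb{P}(\nu_{Z/X}) \to Z$. Combining the transversality functoriality from part (3) of Theorem~\ref{thm:homotopypurity} with a projective-bundle-style computation in $\aone$-homotopy theory, one shows that the induced comparison map becomes $\aone$-contractible after a single simplicial suspension.

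The main obstacle is the Thom-space computation in the third step: tracking why exactly one simplicial suspension suffices, and no more. Unstably, the smooth blow-up square fails to be a homotopy pushout in $\mathrm{H}_{\aone}(k)$, and the precise obstruction is an $S^1$-twist coming from the difference between $\Th(\nu_{Z/X})$ and the projective completion $\mathbb{P}(\nu_{Z/X} \oplus \mathcal{O})$ of the normal bundle. The resolution-of-singularities reduction, while conceptually clean, also requires genuine care to ensure that the successive refinements can be arranged compatibly on both sides of every abstract blow-up square simultaneously.
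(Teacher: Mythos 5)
The paper itself gives no proof of this statement---it is quoted from Voevodsky, with references to Blander and to Voevodsky's Nisnevich/cdh comparison paper---so your sketch can only be measured against the strategy of those cited proofs, whose broad outline it does follow: reduce by localization theory to the generating cdh squares, use resolution of singularities to reduce to blow-ups of smooth schemes along smooth centers, and analyze those via purity. The genuine gap sits at the step you yourself call the main obstacle: nothing in the proposal actually produces the single simplicial suspension, and the mechanism you offer for it is not right. The claim that ``the precise obstruction is an $S^1$-twist coming from the difference between $\Th(\nu_{Z/X})$ and $\P(\nu_{Z/X}\oplus\mathcal{O})$'' is not an argument, and it is contradicted by the basic example: when $X$ is the total space of a vector bundle $\nu$ over $Z$ and $Y=\Bl_Z X$ is the blow-up of the zero section, projecting the blow-up square onto the degenerate square with corners $E,E,Z,Z$ (via the $\aone$-equivalences $Y\to E$ and $X\to Z$) exhibits it as an unstable homotopy pushout outright, even though the ``difference'' between $\Th(\nu)$ and $\P(\nu\oplus\mathcal{O})$ is present there too. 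What purity and the excision isomorphism $Y\setminus E\cong X\setminus Z$ naturally give is control of the total homotopy cofiber of the square, e.g.\ $\hocofib(Y\to X)\simeq\hocofib\bigl(\Th(\mathcal{O}_E(-1))\to\Th(\nu_{Z/X})\bigr)$, and one can show that this total cofiber is Nisnevich--$\aone$-locally trivial. Unstably, a contractible total cofiber does not make the square a homotopy pushout; the suspension is exactly the device that converts cofiber-level information into an equivalence, since after one simplicial suspension the source and target of $Z\sqcup^h_E Y\to X$ are locally simply connected and a local Whitehead/Hurewicz argument (using connectivity results over a field) upgrades ``contractible cofiber'' to ``weak equivalence.'' Your sketch asserts the conclusion of this step without supplying any such mechanism, and that mechanism is the entire quantitative content of the theorem.

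Two further points are passed over too quickly. First, $\mathrm{H}_{\aone}^{cdh}(k)$ is built from presheaves on $\Sch_k$, not on $\Sm_k$, so before any Bousfield-style reduction you must identify it, in characteristic $0$ and again via resolution, with a localization of the Nisnevich $\aone$-theory on $\Sm_k$ at blow-up squares of smooth schemes; this site change is part of the content of the cited theorem, not a formality. You also need the class of maps $g$ such that $\Sigma g$ is a Nisnevich $\aone$-equivalence to be a localizer (closed under two-out-of-three, homotopy cobase change, transfinite composition); this does hold because $\Sigma$ preserves homotopy colimits, but it must be said, since it is what lets the generating squares control all cdh-local $\aone$-equivalences. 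Second, the reduction from an arbitrary abstract blow-up square to smooth-center blow-ups is not ``compatibly resolve and apply two-out-of-three'': the resolution $\tilde X\to X$ is itself an abstract blow-up whose descent you do not yet know, so the actual argument is a noetherian/dimension induction on the centers, as in the cited papers. None of this is fatal to the strategy, but as written the proposal reproduces the known outline while leaving unproved precisely the point that makes the theorem true after one suspension.
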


\subsection{A snapshot of the stable motivic homotopy category}
\label{ss:stablestory}
One of the basic lessons of classical homotopy theory is that calculations become more accessible after inverting the suspension functor on the homotopy category of pointed spaces. The notion of a topological spectrum makes this process precise \cite{MR0402720}.  Similar constructions turn out to be extremely useful in the setting of motivic homotopy theory following \cite{DRO}, \cite{MR1860878}, \cite{JardineMotivic}, \cite{MIntro}, \cite{VICM}.
\vspace{0.1in}

For the purposes of this survey it is useful to know there exists a closed symmetric monoidal category $\mathscr{SH}(k)$ called the stable motivic homotopy category of the field $k$.
We note that $\mathscr{SH}(k)$ is an additive category,
in fact a triangulated category equipped with the auto-equivalence given by smashing with the simplicial circle.
This category is obtained by formally inverting suspension with the projective line $\mathbb{P}^{1}$ on the category $\Spc_{k,\bullet}$ of pointed motivic spaces.
One formally inverts $\pone$ by considering ``spectra".
A motivic spectrum $E\in\Spt_k$ is comprised of pointed motivic spaces $E_{n}$ for all $n\geq 0$ together with structure maps $\mathbb{P}^{1}\wedge E_{n}\to E_{n+1}$.
For example, every $X\in\Sm_k$ has an associated motivic suspension spectrum $\Sigma^\infty_{{\mathbb P}^{1}} X_{+}$ with terms $(\mathbb{P}^{1})^{\wedge n}\wedge X_{+}$ and identity structure maps.
Reminiscent of the way the natural numbers give rise of the integers we are entitled to motivic spheres $S^{p,q}$ in $\mathscr{SH}(k)$ for all $p,q\in\Z$.
In fact,
$\mathscr{SH}(k)$ is generated by shifted motivic suspension spectra of the form $\Sigma^{p,q}\Sigma^\infty_{{\mathbb P}^{1}} X_{+}$.
With a great deal of tenacity one can make precise the statement that $\mathscr{SH}(k)$ is the associated stable homotopy category of $\Spt_k$.
Moreover, one can define a symmetric monoidal structure on $\mathscr{SH}(k)$ for which the sphere spectrum $\sphere=\Sigma^\infty_{{\mathbb P}^{1}} \Spec(k)_{+}$ is the unit.

There are standard Quillen adjunctions, whose left adjoints preserve weak equivalences
\begin{gather*}
\Sigma_{{\mathbb P}^{1}}: \Spc_{k,\bullet}\rightleftarrows \Spc_{k,\bullet}:\Omega_{{\mathbb P}^{1}}\\
\Sigma^\infty_{{\mathbb P}^{1}} : \Spc_{k,\bullet}\rightleftarrows \Spt_k:
\Omega^\infty_{{\mathbb P}^{1}}.
\end{gather*}
Moreover,
we let $\sHom(E,F)$ denote the internal homomorphism object of $\mathscr{SH}(k)$ characterized by the adjunction isomorphism
\[
\Hom_{\mathscr{SH}(k)}(D,\sHom(E,F))
\simeq
\Hom_{\mathscr{SH}(k)}(D\wedge E,F).
\]

Later on in our discussion of $\aone$-contractibility we will appeal to the following result connecting the stable and unstable worlds of motivic homotopy theory.  We include a proof since it illustrates some basic concepts and techniques.

\begin{lem}
\label{lem:compactness}
Let $X$ be a smooth scheme and $x\in X$ a closed point.  If $\Sigma^\infty_{{\mathbb P}^{1}}(X,x)\simeq\ast$ in $\mathscr{SH}(k)$,
then there exists an integer $n\geq 0$ such that $\Sigma^n_{{\mathbb P}^{1}}(X,x)$ is $\A^1$-contractible.
\end{lem}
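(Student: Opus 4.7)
The plan is to exploit the compactness of $\Sigma^\infty_{\pone}(X,x)$ in $\mathscr{SH}(k)$ to reduce the stable vanishing hypothesis to a purely unstable statement at a finite suspension level. Since $X$ is smooth of finite type, $(X,x)$ is a compact object of $\Spc_{k,\bullet}$, and its $\pone$-suspension spectrum is therefore a compact object of $\mathscr{SH}(k)$. Standard features of the model-categorical construction of $\mathscr{SH}(k)$ then yield, for any motivic spectrum $E$ with levels $E_{n}$ and structure maps $\pone \wedge E_{n}\to E_{n+1}$, a natural identification
\[
\Hom_{\mathscr{SH}(k)}\bigl(\Sigma^\infty_{\pone}(X,x),E\bigr)
\;\cong\;
\colim_{n}\,\bigl[\Sigma^{n}_{\pone}(X,x),E_{n}\bigr]_{\aone},
\]
the transition maps being $\pone$-suspension followed by the structure maps of $E$.

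First I would specialise this to $E=\Sigma^\infty_{\pone}(X,x)$, so that $E_{n}=\Sigma^{n}_{\pone}(X,x)$ and the structure maps are the identity. The class $\mathrm{id}_{(X,x)}$ at stage $n=0$ $\pone$-suspends to $\mathrm{id}_{\Sigma^{n}_{\pone}(X,x)}$ at every subsequent stage, and its image in the colimit is $\mathrm{id}_{\Sigma^\infty_{\pone}(X,x)}$. By hypothesis $\Sigma^\infty_{\pone}(X,x)\simeq\ast$, so inside the additive category $\mathscr{SH}(k)$ this identity coincides with the zero endomorphism. Equality in a filtered colimit of pointed sets being detected at a finite stage, there exists some $n\geq 1$ for which
\[
\mathrm{id}_{\Sigma^{n}_{\pone}(X,x)} = 0 \;\in\; \bigl[\Sigma^{n}_{\pone}(X,x),\Sigma^{n}_{\pone}(X,x)\bigr]_{\aone},
\]
where $0$ denotes the class of the constant map at the base-point.

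To finish, I would invoke the fact that for $n\geq 1$ the space $\Sigma^{n}_{\pone}(X,x)$ is a simplicial suspension, because the $\aone$-equivalence $\pone \simeq S^{1}\wedge\Gm$ factors one copy of $S^{1}$-suspension out of $\Sigma^{n}_{\pone}(X,x)$. A simplicial suspension carries a natural cogroup structure, so $[\Sigma^{n}_{\pone}(X,x),Y]_{\aone}$ is a group for every pointed motivic space $Y$, with neutral element the class of the constant map. The displayed equation therefore says exactly that $\mathrm{id}_{\Sigma^{n}_{\pone}(X,x)}$ is $\aone$-homotopic to the constant map at the base-point, which is the assertion that $\Sigma^{n}_{\pone}(X,x)$ is $\aone$-contractible.

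The main obstacle to turning this sketch into a detailed proof is a careful treatment of the compactness input: one needs both that $\Sigma^\infty_{\pone}$ preserves compact objects and that stable homs out of a $\pone$-suspension spectrum of a smooth scheme are literally the filtered colimits written above, with the stated transition maps. Both are standard consequences of the model-categorical construction of $\mathscr{SH}(k)$, but they rely crucially on $X$ being smooth of finite type rather than an arbitrary motivic space; everything beyond this is formal filtered-colimit bookkeeping.
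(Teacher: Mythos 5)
Your argument is correct and takes essentially the same route as the paper: both exploit compactness of $\Sigma^\infty_{\pone}(X,x)$ to identify stable maps out of it with a filtered colimit of unstable $\aone$-homotopy classes at finite suspension levels, and then detect the vanishing of the identity at some finite stage $n$. The only real difference is that the paper's proof consists precisely of establishing the colimit identification you treat as a black box, by working with the explicit description of fibrant motivic spaces and spectra, the preservation of fibrancy under filtered colimits, and $\pi_0$ of mapping spaces.
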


\begin{proof}
By \cite[Definition 2.10]{DRO}, an object $F\in\Spc_{k,\bullet}$ is fibrant exactly when for every $X\in\Sm_k$,
(1) $F(X)$ is a Kan complex;
(2) the projection $X\times\A^1\to X$ induces a homotopy equivalence $F(X)\simeq F(X\times\A^1)$;
(3) $F$ maps Nisnevich elementary distinguished squares in $\Sm_k$ to homotopy pullback squares of simplicial sets,
and $F(\emptyset)$ is contractible.
Moreover,
a motivic spectrum $E\in\Spt_k$ is fibrant if and only if it is levelwise fibrant and an $\Omega_{{\mathbb P}^{1}}$-spectrum.
\vspace{0.1in}

Let $(E_n)_{n\geq 0}$ be a levelwise fibrant replacement of $\Sigma^\infty_{{\mathbb P}^{1}}(X,x)$, i.e., $E_n$ is a fibrant replacement of $\Sigma^n_{{\mathbb P}^{1}}(X,x)$ in $\Spc_{k,\bullet}$, and let $E$ be a fibrant replacement of $\Sigma^\infty_{{\mathbb P}^{1}}(X,x)$ in $\Spt_k$.
A key observation is that filtered colimits in $\Spc_{k,\bullet}$ preserve fibrant objects; this follows from the above description of fibrant objects and the facts that filtered colimits of simplicial sets preserve Kan complexes, homotopy equivalences, and homotopy pullback squares (\cite[Corollary 2.16]{DRO}).
Putting these facts together, one deduces that there is a simplicial homotopy equivalence
\[
\Omega^\infty_{{\mathbb P}^{1}} E\simeq \colim_{n\to\infty} \Omega^n_{{\mathbb P}^{1}} E_n.
\]

Let $\tilde X\in\Spc_{k,\bullet}$ be the simplicial presheaf $(X,x)\vee \Delta^1$ pointed at the free endpoint of $\Delta^1$;
this is a cofibrant replacement of $(X,x)$ in $\Spc_\bullet(k)$.
Since $\tilde X\in\Spc_{k,\bullet}$ is $\omega$-compact, the following are homotopy equivalences of Kan complexes,
where $\Map$ denotes the simplicial sets of maps in the above simplicial model categories
\begin{align*}
\Map(\Sigma^\infty_{{\mathbb P}^{1}} \tilde X, E)
& \simeq
\Map(\tilde X, \Omega^\infty_{{\mathbb P}^{1}} E) \\
& \simeq
\Map(\tilde X,\colim_{n\to\infty}\Omega^n_{{\mathbb P}^{1}}E_n) \\
& \simeq
\colim_{n\to\infty}\Map(\tilde X,\Omega^n_{{\mathbb P}^{1}}E_n) \\
& \simeq
\colim_{n\to\infty}\Map(\Sigma^n_{{\mathbb P}^{1}}\tilde X, E_n).
\end{align*}
The hypothesis that $\Sigma^\infty_{{\mathbb P}^{1}}(X,x)$ is weakly contractible means that the weak equivalence $\Sigma^\infty_{{\mathbb P}^{1}}\tilde X\stackrel\sim\to E$ and the zero
map $\Sigma^\infty_{{\mathbb P}^{1}}\tilde X\to *\to E$ are in the same connected component of the Kan complex $\Map(\Sigma^\infty_{{\mathbb P}^{1}}\tilde X,E)$.
Since $\pi_0$ preserves filtered colimits of simplicial sets,
there exists an integer $n\geq 0$ such that the weak equivalence $\Sigma^n_{{\mathbb P}^{1}}\tilde X\stackrel\sim\to E_n$ and the zero map $\Sigma^n_{{\mathbb P}^{1}}\tilde X\to *\to E_n$ belong to the
same connected component of $\Map(\Sigma^n_{{\mathbb P}^{1}}\tilde X, E_n)$.
In other words,
$\Sigma^n_{{\mathbb P}^{1}}\tilde X\simeq\Sigma^n_{{\mathbb P}^{1}}(X,x)$ is $\A^1$-contractible.
\end{proof}

\subsubsection*{Milnor--Witt K-theory}
For later reference we recall the definition of Milnor-Witt $K$-theory $\KMW_{*}(k)$ in \cite{MField}.  It is the quotient of the free associative integrally graded ring on the set of symbols $[k^\times] := \{[u]\mid u\in k^\times\}$ in degree $1$ and $\eta$ in degree $-1$ by the homogeneous ideal imposing
the relations
\begin{itemize}[noitemsep,topsep=1pt]
\item[(1)] $[uv] = [u]+[v]+\eta[u][v]$ ($\eta$-twisted logarithm),
\item[(2)] $[u][v] = 0$ for $u+v=1$ (Steinberg relation),
\item[(3)] $[u]\eta=\eta[u]$ (commutativity), and
\item[(4)] $(2+[-1]\eta)\eta = 0$ (hyperbolic relation).
\end{itemize}
Milnor-Witt $K$-theory is $\varepsilon$-commutative for $\varepsilon = -(1+[-1]\eta)$.
By work of Morel there is an isomorphism with the graded ring of endomorphisms of the sphere
\[
\KMW_{*}(k)
\cong
\bigoplus_{n\in\ZZ}\pi_{n,n}\sphere.
\]

Moreover,
we have $\KMW_{0}(k) \cong GW(k)$, the Grothendieck-Witt ring of stable isomorphism classes of symmetric bilinear forms \cite{MR0506372}.
Inverting $\eta$ in $\KMW_{*}(k)$ yields the ring of Laurent polynomials $W(k)[\eta^{\pm 1}]$ over the Witt ring,
and $\KMW_{*}(k)/\eta \cong\K^{M}_*(k)$, the Milnor $K$-theory ring of $k$  \cite{MR0260844}.

\subsubsection*{Stable representablity of algebraic K-theory}
Algebraic $K$-theory is also representable in the stable $\aone$-homotopy category.  To see this, it suffices to consider the infinite projective space ${\mathbb P}^{\infty}$ and a certain ``Bott element" $\beta$ obtained from the virtual vector bundle $[\mathscr{O}]-[\mathscr{O}(-1)]$ over ${\mathbb P}^{1}$.
The precise context involves the stable motivic homotopy category $\mathscr{SH}(k)$;
we replace ${\mathbb P}^{\infty}$ with its motivic suspension spectrum  $\Sigma^{\infty}_{{\mathbb P}^{1}}{\mathbb P}^{\infty}_{+}$ upon which it makes sense to invert $\beta$.

\begin{thm}[Gepner--Snaith, Spitzweck--{\O}stv{\ae}r]
\label{thm:Krepresentable}
If $k$ is a noetherian ring with finite Krull dimension, then there is a natural isomorphism in $\mathscr{SH}(k)$
\[
\Sigma^{\infty}_{{\mathbb P}^{1}}{\mathbb P}^{\infty}_{+}[\beta^{-1}]
\cong
\KGL.
\]
\end{thm}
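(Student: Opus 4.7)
The plan is to construct a natural map $\phi:\Sigma^\infty_{{\mathbb P}^{1}}{\mathbb P}^{\infty}_{+}\to\KGL$ using the universal line bundle, check that it carries $\beta$ to a unit so that it factors over the localization, and then show the resulting map is an equivalence of $E_\infty$-ring spectra. The representability of the Picard group (Proposition \ref{prop:picrepresentable}) provides a tautological class $[\mathscr{O}(1)]-1\in\tilde{K}_0({\mathbb P}^\infty)$, which corresponds to a pointed map ${\mathbb P}^\infty_+\to\Omega^\infty_{{\mathbb P}^1}\KGL$; the suspension-loop adjunction then yields the spectrum map $\phi$. The Segre-embedding multiplication on ${\mathbb P}^\infty$ corresponds to the tensor product of line bundles, so $\phi$ should be compatible with the $E_\infty$-ring structures on both sides.

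Next, I would verify that $\phi(\beta)$ is invertible in $\pi_{2,1}\KGL$. By construction $\beta$ is the image of $[\mathscr{O}]-[\mathscr{O}(-1)]\in\tilde{K}_0(\mathbb{P}^1)$ under the natural inclusion ${\mathbb P}^1_+\to\Sigma^\infty_{\mathbb{P}^1}{\mathbb P}^\infty_+$, and the projective bundle theorem for algebraic K-theory gives $\KGL\wedge\mathbb{P}^1_+\simeq\KGL\oplus\Sigma^{2,1}\KGL$, which is precisely the statement that this class determines a unit (Bott periodicity for $\KGL$). Hence $\phi$ descends to a map $\tilde\phi:\Sigma^\infty_{\mathbb{P}^1}{\mathbb P}^\infty_+[\beta^{-1}]\to\KGL$.

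The main content is showing $\tilde\phi$ is an equivalence, and this is where I expect the real difficulty. Two strategies are available. The first is a direct comparison of bigraded homotopy groups on a generating family of smooth schemes: the projective bundle formula computes both sides on ${\mathbb P}^n$ as a truncation of a power series ring in the tautological class $\xi=[\mathscr{O}(1)]-1$, and these computations can be reassembled via Mayer-Vietoris/Nisnevich descent and the compact generation of $\mathscr{SH}(k)$ by smooth suspension spectra. The second, more conceptual, strategy exploits a universal property: $\Sigma^\infty_{\mathbb{P}^1}{\mathbb P}^\infty_+[\beta^{-1}]$ is the initial $E_\infty$-ring spectrum in $\mathscr{SH}(k)$ receiving a map of $E_\infty$-monoids from the Picard space with the Bott element inverted, and one identifies $\KGL$ with this initial object using the algebraic splitting principle that every class in $K_0$ is stably a formal sum of line bundles.

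The hardest step will be justifying the splitting-principle style input over a general noetherian base of finite Krull dimension. Over a field one can use Morel's computation of endomorphisms of the sphere, Voevodsky's slice filtration, and the known structure of motivic cohomology; but to obtain the statement for arbitrary noetherian $k$ (as in the form cited), one must either replace K-theory by a sufficiently flexible motivic model (the Spitzweck-Østvær approach) or work $\infty$-categorically and appeal to Robalo-style universal properties of the presentable symmetric monoidal $\infty$-category underlying $\mathscr{SH}(k)$ (the Gepner-Snaith approach). Establishing that $\KGL$ satisfies the claimed universal property in sufficient generality — rather than the formal construction of the map — is the technical heart of the theorem.
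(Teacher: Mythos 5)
The paper does not actually prove this theorem: the ``Comments on the proof'' merely point to the two original sources (Gepner--Snaith and Spitzweck--{\O}stv{\ae}r) and record that the statement holds over a general noetherian base of finite Krull dimension. There is therefore no internal argument to compare against, and your blind reconstruction has to be judged on its own terms against the cited literature.

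On that footing, your outline is a reasonable high-level account of the two known proofs. The formal steps you list — building a map from the universal line bundle on ${\mathbb P}^\infty$, checking ring-map compatibility via the Segre multiplication versus tensor product, and verifying that $\beta$ maps to a unit via the projective bundle theorem (motivic Bott periodicity) — are all correctly identified, and you rightly flag that the substantive work is in proving the localized map is an equivalence. The two strategies you describe correspond to the two cited proofs: the ``initial $E_\infty$-ring receiving a map from the Picard monoid with Bott inverted'' route is essentially the Gepner--Snaith argument using universal properties of presentable symmetric monoidal $\infty$-categories, while the descent/compact-generation/computational route is closer in spirit to the Spitzweck--{\O}stv{\ae}r approach via motivic Landweber exactness. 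One detail to tighten: the generating map ${\mathbb P}^\infty_+\to\Omega^\infty_{{\mathbb P}^1}\KGL$ should be induced by the class $[\mathscr{O}(1)]$ itself (landing in the rank-one component), not the reduced class $[\mathscr{O}(1)]-1$; only the former is multiplicative with respect to the Segre $H$-space structure and the ring structure on $\KGL$, while the reduced class satisfies the twisted addition law $([L_1]-1)+([L_2]-1)+([L_1]-1)([L_2]-1)=[L_1\otimes L_2]-1$ and would not directly give a ring map.
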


\begin{proof}[Comments on the proof.]
Here $\KGL$ is the algebraic $K$-theory spectrum introduced by Voevodsky \cite{VICM}.
Independent proofs of this result are given in \cite{MR2540697} and \cite{MR2496504}.
The conclusion holds more generally over any noetherian base scheme of finite Krull dimension.
\end{proof}

\section{Concrete $\aone$-weak equivalences}
\label{s:concreteaoneweakequivalences}
In this section, we attempt to make the discussion of the previous section more concrete.  In particular, we will discuss fundamental examples of isomorphisms in the unstable $\aone$-homotopy category.  Moreover, we recall some results from affine (and quasi-affine) algebraic geometry in the context of $\aone$-homotopy theory.

\subsection{Constructing $\aone$-weak equivalences of smooth schemes}
By construction of the $\aone$-homotopy category, for any smooth scheme $X$ the projection map $X \times \aone \to X$ is an $\aone$-weak equivalence; in particular, the morphism $\aone \to \Spec k$ is an $\aone$-weak equivalence.  By induction, one concludes that ${\mathbb A}^n \to \Spec k$ is an $\aone$-weak equivalence.  In fact, one may give a completely algebraic construction of this $\aone$-weak equivalence using the ideas of Remark~\ref{rem:naivehomotopy}.  Indeed, there is a morphism $\aone \times {\mathbb A}^n \to {\mathbb A}^n$ sending $(t,x_1,\ldots,x_n) \mapsto (tx_1,\ldots, tx_n)$; this corresponds to the usual radial rescaling map.  As in topology, this construction defines a naive $\aone$-homotopy between the identity map ($t = 1$) and the map factoring through the inclusion of $0$ ($t = 1$). In any case, affine space gives the first example of a space satisfying the hypotheses of the following definition.

\begin{defn}
\label{defn:aonecontractible}
A space $\mathscr{X} \in \Spc_k$ is $\aone$-contractible if the structure morphism $\mathscr{X} \to \Spec k$ is an $\aone$-weak equivalence.
\end{defn}

\begin{ex}
Assume $k$ is a field and let $\alpha_{1}$ and $\alpha_{2}$ be coprime integers. The cuspidal curve $\Gamma_{\alpha_{1},\alpha_{2}}=\{y^{\alpha_{1}}-z^{\alpha_{2}}=0\}$ is $\AA^{1}$-contractible.  More precisely, we identify $\Gamma_{\alpha_1,\alpha_2}$ as a motivic space by restricting its functor of points to $\Sm_k$.  Then, the normalization map $\AA^{1}_{x}\to\Gamma_{\alpha_{1},\alpha_{2}}$ given by $x\mapsto (x^{\alpha_{2}},x^{\alpha_{1}})$ is an $\AA^{1}$-weak equivalence, even an isomorphism of presheaves on $\Sm_{k}$ (see \cite[Example 2.1]{ADContractible}).
\end{ex}

Suppose $Z$ is an $\aone$-contractible smooth scheme.  Since we have forced maps that are ``Nisnevich locally" weak equivalences to be weak equivalences, it follows that any map that is Nisnevich locally isomorphic to the product projection $U \times_{\Spec k} Z \to U$ is automatically a weak equivalence.  More precisely, if $f: X \to Y$ is a morphism of smooth $k$-schemes, and there exists a Nisnevich covering map $u: U \to Y$ and an isomorphism of $U$-schemes $X \times_Y U \cong U \times_{\Spec k} Z$, then we will say that $f$ is Nisnevich locally trivial with $\aone$-contractible fibers.

\begin{lem}
\label{lem:nisloctrivaffinefibers}
If $f: X \to Y$ is any morphism of smooth $k$-schemes that is Nisnevich locally trivial with affine space fibers, then $f$ is an $\aone$-weak equivalence.
\end{lem}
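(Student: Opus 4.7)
The plan is to prove this by a descent argument along the trivializing Nisnevich cover, leveraging two properties of the $\aone$-homotopy category: Nisnevich-local weak equivalences are $\aone$-weak equivalences by construction, and $\aone$-weak equivalences are preserved by homotopy colimits of simplicial diagrams. First I would reduce (by working on connected components of $Y$) to the case where the fiber has constant rank $n$, and then pick a Nisnevich cover $u \colon U \to Y$ together with an isomorphism of $U$-schemes $X \times_Y U \cong U \times_k \mathbb{A}^n$.

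Next I would form the augmented Čech nerves $\breve{C}(U) \to Y$ and $\breve{C}(U \times_Y X) \to X$. The augmentations are Nisnevich-local weak equivalences (this is the basic fact, already invoked in the construction of $\mathrm{H}_{\Nis}(k)$, that the Čech nerve of a Nisnevich cover becomes equivalent to its base). By associativity of fiber products, $\breve{C}(U \times_Y X) \cong \breve{C}(U) \times_Y X$ as simplicial schemes, and the map $f$ induces a morphism $\breve{C}(U) \times_Y X \to \breve{C}(U)$ sitting in a commutative square with the two augmentations.

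The content is to identify this last morphism in each simplicial degree. In degree $p$ it is the projection $U^{\times_Y (p+1)} \times_Y X \to U^{\times_Y (p+1)}$, and base-changing $X \to Y$ through the last copy of $U$ gives
\[
U^{\times_Y (p+1)} \times_Y X \;\cong\; U^{\times_Y (p+1)} \times_U (U \times_Y X) \;\cong\; U^{\times_Y (p+1)} \times_k \mathbb{A}^n,
\]
so each level is a product projection off an affine space, hence an $\aone$-weak equivalence. Combining a levelwise $\aone$-weak equivalence into an $\aone$-weak equivalence of simplicial presheaves (equivalently, passing to diagonals or homotopy colimits) is where the real work sits, and it is the step I expect to be the main obstacle in a fully rigorous write-up; it rests on the fact that the $\aone$-local model structure is a left Bousfield localization, so its weak equivalences are closed under homotopy colimits. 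Once that is granted, the map $\breve{C}(U) \times_Y X \to \breve{C}(U)$ is an $\aone$-weak equivalence, and the two-out-of-three property applied to the commutative square of augmentations forces $f \colon X \to Y$ to be an $\aone$-weak equivalence as well.
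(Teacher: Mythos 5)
Your argument is correct, and it is essentially the rigorous version of what the paper leaves implicit: the paper gives no proof of this lemma at all, simply asserting (in the paragraph preceding it) that since Nisnevich-locally trivial maps with $\aone$-contractible fibers are ``Nisnevich locally'' weak equivalences and such maps have been inverted by construction, the conclusion follows. Your \v{C}ech-nerve descent argument is the standard way to make that assertion precise: base-changing the trivializing cover, identifying the map of nerves levelwise with a projection off $\mathbb{A}^n$, and then using that the augmentations are inverted in $\mathrm{H}_{\Nis}(k)$ together with two-out-of-three. The step you flag as the main obstacle is indeed the only point needing care, but it is unproblematic here: in the (injective) local model structure on simplicial presheaves every object is cofibrant, so a levelwise $\aone$-weak equivalence of simplicial objects induces an $\aone$-weak equivalence on diagonals/homotopy colimits, and weak equivalences of the Bousfield localization are closed under these homotopy colimits. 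One could also phrase the whole thing slightly more economically by observing that $U \times_Y X \to X$ is again a Nisnevich cover, so both \v{C}ech augmentations are already local weak equivalences and the only $\aone$-input is the levelwise projection; but this is the same argument. In short, your proof buys an explicit justification where the paper relies on the reader's familiarity with the localization construction.
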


Any morphism $f$ of smooth schemes that is Nisnevich locally trivial with affine space fibers is automatically an $\aone$-weak equivalence.  For example, the projection map in a geometric vector bundle is automatically a vector bundle.  More generally, if $\pi: E \to X$ is a torsor under a vector bundle on $X$, then $\pi$ is Zariski locally trivial (this folows from the vanishing of coherent cohomology on affine schemes), and thus an $\aone$-weak equivalence.  Jouanolou originally observed \cite{Jouanolou} that given a quasi-projective variety $X$, one could find a torsor under a vector bundle over $X$ whose total space was affine; such a scheme will be called an {\em affine vector bundle torsor}.  Thomason generalized this observation to schemes admitting an ample family of line bundles; and in the next result we summarize the consequences for $\aone$-homotopy theory.

\begin{lem}[Jouanolou--Thomason homotopy lemma]
\label{lem:jouanolouthomason}
If $k$ is a regular ring, and $X$ is a separated, finite type, smooth $k$-scheme, then there exists a smooth affine $k$-scheme $\tilde{X}$ and morphism $\pi: \tilde{X} \to X$ that is a torsor under a vector bundle; in particular, $\pi:$ is Zariski locally trivial with affine space fibers and is thus an $\aone$-weak equivalence.
\end{lem}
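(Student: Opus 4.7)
The plan is to build $\tilde{X}$ by carrying out the classical Jouanolou trick and then invoking Thomason's extension. The prototypical construction takes place on projective space: for each $n\geq 0$, consider the affine scheme
\[
J_n = \{A \in \mathrm{Mat}_{n+1}(\mathscr{O}) : A^2 = A,\ \mathrm{tr}(A) = 1\}
\]
of rank-one idempotent matrices, realized as a closed subscheme of $\mathbb{A}^{(n+1)^2}_k$. The morphism $p_n : J_n \to \mathbb{P}^n_k$ sending an idempotent to its image line is, after trivialization over the standard affine cover of $\mathbb{P}^n_k$, a Zariski locally trivial torsor under a rank-$n$ vector bundle on $\mathbb{P}^n_k$ (the fiber over $[\ell]$ identifies with $\mathrm{Hom}(k^{n+1}/\ell,\ell)$), so that $p_n$ is a vector-bundle torsor with affine total space.

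For $X$ quasi-projective over $k$, I would choose a locally closed immersion $\iota : X \hookrightarrow \mathbb{P}^n_k$ and form the pullback $\tilde{X} := X \times_{\mathbb{P}^n_k} J_n$. By base change this is automatically a torsor over $X$ under the restricted vector bundle $\iota^*(\cdots)$, so the essential content is to verify that $\tilde{X}$ is affine. When $X$ is projective this is immediate since $\tilde{X}$ is then a closed subscheme of the affine scheme $J_n$; for a general quasi-projective $X$ one uses the ample line bundle $\iota^*\mathscr{O}(1)$ to produce enough global sections on $\tilde{X}$, and deduces affineness from Serre's cohomological criterion.

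For an arbitrary smooth separated $k$-scheme of finite type $X$ with $k$ regular, I would invoke Thomason's refinement of the Jouanolou construction: the hypotheses imply that $X$ admits an ample family of line bundles, and Thomason's argument upgrades Jouanolou's trick to this setting, still producing a torsor $\pi : \tilde{X} \to X$ under a vector bundle with $\tilde{X}$ affine. Once $\tilde{X}$ is in hand, Zariski local triviality with affine-space fibers is automatic from the torsor structure (vector-bundle torsors on affine schemes are trivial by vanishing of coherent cohomology), and the Zariski-local version of Lemma~\ref{lem:nisloctrivaffinefibers}---whose proof is identical to the Nisnevich version since every Zariski distinguished square is Nisnevich distinguished---delivers that $\pi$ is an $\aone$-weak equivalence.

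The main obstacle is the affineness of $\tilde{X}$ in the non-projective case: the projective case is trivially a closed subscheme of $J_n$, but extending through the quasi-projective and then the general smooth-separated setting is where the ``ample family of line bundles'' hypothesis does real work, and this is the technical heart of both Jouanolou's original paper and Thomason's extension. Once affineness is secured, the torsor property and the $\aone$-weak-equivalence assertion are essentially formal consequences of the construction.
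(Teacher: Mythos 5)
Your argument is essentially the paper's: both reduce the general case to the fact that a separated, regular, Noetherian scheme admits an ample family of line bundles (SGA6) and then invoke Thomason's extension of Jouanolou's trick (cited in the paper via Weibel's account) to produce the affine vector-bundle torsor, with the $\aone$-weak equivalence being the formal consequence via Zariski local triviality. The explicit idempotent-matrix model over $\mathbb{P}^n$ and the quasi-projective intermediate step are expository detail rather than a different route.
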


\begin{proof}
If $k$ is regular, then $X$ is a separated, regular, Noetherian scheme.  In that case, $X$ admits an ample family of line bundles (combine \cite[Expose III  Corollaire 2.2.7.1]{SGA6}.  The result then follows from  \cite[Proposition 4.4]{WeibelHomotopy}, a result attributed to Thomason.
\end{proof}

\begin{rem}
If $X$ is a smooth scheme, then a choice of smooth affine scheme $\tilde{X}$ and an $\aone$-weak equivalence $\pi: \tilde{X} \to X$ will be called a Jouanolou device over $X$.  Unfortunately, the construction of Jouanolou devices is not functorial.
\end{rem}

\begin{ex}
\label{ex:standardJouanolou}
For any base ring $k$, following Jouanolou, there is a very simple example of a Jouanolou device over ${\mathbb P}^n$.  Let $\widetilde{{\mathbb P}^n}$ be the complement of the incidence hyperplane in ${\mathbb P}^n \times {\mathbb P}^n$ (viewing the second ${\mathbb P}^n$ as the dual of the first).  It is easy to see that the composite of the inclusion and the projection onto the first factor defines a morphism $\widetilde{{\mathbb P}^n} \to {\mathbb P}^n$ which is a Jouanolou device; we will refer to this as the standard Jouanolou device over ${\mathbb P}^n$.  For $n = 1$ it is straighforward to check that $\widetilde{{\mathbb P}^1}$ is isomorphic to the closed subscheme of ${\mathbb A}^3_k$ defined by the equation $xy = z(1+z)$.
\end{ex}

\subsection{$\aone$-weak equivalences vs. weak equivalences}
\label{ss:realization}
For this section, we will consider rings $k$ that come equipped with a homomorphism $\iota: k \to \cplx$.  In that case, we may compare $\aone$-weak equivalences and classical weak equivalences via what are often called ``realization functors".  Given a smooth $k$-scheme $X$, we may consider the set $X^{an}_{\iota}$ (via $\iota$) and we view this as a complex manifold with its usual structure.  Morel and Voveodsky \cite[\S 3.3]{MV} show that the assignment $X \mapsto X^{an}_{\iota}$ may then be extended to a functor between homotopy categories
\[
{\mathfrak R}_{\iota}: \ho{k} \longrightarrow \mathcal{H}
\]
which we refer to as a (topological) realization functor; see \cite{DuggerIsaksen} for more discussion of topological realization functors.  In particular, applying ${\mathfrak R}_{\iota}$ to an $\aone$-weak equivalence of smooth schemes yields a weak equivalence of the associated topological spaces of complex points.

\begin{rem}
The choice $\iota$ is important: Serre showed that it is possible to find smooth algebraic varieties over a number field together with two embeddings of $k$ into $\cplx$ such that the resulting complex manifolds are homotopy inequivalent.  In fact, F. Charles provided examples of two smooth algebraic varieties over a number field $k$ together with two embeddings of $k$ into $\cplx$ such that the real cohomology algebras of the resulting complex manifolds are not isomorphic \cite{Charles}.  Said differently, the real homotopy type of a smooth $k$-scheme may depend on the choice of embedding of $k$ into $\cplx$.
\end{rem}

\begin{question}
Assume $\iota_1,\iota_2: k \to \cplx$ are distinct ring homomorphisms.  Is it possible to find a (smooth) $k$-scheme $X$ such that $\mathfrak{R}_{\iota_1}(X)$ is contractible while $\mathfrak{R}_{\iota_2}(X)$ is not?
\end{question}

\begin{rem}
Recall that a connected topological space $X$ is $\Z$-acyclic if $H_{i}(X,\Z) = 0$ for all $i > 0$.  Of course, contractible topological spaces are $\Z$-acyclic.  One can show that the property of being $\Z$-acyclic is independent of the choice of embedding for smooth varieties using \'etale cohomology as follows.  If $X$ is a smooth $k$-scheme, then the integral singular cohomology groups $H_i(X^{an},\Z)$ are finitely generated abelian groups.  By the Artin-Grothendieck comparison theorem, the cohomology of $X(\cplx)$ with $\Z/n$-coefficients is isomorphic to the \'etale cohomology of $X$ with $\Z/n$-coefficients, and \'etale cohomology of $X$ is independent of the choice of embedding of $k$ into $\cplx$.  By appeal to the universal coefficient theorem, one deduces that the vanishing of $H_i(X^{an},\Z)_{tors}$ for one embedding implies the vanishing for any other.  Similarly, the rank of the free part is determined by the Betti numbers, which are also determined by \'etale cohomology and are therefore also independent of the choice of embedding.
\vspace{0.1in}

To our knowledge, all the examples where homotopy types change with the embedding involve a nontrivial fundamental group.  If $X$ is a topologically contractible smooth $k$-scheme, then its \'etale fundamental group is independent of the choice of embedding.  Furthermore, the \'etale fundamental group of $X$ is the profinite completion of the topological fundamental group of $X(\cplx)$.  Thus, if $X$ is topologically contractible, then any of the manifolds $X(\cplx)$ has a fundamental group with trivial profinite completion.  If one could prove that $X(\cplx)$ has trivial fundamental group for any choice of embedding, the above problem would have a positive solution as a consequence of the usual Whitehead theorem.  Let us also note that, working with \'etale homotopy types, one can deduce restrictions on the profinite completions of the other homotopy groups of $X(\cplx)$.
\end{rem}

Using the realization functor mentioned above and the definition of topologically contractible varieties from the introduction, the following result is immediate.

\begin{lem}
\label{lem:aonecontractibleimpliestopologicallycontractible}
If $k$ is a commutative ring, $\iota: k \to \cplx$ is a homomorphism, and $X$ is $\aone$-contractible smooth $k$-scheme, then $\mathfrak{R}_{\iota}(X)$ is topologically contractible.
\end{lem}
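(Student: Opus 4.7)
The plan is to exploit the functoriality of the topological realization $\mathfrak{R}_{\iota}$ recalled at the start of Section~\ref{ss:realization}, together with the fact that such a functor, being defined on the homotopy category, sends isomorphisms to isomorphisms. Since $X$ is $\aone$-contractible, the structure morphism $X \to \Spec k$ is by Definition~\ref{defn:aonecontractible} an $\aone$-weak equivalence, i.e.\ an isomorphism in $\ho{k}$. Applying the functor
\[
\mathfrak{R}_{\iota}\colon \ho{k}\longrightarrow\mathcal{H}
\]
then produces an isomorphism $\mathfrak{R}_{\iota}(X)\to\mathfrak{R}_{\iota}(\Spec k)$ in the classical homotopy category $\mathcal{H}$, which is to say a weak homotopy equivalence of topological spaces.

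Next, I would identify the target. The realization $\mathfrak{R}_{\iota}(\Spec k)$ is the analytic space $(\Spec k)(\cplx)$ associated to the embedding $\iota$, which consists of a single point. Consequently $\mathfrak{R}_{\iota}(X) = X^{an}_{\iota}$ is weakly equivalent to a point, i.e.\ it has trivial homotopy groups in every degree.

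The only remaining, essentially formal, point is to promote weak contractibility to genuine contractibility in the sense used in the introduction. This is not really an obstacle: $X^{an}_{\iota}$ is the complex analytic space attached to a smooth, separated, finite type $k$-scheme, hence a (paracompact) complex manifold admitting the structure of a CW complex (for example via a smooth triangulation). For such spaces, Whitehead's theorem converts the weak equivalence $X^{an}_{\iota}\simeq\ast$ obtained above into an actual homotopy equivalence, giving the conclusion.
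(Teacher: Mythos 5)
Your proof is correct and takes exactly the approach the paper intends: the paper states the lemma is ``immediate'' from the realization functor and the definitions, and your argument simply spells out the implicit steps (apply $\mathfrak{R}_{\iota}$ to the $\aone$-weak equivalence $X\to\Spec k$, identify the target as a point, then upgrade weak contractibility to genuine contractibility via Whitehead's theorem for CW complexes). No gap; this is the paper's argument made explicit.
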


\begin{rem}
While complex realization is only available for fields admitting an embedding in $\cplx$, there are other realization functors that may be defined more generally.  For example, one may define an \'etale realization functor on the unstable ${\mathbb A}^1$-homotopy category over any field \cite{Isaksen}. Over fields having characteristic $p$, the $p$-part of the \'etale homotopy type is not $\aone$-invariant in general.  For example, the affine line over a separably closed field having positive characteristic has a large nontrivial \'etale fundamental group.  Thus, \'etale realization of an unstable $\aone$-homotopy type involves completion away from the residue characteristics of whatever base ring we work over.  Correspondingly, the analog of Lemma~\ref{lem:aonecontractibleimpliestopologicallycontractible} says that the \'etale realization of an $\aone$-contractible smooth scheme is only trivial after a suitable completion.  On the other hand, the true ``topological" analog of contractibility, i.e., contractibility in the \'etale sense {\em including} triviality of the $p$-part is extremely restrictive: in fact, there are no nontrivial \'etale contractible varieties \cite{HSS}.
\end{rem}

\subsection{Cancellation questions and $\aone$-weak equivalences}
\label{ss:cancellationproblemsaoneweakequivalences}
We now connect the discussion to the cancellation questions mentioned in the introduction: from the standpoint of $\aone$-homotopy theory, this can be viewed as a source of many interesting $\aone$-weak equivalences.  Before discussing the biregular cancellation problem, we recall some results about the original (birational) Zariski cancellation problem.  In the special case where $X$ is a projective space, this question can be rephrased as follows: if $X$ is a stably $k$-rational variety, then is it $k$-rational?  The work of Beauville--Colliot-Th{\'e}l{\`e}ne--Sansuc--Swinnerton-Dyer from the early 80s answered Zariski's original question in the negative \cite{BCTSSD}, i.e., even over algebraically closed fields, there exist stably rational, non-rational varieties of dimension $\geq 3$ (examples over algebraically closed fields cannot exist in dimension $\leq 2$ by the classification of non-singular surfaces).  The results following this development introduced a heirarchy of birational cancellation questions for discussion of which we refer the interested reader to \cite[\S 1]{CTS}.   Correspondingly, we introduce a heirarchy of ``biregular" cancellation questions mimicking the birational story.

\begin{defn}
\label{defn:cancellationtypes}
Suppose $X$ and $Y$ are irreducible smooth $k$-schemes of the same dimension.  Say that $X$ and $Y$ are
\begin{enumerate}[noitemsep,topsep=1pt]
\item {\em stably isomorphic} if there exist an integer $n \geq 0$ such that $X \times {\mathbb A}^n \cong Y \times {\mathbb A}^n$;
\item {\em common direct factors} if there exists a smooth variety $Z$ such that $X \times Z \cong Y \times Z$; and
\item {\em common retracts} if there exists a smooth variety $Z$ and closed immersions $X \to Z$ and $Y \to Z$ admitting retractions.
\end{enumerate}
\end{defn}

\begin{rem}
If $X$ and $Y$ are stably isomorphic, one may ask for the smallest value of $m$ for which $X \times {\mathbb A}^m \cong Y \times {\mathbb A}^m$; so it makes sense to refine stable isomorphisms and inquire about $m$-stable isomorphisms.
\end{rem}

From the standpoint of $\aone$-homotopy theory, this definition is important because of the following result.

\begin{lem}
Stably isomorphic smooth varieties are $\aone$-weakly equivalent.
\end{lem}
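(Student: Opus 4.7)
The plan is to exhibit a zig-zag of $\aone$-weak equivalences between $X$ and $Y$ built directly from the stable isomorphism. By hypothesis, there is an integer $n \geq 0$ and an isomorphism of smooth $k$-schemes $\varphi : X \times {\mathbb A}^n \isomto Y \times {\mathbb A}^n$. An isomorphism of schemes is in particular an isomorphism of the corresponding representable presheaves, hence an $\aone$-weak equivalence.

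Next I would invoke the fact, already noted at the beginning of Section \ref{s:concreteaoneweakequivalences}, that for any smooth scheme $Z$ the projection $Z \times {\mathbb A}^1 \to Z$ is an $\aone$-weak equivalence by construction of the $\aone$-homotopy category. Iterating (or equivalently applying Lemma \ref{lem:nisloctrivaffinefibers} to the trivial bundle of rank $n$), the projections
\[
p_X : X \times {\mathbb A}^n \longrightarrow X, \qquad p_Y : Y \times {\mathbb A}^n \longrightarrow Y
\]
are $\aone$-weak equivalences.

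Assembling these ingredients, the zig-zag
\[
X \;\xleftarrow{\;p_X\;}\; X \times {\mathbb A}^n \;\xrightarrow{\;\varphi\;}\; Y \times {\mathbb A}^n \;\xrightarrow{\;p_Y\;}\; Y
\]
consists entirely of $\aone$-weak equivalences, so it induces an isomorphism $X \isomto Y$ in $\mathrm{H}_{\aone}(k)$, which is what was to be shown. There is no real obstacle here: the proof is a direct consequence of the universal property by which $\mathrm{H}_{\aone}(k)$ was constructed, namely that projections from products with affine spaces are inverted. The only thing to keep in mind is that we are implicitly using the Yoneda embedding $\Sm_k \hookrightarrow \Spc_k$ to view schemes as motivic spaces, so that the notion of $\aone$-weak equivalence applies.
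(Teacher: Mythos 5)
Your argument is correct and is precisely the one the paper has in mind (it omits the proof as immediate): the zig-zag $X \leftarrow X \times {\mathbb A}^n \xrightarrow{\varphi} Y \times {\mathbb A}^n \to Y$, where the projections are inverted by the very construction of $\mathrm{H}_{\aone}(k)$ and $\varphi$ is an isomorphism of representable presheaves. Nothing is missing.
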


Perhaps the original cancellation question, which was explicitly stated by Coleman and Enochs \cite{ColemanEnochs}, asked whether $1$-stably isomorphic affine varieties are isomorphic.  More generally, Abhyanker--Heinzer--Eakin \cite{AHE} asked whether stably isomorphic affine varieties are isomorphic.  In \cite{AHE}, this question is introduced as follows: a ring $A$ is called invariant if given a ring $B$ and an isomorphism $A[x_1,\ldots,x_n] \cong B[y_1,\ldots,y_n]$ it follows that $A \cong B$.  In fact, Abhyankar--Heinzer--Eakin proved that one-dimensional integral domains over a field are invariant \cite[Theorem 3.3]{AHE}, i.e., cancellation holds for irreducible affine curves.  Then, \cite[Question 7.10]{AHE} asks whether two-dimensional integral domains over a field are invariant, with particular attention drawn to the case of the affine plane.  It is, of course, natural to consider the invariance question in higher dimensions as well.
\vspace{0.1in}

The invariance question becomes more subtle as the dimension of varieties under consideration increases.  In the early 1970s, Hochster gave the first counter-example to a cancellation problem over the real numbers \cite{Hochster}, and similar examples were observed by M.P. Murthy (unpublished). In the mid 1970s, Iitaka and Fujita gave geometric conditions (non-negativity of the so-called logarithmic Kodaira dimension, an invariant taking values among $-\infty$, $0$, $1$, $2$, $\dots$) under which affine varieties that are common direct factors are isomorphic \cite[Theorem 1]{IitakaFujita}.  From the point of $\aone$-homotopy theory, what is more interesting is counter-examples to cancellation questions involving smooth varieties.  In the late 80s, Danielewski gave a rather definitive counter-example to the invariance question for smooth affine surfaces: he wrote down smooth affine surfaces depending on a positive integer $n$, such that any two varieties in the class were stably isomorphic, and showed that the relevant varieties could be non-isomorphic for different values of $n$; we now discuss these varieties in detail.

\subsection{Danielewski surfaces and generalizations}
\label{ss:Danielewski}
\begin{defn}
\label{defn:danielewskisurface}
Fix a polynomial $P(z)$ in one variable and an integer $n \geq 1$.  The Danielewski surface $D_{n,P}$ is the closed subscheme of ${\mathbb A}^3$ defined by the equation $x^ny = P(z)$.
\end{defn}

\begin{ex}
When $n = 1$ and $P(z) = z(1+z)$, the variety $D_{n,P}$ is isomorphic to the standard Jouanolou device over ${\mathbb P}^1$ from Example~\ref{ex:standardJouanolou}.  Assuming $2$ is invertible in our base ring, it is also isomorphic to the standard hyperbolic quadric $xy + z^2 = 1$.
\end{ex}

If $P(z)$ is a separable polynomial, then it is straightforward to see that $D_{n,P}$ is smooth over $k$.  In that case, projection in the $x$-direction determines a morphism $D_{n,P} \to \aone_k$.  Assume for simplicity $k$ is an algebraically closed field, so $P(z)$ factors as a product of distinct linear factors; write $z_1,\ldots,z_d$ for the $d := deg P(z)$ distinct roots of $P(z)$.  In that case, the fibers of the projection morphism are isomorphic to $\aone_k$ over non-zero points of $\aone_k$ while the fiber over $0$ consists of $d$ copies of $\aone_k$ defined by $x = 0, z = z_i$.  The complement of all but $d-1$ of these copies of the affine line is an open subscheme of $D_{n,P}$ that is isomorphic to ${\mathbb A}^2$, and the restriction of the projection morphism is a product projection $\aone_k \times \aone_k \to \aone_k$.  Thus, the projection morphism $D_{n,P} \to \aone_k$ factors through a morphism $D_{n,P} \to \aone_{P}$, where $\aone_P$ is a non-separated version of the affine line with a $d$-fold origin.  Danielewski and Fieseler observed that this product projection makes $D_{n,P} \to \aone_P$ into a torsor under a line bundle (see \cite[Proposition 2.6]{DuboulozDF} for various generalizations of this construction).  We summarize the Danielewski construction in the following result, which follows from the discussion above and the fact that torsors under line bundles over affine schemes may be trivialized (i.e., are isomorphic to line bundles).

\begin{prop}[Danielewski, Fieseler, Dubouloz]
Assume $k$ is algebraically closed and $P$ is a separable polynomial.  If $n$ and $n'$ are distinct positive integers, and $P$ is a separable polynomial over $k$, then set:
\[
D_{n,P} \stackrel{p_1}{\longleftarrow} D_{n,P} \times_{\aone_P} D_{n',P} \stackrel{p_2}{\longrightarrow} D_{n',P}.
\]
The morphisms $p_i$ make the fiber product into the projection map for a geometric line bundle.  In particular, $D_{n,P}$ and $D_{n',P}$ are common retracts of $D_{n,P} \times_{\aone_P} D_{n',P}$ (retraction given by the zero section).  If $P = z(1+z)$, then $D_{n,P}$ and $D_{n',P}$ are furthermore stably isomorphic.
\end{prop}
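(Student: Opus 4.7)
The plan is to exploit the factorization of the $x$-projection through the non-separated affine line $\AA^{1}_{P}$ and identify the proposition's fiber-product projections as line bundles over the two Danielewski surfaces.

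First, I would describe $\AA^{1}_{P}$ concretely as the gluing of $d=\deg P$ copies of $\AA^{1}_{k}$ along $\Gm$, with one origin $o_{i}$ for each root $z_{i}$ of $P$, and lift the $x$-projection $D_{n,P}\to\AA^{1}_{k}$ to a morphism $\pi:D_{n,P}\to\AA^{1}_{P}$ by sending each line $\{x=0,\,z=z_{i}\}$ to $o_{i}$. I would then verify that $\pi$ is Zariski-locally trivial with $\AA^{1}$-fibers, by exhibiting explicit trivializations $\pi^{-1}(W_{i})\cong W_{i}\times\AA^{1}$ on a suitable affine open cover of $\AA^{1}_{P}$, and identify the resulting transition data with a line bundle $L_{n,P}\in\Pic(\AA^{1}_{P})$.

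Second, to see that $p_{1}$ is a geometric line bundle over $D_{n,P}$, I would observe that $p_{1}$ is the pullback along $\pi$ of the torsor $\pi':D_{n',P}\to\AA^{1}_{P}$. Pullbacks of line-bundle torsors are again line-bundle torsors, so $p_{1}$ is a torsor over $D_{n,P}$ under $\pi^{*}L_{n',P}$. Because $D_{n,P}$ is affine, $H^{1}(D_{n,P},\pi^{*}L_{n',P})=0$, so this torsor admits a global section and is therefore isomorphic over $D_{n,P}$ to the total space of the geometric line bundle $\pi^{*}L_{n',P}$. The zero section of this line bundle yields the retraction asserted in the proposition, and the symmetric argument handles $p_{2}$.

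Third, for the stable isomorphism in the case $P=z(1+z)$, I would produce the relevant identification by explicit sections. Without loss of generality take $n\geq n'$. The formula
\[
s_{1}(x,y,z)=\big((x,y,z),\,(x,\,x^{n-n'}y,\,z)\big)
\]
defines a section of $p_{1}$: the identity $x^{n'}(x^{n-n'}y)=x^{n}y=P(z)$ shows that the second entry lies in $D_{n',P}$, while agreement on $z$ ensures that both entries map to the same origin of $\AA^{1}_{P}$ on $\{x=0\}$. Thus $p_{1}$ is a trivial line bundle, yielding $D_{n,P}\times_{\AA^{1}_{P}}D_{n',P}\cong D_{n,P}\times\AA^{1}$.

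The main obstacle is upgrading this one-sided identification to a genuine stable isomorphism. The scheme $W=D_{n,P}\times_{\AA^{1}_{P}}D_{n',P}$ is simultaneously the total space of a line bundle $M=\pi^{*}L_{n,P}$ on $D_{n',P}$, and to conclude one must show that $M$ is trivial, equivalently, that $p_{2}$ admits a regular section. The naive inverse formula $y'\mapsto x'^{-(n-n')}y'$ fails to be regular, so the argument must use something special about $P=z(1+z)$. The role of this hypothesis enters precisely here: the involution $z\mapsto-1-z$ preserves $P$ and interchanges the two origins of $\AA^{1}_{P}$, and lifting this symmetry to $D_{n',P}$ provides the additional structure needed to construct a regular section of $p_{2}$ that is not visible from naive rescaling alone. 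Combining the triviality of $M$ with the identification $W\cong D_{n,P}\times\AA^{1}$ gives $D_{n,P}\times\AA^{1}\cong D_{n',P}\times\AA^{1}$, establishing the stable isomorphism.
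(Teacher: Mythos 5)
Your first two steps are essentially the paper's argument (factor the $x$-projection through the non-separated line $\aone_P$, invoke the Danielewski--Fieseler observation that $D_{n,P}\to\aone_P$ is a torsor under a line bundle, pull back, and use vanishing of $H^1$ of a coherent sheaf on the affine base to trivialize the torsor, the zero section giving the retraction), and they are fine. The problem is Step 3, where the stable isomorphism for $P=z(1+z)$ is claimed.

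The gap is the inference ``$s_1$ is a section of $p_1$, thus $p_1$ is a trivial line bundle.'' A torsor under a line bundle over an affine scheme \emph{always} admits sections; a section merely identifies the torsor with the total space of the underlying line bundle $\pi^*L_{n',P}$, which is exactly what you already had from Step 2, and it says nothing about triviality of that line bundle. The same confusion recurs in ``$M$ is trivial, equivalently $p_2$ admits a regular section'': $p_2$ admits a section for free, so this equivalence is false as stated, and your proposed repair via the involution $z\mapsto -1-z$ is never turned into a construction and is in any case not the mechanism. What actually makes the argument work (and what is implicit in the paper's appeal to Danielewski--Fieseler) is that $D_{n,P}\to\aone_P$ is a torsor under the \emph{trivial} line bundle, i.e.\ a $\ga$-torsor: on the chart $V_i\subset D_{n,P}$ obtained by deleting the fiber components $\{x=0,\ z=z_j\}$ for $j\neq i$, the function $w_i=(z-z_i)/x^n=y/\prod_{j\neq i}(z-z_j)$ gives a trivialization $V_i\cong\mathbb{A}^1_x\times\mathbb{A}^1_{w_i}$, and on overlaps $w_j=w_i+(z_i-z_j)x^{-n}$, so the transition maps are pure translations with trivial linear part. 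Consequently $p_1$ and $p_2$ are $\ga$-torsors over affine schemes, hence trivial, and $D_{n,P}\times\mathbb{A}^1\cong D_{n,P}\times_{\aone_P}D_{n',P}\cong D_{n',P}\times\mathbb{A}^1$. (Note this computation uses only separability of $P$, not the special shape $z(1+z)$, which already shows the $z\mapsto -1-z$ symmetry cannot be the essential point.) So to close the gap you must compute the linear part of the transition data and show the relevant line bundle is trivial --- equivalently exhibit the $\ga$-torsor structure --- rather than argue from the existence of sections.
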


The above observations, coupled with a homotopy colimit argument allow one to describe the $\aone$-homotopy type of $D_{n,P}$ rather explicitly: it is $\aone$-weakly equivalent to a wedge sum of $d-1$-copies of $\pone_k$.  The proposition even gives very explicit $\aone$-weak equivalences for different values of $n$ and fixed $P$.  The isomorphism class of the varieties $D_{n,z(1+z)}$ may be distinguished over the complex numbers by computing their first homology at infinity: explicitly $D_n$ may be realized as the complement of a divisor in a Hirzebruch surface.

\begin{prop}[Fieseler]
If $k = \cplx$, then for any integer $n \geq 2$, $H_1^{\infty}(D_{n,z(1+z)}) \cong \Z/n\Z$.  In particular, if $n$ and $n'$ are distinct integers, $D_{n,z(1+z)}$ and $D_{n',z(1+z)}$ are not isomorphic.
\end{prop}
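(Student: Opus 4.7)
The plan is to realize $D_n := D_{n,z(1+z)}$ as the complement of a simple normal crossings divisor $B$ in a smooth projective surface $\bar D_n$ whose components are smooth rational curves, and then read off $H_1^{\infty}(D_n)$ as the cokernel of the intersection matrix of $B$ via the standard description of a regular neighborhood of an SNC divisor on a surface as a plumbing.

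For the compactification I would use a blow-up of the Hirzebruch surface $\mathbb F_n$. The projection $\pi\colon D_n \to \mathbb A^1_x$ has generic fiber $\mathbb A^1$ (parametrized by $z$, with $y = z(z+1)/x^n$), while its fiber over $x = 0$ is the disjoint union of two reduced copies of $\mathbb A^1$, namely $\{x = z = 0\}$ and $\{x = 0,\, z = -1\}$. Fiberwise compactifying to $\mathbb P^1$ by adjoining a section at infinity and then compactifying the base to $\mathbb P^1$ produces a non-separated $\mathbb P^1$-bundle; the transition $z \mapsto z/x^n$ forced by solving $y = z(z+1)/x^n$ and passing to the chart at $x = \infty$ shows that the separated model is $\mathbb F_n$. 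The non-separatedness over $x = 0$ is then resolved by a sequence of blowups that separate $\{z = 0\}$ and $\{z = -1\}$ into disjoint $(-1)$-curves while turning the strict transform of the original special fiber into a chain of $(-2)$-curves.

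Next I would identify $B := \bar D_n \setminus D_n$ explicitly: it consists of the section at infinity $C_\infty$ (with self-intersection $n$ on $\mathbb F_n$, suitably modified by the blowups), the fiber $F_\infty$ over $x = \infty$, the strict transform of the fiber over $x = 0$, and the exceptional divisors introduced during the resolution of the doubled fiber. All components are smooth rational curves meeting transversally, so by a theorem of Mumford the link at infinity $L$ is a plumbed $3$-manifold determined by the dual graph of $B$, and $H_1(L)$ is the cokernel of the intersection matrix $M = (B_i \cdot B_j)$. A direct tridiagonal-determinant (equivalently continued-fraction) computation on the resulting chain gives $|\det M| = n$, so $\coker(M)$ is cyclic of order $n$; combined with the identification $H_1^{\infty}(D_n) = H_1(L)$ from the Mayer--Vietoris sequence for $\bar D_n$ decomposed as $D_n$ glued to a regular neighborhood of $B$ along $L$, this yields $H_1^{\infty}(D_n) \cong \Z/n\Z$.

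The non-isomorphism statement is then immediate: an isomorphism $D_n \cong D_{n'}$ of schemes over $\mathbb C$ induces a homeomorphism of the underlying smooth $4$-manifolds $D_n(\mathbb C) \cong D_{n'}(\mathbb C)$, hence of end spaces, hence of $H_1^{\infty}$, forcing $\Z/n \cong \Z/n'$ and therefore $n = n'$. The main obstacle I anticipate is bookkeeping the self-intersections through the chain of blowups that separates the doubled fiber: the final determinant must equal $n$ on the nose, so a single miscount of multiplicities or signs would produce the wrong cyclic quotient and break the argument.
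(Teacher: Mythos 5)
The paper offers no proof of this proposition: it is stated with attribution to Fieseler, and the only guidance in the surrounding text is the one-line hint that $D_n$ can be realized as the complement of a divisor in (a blowup of) a Hirzebruch surface. Your strategy---take an SNC compactification $\bar D_n$, read off $H_1$ of the link at infinity as $\coker$ of the boundary intersection matrix via the Mumford/Neumann plumbing description, and combine with Mayer--Vietoris---is precisely that route, and the substance is indeed the bookkeeping you flag as the danger.

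Two concrete corrections to that bookkeeping, both coming from the fact that $z(1+z)$ has \emph{two} roots. First, the dual graph of the boundary is not a chain. The strict transform $\tilde L$ of the fiber $\{x=0\}$ is a trivalent vertex of self-intersection $-2$: it meets the section at infinity on one side, and on the other two sides meets the two chains of $(n-1)$ curves of self-intersection $-2$ obtained by resolving $z(1+z)/x^{n}$ at $(0,0)$ and at $(0,-1)$ separately. (The final, $(-1)$-exceptional curve of each chain is the one whose affine part survives inside $D_n$ as a component of the special fiber, which is why only $n-1$ per root land in the boundary.) In particular there is no single tridiagonal determinant to compute; you must expand around the branch vertex. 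Second, doing so---with center $\tilde L(-2)$, two arms of $(n-1)$ $(-2)$'s, and a third arm $S_\infty(0)-F_\infty(0)$---gives $\lvert\det M\rvert = 2n$, not $n$. As a check, for $n=1$ this yields $\Z/2$, matching $H_1(\mathbb{R}P^3)$ for the link at infinity of the affine quadric $D_1 \cong T^*\mathbb{P}^1$; and dually, the generator of $H_2(D_n)\cong\Z$ pulls back to $e_1+\cdots+e_n-e_1'-\cdots-e_n'$ in the blowup, with self-intersection $-2n$. So carried out carefully your method should produce $\Z/2n$, which disagrees with the paper's $\Z/n$ by a factor of $2$ (likely a slip in the paper's statement), but of course still yields the pairwise non-isomorphism of the $D_n$, which is the point.
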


Danielewski's original construction has been expanded in many directions.  We refer the reader to \cite{DuboulozDF} and \cite{DuboulozDanielewskiVarieties} for more details.  One even knows that there are pairs of topologically contractible smooth affine varieties giving counter-examples to cancellation \cite{DM-JP}.  Cancellation may fail for open subsets of affine space: this was observed for affine spaces of sufficiently large dimension in \cite{JelonekOpen} and for ${\mathbb A}^3$ in \cite{DuboulozOpen}.  Jelonek even observed \cite[Proposition 3.18]{JelonekOpen} that there exist smooth affine varieties that are $2$-stably isomorphic but not stably isomorphic.  Subsequently, lower-dimensional examples of this phenomena (though which fail to be smooth) were constructed in \cite{AsanumaGupta} and $2$-dimensional smooth counterexamples were constructed in \cite{Dubouloza2cylinders}.  Furthermore, cancellation may fail rather generically: for every smooth affine variety of dimension $d \geq 7$, there exists a smooth affine variety $X'$ birationally equivalent to $X$ such that the variety $X'\times {\mathbb A}^2$ is not invariant.  We refer the reader to \cite{RussellSurvey} for a survey of the state of affairs up to about 2014, though the references above should make it clear that many exciting developments have occurred since that time.

\begin{problem}
Develop tools to distinguish isomorphism classes of smooth schemes having a given unstable $\aone$-homotopy type.
\end{problem}

\begin{rem}
In Section~\ref{ss:endspaces}, we will develop some tools to aid in the study of this problem for smooth schemes that are not proper.
\end{rem}

\subsection{Building quasi-affine $\aone$-contractible varieties}
\label{ss:quasiaffineaonecontractibles}
Winkelmann's example \ref{exintro:winkelmann} is realized as a quotient of affine space by a free action of a unipotent group (we review this below in Example~\ref{ex:winkelmann}.  In this section, we discuss some examples that significantly expand on this idea and therefore show that $\aone$-contractible smooth schemes are abundant in nature.

\subsubsection*{Unipotent quotients}
If the additive group scheme $\mathbb{G}_a$ acts scheme-theoretically freely on a smooth scheme $X$ and a quotient exists as a scheme, then the quotient map is automatically Zariski locally trivial because $H^1(-,\mathbb{G}_a)$ vanishes on affine schemes ($\mathbb{G}_a$ is an example of a linear algebraic group that is {\em special} in the sense of Serre).  In that case, the quotient map is automatically an $\aone$-weak equivalence by appeal to Lemma~\ref{lem:nisloctrivaffinefibers}.

\begin{ex}
\label{ex:winkelmann}
Take $k = \Z$, and suppress it from notation.  Let $Q_4$ be the smooth affine quadric in ${\mathbb A}^5$ defined by the equation $x_1x_3 - x_2x_4 = x_5(1+x_5)$.  Let $E_2$ be the closed subscheme defined by the equation $x_1 = x_3 = x_5+1 = 0$; $E_2$ is isomorphic to ${\mathbb A}^2$.  The complement $X_4 := Q_4 \setminus E_2$ is quasi-affine and not affine; it is Winkelmann's quasi-affine quotient from the introduction (i.e., Example~\ref{exintro:winkelmann}) over $\Spec \Z$.  The variety $X_4$ is an $\aone$-contractible smooth scheme by appeal to Lemma~\ref{lem:nisloctrivaffinefibers}.
\end{ex}

Generalizing these observations, in \cite{ADContractible}, the first author and B. Doran showed that many (pairwise non-isomorphic) strictly quasi-affine $\aone$-contractible varieties could be constructed in this way.  In fact, the following result, which is a first step in the direction of Theorem~\ref{thmintro:uncountableexistence}.

\begin{thm}[{\cite[Theorem 1.3]{ADContractible}}]
\label{thm:familesdimgreater6}
Assume $k$ is a field.  For every integer $m \geq 6$ and every integer $n \geq 0$, there exists a connected $n$-dimensional $k$-scheme $S$ and a smooth morphism $\pi: X \to S$ of relative dimension $m$, whose fibers over $k$-points are $\aone$-contractible, quasi-affine, not affine, and pairwise non-isomorphic.
\end{thm}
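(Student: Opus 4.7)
The basic input is Winkelmann's construction (Example~\ref{ex:winkelmann}), suitably parametrized. The general principle at work is: if $\mathbb{G}_a$ (or more generally a unipotent group $U$) acts scheme-theoretically freely on a smooth $\aone$-contractible scheme $Y$ and admits a geometric quotient $q\colon Y\to X$ in the category of schemes, then $q$ is Zariski locally trivial with affine space fibers (because $\mathbb{G}_a$ is Serre-special, so $H^1(-,\mathbb{G}_a)=0$ on affines), hence an $\aone$-weak equivalence by Lemma~\ref{lem:nisloctrivaffinefibers}. Combined with $\aone$-contractibility of $Y=\aone$-space-like ambient, this makes $X$ itself $\aone$-contractible. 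So to build the family in the theorem, it is enough to build a family of such free $\mathbb{G}_a$-actions on a family of affine spaces over $S$.

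First I would set up, for each $m\geq 6$, a \emph{universal} Winkelmann-style construction: take a suitable linear representation $V$ of $\mathbb{G}_a$ on affine space (e.g.\ copies of the standard upper-triangular $2$-dimensional representation, padded out until the invariant-theoretic quotient has the right dimension) and cut out a $\mathbb{G}_a$-invariant hypersurface (or complete intersection) in $\mathbb{A}(V)$ whose defining equation $f=1-g(y,z,\ldots)$ involves a polynomial $g$ in the low-degree $\mathbb{G}_a$-invariants. Exactly as in Example~\ref{exintro:winkelmann} / Example~\ref{ex:winkelmann}, the hypersurface $\{f=0\}$ is isomorphic to an affine space and the $\mathbb{G}_a$-action on it is scheme-theoretically free; its geometric quotient $X$ exists as a quasi-affine scheme, and an explicit inspection of the ring of $\mathbb{G}_a$-invariants identifies $X$ as an open subscheme of a smooth affine variety with complement of codimension $\geq 2$, showing $X$ is quasi-affine but not affine. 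Dimension counting shows that by choosing the number of copies of $V$ and the complexity of $g$, one can realize any $m\geq 6$ as the dimension of $X$.

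Second, I would promote this to a family by taking the polynomial $g$ to depend on parameters ranging in an $n$-dimensional base $S$. Concretely, one fixes a linear system of invariants of the appropriate bidegree and sets up the family of hypersurfaces as the universal member, so that over each closed point $s\in S$ the fiber $X_s$ is a quasi-affine $\aone$-contractible variety as in the previous paragraph. Smoothness of the total morphism $\pi\colon X\to S$ and the quasi-affine/non-affine properties fiberwise can be checked on a stratification of $S$; after shrinking $S$ to the open locus where these properties hold, one may assume $S$ is connected of the required dimension $n$.

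Third, and this is the main obstacle, one must verify pairwise non-isomorphism of the fibers over distinct $k$-points. The key is to extract a discrete or moduli-valued invariant from $X_s$ that encodes the parameter. Here the geometry of the boundary at infinity is the natural tool: each $X_s$ is the complement of an explicit codimension-$2$ subvariety in a smooth affine variety, and the isomorphism class of this compactifying data, together with, say, the complex-analytic end invariants of $\mathfrak{R}_\iota(X_s)$ in the spirit of Section~\ref{ss:realization}, distinguishes the fibers. The deepest point will be to show that the defining parameters of $g$ can be reconstructed from $X_s$ alone (and not from any other presentation of $X_s$ as a $\mathbb{G}_a$-quotient); this requires showing the $\mathbb{G}_a$-action is essentially unique up to automorphisms of the quotient, or, alternatively, producing enough invariants of $X_s$ (e.g.\ the ML-invariant of its affine hull, or cohomological invariants of its end space as discussed in Section~\ref{ss:endspaces} referenced later) that sweep out an $n$-dimensional family as $s$ varies over $S$. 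Passing to a dense open of $S$ where the relevant invariant is injective yields the desired statement.
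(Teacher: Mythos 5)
Your first two steps (a parametrized Winkelmann-type construction: a free $\mathbb{G}_a$- or unipotent-group action on an affine space, quotient quasi-affine but not affine, $\aone$-contractibility via Lemma~\ref{lem:nisloctrivaffinefibers} because the quotient map is a Zariski-locally trivial torsor under a special group) are indeed the template of the cited construction of Asok--Doran, and they are fine as far as they go. The genuine gap is the third step, which is where the actual content of the theorem lies: you never prove that the fibers over distinct $k$-points are pairwise non-isomorphic, you only list candidate strategies. Two of the candidates do not work as stated. Complex-analytic end invariants of $\mathfrak{R}_\iota(X_s)$ are unavailable here, since the theorem is over an arbitrary field $k$ (possibly of positive characteristic, or with no embedding into $\cplx$), so any distinguishing argument must be algebraic. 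And proving that ``the $\mathbb{G}_a$-action is essentially unique up to automorphisms of the quotient'' is both hard and unnecessary; the theorem does not require reconstructing the group action from $X_s$.

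The missing idea that makes the non-isomorphism step tractable is to exploit quasi-affineness itself: each fiber $X_s$ sits as an open subscheme of its affine hull $Y_s=\Spec \Gamma(X_s,\mathscr{O}_{X_s})$ with boundary of codimension $\geq 2$ in a normal affine variety, and this embedding is \emph{intrinsic} to $X_s$ (no choice of presentation as a quotient is involved). Hence any isomorphism $X_s \cong X_{s'}$ extends to an isomorphism of affine hulls carrying the boundary $Z_s = Y_s\setminus X_s$ to $Z_{s'}$, i.e.\ an isomorphism of pairs $(Y_s,Z_s)\cong(Y_{s'},Z_{s'})$. One then designs the family so that $Y_s$ is a fixed affine variety and the boundary data $Z_s$ varies with genuinely positive-dimensional moduli modulo the automorphism group of $Y_s$ (configurations of subvarieties with $n$ independent moduli), which is exactly how one gets $n$-dimensional families of pairwise non-isomorphic fibers for every $n\geq 0$. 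Without this (or some equally concrete invariant computed for the actual family you write down), your construction only produces a family over an $n$-dimensional base, not a family with pairwise non-isomorphic fibers, and shrinking $S$ to a locus ``where the invariant is injective'' is circular until you exhibit such an invariant and show it is nonconstant in $n$ independent directions.
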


By Proposition~\ref{prop:picrepresentable}, it follows that if $X$ is any $\aone$-contractible smooth $k$-scheme, then $Pic(X) = Pic(k)$.  In particular, if $Pic(k)$ is trivial (e.g., if $k$ is a field or $\Z$), then every line bundle on $X$ is trivial.  In that case, every torsor under a line bundle on $X$ is automatically a $\mathbb{G}_a$-torsor.  It is known that total spaces of $\mathbb{G}_a$-torsors over schemes may have non-isomorphic total spaces (e.g., consider the Danielewski varieties above).  The following question extends a question posed initially by Kraft \cite[\S 3 Remark 2]{Kraft} in the case $X = X_4$ (since $\ga$-torsors on affine schemes are always trivial, the question is only interesting for quasi-affine $\aone$-contractible varieties).

\begin{question}
Suppose $X$ is an $\aone$-contractible smooth $k$-scheme.  Is it possible to have two $\mathbb{G}_a$-torsors on $X$ with non-isomorphic total spaces?
\end{question}

Winkelmann's example also has interesting consequences for the shape of the generalized Serre question \ref{questionintro:generalizedserrequestion}.  Indeed, one may use it to see that it {\em is} possible to have nontrivial vector bundles on $\aone$-contractible smooth schemes that are not affine.  This phenomenon is analyzed in great detail \cite{ADBundle} as a measure of the failure of $\aone$-invariance of the functor assigning to a scheme $X$ the set of isomorphism classes of rank $r$ vector bundles on $X$ (cf. Example~\ref{ex:failureofaoneinvariancevb}).  It also shows that the affineness assertion in Question~\ref{questionintro:generalizedserrequestion} is absolutely essential.

\begin{ex}
\label{ex:nontrivialvectorbundles}
The variety $X_4$ above carries a nontrivial rank $2$ vector bundle.  The variety $Q_4$ carries a nontrivial rank $2$ vector bundle.  The easiest way to see this is to realize $Q_4$ as $Sp_4/(Sp_2 \times Sp_2)$, i.e., as the quaternionic projective line $\mathbb{HP}^1$ in the sense of \cite{PaninWalter}.  In that case, the map $Sp_4/Sp_2 \to Q_4$ is a nontrivial $Sp_2$-torsor, and the relevant vector bundle is the associated vector bundle to the standard $2$-dimensional representation of this $Sp_2$-torsor.  In fact, the quotient $Sp_4/Sp_2$ and the variety $Q_4$ both have the $\aone$-homotopy type of a motivic sphere, and the relevant morphism is the motivic Hopf map sometimes called $\nu$.  Since the open immersion $j: X_4 \hookrightarrow Q_4$ has closed complement of codimension $2$, the restriction functor $j^*$ on the category of vector bundles is fully-faithful.  Thus, this rank $2$ bundle restricts to a nontrivial rank $2$ vector bundle on $X_4$.  The total space of this rank $2$ vector bundle is another $\aone$-contractible smooth scheme which is necessary non-isomorphic to affine space as it is itself quasi-affine!
\end{ex}

We have seen above that there are quasi-affine $\aone$-contractible smooth schemes of dimension $d \geq 4$.  On the other hand, one may see using classification that the only $\aone$-contractible smooth scheme of dimension $1$ (say over a perfect field) is $\aone$.  It follows from a general result of Fujita \cite[\S 2 Theorem 1]{FujitaAffine} that any topologically contractible smooth complex surface is necessarily affine.  Thus, the following question remains open.

\begin{question}
If $k$ is a field, does there exist an $\aone$-contractible (resp. topologically contractible) smooth $k$-scheme of dimension $3$ that is quasi-affine but not affine?
\end{question}

\subsubsection*{Other quasi-affine $\aone$-contractible varieties}
In \cite{ADContractible}, it was asked whether every quasi-affine $\aone$-contractible variety could be realized as a quotient of an affine space by a free action of a unipotent group, generalizing the situation in topology suggested by Theorem~\ref{thmintro:characterization}.  The answer to this question was seen to be no in \cite{AsokDoranFasel}.  For any integer $n \geq 0$, write $Q_{2n}$ for the smooth affine $k$-scheme defined by the equation $\sum_i x_i y_i = z(1+z)$ in ${\mathbb A}^{2n+1}$ with coordinates $(x_1,\ldots,x_n,y_1,\ldots,y_n,z)$.  Then, define $E_n$ to be the closed subscheme of $Q_{2n}$ defined by $x_1 = \cdots = x_n = 1+z = 0$.  As in the case $n = 2$, $E_{n}$ is isomorphic to affine space of dimension $n$.  We define a variety
\[
X_{2n} := Q_{2n} \setminus E_n.
\]
For $n = 0$, $E_0 = \Spec k$, and $X_0$ is $\Spec k$ as well.  For $n = 1$, one can check that $X_2$ is isomorphic to ${\mathbb A}^2_k$.  For $n = 2$, $X_4$ is Winkelmann's example studied above.  The following example shows that not every $\aone$-contractible smooth scheme may be realized as a quotient of affine space by a free action of the additive group, in contrast to the situation in topology summarized in Theorem~\ref{thmintro:characterization}.

\begin{thm}[{\cite[Theorems 3.1.1 and Corollary 3.2.2]{AsokDoranFasel}}]
\label{thm:highdimquasiaffinecontractible}
Suppose $n \geq 0$.
\begin{enumerate}[noitemsep,topsep=1pt]
\item The variety $X_{2n}$ is $\aone$-contractible.
\item If $n \geq 3$, then $X_{2n}$ is not a quotient of affine space by a free action of a unipotent group.
\end{enumerate}
\end{thm}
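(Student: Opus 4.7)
For Part~(1), my plan is to apply homotopy purity (Theorem~\ref{thm:homotopypurity}) to the closed immersion $E_n \hookrightarrow Q_{2n}$. Since $E_n \cong \AA^n$ sits as a codimension-$n$ subvariety of the smooth $2n$-dimensional quadric $Q_{2n}$, and every vector bundle on $\AA^n$ is trivial by Quillen--Suslin, the normal bundle $\nu_{E_n/Q_{2n}}$ is a trivial rank $n$ bundle. The Thom space formula then gives
\[
Q_{2n}/X_{2n} \;\simeq\; \Th(\nu_{E_n/Q_{2n}}) \;\simeq\; (\PP^1)^{\wedge n} \wedge (\AA^n)_+ \;\simeq\; S^{2n,n}.
\]
Independently, I would verify that $Q_{2n}$ itself is $\AA^1$-equivalent to $S^{2n,n}$; this is a standard computation for split smooth affine quadrics of this form via an explicit Nisnevich cover exhibiting $Q_{2n}$ as an iterated homotopy pushout. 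To deduce contractibility of $X_{2n}$ from the resulting cofiber sequence $X_{2n}\to Q_{2n}\to Q_{2n}/X_{2n}$, one must then identify the quotient map as an $\AA^1$-weak equivalence. The cleanest route is to exploit the involution $z\mapsto -1-z$ of $Q_{2n}$, which swaps $E_n$ with the ``antipodal'' affine subspace $E_n'=\{x_1=\cdots=x_n=z=0\}$: the Zariski cover $Q_{2n}=X_{2n}\cup X_{2n}'$ with $X_{2n}'=Q_{2n}\setminus E_n'$, combined with the symmetry of the two purity cofiber sequences, pins down the collapse map and forces $X_{2n}\simeq\ast$.

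For Part~(2), the plan is to extract a cohomological obstruction from the structure sheaf. The local cohomology sequence for the open immersion $X_{2n}\hookrightarrow Q_{2n}$, combined with the vanishing of higher coherent cohomology on the affine $Q_{2n}$, yields $H^1(X_{2n},\mathcal{O}_{X_{2n}})\cong H^2_{E_n}(Q_{2n},\mathcal{O})$; because $E_n\hookrightarrow Q_{2n}$ is a regular embedding of codimension $n$ in a smooth scheme, $H^i_{E_n}(Q_{2n},\mathcal{O})=0$ for $i<n$, so $H^1(X_{2n},\mathcal{O})=0$ as soon as $n\geq 3$. The same computation, with an added $\AA^k$-factor, gives $H^1(X_{2n}\times\AA^k,\mathcal{O})=0$ for every $k\geq 0$. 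Now suppose $X_{2n}\cong\AA^N/U$ for some unipotent $U$ acting freely. A composition series $U=U_0\supset U_1\supset\cdots\supset U_d=1$ with successive quotients $\GG_a$ produces a tower of $\GG_a$-torsors
\[
\AA^N \;\longrightarrow\; \AA^N/U_{d-1} \;\longrightarrow\; \cdots \;\longrightarrow\; \AA^N/U_1 \;\longrightarrow\; X_{2n},
\]
and since $\GG_a$-torsors are classified by $H^1(-,\mathcal{O})$, the vanishing just established forces every torsor in the tower to split, giving $\AA^N\cong X_{2n}\times\AA^{N-2n}$. The zero section then embeds $X_{2n}$ as a closed subscheme of an affine scheme, so $X_{2n}$ itself would be affine. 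This contradicts the fact that $X_{2n}$ is quasi-affine but not affine: the complement $E_n\subset Q_{2n}$ has codimension $\geq 2$, so every regular function on $X_{2n}$ extends to $Q_{2n}$, whence an affine $X_{2n}$ would coincide with $\Spec\mathcal{O}(X_{2n})=Q_{2n}$.

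The main obstacle is the last step of Part~(1): verifying that the collapse map $Q_{2n}\to Q_{2n}/X_{2n}$ is genuinely an $\AA^1$-weak equivalence rather than merely a map between two objects each abstractly $\AA^1$-equivalent to $S^{2n,n}$. One must pin down the class of this map in $[S^{2n,n},S^{2n,n}]_{\AA^1}$, which stably becomes an element of $\GW(k)$, and rule out the possibility that it is nullhomotopic or some non-unit multiple of the identity; the symmetry argument sketched above is intended to identify this class as the unit. By contrast, Part~(2) is essentially formal once the cohomological vanishing is in hand, and the hypothesis $n\geq 3$ enters precisely at this step, consistent with the fact that $H^1(X_4,\mathcal{O})\neq 0$ and that Winkelmann's construction realizes $X_4$ as the nontrivial $\GG_a$-quotient $\AA^5/\GG_a$.
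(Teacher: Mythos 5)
Your Part (1) contains a genuine gap, and it sits exactly where you flag ``the main obstacle''---but the problem is deeper than computing a degree. First, the symmetry idea is not yet an argument: the involution $z\mapsto -1-z$ only exchanges the two collapse maps $Q_{2n}\to Q_{2n}/X_{2n}$ and $Q_{2n}\to Q_{2n}/X_{2n}'$, so by itself it gives no control over the class of either one in $[S^{2n,n},S^{2n,n}]_{\aone}$ (it could a priori act by a non-trivial unit such as $\langle -1\rangle$, or the class could fail to be a unit at all); nothing in your sketch pins it down. Second, and more fundamentally, even if you had shown that the collapse map is an $\aone$-weak equivalence, the conclusion $X_{2n}\simeq\ast$ does not follow unstably from the cofiber sequence $X_{2n}\to Q_{2n}\to Q_{2n}/X_{2n}$. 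Cofiber sequences do not detect unstable contractibility of the first term: already in topology, if $A$ is an acyclic complex with nontrivial $\pi_1$, the collapse $CA\to CA/A=\Sigma A$ is a weak equivalence of contractible spaces while $A\not\simeq\ast$. What your argument would yield is only that the $\PP^1$-suspension spectrum of $(X_{2n},0)$ is trivial, hence (by Lemma~\ref{lem:compactness}) that $\Sigma^m_{\pone}(X_{2n},0)$ is $\aone$-contractible for some finite $m$. To descend to contractibility of $X_{2n}$ itself you would need an additional unstable input---for example $\aone$-simple connectedness of $X_{2n}$ together with Morel's Hurewicz/Whitehead machinery over a perfect field---and none of this is addressed; it is precisely the hard content of the theorem.

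For contrast, the paper's proof avoids both issues by an induction carried out inside $X_{2n}$ rather than inside $Q_{2n}$: purity (Theorem~\ref{thm:homotopypurity}) is applied to the closed subscheme $Z_n\setminus E_n\cong X_{2n-2}\times\aone$ of $X_{2n}$, whose open complement $U_n\cong\AA^{2n-1}\times\Gm$ retracts onto $\Gm$; the chain of inclusions $\Gm\to\aone\to X_{2n}$ and the octahedral axiom give a cofiber sequence $\pone\to\pone\wedge(Z_n\setminus E_n)_+\to X_{2n}$ whose first map is the $\pone$-suspension of $S^0\to (Z_n\setminus E_n)_+$ and is therefore split, identifying $X_{2n}\simeq\pone\wedge(X_{2n-2}\times\aone)$, which is contractible by the inductive hypothesis---no degree computation and no stable-to-unstable descent is required. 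Your Part (2), on the other hand, is essentially sound (the survey itself gives no proof and only cites \cite{AsokDoranFasel}): the vanishing $H^1(X_{2n}\times\AA^k,\mathcal{O})\cong H^2_{E_n\times\AA^k}(Q_{2n}\times\AA^k,\mathcal{O})=0$ for $n\geq 3$, the d\'evissage of the unipotent group into $\GG_a$-extensions, triviality of the resulting $\GG_a$-torsors, and the contradiction with non-affineness (regular functions on $X_{2n}$ extend across the codimension-$n$ subvariety $E_n$) is the expected argument, and it correctly explains why $n\geq 3$ is needed in view of Winkelmann's $X_4$. Do make explicit that the intermediate quotients are schemes (a $\GG_a$-torsor is an affine morphism, so representability descends from the scheme base upward through the tower) and that in characteristic zero the composition series can be chosen with each successive quotient central, so that each stage really is a $\GG_a$-torsor.
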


\begin{proof}
We will sketch the proof of the first statement, which follows by induction starting from the fact that $X_2 \cong {\mathbb A}^2$ and is thus $\aone$-contractible.  To set up the induction, we introduce some terminology.  Let $U_n$ be the open subscheme of $Q_{2n}$ defined by $x_n \neq 0$.  Note that $U_n$ is isomorphic to ${\mathbb A}^{2n-1} \times \gm{}$ with coordinates $x_1,\ldots,x_{n-1},y_1,\ldots,y_{n},z$ on the ${\mathbb A}^{2n-1}$-factor and coordinate $x_n$ on the $\gm{}$-factor.  The closed complement $Z_n$ of $U_n$, i.e., the closed subscheme of $Q_{2n}$ defined by $x_n = 0$ is isomorphic to $Q_{2n-2} \times \aone$ with coordinate $y_n$ on the $\aone$-factor.  The point $x_1 = \cdots = x_{n-1} = y_1 = \cdots = y_{n} = z = 0$ defines a point $0$ on $Z_n$.  The normal bundle to $Z_n$ is a trivial line bundle, with an explicit trivialization defined by the equation $x_n = 0$.  Note that, by construction $E_n$ is a closed subscheme of $Z_n$, and therefore $U_{2n} \subset X_{2n}$ as well.  Likewise, the subscheme $Z_n \setminus E_n$ is isomorphic to $X_{2n-2} \times \aone$.

The closed subscheme of $U_n$ defined by setting $x_1 = \cdots = x_{n-1} = y_1 = \cdots = y_n = z = 0$ is isomorphic to $\gm{}$ with coordinate $x_n$.  The inclusion map $\gm{} \to U_n$ is a monomorphism of presheaves, and splits the projection $U_n \cong {\mathbb A}^{2n-1} \times \gm{} \to \gm{}$.  In particular, the map $U_n \to \gm{}$ is an $\aone$-weak equivalence.  Likewise, the closed subscheme of $Q_{2n}$ defined by $x_1 = \cdots = x_{n-1} = y_1 = \cdots = y_n = z = 0$ is isomorphic to $\aone$ with coordinate $x_n$ and we have a pullback diagram of the form
\[
\xymatrix{
0 \ar[r]\ar[d] & \aone \ar[d] \\
Z_n \ar[r] & Q_{2n};
}
\]
where the top map is given by $x_n = 0$ and thus maps on normal bundles to the horizontal closed immersions have compatible trivializations.  Since $E_n$ is disjoint from this copy of $\aone$, it follows that we have a sequence of inclusions of the form:
\[
\gm{} \longrightarrow \aone \longrightarrow X_{2n}.
\]
We would like to understand the homotopy cofiber of the composite map, which coincides with the homotopy cofiber of the map $U_n \to X_n$ since the projection map $U_n \to \gm{}$ is an $\aone$-weak equivalence.

The cofiber of a composite fits into a cofiber sequence involving the cofibers of the terms by the ``octahedral" axiom in a model category.  Since $\aone$ is $\aone$-contractible, the homotopy cofiber of the map $\aone \to X_{2n}$ is $X_{2n}$ pointed by $0$.  The cofiber of $\gm{} \to \aone$ is ${\pone}$ by the purity isomorphism (see Theorem~\ref{thm:homotopypurity}).  Likewise, the cofiber of the map $\gm{} \to X_{2n}$ coincides with $Th(\nu_{(Z_n \setminus E_n)/X_{2n}}) = {\pone}_+ \wedge (X_{2n-2} \times \aone)$, and there is thus a cofiber sequence of the form
\[
{\pone} \longrightarrow {\pone} \wedge (Z_n \setminus E_n)_+ \longrightarrow X_{2n}.
\]
By construction, and functoriality of the purity isomorphism (again Theorem~\ref{thm:homotopypurity}), the left hand map is the $\pone$-suspension of the map $S^0_k \to (Z_n \setminus E_n)_+$ given by inclusion of $0$ in the first factor.  It is therefore split by the map $(Z_n \setminus E_n)_+ \to S^0_k$ that corresponds to adding a disjoint base-point to the structure map.  Thus, one concludes that there is an induced $\aone$-weak equivalence $\pone \wedge (Z_n \setminus E_n) \to X_{2n}$.  Since $(Z_n \setminus E_n) \cong X_{2n-2} \times \aone$, it is $\aone$-contractible, and the suspension $\pone \wedge (Z_n \setminus E_n)$ is also $\aone$-contractible.
\end{proof}

\begin{question}
Is the total space of a Jouanolou device over $X_{2n}$ isomorphic to an affine space?
\end{question}

\section{Further computations in $\aone$-homotopy theory}
In the preceding section, we revisited some constructions from affine (and quasi-affine) varieties from the standpoint of $\aone$-homotopy theory.  Of these, constructions and results, only Theorem~\ref{thm:highdimquasiaffinecontractible} really required tools of $\aone$-homotopy theory.  To connect with some of the other questions mentioned in the introduction, in particular the generalized van de Ven Question \ref{question:vandeVenquestion}, in this section we will discuss connectivity from the standpoint of $\aone$-homotopy theory, and close with some of the basic computations of the analogs of classical (unstable) homotopy groups of spheres.

\subsection{$\aone$-homotopy sheaves}
Suppose $(\mathscr{X},x)$ is a pointed space.  Earlier, we defined bi-graded motivic spheres $S^{i,j}$.  These bi-graded motivic spheres allow us to define corresponding homotopy groups.

\subsubsection*{Basic definitions}
\begin{defn}
\label{defn:connectedcomponents}
Given a space $\mathscr{X}$, the sheaf of $\aone$-connected components, denoted $\bpi_0^{\aone}({\mathscr X})$, is the Nisnevich sheaf on $\Sm_k$ associated with the presheaf $U \mapsto [U,\mathscr{X}]_{\aone}$.
\end{defn}

\begin{defn}
If $\mathscr{X}$ is a motivic space, then $\mathscr{X}$ is $\aone$-connected if the canonical morphism $\bpi_0^{\aone}({\mathscr X}) \to \Spec k$ is an isomorphism (and $\aone$-disconnected otherwise).
\end{defn}

\begin{defn}
\label{defn:homotopysheaves}
Given a pointed space $(\mathscr{X},x)$, the $i$-th $\aone$-homotopy sheaf, denoted $\bpi_i^{\aone}(\mathscr{X},x)$, is the Nisnevich sheaf on $\Sm_k$ associated with the presheaf $U \mapsto [S^i \wedge U_+,(\mathscr{X},x)]_{\aone}$.
\end{defn}

As in classical topology, one can formally show that $\bpi_1^{\aone}(\mathscr{X},x)$ is a Nisnevich sheaf of groups, and $\bpi_i^{\aone}({\mathcal X},x)$ is a Nisnevich sheaf of abelian groups for $i > 1$.  In fact, results of Morel show that, just like in topology, these sheaves of groups are ``discrete" in an appropriate sense; see \cite{MICM} for an introduction to these ideas and \cite{MField} for details.  The following result, called the $\aone$-Whitehead theorem for its formal similarity to the ordinary Whitehead theorem for CW complexes, is a formal consequence of the definitions.

\begin{prop}[{\cite[\S 3 Proposition 2.14]{MV}}]
A morphism $f: \mathscr{X} \to \mathscr{Y}$ of $\aone$-connected spaces is an $\aone$-weak equivalence if and only if for any choice of base-point $x$ for $\mathcal{X}$, setting $y = f(x)$ the induced morphism
\[
\bpi_i^{\aone}(\mathscr{X},x) \longrightarrow \bpi_i^{\aone}({\mathscr Y},y)
\]
is an isomorphism.
\end{prop}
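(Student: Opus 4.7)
The plan is to reduce the statement to the classical Whitehead theorem for simplicial sheaves on a site with enough points, by first showing that $\aone$-homotopy sheaves compute the simplicial homotopy sheaves of $\aone$-local replacements.

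First I would dispatch the ``only if'' direction. If $f$ is an $\aone$-weak equivalence, then for every smooth $U$ the induced map $[S^i \wedge U_+, (\mathscr{X},x)]_{\aone} \to [S^i \wedge U_+, (\mathscr{Y},y)]_{\aone}$ is a bijection (precompose with $f$ in the localized hom-set), so sheafifying yields an isomorphism $\bpi_i^{\aone}(\mathscr{X},x) \isomto \bpi_i^{\aone}(\mathscr{Y},y)$ for every $i \geq 0$ and every base-point.

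For the substantive direction, I would replace both spaces by $\aone$-fibrant models $\Laone \mathscr{X}$ and $\Laone \mathscr{Y}$ (which exist in the Morel--Voevodsky model structure) and reduce to showing that a morphism $g \colon \mathscr{X}' \to \mathscr{Y}'$ between $\aone$-fibrant simplicial presheaves that induces isomorphisms on all simplicial homotopy sheaves (in the Nisnevich topology) is a Nisnevich-local weak equivalence; since both $g$ and its source/target are $\aone$-fibrant, this is equivalent to being an $\aone$-weak equivalence. The key input is that for any $\aone$-fibrant space $\mathscr{X}'$ and any choice of base-point, the Nisnevich sheaf $\bpi_i^{\aone}(\mathscr{X}',x)$ coincides with the simplicial homotopy sheaf $\bpi_i(\mathscr{X}',x)$ in the local homotopy category; this is essentially the definition of $\Laone$ together with the standard identification of mapping spaces with simplicial function complexes on fibrant objects.

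Next, I would invoke the local Whitehead theorem for simplicial presheaves on the site $\Sm_k$ equipped with the Nisnevich topology. The Nisnevich topology has enough points, with stalks corresponding to henselizations of local rings of smooth $k$-schemes, so weak equivalences in the local model structure can be checked on Nisnevich stalks. Thus, if $g$ induces isomorphisms on all simplicial homotopy sheaves, it induces a weak equivalence of pointed simplicial sets on every stalk (after choosing compatible base-points, which is fine given $\aone$-connectedness to handle the $\pi_0$ level), and one applies the classical Whitehead theorem pointwise. The $\aone$-connectedness hypothesis is exactly what is needed to bypass the usual caveat that Whitehead requires control on all components.

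The main subtlety --- and the step I expect to require the most care --- is the handling of base-points when passing between presheaves, their sheafifications, and stalks. Because $\bpi_i^{\aone}(\mathscr{X},x)$ is defined by sheafifying a presheaf of $[S^i \wedge U_+, \cdot]_{\aone}$-sets and a base-point on $\mathscr{X}$ over $\Spec k$ does not automatically give base-points at every stalk, one uses $\aone$-connectedness to ensure that the relevant fibers of $g$ on stalks are connected and that the long exact sequence of a fibration of simplicial sets applies to conclude the Whitehead-type comparison. Once these base-point issues are organized, the conclusion follows formally from combining the $\aone$-fibrant replacement, the stalkwise criterion for Nisnevich-local weak equivalences, and the classical Whitehead theorem for pointed connected simplicial sets.
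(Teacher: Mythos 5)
Your argument is correct and is essentially the standard one: the paper does not prove this statement itself but cites \cite[\S 3 Proposition 2.14]{MV} and calls it a formal consequence of the definitions, and your route --- $\aone$-fibrant replacement, identification of $\bpi_i^{\aone}$ with the Nisnevich simplicial homotopy sheaves of the localization, and a stalkwise application of the classical Whitehead theorem (with $\aone$-connectedness guaranteeing connected stalks so that the single global base-point suffices) --- is exactly that intended argument.
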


The following result is called the unstable $0$-connectivity theorem.

\begin{thm}[{\cite[\S 2 Corollary 3.22]{MV}}]
\label{thm:unstable0connectivity}
If $\mathscr{X}$ is a space, then the canonical map $\mathscr{X} \to \bpi_0^{\aone}(\mathscr{X})$ is an epimorphism after Nisnevich sheafification.
\end{thm}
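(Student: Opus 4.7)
The plan is to show that for any $U \in \Sm_k$ and any section $s \in \bpi_0^{\aone}(\mathscr{X})(U)$, there exists a Nisnevich cover $p : V \to U$ such that $p^*s$ lies in the image of $\mathscr{X}_0(V)$. By the defining universal property of Nisnevich sheafification, it suffices to treat the case where $s$ is represented by an actual class $[f] \in [U, \mathscr{X}]_{\aone}$, and then exhibit a Nisnevich refinement on which $[f]$ is represented by an honest morphism of simplicial presheaves, i.e., by a $0$-simplex of $\mathscr{X}(V)$.

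The key input is a concrete model for $\aone$-fibrant replacement built from the Suslin--Voevodsky singular functor $\operatorname{Sing}^{\aone}$, defined on a simplicial presheaf $\mathscr{X}$ by
\[
(\operatorname{Sing}^{\aone}\mathscr{X})_n(V) := \mathscr{X}_n(V \times \Delta^n_k),
\]
where $\Delta^n_k := \Spec k[t_0,\dots,t_n]/(t_0 + \cdots + t_n - 1)$ is the algebraic $n$-simplex. The canonical map $\mathscr{X} \to \operatorname{Sing}^{\aone}\mathscr{X}$ is an $\aone$-weak equivalence, and interleaving $\operatorname{Sing}^{\aone}$ with Nisnevich-fibrant replacement yields a functorial $\aone$-fibrant replacement $\mathscr{X}^{f,\aone}$ of $\mathscr{X}$; in particular, one has an identification $[U,\mathscr{X}]_{\aone} = \pi_0 \mathscr{X}^{f,\aone}(U)$.

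Represent $[f]$ by an element of $\pi_0 \mathscr{X}^{f,\aone}(U)$. Because the comparison $\operatorname{Sing}^{\aone}\mathscr{X} \to \mathscr{X}^{f,\aone}$ is a Nisnevich-local weak equivalence, it induces an isomorphism of sheaves $a_{\Nis}\pi_0(\operatorname{Sing}^{\aone}\mathscr{X}) \xrightarrow{\sim} \bpi_0^{\aone}(\mathscr{X})$. Hence, after refining by a Nisnevich cover $V \to U$, the class $[f]|_V$ lifts to an element of $\pi_0\bigl(\operatorname{Sing}^{\aone}\mathscr{X}(V)\bigr)$. But $\pi_0$ of any simplicial set is canonically a quotient of its set of $0$-simplices, so $[f]|_V$ is represented by some
\[
\xi \in (\operatorname{Sing}^{\aone}\mathscr{X})_0(V) = \mathscr{X}_0(V),
\]
which is precisely a map $V \to \mathscr{X}$ of simplicial presheaves whose $\aone$-homotopy class pulls back to $[f]|_V$. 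This is the required Nisnevich-local lift.

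The principal technical obstacle is the $\operatorname{Sing}^{\aone}$-based formula for $\aone$-fibrant replacement, i.e., verifying that iterating $L_{\Nis} \circ \operatorname{Sing}^{\aone}$ produces an $\aone$-local object in the Nisnevich-local homotopy category; this is the standard but nontrivial input of \cite{MV}. Once it is in hand, the remainder of the argument collapses into the combinatorial tautology $\pi_0(K) = K_0/\!\sim$ for any simplicial set $K$, combined with exactness of Nisnevich sheafification. All the geometric content is absorbed into the identity $(\operatorname{Sing}^{\aone}\mathscr{X})_0 = \mathscr{X}_0$, which records that $\operatorname{Sing}^{\aone}$ does not alter the presheaf of $0$-simplices.
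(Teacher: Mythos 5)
There is a genuine gap. Your key step asserts that the comparison $\operatorname{Sing}^{\aone}\mathscr{X} \to \mathscr{X}^{f,\aone}$ is a Nisnevich-local weak equivalence, and hence induces an isomorphism $a_{\Nis}\pi_0(\operatorname{Sing}^{\aone}\mathscr{X}) \xrightarrow{\sim} \bpi_0^{\aone}(\mathscr{X})$. This is false in general. Both $\mathscr{X}\to\operatorname{Sing}^{\aone}\mathscr{X}$ and $\mathscr{X}\to\mathscr{X}^{f,\aone}$ are $\aone$-weak equivalences, so the comparison map is an $\aone$-weak equivalence; but for it to be a Nisnevich-local equivalence one would need $\operatorname{Sing}^{\aone}\mathscr{X}$ to be $\aone$-local already after a single pass, which is precisely the well-known failure that forces one to iterate $L_{\Nis}\circ\operatorname{Sing}^{\aone}$ transfinitely in the first place --- a point you yourself flag as the ``principal technical obstacle'' without realizing your epimorphism argument also depends on it. Were your claim correct, $[U,\mathscr{X}]_{\aone}$ would be computed by naive $\aone$-homotopy classes of maps after Nisnevich sheafification; the paper explicitly cautions against exactly this in Remark~\ref{rem:naivehomotopy}. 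So the lift of $[f]|_V$ to $\pi_0(\operatorname{Sing}^{\aone}\mathscr{X}(V))$, and hence to a $0$-simplex, is not available by the route you take.

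The observation you rely on --- that $\operatorname{Sing}^{\aone}$ leaves $0$-simplices untouched, so $\pi_0$ of a section complex is a quotient of the corresponding set of $0$-simplices --- is the right geometric input, but it must be threaded through the entire transfinite construction of $\mathscr{X}^{f,\aone}$, not invoked once. Write $\mathscr{X}^{(\alpha)}$ for the $\alpha$-th stage of the iteration; the inductive claim is that $a_{\Nis}\pi_0(\mathscr{X}) \to a_{\Nis}\pi_0(\mathscr{X}^{(\alpha)})$ is an epimorphism of sheaves. At a successor stage given by $\operatorname{Sing}^{\aone}$, the map on $0$-simplices is a bijection, so $\pi_0$ of each section complex remains a quotient of the previous one and the sheafified $\pi_0$ receives an epimorphism; at a successor stage given by $L_{\Nis}$, the map is a Nisnevich-local weak equivalence and $a_{\Nis}\pi_0$ is unchanged; at a limit ordinal, $a_{\Nis}\pi_0$ commutes with the filtered colimit and a filtered colimit of quotients of a fixed sheaf is again a quotient. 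Passing to the colimit gives $a_{\Nis}\pi_0(\mathscr{X})\twoheadrightarrow\bpi_0^{\aone}(\mathscr{X})$, and prepending the tautological epimorphism $a_{\Nis}(\mathscr{X}_0)\twoheadrightarrow a_{\Nis}\pi_0(\mathscr{X})$ yields the theorem. Your write-up collapses this induction to a single step, and that collapse is exactly where it breaks.
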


\subsubsection*{$\aone$-rigid varieties embed into $\ho{k}$}
One rather fundamental difference between the $\aone$-homotopy category and the classical homotopy category is that while classical homotopy types are essentially discrete, $\aone$-homotopy types may vary in families.  We begin by recalling the following definition, which begins to analyze the interaction with morphisms from the affine line and $\aone$-connected components.

\begin{defn}
\label{defn:A1rigid}
A smooth scheme of finite type $X\in \Sm_{k}$ is $\aone$-rigid if the map
\begin{equation}
\label{equation:A1rigidbijection}
\Sm_{k}(Y\times\aone,X)
\longrightarrow
\Sm_{k}(Y,X)
\end{equation}
induced by the $0$-section $\Spec k\to\aone$ is a bijection for every $Y\in \Sm_{k}$.
\end{defn}

\begin{rem}
Let $\pi\colon Y\times\aone\to\aone$ denote the projection map.
Then the composite map
\begin{equation}
\label{equation:A1rigidcomposite}
\Sm_{k}(Y,X)
\overset{\pi^{\ast}}{\to}
\Sm_{k}(Y\times\aone,X)
\longrightarrow
\Sm_{k}(Y,X)
\end{equation}
is the identity.  Thus $X$ is $\aone$-rigid if and only if $\pi^{\ast}$ is surjective or equivalently \eqref{equation:A1rigidbijection} is injective for all $Y\in \Sm_{k}$.
\end{rem}

\begin{lem}
A smooth scheme of finite type $X\in \Sm_{k}$ is $\aone$-local fibrant if and only if it is $\aone$-rigid.
\end{lem}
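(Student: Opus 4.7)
The plan is to unpack the definition of $\aone$-local fibrancy for a representable presheaf and show it collapses to the $\aone$-rigidity condition. Recall from the earlier discussion (reproducing the criterion of \cite[Definition 2.10]{DRO} quoted in the proof of Lemma~\ref{lem:compactness}) that a motivic space $F \in \Spc_{k,\bullet}$ is fibrant precisely when (1)~$F(U)$ is a Kan complex for every $U \in \Sm_k$, (2)~the projection $U \times \aone \to U$ induces a simplicial homotopy equivalence $F(U) \simeq F(U \times \aone)$, (3)~Nisnevich elementary distinguished squares are carried to homotopy pullback squares of simplicial sets, and (4)~$F(\emptyset)$ is contractible. I will check that, when $F$ is the representable presheaf $\Sm_k(-,X)$ attached to a smooth $k$-scheme $X$ of finite type, conditions (1), (3), and (4) hold automatically, so that fibrancy is controlled entirely by condition (2), which in turn will be identified with $\aone$-rigidity.

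First I would observe that $X$, regarded as a constant simplicial presheaf, takes values in \emph{discrete} simplicial sets: $X(U) = \Sm_k(U,X)$ is a set, hence a Kan complex, giving (1); and $X(\emptyset) = \Sm_k(\emptyset,X)$ is a singleton, hence contractible, giving (4). For condition (3), I would invoke the fact that any scheme defines a Nisnevich sheaf on $\Sm_k$: a Nisnevich distinguished square (Definition~\ref{defn:Nisnevichsquare}) is a colimit diagram in $\Sm_k$ in the precise sense recorded earlier, so applying $\Sm_k(-,X)$ produces a pullback square of sets, and a pullback square of discrete simplicial sets is a homotopy pullback. Thus conditions (1), (3), and (4) are satisfied with no hypothesis on $X$ beyond being a smooth scheme.

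This reduces fibrancy of $X$ to condition (2): the map of discrete simplicial sets
\[
\Sm_k(U,X) \;\longrightarrow\; \Sm_k(U \times \aone, X)
\]
induced by the zero section $U \to U \times \aone$ is a homotopy equivalence for every $U \in \Sm_k$. Since both sides are discrete, a homotopy equivalence is exactly a bijection. By the remark following Definition~\ref{defn:A1rigid}, the composite \eqref{equation:A1rigidcomposite} with the pullback $\pi^{\ast}$ along the projection is the identity on $\Sm_k(U,X)$, so the map induced by the zero section is automatically surjective and the bijectivity condition is equivalent to injectivity of the map induced by the zero section, which is exactly the definition of $\aone$-rigidity.

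The argument is essentially a bookkeeping exercise with the fibrancy criterion; the main conceptual point — and the only thing one must be vigilant about — is the verification of Nisnevich descent for the representable presheaf, since this is the one condition among (1)--(4) that uses anything nontrivial about $X$ (namely, that a scheme is a sheaf in the Nisnevich topology). Everything else follows from the fact that representables land in sets, where weak equivalences are bijections.
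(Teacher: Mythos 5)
Your proof is correct and takes essentially the same route as the paper's: both reduce the question to the observation that a representable (discrete) simplicial presheaf is automatically locally fibrant (Kan, Nisnevich descent, and the condition at $\emptyset$ being automatic), so that $\aone$-local fibrancy comes down to the $\aone$-invariance map being a weak equivalence of discrete simplicial sets, hence a bijection, which is exactly $\aone$-rigidity via the retraction remark. One cosmetic slip: the map in your condition (2) is $\pi^{\ast}$, induced by the projection $U\times\aone\to U$, while the zero section induces the map in the opposite direction, namely \eqref{equation:A1rigidbijection}; since each is bijective precisely when the other is (their composite being the identity), this does not affect the argument.
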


\begin{proof}
Since every object of $\Sm_{k}$ is local projective fibrant, $X\in \Sm_{k}$ is $\aone$-local fibrant if and only if \eqref{equation:A1rigidbijection} is a weak equivalence of simplicial sets.  We note that every discrete simplicial set is cofibrant and fibrant.  Thus \eqref{equation:A1rigidbijection} is a weak equivalence if and only if it is a homotopy equivalence or equivalently a bijection.
\end{proof}

\begin{cor}
\label{cor:aonerigidfullyfaithful}
The full subcategory of $\aone$-rigid schemes in $\Sm_{k}$ embeds fully faithfully into $\ho{k}$.  Moreover, if $X$ is $\aone$-rigid, the canonical morphism $X \to \bpi_0^{\aone}(X)$ is an isomorphism of Nisnevich sheaves.
\end{cor}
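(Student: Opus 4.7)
The plan is to deduce both statements from the preceding lemma, which identifies $\A^{1}$-rigid smooth schemes with $\A^{1}$-local fibrant objects in $\Spc_{k}$. The key input is the standard fact from simplicial model category theory that, once a model structure is in place, morphisms in the homotopy category between a cofibrant source and a fibrant target are computed as $\pi_{0}$ of the mapping space. Since every representable presheaf is cofibrant in the projective model structure used to construct $\ho{k}$, and since $\A^{1}$-rigid schemes are fibrant by the preceding lemma, this reduces $[Y,X]_{\A^{1}}$ to a concrete computation.

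First I would record the following chain of identifications: for any $Y\in\Sm_{k}$ and any $\A^{1}$-rigid $X$,
\[
[Y,X]_{\A^{1}}
\;\cong\;
\pi_{0}\,\Map(Y,X)
\;\cong\;
\pi_{0}\,\Sm_{k}(Y,X)
\;=\;
\Sm_{k}(Y,X),
\]
where $\Map$ denotes the simplicial mapping space computed in the $\A^{1}$-local projective model structure on $\Spc_{k}$. The second isomorphism uses that $X$, viewed as a constant simplicial presheaf, has discrete simplicial sets of sections, so its derived mapping space out of a representable cofibrant object is itself a discrete simplicial set. Naturality in $Y$ shows that the functor $X\mapsto \mathrm{Hom}_{\ho{k}}(-,X)$, restricted to $\A^{1}$-rigid schemes, agrees with the representable presheaf, which by Yoneda gives the fully faithful embedding into $\ho{k}$.

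For the second statement, I would apply the computation above with $Y=U$ for arbitrary $U\in\Sm_{k}$: the presheaf $U\mapsto [U,X]_{\A^{1}}$ is naturally isomorphic to the representable presheaf $U\mapsto \Sm_{k}(U,X)$ of $X$. Since representable presheaves on $\Sm_{k}$ are already Nisnevich sheaves, Nisnevich sheafification is a no-op here, and Definition~\ref{defn:connectedcomponents} yields an isomorphism $X\isomto \bpi_{0}^{\A^{1}}(X)$ of Nisnevich sheaves. The canonical map in the statement agrees with this identification by construction.

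The main obstacle, such as it is, is purely bookkeeping: one must be careful that the $\Map$-identification really does use that the target is a \emph{constant} simplicial presheaf and that every smooth scheme (as a representable) is cofibrant in the projective $\A^{1}$-local model structure; both are standard features of the Morel--Voevodsky framework as reviewed earlier in Section~\ref{section:aqitmht}. No delicate $\A^{1}$-localization is required at any stage, because the lemma has already absorbed all the $\A^{1}$-content of the argument into the assertion that $\A^{1}$-rigid schemes are $\A^{1}$-fibrant.
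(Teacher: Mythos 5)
Your proof is correct. The paper in fact states this corollary without an explicit proof, leaving it as a direct consequence of the preceding lemma, and your argument supplies exactly the standard reasoning one would give: since representables are cofibrant in the projective $\aone$-local model structure (and left Bousfield localization does not alter cofibrations), and $\aone$-rigid schemes are fibrant by the lemma, the set $[Y,X]_{\aone}$ is computed as $\pi_{0}$ of the mapping space, which is constant on the set $\Sm_{k}(Y,X)$ because the target is a discrete simplicial presheaf. Full faithfulness and the identification $X \cong \bpi_{0}^{\aone}(X)$ (via representables being Nisnevich sheaves, so sheafification is the identity) both drop out, and you correctly note that all $\aone$-local content has already been absorbed into the lemma.
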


\begin{ex}
\label{ex:curvesaonerigid}
We note that $\gm{}$ is $\aone$-rigid.  To show \eqref{equation:A1rigidbijection} is injective for all $Y\in \Sm_{k}$ we may assume $Y=\Spec R$, where $R$ is a finitely generated $k$-algebra.  In fact, if $R$ is a reduced commutative ring, then the pullback map $R^{\times} \to R[x]^{\times}$ is bijective.  More generally, any open subscheme of $\gm{}$ is $\aone$-rigid.  Similarly, one may show that if $X$ is a smooth curve of genus $g \geq 1$ and $U \subset X$ is an open subscheme, then $U$ is also $\aone$-rigid.
\end{ex}

\begin{ex}
The scheme $\pone$ is not $\aone$-rigid because $\pi^{\ast}$ is not surjective when $Y=\Spec k$ (there are of course many non-constant embeddings $\aone\hookrightarrow\pone$).  We will explore this example more in Section~\ref{ss:aoneconnected}.
\end{ex}

\begin{ex}
Recall that a semi-abelian variety is a smooth connected algebraic group $G$ obtained as an extension
\[
1 \longrightarrow T \longrightarrow G \longrightarrow A \longrightarrow 1
\]
of an abelian variety $A$, i.e., a smooth connected proper algebraic group, by a torus $T$.  For example, $A$ can be the Jacobian of an algebraic curve of positive genus and $T$ can be the multiplicative group scheme.  For any map $\phi\colon\aone\to G$ the composite $\aone\to G\to A$ is constant.  To show this we may assume $k$ is algebraically closed.  Indeed every map $\rho\colon\pone\to A$ is constant: by L{\"u}roth's theorem, we are reduced to consider the normalization, i.e., we may assume that $\rho$ is birational onto its image.  In that case, the differential $d\rho: T_{\pone}\to T_{A}$ is injective being non-zero at the generic point.  Now the tangent sheaf $T_{A}$ is trivial while $T_{\pone}\cong \mathscr{O}_{\pone}(2)$.  However,  there is no injective map $\mathscr{O}_{\pone}(2) \to \mathscr{O}_{\pone}^{\oplus n}$, where $n$ is the dimension of $A$.

Returning to $\phi$ we conclude there exists $g\in G(k)$ such that $\phi$ factors through the translate $gT\subset G$, and we may view $\phi$ as a map $\aone\to (\aone\setminus \{0\})^{n}$ for some $n>0$.  It follows that $f$ is constant because every map $\aone\to \aone\setminus \{0\}$ is constant.
In fact, the affine line provides a useful geometric characterization of semi-abelian varieties by \cite[Proposition 5.4.5]{MR3645068}:
If $G$ is a smooth connected algebraic group over a perfect field, then $G$ is a semi-abelian variety if and only if every map $\aone\to G$ is constant.

Finally, since abelian varieties exist in non-constant families, we see $\aone$-homotopy types exist in non-constant families.
\end{ex}

\begin{rem}
\label{rem:algebraichyperbolicity}
Over the complex numbers, an algebraic variety $X$ is called {\em Brody} hyperbolic if every holomorphic map $\cplx \to X$ is constant and {\em algebraically hyperbolic} (also called {\em Mori hyperbolic}) if every algebraic morphism $\aone \to X$ is constant.  It is not hard to show that Mori hyperbolic varieties are $\aone$-rigid in the sense above.
\end{rem}

\subsection{$\aone$-connectedness and geometry}
\label{ss:aoneconnected}
Having explored varieties that were discrete from the standpoint of $\aone$-homotopy theory, we now discuss aspects of connectedness in $\aone$-homotopy theory.
Recall that a motivic space $\mathscr{X}$ is $\aone$-connected if the canonical morphism $\bpi_0^{\aone}({\mathscr X}) \to \Spec k$ is an isomorphism (and $\aone$-disconnected otherwise).

\begin{rem}
Since the $\aone$-homotopy category is constructed by a localization procedure, the sheaf $\bpi_0^{\aone}({\mathscr X})$ is rather abstractly defined and hard to ``compute" in practice.  To give one indication of how $\aone$-connectedness interacts with arithmetic, suppose $X$ is a scheme $k$-scheme with $k$ a field.  Since stalks in the Nisnevich topology on $\Sm_k$ are henselizations of points on smooth schemes, Theorem~\ref{thm:unstable0connectivity} implies that if $X$ is an $\aone$-connected smooth scheme, then $X(S)$ is non-empty for $S$ every henselization of a smooth scheme at a point.  In particular, $\aone$-connected smooth schemes always have $k$-rational points.
\end{rem}

One would like to have a more ``geometric" interpretation of $\aone$-connectedness.  Of course, any $\aone$-contractible space is $\aone$-connected, by the very definition.  For this, we recall how connectedness is studied in topology: a topological space is {\em path} connected if any two points can be connected by a map from the unit interval.  Replacing the unit interval by the affine line, we could define a notion of $\aone$-path connectedness.  For flexibility, we will use a slightly more general definition.

\begin{defn}
If $X$ is a smooth $k$-scheme, say that $X$ is {\em $\aone$-chain connected} if for every separable, finitely generated extension $K/k$, $X(K)$ is non-empty, and for any pair $x, y \in X(K)$, there exist an integer $N$ and a sequence $x = x_0, x_1,\ldots,x_N = y \in X(K)$ together with morphisms $f_1,\ldots,f_N: \aone_K \to X$ with the property that $f_i(0) = x_{i-1}$ and $f_i(1) = x_i$; loosely speaking: any two points can be connected by the images of a chain of maps from the affine line.
\end{defn}

\begin{rem}
Note: $K$ is not necessarily a finite extension, so this definition is nontrivial even when $k = \cplx$.  Indeed, in that case, we ask, e.g., that $\cplx(t)$-points, $\cplx(t_1,t_2)$-points, etc. can all be connected by the images of chains of affine lines.
\end{rem}

From the definitions given, it is not clear that either $\aone$-connectedness implies $\aone$-chain connectedness or vice versa.  In one direction, the problem is that $\aone$-chain connectedness only imposes conditions over fields: while fields are examples of stalks in the Nisnevich topology, they do not exhaust all examples of stalks.

\begin{prop}[{\cite[Lemma 3.3.6]{MIntro} and \cite[Lemma 6.1.3]{MStable}}]
If $X$ is an $\aone$-chain connected smooth variety, then $X$ is $\aone$-connected.
\end{prop}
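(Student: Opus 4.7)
The goal is to show that the canonical morphism of Nisnevich sheaves $\bpi_0^{\aone}(X) \to \Spec k$ is an isomorphism, equivalently that $\bpi_0^{\aone}(X)$ is terminal. Since a morphism of Nisnevich sheaves is an isomorphism iff it is so on stalks, I reduce to verifying $\bpi_0^{\aone}(X)(\Spec R) = \{\ast\}$ for every henselization $R = \mathcal{O}_{U,u}^h$ of the local ring of a smooth $k$-scheme at a point $u$. Because every Nisnevich cover of a henselian local scheme admits a section, Nisnevich sheafification leaves sections over $\Spec R$ unchanged, so this stalk identifies with $[\Spec R, X]_{\aone}$.

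For non-emptiness, the residue field $K = R/\mathfrak m_R$ is a finitely generated separable extension of $k$, so $X(K) \neq \emptyset$ by hypothesis, and Hensel's lemma applied to the smooth scheme $X$ lifts any $K$-point to an element of $X(R)$. For uniqueness, given $f, g \in X(R)$ with residues $\bar f, \bar g \in X(K)$, the hypothesis produces a chain of $K$-morphisms $\bar h_i \colon \aone_K \to X$ with $\bar h_i(0) = \bar x_{i-1}$, $\bar h_i(1) = \bar x_i$, $\bar x_0 = \bar f$, $\bar x_n = \bar g$. The heart of the argument is to lift this chain to $R$: produce $h_i \colon \aone_R \to X$ together with $x_i \in X(R)$ such that $x_0 = f$, $x_n = g$, $h_i(0) = x_{i-1}$, $h_i(1) = x_i$, and each $h_i$ reduces to $\bar h_i$ modulo $\mathfrak m_R$. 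Each such $h_i$ is a naive $\aone$-homotopy, so by Remark~\ref{rem:naivehomotopy} it yields $[x_{i-1}] = [x_i]$ in $[\Spec R, X]_{\aone}$; chaining gives $[f] = [g]$.

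The main obstacle is the lifting step, since $R[t]$ itself is not henselian. The deformation functor parameterizing morphisms $\aone_R \to X$ with prescribed value at $0$ and prescribed residue $\bar h_i$ is controlled by the smoothness of $X$; an application of Artin approximation to the henselian local ring $R$ (or iterative Hensel lifting combined with formal smoothness along $(\mathfrak m_R, t)$-adic neighborhoods) then produces an algebraic lift. Setting $x_i := h_i(1)$ continues the chain inductively; a final Hensel lift aligns the last endpoint with $g$. An alternative and arguably cleaner route uses the algebraic singular construction $\mathrm{Sing}^{\aone}(X)$: one identifies $\pi_0 \mathrm{Sing}^{\aone}(X)(\Spec K)$ with the set of chain-connected equivalence classes on $X(K)$, which is a singleton by hypothesis, and then argues that Nisnevich sheafification preserves this triviality because smoothness of $X$ supplies the required local lifts.
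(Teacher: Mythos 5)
Your reduction to stalks over henselian local rings $R$ is the right first move, and the surjectivity part (lifting a point of $X(K)$, $K$ the residue field, to an $R$-point using smoothness of $X$ and henselianness of $R$) is fine. The gap is in the step you yourself identify as the heart of the argument: lifting the chain of naive homotopies $\bar h_i\colon \aone_K \to X$ to homotopies $\aone_R \to X$ with prescribed endpoint and reduction. The pair $(R[t],\mathfrak m_R R[t])$ is \emph{not} henselian, so neither Hensel's lemma nor the infinitesimal lifting property of the smooth scheme $X$ produces an honest lift: formal smoothness only yields compatible lifts modulo the powers $\mathfrak m_R^n$, i.e.\ a section over an adic completion (a ring of restricted power series rather than $R[t]$), and the appeal to Artin approximation is not set up to produce a polynomial homotopy respecting the two constraints $h_i(0)=x_{i-1}$ and $h_i\equiv\bar h_i \bmod \mathfrak m_R$ simultaneously. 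Worse, even granting such lifts, the endpoints drift: $h_i(1)$ is merely \emph{some} lift of $\bar x_i$, so at the last stage you obtain a lift $x_n$ of $\bar g$ which has no reason to equal $g$; your ``final Hensel lift'' would require knowing that any two $R$-points of $X$ with the same reduction in $X(K)$ become equal in $\bpi_0^{\aone}(X)(R)$ --- but that injectivity statement is precisely the nontrivial content the proposition turns on, so the argument is circular at exactly the crucial point. The alternative route via $\operatorname{Sing}^{\aone}(X)$ at the end rests on the same unproved lifting claim (``smoothness supplies the required local lifts'').

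For comparison, the paper's (Morel's) argument does not attempt to lift homotopies from the closed point at all. Chain connectedness is used only to show that the sections of $\bpi_0^{\aone}(X)$ over the \emph{generic} point and over the \emph{closed} point of the henselian local scheme $S$ are trivial; the passage from these two boundary cases to triviality of the sections over $S$ itself is a sandwiching/specialization argument whose essential input is the homotopy purity theorem (Theorem~\ref{thm:homotopypurity}). That purity input is exactly what substitutes for the lifting step that your proposal leaves unjustified, so to repair your proof you would either have to prove the lifting/injectivity statement directly (which is essentially Morel's lemma) or import the purity-based argument.
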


\begin{proof}[Idea of proof.]
The proof uses the fact that we are working with the Nisnevich topology in a fairly crucial way.  To check triviality of all stalks, it suffices to show that $\bpi_0^{\aone}(X)(S)$ is trivial for $S$ a henselian local scheme.  Chain connectedness implies that the sections over the generic point of $S$ are trivial and also that the sections over the closed point are trivial.  We can then try to use a sandwiching argument to establish that sections over $S$ are also trivial: in practice, this uses the homotopy purity theorem \ref{thm:homotopypurity}!
\end{proof}

Conversely, it is not clear that $\aone$-connectedness implies $\aone$-chain connectedness.  However, one may prove the following result.

\begin{thm}[{\cite[Theorem 6.2.1]{AM}}]
\label{thm:aoneconnectedrtrivial}
If $X$ is a smooth proper $k$-variety, and $K/k$ is any separable finitely generated extension, then $\bpi_0^{\aone}(X)(K) = X(K)/\sim$.  In particular, if $X$ is $\aone$-chain connected, then $X$ is $\aone$-connected.
\end{thm}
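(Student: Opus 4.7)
The plan is to compare $\bpi_0^{\aone}(X)$ with an explicit ``naive'' quotient construction. Write $\sim$ for the equivalence relation generated on $X(K)$ by declaring $f(0)\sim f(1)$ for every morphism $f\colon\aone_K\to X$. Any such $f$ is literally a naive $\aone$-homotopy between its endpoints, so the composite $X(K)\to[\Spec K,X]_{\aone}\to\bpi_0^{\aone}(X)(K)$ factors through a canonical map
\[
\Phi\colon X(K)/\!\sim\;\longrightarrow\;\bpi_0^{\aone}(X)(K).
\]
The first step will be to verify that $\Phi$ is a surjection. This is essentially formal: by Theorem~\ref{thm:unstable0connectivity}, the unit map $X\to\bpi_0^{\aone}(X)$ is an epimorphism of Nisnevich sheaves, and since $\Spec K$ has no non-trivial Nisnevich covers the induced map on $K$-points $X(K)\to\bpi_0^{\aone}(X)(K)$ is itself surjective. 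Composing with the quotient $X(K)\twoheadrightarrow X(K)/\!\sim$ shows $\Phi$ is surjective.

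The heart of the matter is injectivity, and here properness enters. The strategy is to introduce the \emph{naive R-equivalence presheaf} $R(X)\colon U\mapsto X(U)/\!\sim_U$ on $\Sm_k$, where $\sim_U$ is generated by chains of morphisms $\aone_U\to X$ matched at $0$ and $1$. By construction there is a natural transformation $X\to R(X)$ and a natural map $R(X)^{\Nis}\to\bpi_0^{\aone}(X)$ of Nisnevich sheaves. The plan is to show this latter map is an isomorphism when $X$ is smooth proper; evaluating at $\Spec K$ then identifies $\bpi_0^{\aone}(X)(K)$ with $R(X)(K)=X(K)/\!\sim$ and hence yields the injectivity of $\Phi$. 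To produce the inverse, one invokes the universal property of $\bpi_0^{\aone}$ as the initial $\aone$-invariant Nisnevich sheaf of sets under $X$: thus it suffices to check that $R(X)^{\Nis}$ is $\aone$-invariant.

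The main obstacle, and the step which genuinely uses properness, is this $\aone$-invariance of $R(X)^{\Nis}$. At the level of honest sections on a smooth Henselian local $U$, a morphism $\aone_U\to X$ tautologically exhibits its two endpoints as $\sim_U$-equivalent, so $R(X)$ is invariant ``on the nose'' for genuine maps. The subtlety is that a section of $R(X)^{\Nis}$ over $U\times\aone$ is only represented \emph{Nisnevich locally} by such a morphism: one must refine via some Nisnevich cover $V\to U\times\aone$, then realise the section by a morphism $V\to X$, and descend. Extending the resulting rational maps $U'\times\aone\dashrightarrow X$ from the cover back to honest morphisms on $U'\times\aone$ is where smooth properness of $X$ is indispensable, via the valuative criterion: for smooth $U'$ such rational maps are defined away from a locus of codimension $\geq 2$ in $U'\times\aone$, and further Nisnevich refinement of $U'$ clears this locus. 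Once this extension argument is in place, standard manipulation of Nisnevich covers completes the proof that $R(X)^{\Nis}$ is $\aone$-invariant.

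Given this identification, the ``in particular'' assertion is immediate: if $X$ is $\aone$-chain connected then $R(X)(K)$ is a singleton for every separable finitely generated $K/k$, so $\bpi_0^{\aone}(X)(K)=\ast$ for every such $K$. Combined with the sheaf-level identification $R(X)^{\Nis}=\bpi_0^{\aone}(X)$ and the fact that a Nisnevich sheaf on $\Sm_k$ is determined by its stalks at smooth Henselian local schemes (whose R-equivalence classes are pulled back from residue fields by the naive presheaf definition of $R(X)$), one concludes that $\bpi_0^{\aone}(X)=\Spec k$, i.e., $X$ is $\aone$-connected.
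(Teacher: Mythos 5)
The survey does not actually prove this theorem; it quotes it from \cite{AM} (with \cite{BalweHogadiSawant} cited for a second proof), so your proposal has to be measured against those arguments, and it has a genuine gap at its heart. Your whole strategy rests on the claim that the Nisnevich sheafification $R(X)^{\Nis}$ of the naive chain-components presheaf is $\aone$-invariant for every smooth proper $X$. That claim is strictly stronger than the theorem: it would identify the full sheaf $\bpi_0^{\aone}(X)$ with $R(X)^{\Nis}$ (the sheaf denoted $\mathcal{S}(X)$ by Balwe--Hogadi--Sawant), i.e.\ it would say that the filtration $\bpi_0^{\aone}(X)=\colim_n \mathcal{S}^n(X)$ of \cite{BalweHogadiSawant} stabilizes at the first stage as sheaves. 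Neither known proof establishes this, and it is false in general: later work of Balwe and Sawant exhibits smooth projective (birationally ruled) surfaces over curves of positive genus for which $\mathcal{S}(X)$ is not $\aone$-invariant and differs from $\bpi_0^{\aone}(X)$, even though the two agree on sections over finitely generated separable field extensions. Both \cite{AM} and \cite{BalweHogadiSawant} prove only that field-level comparison, which is exactly the content of the theorem; the real work (analysis of ``ghost homotopies'' over $\aone_K$, specialization of $R$-equivalence over discrete valuation rings using properness) is concentrated there. The sentence ``standard manipulation of Nisnevich covers completes the proof'' hides precisely this difficulty: a section of the sheafification over $U\times\aone$ is only Nisnevich-locally represented by morphisms to $X$, and no formal cover manipulation makes it constant along $\aone$ after a single application of the naive construction.

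The mechanism by which you bring in properness is also flawed. Properness of $X$ plus smoothness of the source gives, via the valuative criterion, that a rational map $U'\times\aone\dashrightarrow X$ is defined away from a closed subset of codimension at least $2$; it does not allow you to extend across that subset (consider $\mathbb{A}^2\dashrightarrow\pone$, $(x,y)\mapsto[x:y]$), and passing to a Nisnevich refinement of $U'$ cannot ``clear'' the indeterminacy locus, since that locus pulls back nontrivially to any cover. In \cite{AM} and \cite{BalweHogadiSawant} properness instead enters through DVRs, extending and specializing $K$-points and $R$-equivalences over valuation rings, which is exactly why the conclusion is only about finitely generated separable field extensions. Your surjectivity step is fine, but note also that your deduction of the ``in particular'' clause leans on the same false sheaf identification together with the unjustified assertion that sections of $R(X)$ over henselian local schemes are pulled back from residue fields; in the literature (and in the survey's preceding proposition) that implication is obtained instead by the sandwiching argument over henselian local rings using homotopy purity, and it does not require properness at all.
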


\begin{rem}
Another proof of this result has been given in \cite{BalweHogadiSawant}.
\end{rem}

\subsubsection*{$\aone$-connectedness and rationality properties}
The preceding theorem suggests a link between $\aone$-connectedness and rationality properties.  Indeed, Manin defined the notion of $R$-equivalence for rational points on an algebraic variety: if $L/k$ is a finite extension and $X$ is a smooth $k$-scheme, we say two $L$-points $x_0$ and $x_1$ are directly  $R$-equivalent if there exists a rational map $\pone_L \to X_L$ defined at the points $x_0$ and $x_1$.  We say two $L$-points are $R$-equivalent if they are equivalent for the equivalence relation generated by direct $R$-equivalence, and we write $X(L)/R$ for the set of $R$-equivalence classes of $L$-rational points.  One says that a variety $X/k$ is {\em universally $R$-trivial} if $X(L)/R = \ast$ for every finitely generated separable extension $L/k$.  With that definition, a smooth proper variety $X$ that is $\aone$-chain connected is automatically {\em universally $R$-trivial}, and Theorem~\ref{thm:aoneconnectedrtrivial} may be phrased as saying that $\aone$-connected smooth proper varieties are universally $R$-trivial.  In fact, one may make a slightly stronger version of this statement.

\begin{prop}
\label{prop:aoneconnectedunivRtrivial}
If $k$ is a field and $U$ is an $\aone$-connected smooth $k$-variety that admits a smooth proper compactification $X$, then $X$ is $\aone$-connected and $U$ is universally $R$-trivial.
\end{prop}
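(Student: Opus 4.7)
The strategy is to first establish that $X$ is $\aone$-connected, and then deduce universal $R$-triviality of $U$ essentially formally.

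For $\aone$-connectedness of $X$, the plan is to exploit the cofiber sequence
\[
U_+ \longrightarrow X_+ \longrightarrow X/U
\]
in the pointed $\aone$-homotopy category induced by the open immersion $j\colon U \hookrightarrow X$. The key claim is that $X/U$ is $\aone$-$0$-connected, i.e., $\bpi_0^{\aone}(X/U)$ is reduced to the basepoint. When $Z := X \setminus U$ is smooth of codimension $c \geq 1$, homotopy purity (Theorem~\ref{thm:homotopypurity}) identifies $X/U$ with the Thom space $\Th(\nu_{Z/X})$, which is $(c-1)$-$\aone$-connected, and in particular $\aone$-$0$-connected. For non-smooth $Z$ one reduces to the smooth case: in characteristic zero, Hironaka's resolution of singularities furnishes a blowup $\pi\colon \tilde X \to X$ that is an isomorphism over $U$ and such that $\pi^{-1}(Z)$ is a strict normal crossings divisor, after which one argues by iterated purity and a stratification/Mayer--Vietoris argument (alternatively, one may invoke Theorem~\ref{thm:voevodskyresolutionofsings} as a substitute for explicit resolution in other characteristics, since we are only concerned with $\bpi_0^{\aone}$ and the simplicial suspension does not change connectivity). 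Granting triviality of $\bpi_0^{\aone}(X/U)$, exactness of the associated Puppe sequence at $\bpi_0^{\aone}(X_+)$ shows that $\bpi_0^{\aone}(U_+) \to \bpi_0^{\aone}(X_+)$ is surjective; since $U$ is $\aone$-connected and non-empty, this forces $\bpi_0^{\aone}(X) = \Spec k$.

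For the second part, assuming $X$ is $\aone$-connected, Theorem~\ref{thm:aoneconnectedrtrivial} gives $X(K)/R = \{*\}$ for every finitely generated separable extension $K/k$. Non-emptiness of $U(K)$ follows from base-change stability of $\aone$-connectedness: $U_K$ is $\aone$-connected over $K$, so the remark following Theorem~\ref{thm:unstable0connectivity} produces a $K$-rational point of $U_K$. For any pair $u_0, u_1 \in U(K) \subset X(K)$, pick a witness to their $R$-equivalence in $X$: a finite extension $L/K$, points $p_0, p_1 \in \pone_L(L)$, and a rational map $\varphi\colon \pone_L \dashrightarrow X_L$ defined at each $p_i$ with $\varphi(p_i) = u_i \otimes_K L$. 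Because $U$ is open in $X$ and $\varphi(p_i) \in U$, the preimage $\varphi^{-1}(U) \subset \pone_L$ is an open subset containing $p_0$ and $p_1$, so $\varphi$ can equally well be regarded as a rational map $\pone_L \dashrightarrow U_L$ regular at $p_0, p_1$, witnessing $R$-equivalence of $u_0$ and $u_1$ in $U$. Hence $U(K)/R = \{*\}$.

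The main obstacle is verifying that $\bpi_0^{\aone}(X/U)$ is trivial when $Z$ is not smooth: this requires either resolution of singularities (in characteristic zero) or a careful stratification argument combined with iterated applications of homotopy purity. The remaining ingredients are largely formal manipulations with the results stated earlier in the survey.
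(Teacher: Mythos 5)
Your reduction of the second statement to the first follows the paper, but your route to the $\aone$-connectedness of $X$ has a genuine gap at its pivotal step. You assert that ``exactness of the associated Puppe sequence at $\bpi_0^{\aone}(X_+)$'' lets you pass from triviality of $\bpi_0^{\aone}(X/U)$ to surjectivity of $\bpi_0^{\aone}(U) \to \bpi_0^{\aone}(X)$. Cofiber sequences do not induce exact sequences of $\aone$-homotopy sheaves; that is a feature of fiber sequences (or of homology), not of cofibrations. What the homotopy pushout description of $X/U$ actually yields, since $\aone$-localization does not commute with homotopy colimits, is at best a natural epimorphism from the pushout of Nisnevich sheaves of sets $\ast \sqcup_{\bpi_0^{\aone}(U)} \bpi_0^{\aone}(X)$ onto $\bpi_0^{\aone}(X/U)$ (using Theorem~\ref{thm:unstable0connectivity}); so the implication you need runs in the wrong direction, and triviality of $\bpi_0^{\aone}(X/U)$ gives no control over $\bpi_0^{\aone}(X)$. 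This is precisely the difficulty the paper's proof is built to avoid: it works with the zeroth $\aone$-homology sheaf $\mathrm{H}_0^{\aone}$, for which the open immersion $U \subset X$ does induce an epimorphism $\mathrm{H}_0^{\aone}(U) \to \mathrm{H}_0^{\aone}(X)$ by \cite[Proposition 3.8]{ABirational}, concludes $\mathrm{H}_0^{\aone}(X) \cong \Z$, and then invokes the nontrivial fact that a smooth \emph{proper} scheme with $\mathrm{H}_0^{\aone} \cong \Z$ is $\aone$-connected \cite[Theorem 4.15]{ABirational}. Some input of this kind seems unavoidable: your purity computation of the connectivity of $X/U$ (fine when $X \setminus U$ is smooth) does not by itself feed back into $\bpi_0^{\aone}(X)$.

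Two secondary points. Your treatment of a non-smooth boundary leans on resolution of singularities and on Theorem~\ref{thm:voevodskyresolutionofsings}; the latter only gives information after a simplicial suspension, and connectivity does not descend along desuspension, so ``the simplicial suspension does not change connectivity'' cannot be used to recover $\bpi_0^{\aone}(X/U)$ itself --- note also that the proposition is stated over an arbitrary field, whereas the paper's argument uses no resolution. In the $R$-triviality step you argue as if $R$-equivalence of $u_0,u_1$ in $X$ were witnessed by a single rational map; in general it is witnessed only by a chain whose intermediate points may a priori leave $U$, so the restriction argument needs the chain made explicit (the paper restricts a chain of affine lines supplied by Theorem~\ref{thm:aoneconnectedrtrivial} and is admittedly terse on the same point). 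Finally, non-emptiness of $U(K)$ needs no base-change argument: it follows from Theorem~\ref{thm:unstable0connectivity} applied at the generic point of a smooth model of $K$, which is a Nisnevich stalk.
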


\begin{rem}
Of course, the hypothesis of admitting a smooth proper compactification is superfluous if either $k$ has charateristic $0$ or $U$ has small dimension.
\end{rem}

\begin{proof}
The easiest proof of this fact uses a tool that we have not yet introduced called the zeroth $\aone$-homology sheaf, which plays a role very similar to singular homology in classical topology, but is rather far from motivic (co)homology mentioned earlier.  Since $U \subset X$ is open, the map
\[
\mathrm{H}_0^{\aone}(U) \longrightarrow \mathrm{H}_0^{\aone}(X).
\]
is an epimorphism by \cite[Proposition 3.8]{ABirational}.  On the other hand, since $U$ is $\aone$-connected, there is an isomorphism $H_0^{\aone}(U) \cong \Z$.  Since $X$ is proper, one concludes $\mathrm{H}_0^{\aone}(X) \cong \Z$ as well.  Then, the fact that $X$ is $\aone$-connected follows from \cite[Theorem 4.15]{ABirational}.  To conclude, simply observe that if $x_0$ and $x_1$ are $L$-points in $U$, then $x_0$ and $x_1$ are connected by a chain of affine lines over $L$ in $X$.  Restricting this chain of affine lines to $U$ gives the required witness to $R$-equivalence.
\end{proof}

In light of the van de Ven question mentioned in the introduction, we observe that Proposition~\ref{prop:aoneconnectedunivRtrivial} has the following consequence on the rationality of $\aone$-contractible varieties.

\begin{cor}
Assume $k$ is a field having characteristic $0$.  If $X$ is an $\aone$-contractible smooth $k$-scheme then $X$ is universally $R$-trivial.
\end{cor}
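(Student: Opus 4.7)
The plan is to reduce the corollary directly to Proposition~\ref{prop:aoneconnectedunivRtrivial}, which already furnishes the conclusion once we supply two ingredients: $\aone$-connectedness of $X$, and the existence of a smooth proper compactification. Both follow cheaply from the hypotheses.

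First, I would observe that $\aone$-contractibility trivially implies $\aone$-connectedness. Indeed, by Definition~\ref{defn:aonecontractible}, the structural morphism $X \to \Spec k$ is an $\aone$-weak equivalence, so it induces an isomorphism on $\bpi_0^{\aone}$. Since $\bpi_0^{\aone}(\Spec k) = \Spec k$, we deduce $\bpi_0^{\aone}(X) \cong \Spec k$, which is precisely the definition of $X$ being $\aone$-connected.

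Next, I would invoke the characteristic-zero hypothesis to produce a smooth proper compactification of $X$. This is exactly Hironaka's resolution of singularities applied to any projective compactification of $X$ (or, more precisely, the combination of Nagata compactification and Hironaka's theorem): one first embeds $X$ as a dense open subscheme of some proper $k$-variety, and then resolves the singularities of the complement to obtain a smooth proper $k$-scheme $\bar{X}$ containing $X$ as a dense open. This is the reason for the remark following Proposition~\ref{prop:aoneconnectedunivRtrivial} noting that the compactification hypothesis is superfluous in characteristic $0$.

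With these two ingredients in hand, Proposition~\ref{prop:aoneconnectedunivRtrivial} applies verbatim with $U = X$ and concludes that $X$ is universally $R$-trivial. There is no real obstacle to the argument; the only content beyond unwinding definitions is the appeal to resolution of singularities, which is a black-box invocation and the reason the characteristic-$0$ hypothesis cannot be dropped without further work (in positive characteristic one would need either to assume a smooth compactification exists, or to work in low dimensions where one has alternative compactification results).
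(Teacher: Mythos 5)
Your proposal is correct and follows exactly the route the paper intends: $\aone$-contractibility gives $\aone$-connectedness by functoriality of $\bpi_0^{\aone}$, and in characteristic $0$ Nagata compactification plus Hironaka resolution supplies the smooth proper compactification needed to invoke Proposition~\ref{prop:aoneconnectedunivRtrivial} (this is precisely the point of the remark that the compactification hypothesis is superfluous in characteristic $0$). Nothing is missing.
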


\begin{rem}
If $k$ is a field and $X$ is a smooth proper variety, then any $\aone$-connected smooth proper variety is separably rationally connected.  However, $\aone$-connectedness has cohomological implications, e.g., the (prime to the characteristic part of the) Brauer group of an $\aone$-connected smooth proper $k$-scheme is automatically trivial (see \cite[\S 4]{ABirational} for a detailed discussion of this point).  For example, it is known that there exist $k$-unirational varieties that are not $\aone$-connected \cite[\S 2.3]{AM} \cite[Example 4.18]{ABirational}.  Thus, $\aone$-connectedness of a smooth scheme has nontrivial implications for the rationality properties of the scheme.
\end{rem}

We know that $\aone$-connectedness implies universal $R$-triviality for smooth schemes admitting a smooth proper compactification.  However, the example of $\gm{}$ shows that $R$-equivalent $k$-points need not be connected by (chains of) rational curves.  Nevertheless, the following question remains open:

\begin{question}
Is it true for an arbitrary smooth $k$-scheme $X$ that $\aone$-connectedness is equivalent to $\aone$-chain connectedness?
\end{question}

\paul{Counterexample in \cite{BalweHogadiSawant}?}

\begin{rem}
The relationship between $R$-equivalence and $\aone$-weak equivalence of points has been studied on certain linear group schemes in \cite{BalweSawant}.
\end{rem}

\subsection{$\aone$-homotopy sheaves spheres and Brouwer degree}
Earlier, we saw that ${\mathbb A}^n \setminus 0$ was a motivic sphere: it was isomorphic in $\ho{k}$ to $S^{n-1,n}$.  It is not hard to show that ${\mathbb A}^n \setminus 0$ is $\aone$-connected for $n \geq 2$, so it is natural to inquire about its connectivity and to compute its first non-vanishing $\aone$-homotopy sheaf.  We quickly summarize some results of F. Morel, though we do not have enough space to motivate the proofs.

\begin{thm}[F. Morel]
If $k$ is a field, then ${\mathbb A}^n \setminus 0$ is at least $(n-2)$-$\aone$-connected, i.e., if $n \geq 2$, it is $\aone$-connected and $\bpi_i^{\aone}({\mathbb A}^n \setminus 0,x)$ vanishes for any choice of $k$-point $x \in {\mathbb A}^n \setminus 0 (k)$ and any integer $1 \leq i \leq n-2$.
\end{thm}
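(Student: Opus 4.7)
My plan is to reduce the statement to a purely simplicial (Nisnevich-local) connectivity assertion about a smash product, and then invoke Morel's unstable $\aone$-connectivity theorem to transfer that connectivity across $\aone$-localization.

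The first step is to identify the motivic homotopy type of ${\mathbb A}^n \setminus 0$ with $S^{n-1,n} = S^{n-1} \wedge \gm{\sma n}$. This is essentially done in Example~\ref{ex:2nnsphere} and the preceding example: using the Nisnevich (Zariski) distinguished square with cover ${\mathbb A}^n \setminus 0 = (({\mathbb A}^{n-1}\setminus 0) \times \aone) \cup (\gm{} \times {\mathbb A}^{n-1})$ and intersection $\gm{} \times ({\mathbb A}^{n-1}\setminus 0)$, an induction on $n$ identifies ${\mathbb A}^n \setminus 0$ with the homotopy pushout $(({\mathbb A}^{n-1}\setminus 0) \leftarrow ({\mathbb A}^{n-1}\setminus 0) \times \gm{} \rightarrow \gm{})$ in $\ho{k}$, i.e.\ with the join $({\mathbb A}^{n-1}\setminus 0) \ast \gm{} \simeq \Sigma(({\mathbb A}^{n-1}\setminus 0) \wedge \gm{})$. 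Induction then yields ${\mathbb A}^n \setminus 0 \simeq S^{n-1} \wedge \gm{\sma n}$ in $\ho{k}$, once one pins down a base-point (any $k$-point works by pointed-vs-unpointed comparison together with Theorem~\ref{thm:unstable0connectivity}).

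The second step is to note that as a pointed simplicial Nisnevich sheaf, the smash $S^{n-1} \wedge \gm{\sma n}$ is an $(n-1)$-fold simplicial suspension of the pointed space $\gm{\sma n}$. Since each simplicial suspension $S^1 \wedge (-)$ increases simplicial connectivity by one, and $\gm{\sma n}$ is trivially $(-1)$-connected as a pointed space, the smash product $S^{n-1} \wedge \gm{\sma n}$ is simplicially $(n-2)$-connected in $\mathrm{H}_{\Nis}(k)$. This is the classical, topology-style input that uses no $\aone$-localization.

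The final and decisive step is to pass from simplicial Nisnevich connectivity to $\aone$-connectivity. This is Morel's unstable $\aone$-connectivity theorem: for a field $k$, if a pointed motivic space $(\mathscr{X},x)$ is simplicially $m$-connected as a pointed simplicial Nisnevich sheaf, then its $\aone$-localization remains $m$-connected in the sense that $\bpi_i^{\aone}(\mathscr{X},x) = 0$ for all $i \leq m$. Applied to $(\mathscr{X},x) = (S^{n-1} \wedge \gm{\sma n}, \ast)$ with $m = n-2$, this yields both the $\aone$-connectedness for $n \geq 2$ and the vanishing of $\bpi_i^{\aone}$ for $1 \leq i \leq n-2$, completing the theorem. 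The main obstacle is Step 3: the analogous statement fails badly in the \'etale topology (already $\aone$ itself acquires many \'etale coverings over separably closed fields of positive characteristic), and the Nisnevich-local proof is delicate, resting on the strict $\aone$-invariance of $\aone$-homotopy sheaves in positive degree and a careful induction. I would cite Morel \cite{MField} for this input rather than attempt to reprove it.
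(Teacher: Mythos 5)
Your outline is correct, and it is essentially the standard argument (Morel's own): identify ${\mathbb A}^n\setminus 0$ with the sphere model $S^{n-1}\wedge\gm{\sma n}$ via the Zariski cover and induction, observe that this model is simplicially $(n-2)$-connected Nisnevich-locally because it is an $(n-1)$-fold simplicial suspension, and then invoke the unstable $\aone$-connectivity theorem of \cite{MField} to conclude that $\aone$-localization does not destroy this connectivity. The survey itself gives no proof of this statement at all — it explicitly says it lacks the space to motivate the proofs and simply attributes the result to Morel — so your proposal supplies precisely the two ingredients the paper records elsewhere: the unlabeled example identifying ${\mathbb A}^n\setminus 0$ with $S^{n-1,n}$, and the remark (citing \cite{MICM}) that $S^{i,j}$ is $\aone$-$(i-1)$-connected, which is exactly the connectivity theorem applied to the sphere model. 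One point worth stating explicitly if you write this up: the equivalence ${\mathbb A}^n\setminus 0\simeq S^{n-1}\wedge\gm{\sma n}$ is only an $\aone$-weak equivalence, not a Nisnevich-local one (contracting the pieces $({\mathbb A}^{n-1}\setminus 0)\times\aone$ and $\gm{}\times{\mathbb A}^{n-1}$ of the cover already uses $\aone$-invariance), but your argument accommodates this correctly, since you apply the connectivity theorem to the sphere model and then transport $\aone$-homotopy sheaves along the $\aone$-equivalence, with base-point independence handled by $\aone$-connectedness (Theorem~\ref{thm:unstable0connectivity} and $n\geq 2$), as you note. The deep input remains Morel's connectivity theorem, which is appropriately cited rather than reproved.
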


Morel also computed the first non-vanishing $\aone$-homotopy sheaf in terms of Milnor--Witt K-theory introduced earlier.

\begin{thm}[F. Morel]
If $k$ is a field, then for any integer $n \geq 2$, and any finitely generated separable extension $L/k$
\[
\bpi_{n-1}^{\aone}({\mathbb A}^n \setminus 0)(L) = \K^{MW}_n(L).
\]
\end{thm}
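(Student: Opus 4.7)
The plan is to reduce the statement, via the motivic Hurewicz theorem, to Morel's identification of $\mathrm{H}_0^{\aone}(\mathbb{G}_m^{\wedge n})$ with the unramified Milnor--Witt $K$-theory sheaf $\KMW_n$. First, I would invoke the $\aone$-weak equivalence $\aone^n \setminus 0 \simeq S^{n-1,n} = S^{n-1} \wedge \mathbb{G}_m^{\wedge n}$ from the earlier example, together with the preceding connectivity theorem, which supplies the $(n-2)$-$\aone$-connectedness hypothesis. For $n \geq 2$, the space $(\aone^n\setminus 0, x)$ is simplicially connected and $\aone$-$(n-2)$-connected, which is exactly the input needed for the $\aone$-Hurewicz theorem.

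Second, I would apply the (simplicial-suspension) $\aone$-Hurewicz theorem of Morel: for a pointed $\aone$-$(d-1)$-connected space $X$ with $d \geq 2$, the Hurewicz morphism
\[
\bpi_d^{\aone}(X) \;\longrightarrow\; \mathrm{H}_d^{\aone}(X)
\]
is an isomorphism of strictly $\aone$-invariant Nisnevich sheaves. Specializing to $d = n-1$ and $X = S^{n-1,n}$, and using that the simplicial suspension shifts $\aone$-homology, I obtain
\[
\bpi_{n-1}^{\aone}(\aone^n \setminus 0) \;\cong\; \mathrm{H}_{n-1}^{\aone}(S^{n-1}\wedge \mathbb{G}_m^{\wedge n}) \;\cong\; \mathrm{H}_0^{\aone}(\mathbb{G}_m^{\wedge n}).
\]
The case $n = 2$ deserves a small separate remark since Hurewicz then lands in the abelianization of $\bpi_1^{\aone}$, but one can show that this fundamental sheaf is already abelian (for instance by a suspension/stabilization argument, since $\mathbb{G}_m$ is already a ``double loop object'' in the Tate direction).

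The main obstacle, and the deepest part of the argument, is the identification $\mathrm{H}_0^{\aone}(\mathbb{G}_m^{\wedge n}) \cong \KMW_n$ of strictly $\aone$-invariant sheaves. The strategy is to construct a natural map $\KMW_n \to \mathrm{H}_0^{\aone}(\mathbb{G}_m^{\wedge n})$ sending the generator $[u_1]\cdots[u_n]$ to the class of the $L$-point $(u_1,\ldots,u_n)\in\mathbb{G}_m^{\wedge n}(L)$, and to verify that the four defining relations of Milnor--Witt $K$-theory hold. The $\eta$-twisted logarithm relation reflects a standard coproduct computation on $\mathbb{G}_m \to \mathbb{G}_m \vee \mathbb{G}_m$; the hyperbolic relation $(2+[-1]\eta)\eta=0$ arises from the fact that the swap of the two factors in $\mathbb{G}_m \wedge \mathbb{G}_m$ acts by $\langle -1\rangle$ in the homotopy category; and $\eta$ itself is built from the Hopf construction on $\mathbb{A}^2\setminus 0 \to \mathbb{P}^1$. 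The most delicate point is the Steinberg relation $[u][1-u]=0$: geometrically, the morphism $\mathbb{A}^1\setminus\{0,1\}\to\mathbb{G}_m\wedge\mathbb{G}_m$ given by $u\mapsto(u,1-u)$ must be shown to be $\aone$-null, which ultimately rests on the $\aone$-contractibility of an explicit cover by affine cells.

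Once this identification is established at the level of sheaves, one concludes by evaluating on the separable, finitely generated extension $L/k$: since $\KMW_n$ and $\mathrm{H}_0^{\aone}(\mathbb{G}_m^{\wedge n})$ are both unramified, their sections over $L$ are well-defined, and the chain of isomorphisms yields $\bpi_{n-1}^{\aone}(\aone^n\setminus 0)(L)=\KMW_n(L)$. To turn the bijection of sheaves into a bijection of groups, I would invoke Morel's theorem that strictly $\aone$-invariant Nisnevich sheaves on $\Sm_k$ are determined by their values on finitely generated field extensions, together with the explicit presentation of $\KMW_n(L)$ by symbols and relations recalled earlier in the survey. The principal technical burden is thus concentrated in the Steinberg verification and the structure theory of contracted strictly $\aone$-invariant sheaves; the rest is a formal reduction through Hurewicz and suspension.
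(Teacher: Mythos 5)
The survey does not actually prove this theorem: it is quoted from Morel's book \cite{MField} with the explicit disclaimer that there is no space to give proofs, so there is no in-paper argument to compare against. Your proposal is, in outline, Morel's own route: identify $\mathbb{A}^n\setminus 0$ with the simplicial suspension $S^{n-1}\wedge\mathbb{G}_m^{\wedge n}$, use the connectivity theorem together with the $\aone$-Hurewicz and suspension isomorphisms to reduce to the zeroth reduced $\aone$-homology of $\mathbb{G}_m^{\wedge n}$, and then invoke Morel's identification of the free strictly (for $n\geq 3$), resp.\ strongly (for $n=2$), $\aone$-invariant sheaf generated by $\mathbb{G}_m^{\wedge n}$ with $\KMW_n$. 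As a blueprint this is sound and is essentially the argument of the cited source.

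Two caveats about where the weight actually sits. First, your justification of the $n=2$ case --- that the fundamental sheaf is abelian ``since $\mathbb{G}_m$ is already a double loop object in the Tate direction'' --- is not an argument: $\mathbb{G}_m$ is not a double loop object in any relevant sense, and no Eckmann--Hilton formality is available unstably. The correct statement is Morel's theorem that the free strongly $\aone$-invariant sheaf of groups on $\mathbb{G}_m^{\wedge n}$ is already abelian for $n\geq 2$ and equals $\KMW_n$, and its proof goes through the theory of unramified sheaves and contractions rather than a loop-space argument. Second, checking the four Milnor--Witt relations for the symbol map only yields a morphism $\KMW_n \to \mathrm{H}_0^{\aone}(\mathbb{G}_m^{\wedge n})$ (with the Steinberg relation indeed the delicate geometric input, in the spirit of Hu--Kriz/Morel); the harder half is showing this morphism is an isomorphism, i.e.\ establishing the universal property of $\KMW_n$ among strongly/strictly $\aone$-invariant sheaves, which again rests on Morel's residue and transfer structure for unramified Milnor--Witt $K$-theory. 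You gesture at this in your final sentence, but be aware that this step, not the Hurewicz bookkeeping, carries essentially all of the content of the theorem.
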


From this result, Morel deduced the computation of the homotopy endomorphisms of ${\mathbb A}^n \setminus 0$.  If $k \hookrightarrow \cplx$, then ${\mathbb A}^{n}(\cplx)$ is homotopy equivalent to $S^{2n-1}$.  Therefore, realization defines a homomorphism $[{\mathbb A}^n \setminus 0,{\mathbb A}^n \setminus 0]_{\aone} \to [S^{2n-1},S^{2n-1}] \cong \Z$ where the latter identification is the usual Brouwer degree.

\begin{thm}
\label{thm:degree}
If $k$ is a field, then for any integer $n \geq 2$, there is a canonical ``motivic Brouwer degree" isomorphism
\[
[{\mathbb A}^n \setminus 0,{\mathbb A}^n \setminus 0]_{\aone} \cong GW(k).
\]
Moreover, if $k \hookrightarrow \cplx$, the induced homomorphism $GW(k) \to \Z$ coincides with the homomorphisms induced by sending a stable isomorphicm class of symmetric bilinear forms to the rank of its underlying $\cplx$-vector space.
\end{thm}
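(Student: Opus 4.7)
The plan is to exploit Morel's computation $\piaone_{n-1}(\aone^n \setminus 0) \cong \KMW_n$ recorded in the preceding theorem, combined with the $\gm{}$-delooping formalism and the behavior of Milnor--Witt $K$-theory under contraction.

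First I would use the identification $\aone^n \setminus 0 \cong S^{n-1,n} = S^{n-1} \wedge \gm{\wedge n}$ in $\ho{k}$. Since $\aone^n \setminus 0$ is $\aone$-$(n-2)$-connected and in particular $\aone$-simply connected for $n \geq 3$, the forgetful map
$$[\aone^n \setminus 0, \aone^n \setminus 0]_{\aone,\bullet} \longrightarrow [\aone^n \setminus 0, \aone^n \setminus 0]_{\aone}$$
is a bijection for $n \geq 3$. The case $n = 2$ requires a separate argument invoking Morel's explicit description of $\piaone_{1}(\aone^2 \setminus 0)$ to show the relevant action on the pointed homotopy set is trivial.

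Next I would compute the pointed set via the smash--loop adjunction,
$$[S^{n-1} \wedge \gm{\wedge n}, S^{n-1,n}]_{\aone,\bullet} \;\cong\; [\gm{\wedge n}, \Omega^{n-1} S^{n-1,n}]_{\aone,\bullet},$$
where $\Omega^{n-1}$ denotes the $(n-1)$-fold simplicial loop space. By the $(n-2)$-connectivity of $S^{n-1,n}$, the $(n-1)$-st loop space of its $\aone$-local model has $\pi_{0}$-sheaf equal to $\piaone_{n-1}(S^{n-1,n})\cong \KMW_n$, and the right-hand set identifies with the pointed sections $\KMW_n(\gm{\wedge n})$. Applying the contraction calculus for strictly $\aone$-invariant sheaves (under which the pointed sections over $\gm{\wedge i}$ equal the $i$-fold contracted sheaf evaluated at the base field) together with the key relation $(\KMW_m)_{-1} \cong \KMW_{m-1}$ from Morel's presentation, I would conclude
$$[\aone^n \setminus 0, \aone^n \setminus 0]_{\aone,\bullet} \;\cong\; (\KMW_n)_{-n}(k) \;\cong\; \KMW_0(k) \;\cong\; \GW(k),$$
the last isomorphism being the one recalled in the Milnor--Witt subsection.

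For the realization statement, I would test on the generators $\<u\>$ with $u \in k^\times$: the corresponding endomorphism of $\aone^n \setminus 0$ can be represented (up to convention) by $(x_1,\ldots,x_n) \mapsto (ux_1, x_2, \ldots, x_n)$. Under a complex embedding this becomes a complex-linear automorphism of $\cplx^n \setminus 0 \simeq S^{2n-1}$ whose real Jacobian determinant $|u|^{2}$ is strictly positive, hence a degree $+1$ self-map. By additivity and the fact that every rank $r$ symmetric bilinear form is a sum of $r$ rank one forms $\<u_i\>$, the realization map sends the class of a form to its rank.

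The main obstacle is the identification $(\KMW_n)_{-n}(k) \cong \GW(k)$ via iterated $\gm{}$-contraction: this hinges on a careful development of Morel's contraction calculus for strongly $\aone$-invariant sheaves and on the explicit formula $(\KMW_m)_{-1} \cong \KMW_{m-1}$, which is essentially equivalent to the presentation of $\KMW_{*}$ recalled earlier. A secondary technical point, needed for the $n = 2$ case of the unpointed computation, is the analysis of the $\piaone_{1}(\aone^2\setminus 0)$-action on pointed homotopy classes of self-maps.
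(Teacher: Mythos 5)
The paper offers no proof of this theorem---it is quoted from Morel, with only the remark that it is ``deduced'' from the computation $\bpi_{n-1}^{\aone}({\mathbb A}^n\setminus 0)\cong \KMW_n$ stated just before---and your sketch reconstructs exactly that deduction: identify ${\mathbb A}^n\setminus 0$ with $S^{n-1}\wedge\gm{\sma n}$, use connectivity plus the contraction calculus $(\KMW_m)_{-1}\cong\KMW_{m-1}$ to land in $\KMW_0(k)\cong GW(k)$, and check realization on the generators $\langle u\rangle$, which is the standard route in Morel's work. So your approach is essentially the same as the one the paper points to; the only caveat is that the connectivity theorem, the strict $\aone$-invariance of $\KMW_n$, and the contraction identification (as well as the pointed-versus-free comparison you flag for $n=2$) are substantial inputs from Morel that you, like the survey, are citing rather than proving.
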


Later, we will see that computations of motivic Brouwer degree appear in proofs of $\aone$-contractibility statements.

\subsection{$\aone$-homotopy at infinity}
\label{ss:endspaces}
In this section, we introduce some notions of $\aone$-homotopy theory at infinity.  Unfortunately, we are unable at the moment to make a good definition of ``end space" in order to define a workable notion of $\aone$-fundamental group at infinity.

\subsubsection*{One-point compactifications}
Fix a field $k$, and suppose $X$ is a smooth $k$-scheme.  By a smooth compactification of $X$ we will mean a pair $(\bar{X},\psi)$, where $\psi: X \to \bar{X}$ is an open dense immersion, and $\bar{X}$ is smooth.  We will say that such a smooth compactification is {\em good} if the closed complement of $\psi$ (viewed as a scheme with the reduced induced structure) is a simple normal crossings divisor.

\begin{lem}
If $X$ is a smooth scheme, and $\bar{X}_0$ and $\bar{X}_1$ are smooth compactifications, with boundaries $\partial \bar{X}_0$ and $\partial \bar{X}_1$, then $\bar{X}_0/\partial\bar{X}_0$ and $\bar{X}_1/\partial\bar{X}_1$ are cdh-locally weak equivalent and thus $\Sigma \bar{X}_0/\partial \bar{X}_0$ and $\Sigma \bar{X}_1/\partial \bar{X}_1$ are weakly equivalent in the $\aone$-homotopy category.
\end{lem}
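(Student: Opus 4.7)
The approach is to construct a common proper ``roof'' dominating both compactifications of $X$, and then extract the comparison from the abstract blow-up squares it produces. The cdh statement will follow immediately from the basic properties of the cdh topology, and the $\aone$-statement will be a direct application of Theorem~\ref{thm:voevodskyresolutionofsings}.

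First I would form the scheme-theoretic closure $W$ of the diagonal image of $X$ inside $\bar X_0 \times_k \bar X_1$. Since $\bar X_0 \times_k \bar X_1$ is proper, so is $W$. The two projections $p_i : W \to \bar X_i$ are proper and each restricts to an isomorphism over $X \subset \bar X_i$, because the composition $X \to X \times_k X \to X$ with either factor projection is the identity. Consequently $X$ sits inside $W$ as a dense open, the closed complement $\partial W := W \setminus X$ satisfies $p_i^{-1}(\partial \bar X_i) = \partial W$ for $i = 0,1$, and the cartesian square
\[
\xymatrix{
\partial W \ar[r] \ar[d] & W \ar[d]^{p_i} \\
\partial \bar X_i \ar[r] & \bar X_i
}
\]
is an abstract blow-up square in the sense of Definition~\ref{defn:cdhsquare}.

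Second, the cdh topology is designed precisely so that every abstract blow-up square becomes homotopy cocartesian in $\mathrm{H}_{cdh}(k)$; taking cofibers therefore gives cdh-local weak equivalences $W/\partial W \xrightarrow{\simeq} \bar X_i/\partial \bar X_i$ for each $i$. Splicing these two equivalences produces the desired cdh-local zig-zag
\[
\bar X_0/\partial \bar X_0 \;\xleftarrow{\simeq}\; W/\partial W \;\xrightarrow{\simeq}\; \bar X_1/\partial \bar X_1,
\]
establishing the first assertion of the lemma.

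For the $\aone$-assertion, I would invoke Theorem~\ref{thm:voevodskyresolutionofsings}: the maps $W/\partial W \to \bar X_i/\partial \bar X_i$ become isomorphisms in $\mathrm{H}_{\aone}^{cdh}(k)$, so their simplicial suspensions become isomorphisms in $\mathrm{H}_{\aone}(k)$, yielding a zig-zag of $\aone$-weak equivalences between $\Sigma \bar X_0/\partial \bar X_0$ and $\Sigma \bar X_1/\partial \bar X_1$. The one technical nuisance, which I expect to be the main obstacle, is that the naive $W$ produced by the closure construction is usually singular, so a priori the quotients $W/\partial W$ only live in $\Spc_k'$ and the maps to $\bar X_i/\partial \bar X_i$ are morphisms in $\Spc_k'$ rather than in $\Spc_k$. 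In characteristic zero this is resolved by replacing $W$ with a Hironaka resolution $W' \to W$ that is an isomorphism over the smooth locus (in particular over $X$) before running the argument; this is compatible with the characteristic-zero hypothesis of Theorem~\ref{thm:voevodskyresolutionofsings}.
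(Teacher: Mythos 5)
Your proposal is correct and follows essentially the same route as the paper's own proof: take the closure of the diagonal in $\bar{X}_0 \times \bar{X}_1$, note that the two projections yield abstract blow-up squares and hence cdh-local weak equivalences of the quotients, and then invoke Theorem~\ref{thm:voevodskyresolutionofsings} to get the statement after one suspension. The only divergence is your final resolution-of-singularities step, which the paper omits and which is not actually needed, since the cdh-local category is built on presheaves on $\Sch_k$ where singular schemes are allowed, and the quotient presheaves and comparison maps restrict to $\Sm_k$ in any case.
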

\begin{proof}
By definition of the cdh topology, if we are given an abstract blow-up square of the form
\[
\xymatrix{
E \ar[r]\ar[d] & X' \ar[d] \\
Y \ar[r] & X,
}
\]
i.e., $Y \to X$ is a closed immersion, $X' \to X$ is proper and the induced map $X' \setminus E \to X \setminus Y$ is an isomorphism, then there is a cdh local weak equivalence $X'/E \to X/Y$.  To establish the statement, simply take the closure $\bar{X}$ of the image of the diagonal map $X \to \bar{X}_0 \times \bar{X}_1$ and observe that $\bar{X} \to \bar{X}_0$ and $\bar{X} \to \bar{X}_1$ yield abstract blow-up squares.  The second statement follows from the first because any morphism of presheaves that is a cdh local weak equivalence becomes a Nisnevich local weak equivalence after a single suspension (i.e., Theorem~\ref{thm:voevodskyresolutionofsings}).
\end{proof}

The lemma above shows that one-point compactifications are well-behaved in the cdh-local version of the $\aone$-homotopy category.  Alternatively, the $S^1$-stable $\aone$-homotopy type of a one-point compactification is well-defined.

\begin{defn}
\label{defn:onepointcompactification}
If $X$ is a smooth $k$-scheme, then for any compactification $\bar{X}$ of $X$, we set $\dot{X} = \bar{X}/\partial \bar{X}$; there is a natural map $X \to \dot{X}$.
\end{defn}

\begin{lem}
Suppose $X$ is a smooth scheme.  The following statements hold.
\begin{enumerate}[noitemsep,topsep=1pt]
\item If $X$ is proper, then $\dot{X} = X_+$.
\item For any integer $n \geq 0$, there is an $\aone$-weak equivalence $\dot{(X \times {\mathbb A}^n)} \cong \dot{X} \sma {\pone}^{\sma n}$.
\end{enumerate}
\end{lem}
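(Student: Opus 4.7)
For part (1), the proof is essentially tautological: when $X$ is proper, we may take $X$ itself as the compactification $\bar{X}$, so the boundary $\partial\bar{X}$ is empty. Under the convention that a quotient $X/\emptyset$ means adjoining a disjoint basepoint, we obtain $\dot{X} = X/\emptyset = X_+$.

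For part (2), the strategy is to construct an explicit compactification of $X\times \aone^n$ by taking a product. Choose any smooth compactification $(\bar{X},\psi)$ of $X$, and view $\pone$ as pointed at $\infty$ so that $\pone$ is the standard compactification of $\aone$. Then $\bar{X}\times (\pone)^n$ is a smooth proper scheme containing $X\times \aone^n$ as a dense open subscheme, hence a compactification, whose boundary is the closed subscheme
\[
(\partial\bar{X}\times (\pone)^n)\;\cup\;(\bar{X}\times D_\infty), \qquad D_\infty := (\pone)^n\setminus \aone^n.
\]
Substituting into the definition of the one-point compactification gives
\[
\dot{(X\times \aone^n)} \;=\; \frac{\bar{X}\times(\pone)^n}{(\partial\bar{X}\times (\pone)^n)\cup(\bar{X}\times D_\infty)}.
\]

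The key step is now the standard categorical identity $(A/A')\sma (B/B')\cong (A\times B)/\bigl((A\times B')\cup (A'\times B)\bigr)$, applied with $A=\bar X$, $A'=\partial\bar X$, $B=(\pone)^n$, $B'=D_\infty$. This yields a natural isomorphism
\[
\dot{(X\times \aone^n)}\;\cong\;(\bar{X}/\partial\bar{X})\sma \bigl((\pone)^n/D_\infty\bigr)\;=\;\dot{X}\sma (\pone)^{\sma n},
\]
where the last equality unwinds the iterated smash product of $\pone$ pointed at $\infty$ (by induction, $(\pone)^{\sma n}=(\pone)^n/D_\infty$). In fact this is an equality of simplicial presheaves for the chosen compactifications, and is \emph{a fortiori} an $\aone$-weak equivalence; the independence from the choice of compactification of $X$ appearing on the right-hand side follows from the preceding lemma, possibly after stabilization.

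I do not anticipate a serious obstacle here: the argument is a bookkeeping exercise in combining the quotient-to-smash product identity with the fact that $\pone$ compactifies $\aone$. The only mild subtlety is to record that the identity holds strictly as simplicial presheaves rather than merely up to $\aone$-weak equivalence, so that no appeal to Theorem~\ref{thm:voevodskyresolutionofsings} or to the preceding cdh-local comparison is needed to establish existence of the asserted equivalence — those results enter only to reconcile the inherent choice of compactification in the definition of $\dot{X}$.
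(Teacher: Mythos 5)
Your proposal is correct and follows essentially the same route as the paper: take the product of a compactification of $X$ with a compactification of ${\mathbb A}^n$ and identify the resulting quotient with the smash product. The only (cosmetic) difference is that the paper compactifies ${\mathbb A}^n$ by ${\mathbb P}^n$ with boundary ${\mathbb P}^{n-1}$ and then invokes the $\aone$-weak equivalence ${\mathbb P}^n/{\mathbb P}^{n-1} \cong {\pone}^{\sma n}$, whereas you use $(\pone)^{\times n}$ with boundary $D_\infty$, which yields the smash identification directly (strictly at the presheaf level up to the harmless discrepancy, resolved after sheafification, between the subpresheaf union and the representable presheaf of the reduced boundary).
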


\begin{proof}
The first statement is immediate from $X/{\emptyset} = X_+$.
For the second statement, one can first observe that ${\mathbb P}^n$ is a compactification of ${\mathbb A}^n$ with boundary ${\mathbb P}^{n-1}$ and use the $\aone$-weak equivalence
${\mathbb P}^n/{\mathbb P}^{n-1} \cong {\pone}^{\sma n}$.
Then, use the fact that if $\bar{X}$ is any compactification of $X$, then $\bar{X} \times {\mathbb P}^n$ is a compactification of $X \times {\mathbb A}^n$.
%and observe that the boundary $\partial (\bar{X} \times {\mathbb P}^n)$ is isomorphic to $(\bar{X}\setminus X)\times {\mathbb P}^n\coprod \bar{X}\times ({\mathbb P}^n\setminus {\mathbb A}^n)$
%which identifies with $\infty\times{\mathbb P}^{n}\coprod \bar{X}\times\infty$.
\end{proof}

\subsubsection*{Stable end spaces}
Our goal is to make some progress toward a definition of end-spaces in algebraic geometry.  There are many possible approaches to such a definition, and we do not know whether they are all equivalent.  One approach is to consider a ``punctured formal neighborhood" of the boundary in a compactification; this approach is not suited to motivic homotopy theory using smooth schemes.  The following definition is motivated by the definition of singular homology at infinity, in the case where the boundary is suitably ``tame".
\vspace{0.1in}

J. Wildeshaus introduced the notion of ``boundary motive" \cite{Wildeshaus} of a variety; it can be thought of as a motivic version of the singular chain complex at infinity.  In fact, with notational modifications, Wildeshaus' definition gives a $\pone$-stable homotopy type.  The only novelty of the definition we give below is that it gives an $S^1$-stable homotopy type.

\begin{defn}
\label{defn:endspace}
Assume $X$ is a smooth $k$-scheme.  The $S^1$-stable end space is defined to fit into the following exact triangle:
\[
X \coprod \{ \infty \} \longrightarrow \dot{X} \longrightarrow e(X)[1]
\]
\end{defn}

\begin{rem}
The main benefit of working $S^1$-stably instead of $\pone$-stably is that one has some hope of uncovering unstable phenomena related to ``$\aone$-connected components at $\infty$".  Indeed, the zeroth $\aone$-homology sheaf of a smooth proper scheme (see the proof of Proposition~\ref{prop:aoneconnectedunivRtrivial}) detects rational points, while the corresponding $\pone$-stable object only sees zero cycles of degree $1$ \cite{ABirational,AsokHaesemeyer}.  Thus, the above definition should be refined enough to detect some algebro-geometric analog of the number of ends of a space.
\end{rem}

\begin{prop}
Assume $k$ is a field having characteristic $0$.
\begin{enumerate}[noitemsep,topsep=1pt]
\item The construction $e(-)$ is a functor on the category of smooth schemes and proper maps.
\item If $X$ is $S^1$-stably $\aone$-contractible, then $e(X) \cong \dot{X}/\infty[-1]$.
\end{enumerate}
\end{prop}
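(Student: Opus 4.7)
The plan for Part~(1) is to extend a proper morphism $f \colon X \to Y$ in $\Sm_k$ to a map of smooth compactifications via the graph-closure construction. Fix smooth compactifications $\bar X$ and $\bar Y$, let $\bar X'$ denote the closure in $\bar X \times \bar Y$ of the graph of $f$, and equip it with its proper projection $\bar f \colon \bar X' \to \bar Y$. Properness of $f$ forces $\bar f^{-1}(Y) = X$, so $\bar f$ maps $\partial \bar X'$ into $\partial \bar Y$, inducing a pointed morphism $\dot X \to \dot Y$. Together with $f_+ \colon X_+ \to Y_+$, this yields a morphism of the defining triangles and hence $e(X) \to e(Y)$. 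Independence of the chosen compactifications, and therefore functoriality under composition, follows from the preceding lemma, which identifies different one-point compactifications up to cdh-local equivalence, combined with Theorem~\ref{thm:voevodskyresolutionofsings} to promote this to an $S^1$-stable $\aone$-equivalence in characteristic zero.

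For Part~(2), the plan is to apply the octahedral axiom to the composite $S^0 \xrightarrow{s} X_+ \to \dot X$. The section $s$ is determined by a choice of $k$-point $x_0 \in X$ (existence guaranteed by $\aone$-connectedness) and picks out the added basepoint $\infty$ of $X_+$ together with $x_0$. This section induces a stable splitting $\Sigma^\infty X_+ \simeq \mathbb{S} \oplus \Sigma^\infty (X, x_0)$ whose second summand vanishes by hypothesis. The octahedral axiom then yields the exact triangle
\[
(X, x_0) \longrightarrow \mathrm{cofib}(S^0 \to \dot X) \longrightarrow e(X)[1] \longrightarrow \Sigma (X, x_0),
\]
in which the outer terms are trivial. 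Hence $e(X)[1] \simeq \mathrm{cofib}(S^0 \to \dot X) =: \dot X/\infty$, the pointed quotient of $\dot X$ by the image of $s$. Shifting yields $e(X) \simeq \dot X/\infty[-1]$.

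The main obstacle lies in Part~(1): verifying that the constructed morphism $e(X) \to e(Y)$ is intrinsically independent of the compactifications chosen and assembles functorially under composition. This reduces to showing that a given map $\dot X \to \dot Y$ is canonical up to cdh-local $\aone$-equivalence, for which the characteristic-zero hypothesis enters essentially through Theorem~\ref{thm:voevodskyresolutionofsings}. A secondary concern is identifying $\mathrm{cofib}(S^0 \to \dot X)$ with $\dot X/\infty$ independently of the chosen rational point $x_0$; this should follow from the $\aone$-connectedness of $\dot X$, ensuring that different choices of section $s$ are $\aone$-homotopic.
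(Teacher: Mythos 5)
The paper states this proposition without proof, so there is no argument of the authors to compare against; judged on its own terms, your outline is essentially the expected one, and Part~(2) is correct as written: the octahedral triangle $(X,x_0) \to \mathrm{cofib}(S^0\to\dot X)\to e(X)[1]\to (X,x_0)[1]$ together with the vanishing of $\Sigma^{\infty}_{S^1}(X,x_0)$ gives the identification, and the paper's (loose) notation $\dot{X}/\infty$ is indeed to be read as the cofiber you describe. Note also that the basepoint is already implicit in the hypothesis ``$S^1$-stably $\aone$-contractible,'' so no separate appeal to $\aone$-connectedness is needed there.

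Two points in Part~(1) need more care. First, the closure $\bar X'$ of the graph of $f$ in $\bar X\times\bar Y$ is proper but in general not smooth, whereas the setup for $\dot X$ was phrased via smooth compactifications; this is harmless, because the comparison lemma's proof only uses abstract blow-up squares (smoothness of the compactification is never used), or one can resolve singularities in characteristic zero, but you should say so explicitly. Second, and more substantively, producing $e(X)\to e(Y)$ by filling in a morphism of distinguished triangles does not by itself define a functor: the fill-in map on cones in a triangulated category is not unique, so neither independence of the chosen compactifications nor compatibility with composition follows from the triangle axioms plus Theorem~\ref{thm:voevodskyresolutionofsings} alone. To get an honest functor you should build $e(X)$ as a functorial homotopy cofiber at the level of the underlying model of the $S^1$-stable (or cdh-local) category: take the cofiber of $X_+\to\dot X$ with $\dot X$ formed from graph closures, check compatibility for a composite $X\to Y\to Z$ by passing to the closure of the graph in $\bar X\times\bar Y\times\bar Z$, which dominates the pairwise closures through abstract blow-up squares, and only then invoke Theorem~\ref{thm:voevodskyresolutionofsings} to land in the Nisnevich-local $S^1$-stable category. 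With those repairs your construction goes through.
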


\begin{ex}
The end space of $\real$ is $S^0$.  Analogously, $e(\aone) = \gm{}$.  In particular, end spaces need not even be $\aone$-connected.  Similarly, we find $e({\mathbb A}^n) \cong {\mathbb A}^n \setminus 0$.
\end{ex}

\begin{ex}
Suppose $X$ is an $\aone$-contractible smooth affine scheme of dimension $n$.
If $X$ is isomorphic to ${\mathbb A}^n$ then $e(X) \cong {\mathbb A}^n \setminus 0$.
\end{ex}

End spaces as defined here are also compatible with realization.  Indeed, the triangle defining $e(X)$ makes sense in the usual stable homotopy category, and we define $e_{\cplx}(X)$ to be the cofiber of the natural map from the suspension spectrum of $X(\cplx) \coprod \{ \infty \}$ to the suspension spectrum of $\dot{X(\cplx)}$.  Because realization behaves well with respect to homotopy cofibers, the following result is immediate.

\begin{prop}
If $\iota: k \hookrightarrow \cplx$ is an embedding, then ${\mathfrak R}_{\iota}(e(X)) = e_{\cplx}(X)$.
\end{prop}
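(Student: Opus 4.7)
The plan is to apply $\mathfrak{R}_\iota$ to the defining exact triangle
\[
X \coprod \{\infty\} \longrightarrow \dot{X} \longrightarrow e(X)[1]
\]
of Definition~\ref{defn:endspace} and identify the realization of each of the first two terms with the analogous topological construction. Since the excerpt records that realization preserves homotopy cofibers, the triangle will then realize to the cofiber sequence that, by definition, computes $e_\cplx(X)$, and the identification $\mathfrak{R}_\iota(e(X)) \simeq e_\cplx(X)$ will follow.

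First, since $\mathfrak{R}_\iota$ is the homotopical Yoneda extension of the analytification functor (sending a smooth scheme $Y$ to $Y(\cplx)$ and the terminal object $\ast$ to a point), it carries $X \coprod \{\infty\}$ to $X(\cplx) \coprod \{\infty\}$, matching exactly the source of the map defining $e_\cplx(X)$. This step is formal.

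Second, and this is the crux, I would establish $\mathfrak{R}_\iota(\dot X) \simeq \dot{X(\cplx)}$. Choose a smooth compactification $\bar X$ of $X$, which exists in characteristic zero by Nagata together with Hironaka resolution of singularities; the lemma preceding Definition~\ref{defn:onepointcompactification} shows that $\dot X = \bar X/\partial\bar X$ is independent of this choice after a single suspension, and since $e(X)$ is defined $S^1$-stably the choice is harmless. Because analytification sends smooth schemes to complex manifolds and the (homotopy) pushout presentation of $\bar X/\partial\bar X$ to the corresponding topological pushout, one obtains $\mathfrak{R}_\iota(\bar X/\partial\bar X) \simeq \bar X(\cplx)/\partial\bar X(\cplx)$. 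Since $\bar X(\cplx)$ is compact Hausdorff with closed subspace $\partial\bar X(\cplx)$ and open complement $X(\cplx)$, the quotient $\bar X(\cplx)/\partial\bar X(\cplx)$ is canonically homeomorphic to the one-point compactification $\dot{X(\cplx)}$, the collapsed boundary playing the role of the point at infinity.

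Combining the two identifications and exactness of $\mathfrak{R}_\iota$, we obtain a cofiber sequence
\[
X(\cplx) \coprod \{\infty\} \longrightarrow \dot{X(\cplx)} \longrightarrow \mathfrak{R}_\iota(e(X))[1]
\]
in the topological stable homotopy category, so $\mathfrak{R}_\iota(e(X)) \simeq e_\cplx(X)$ by definition of the latter. The main obstacle lies in the middle identification: one must verify that the simplicial-presheaf quotient $\bar X/\partial\bar X$, interpreted as a homotopy pushout in $\Spc_{k,\bullet}$, is carried by $\mathfrak{R}_\iota$ to the ordinary topological quotient. This reduces to checking that analytification of the closed immersion $\partial\bar X \hookrightarrow \bar X$ has the expected cofibration properties (available since $\partial\bar X$ may be taken to be an SNC divisor, giving a CW-pair after analytification) and that the Yoneda extension is exact on the relevant pushout diagram. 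Once these compatibilities are in place, the rest of the argument is formal manipulation of exact triangles.
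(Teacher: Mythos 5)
Your proposal is correct and follows the same route as the paper: the paper defines $e_{\cplx}(X)$ exactly as the cofiber of $X(\cplx)\coprod\{\infty\}\to \dot{X(\cplx)}$ and deduces the proposition immediately from the fact that realization preserves homotopy cofibers, which is precisely your argument. The only difference is that you make explicit the implicit identification $\mathfrak{R}_{\iota}(\bar X/\partial\bar X)\simeq \bar X(\cplx)/\partial\bar X(\cplx)\cong \dot{X(\cplx)}$ (using that the pair analytifies to a CW pair and that collapsing the closed boundary of the compact space yields the one-point compactification), a point the paper leaves to the reader.
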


\begin{question}
Is there a ``good" unstable definition of end spaces in $\aone$-homotopy theory?
\end{question}

%\begin{rem}
%One can define ``end spaces" using formal geometry: take $X$ a smooth scheme with compactification $\bar{X}$, and consider the formal completion $\mathfrak{X}$ of $\bar{X}$ along $\partial X := \bar{X} \setminus X$.  In good situations, e.g., if $\bar{X}$ is smooth and projective of dimension $\geq 3$ and $\partial X$ is a smooth ample divisor, then Grothendieck showed that vector bundles on $\mathfrak{X}$ coincide with vector bundles in an open neighborhood of $\partial X$.  Thus, $\mathfrak{X}$ essentially contains the information about neighborhoods of $\infty$ for $X$, and we may define $\mathfrak{X}^{\circ}$ as a formal punctured neighborhood of $\infty$.  While this formal scheme is empty, one can make sense of it in various ``rigid analytic categories", e.g., in Berkovich spaces.  One may also make sense of its derived category of sheaves...  Thus, this object could be considered as one model of the end space.  One could also imagine other models in terms of ``formal loops that go off to infinity".  However, we are interested in homotopical models.
%\end{rem}

%\marginpar{\paul{$\aone$-homology at infinity for affine spaces?}}

\section{Cancellation questions and $\aone$-contractibility}
\subsection{The biregular cancellation problem}
\label{ss:cancellation}
After the work of Iitaka--Fujita and Danielewski, it became clear that cancellation questions could admit negative solutions for smooth affine varieties of negative logarithmic Kodaira dimension.  Perhaps the main remaining question in this direction is as follows.

\begin{question}[Biregular cancellation]
If $X$ is a smooth scheme of dimension $d$ such that
\begin{enumerate}[noitemsep,topsep=1pt]
\item $X$ is stably isomorphic to ${\mathbb A}^d$, or
\item $X$ is a direct factor of ${\mathbb A}^N$ for some $N > d$, or
\item $X$ is a retract of ${\mathbb A}^N$ for some $N > d$, or
\end{enumerate}
is $X$ necessarily isomorphic to affine space?
\end{question}

\begin{rem}
The question of whether varieties that are stably isomorphic to affine space are necessarily isomorphic to affine space is sometimes called {\em the Zariski cancellation question}, but as mentioned earlier, Zariski never explicitly stated this question.  On the other hand Beilinson apparently asked whether any retract of affine space is isomorphic to affine space \cite[\S 8]{ZaidenbergSurvey}.
\end{rem}

The biregular cancellation question is known to have a positive answer for smooth affine schemes of dimension $1$ by \cite{AHE}, and also for smooth affine schemes of dimension $2$: the result was established by Miyanish--Sugie and Fujita \cite{MiyanishiSugie,FujitaCancellation} over characteristic $0$ fields and extended to perfect fields of arbitrary characteristic in \cite{RussellCancellation}.  In contrast, we now know that the biregular cancellation problem admits a negative answer over algebraically closed fields having positive characteristic.  Indeed, N. Gupta constructed a counter-example in dimension $3$ \cite{Gupta3dimcancellation} and extended this result in a number of directions \cite{GuptaCancellation,GuptaFamilies}.  We refer the reader to \cite{GuptaSurvey} for more discussion of these results.  However, the specific form of these counterexamples does not allow them to be lifted to characteristic $0$.

\begin{lem}
If $X$ is a smooth scheme of dimension $d$ that is a retract of ${\mathbb A}^N$, then $X$ is a smooth affine, $\aone$-contractible scheme.  In particular, any counter-example to the biregular cancellation problem is necessarily a smooth affine $\aone$-contractible scheme.
\end{lem}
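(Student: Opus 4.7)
The plan is to decompose the lemma into two independent assertions: first, that a retract of affine space is automatically smooth affine and $\aone$-contractible; second, that each of the three flavors of cancellation counter-example falls into this retract framework.

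For the first assertion, I would handle affineness and $\aone$-contractibility separately. Affineness follows from the standard fact that a section of a separated morphism is a closed immersion. Explicitly, since $X$ is separated (it lies in $\Sm_k$) and ${\mathbb A}^N$ is separated, the retraction $r\colon {\mathbb A}^N \to X$ is itself separated, so its section $i\colon X \to {\mathbb A}^N$ is a closed immersion, exhibiting $X$ as a closed subscheme of affine space and hence affine. For $\aone$-contractibility, I would simply push the retract diagram $(i,r)$ through the canonical functor $\Sm_k \to \Spc_k \to \ho{k}$. Since ${\mathbb A}^N$ is $\aone$-contractible, it is isomorphic to the terminal object $\Spec k$ in $\ho{k}$, and a purely categorical observation shows that a retract of a terminal object is terminal; consequently $X$ is terminal in $\ho{k}$, i.e., $\aone$-contractible in the sense of Definition~\ref{defn:aonecontractible}.

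For the second assertion, I would show that each of the three hypotheses produces a retract of some ${\mathbb A}^M$, after which the first assertion applies. The retract case is tautological. If $X \times {\mathbb A}^m \cong {\mathbb A}^{d+m}$, then composing the zero section $X \hookrightarrow X \times {\mathbb A}^m$ and the projection $X \times {\mathbb A}^m \to X$ with the given isomorphism and its inverse exhibits $X$ as a retract of ${\mathbb A}^{d+m}$. If $X \times Y \cong {\mathbb A}^N$ is a direct factor decomposition, then $Y$ has a $k$-rational point (pick any preimage of a $k$-point of ${\mathbb A}^N$ under projection to $Y$), and using such a point to split $X \hookrightarrow X \times Y$ produces a retract of ${\mathbb A}^N$ as before. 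There is essentially no obstacle here: the closed-immersion step is a standard fact about separated morphisms, and the $\aone$-contractibility statement is a formal consequence of the definition together with the categorical behavior of retracts of terminal objects; the mild care needed is only in the direct-factor reduction, where one must extract a $k$-point of the complementary factor.
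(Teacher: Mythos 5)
Your proposal is correct, and its homotopical core is the same retract argument the paper uses: the paper's entire proof is the single observation that any retract of an $\aone$-weak equivalence is an $\aone$-weak equivalence (applied to $X \to \Spec k$ as a retract, in the arrow category, of ${\mathbb A}^N \to \Spec k$), whereas you phrase the same idea after passing to $\ho{k}$, where ${\mathbb A}^N$ is isomorphic to the terminal object and a retract of a terminal object is terminal; the two formulations are interchangeable, yours trading the model-categorical retract-closure axiom for an elementary categorical fact about terminal objects. Beyond that, you supply details the paper leaves implicit and which are worth having: affineness via the standard fact that a section of a separated morphism is a closed immersion (with the cancellation argument showing $r$ is separated), and the reduction of the stably-isomorphic and direct-factor variants of the cancellation question to the retract case, including the correct extraction of a $k$-point of the complementary factor from a $k$-point of ${\mathbb A}^N$ to split the projection. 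No gaps.
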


\begin{proof}
Any retract of an $\aone$-weak equivalence is an $\aone$-weak equivalence.
\end{proof}

\begin{rem}
Gupta's counterexamples to cancellation above provided the first examples of smooth {\em affine} $\aone$-contractible schemes in positive characteristic.
\end{rem}

\begin{rem}
Over $\cplx$, the biregular cancellation question remains open in dimensions $\geq 3$.
\end{rem}

Granted the Quillen--Suslin theorem on triviality of vector bundles on affine space, it is easy to see that any algebraic vector bundle on a variety that is a retract of affine space is automatically trivial.  The representability theorem for vector bundles guarantees that the same statement holds for $\aone$-contractible smooth affine varieties.

\begin{thm}
If $X$ is a smooth affine $\aone$-contractible variety, then all vector bundles on $X$ are trivial.
\end{thm}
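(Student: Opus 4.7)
The plan is to reduce the statement to a direct application of the representability theorem for vector bundles (Theorem~\ref{thm:vbrepresentable}). Under suitable hypotheses on the base (e.g.\ $k$ a perfect field, or smooth over a Dedekind ring with perfect residue fields), that theorem supplies, for every smooth affine $k$-scheme $Y$ and every $r \geq 0$, a pointed bijection
\[
[Y, Gr_r]_{\aone} \cong \mathscr{V}_r(Y)
\]
which is functorial in $Y$ and under which the trivial rank-$r$ bundle corresponds to the constant map at a fixed $k$-point of $Gr_r$.

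First I would observe that by $\aone$-contractibility the structure morphism $p \colon X \to \Spec k$ is an isomorphism in $\ho{k}$, so precomposition with $p$ yields a bijection
\[
p^* \colon [\Spec k, Gr_r]_{\aone} \isomto [X, Gr_r]_{\aone}.
\]
Next, by naturality of the representability bijection in the variable $Y$, the map $p^*$ is intertwined with the pullback map $\mathscr{V}_r(\Spec k) \to \mathscr{V}_r(X)$. Since every rank-$r$ vector bundle on $\Spec k$ is free, the image of this pullback consists solely of the class of the trivial rank-$r$ bundle, and surjectivity (forced by the bijection $p^*$) then shows that every rank-$r$ vector bundle on $X$ is trivial.

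The argument is essentially tautological once Theorem~\ref{thm:vbrepresentable} is in hand; there is no substantive obstacle, only the bookkeeping point of verifying that the base $k$ satisfies the hypotheses required for representability. One small subtlety worth flagging: the representability bijection is a bijection of \emph{pointed} sets, so to rule out distinct isomorphism classes of rank-$r$ bundles on $X$ one genuinely needs the bijectivity of $p^*$ (not just surjectivity through the base-point), which is in turn a consequence of $\aone$-contractibility rather than merely of the existence of a section of $p$.
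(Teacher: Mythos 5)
Your argument is correct and is exactly the one the paper intends: the theorem is stated right after the remark that "the representability theorem for vector bundles guarantees" it, i.e.\ the paper's (implicit) proof is precisely the combination of Theorem~\ref{thm:vbrepresentable} with the $\aone$-weak equivalence $X \to \Spec k$ and the triviality of bundles over the base field that you spell out. Your extra care about pointedness and the hypotheses on $k$ is sound but does not change the route.
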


\subsection{$\aone$-contractibility vs topological contractibility}
\label{ss:aonecontractibilityvstopologicalcontractibility}
We now compare $\aone$-contractibility and topological contractibility in more detail.  In particular, we would like to know whether topological contractibility and $\aone$-contractibility are actually different.  The best we can do at the moment is to proceed dimension by dimension.  The only topologically contractible smooth curve is $\aone$.  However, already in dimension $2$ problems appear to arise.

\subsubsection*{Affine lines on topologically contractible surfaces}
With the exception of ${\mathbb A}^2$, most topologically contractible surfaces appear to have very few affine lines, as we now explain.  Based on the classification results (see \cite{ZaidenbergSurvey} and the references therein for more details), it suffices to treat the case of surfaces of logarithmic Kodaira dimensions $1$ and $2$ (there are no contractible surfaces of logarithmic Kodaira dimension $0$).

\begin{rem}
General conjectures in algebraic geometry and arithmetic of Green--Griffiths and Lang suggest that smooth proper varieties of general type should not have ``many" rational curves (see, e.g., \cite{Demailly}).  Analogously, one hopes that affine varieties of log general type should not have ``many" morphisms from the affine line (see, e.g., \cite{LuZhang}).  The topologically contractible surfaces of logarithmic Kodaira dimension $2$ contain no contractible curves by work of Zaidenberg \cite{Zaidenbergloggentype, ZaidenbergloggentypeII} and Miyanishi-Tsunoda \cite{MiyanishiTsunoda}; what can one say about morphisms from the affine line to such a surface?  For example, are such surfaces Mori hyperbolic (see Remark~\ref{rem:algebraichyperbolicity})?
\end{rem}

\begin{rem}
The surfaces of logarithmic Kodaira dimension $1$ are all obtained from some special surfaces (the so-called tom Dieck-Petrie surfaces) by repeated application of a procedure called an affine modification (an affine variant of a blow-up).  How does $\aone$-chain connectedness behave with respect to affine modifications (we understand well how $\aone$-chain connectedness behaves with respect to blow-ups of projective schemes with smooth centers).  One could also try to use the rationality results of Gurjar-Shastri, i.e., that any smooth compactification of a topologically contractible surface is rational \cite{GSI,GSII}.
\end{rem}

Based on these observations, it seems reasonable to expect that topologically contractible surfaces that are not isomorphic to ${\mathbb A}^2$ are disconnected from the standpoint of $\aone$-homotopy theory, which leads to the following conjecture, which suggests an answer to Question~\ref{questionintro:lowdimensions} from the introduction.

\begin{conj}
A smooth topologically contractible surface $X$ is $\aone$-contractible if and only if it is isomorphic to ${\mathbb A}^2$.
\end{conj}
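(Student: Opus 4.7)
The forward direction is immediate: ${\mathbb A}^2$ is $\aone$-contractible via the naive homotopy $H \colon \aone \times {\mathbb A}^2 \to {\mathbb A}^2$, $(t, x_1, x_2) \mapsto (tx_1, tx_2)$, interpolating the identity and the constant map to the origin. For the converse, the plan is to combine Fujita's theorem (every topologically contractible smooth complex surface is affine) with the Miyanishi--Tsunoda--Gurjar--Zaidenberg classification of such surfaces by logarithmic Kodaira dimension $\bar\kappa \in \{-\infty, 1, 2\}$ (no examples occur at $\bar\kappa = 0$). When $\bar\kappa(X) = -\infty$, Miyanishi's affine-plane characterization already yields $X \cong {\mathbb A}^2$ with no recourse to $\aone$-homotopy theory. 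The remaining task is to produce an obstruction to $\aone$-contractibility in the cases $\bar\kappa = 1$ and $\bar\kappa = 2$.

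For $\bar\kappa(X) = 2$, I would aim to prove Mori hyperbolicity: every morphism $\aone \to X$ is constant. Work of Zaidenberg and Miyanishi--Tsunoda rules out smooth contractible curves on $X$, and the plan is to upgrade this, perhaps via a logarithmic Bogomolov--Miyaoka--Yau estimate on a smooth log compactification $(\bar{X}, D)$, to rule out nonconstant $\aone \to X$ as well via a cuspidal-curve analysis inside $\bar{X}$. Granted Mori hyperbolicity, Remark~\ref{rem:algebraichyperbolicity} makes $X$ into an $\aone$-rigid smooth scheme, and Corollary~\ref{cor:aonerigidfullyfaithful} then identifies $\bpi_0^{\aone}(X) \cong X$. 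Since $X$ is a surface it is not isomorphic to $\Spec k$, contradicting $\aone$-contractibility.

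The case $\bar\kappa(X) = 1$ is subtler because tom Dieck--Petrie surfaces and their iterated affine modifications do admit nonconstant maps from $\aone$, so $\aone$-rigidity is unavailable. Proposition~\ref{prop:aoneconnectedunivRtrivial} shows $\aone$-contractibility forces universal $R$-triviality of $X$, which imposes useful rationality constraints but does not pin down the isomorphism class. My plan here is to extract the obstruction from behaviour at infinity: by Ramanujam's characterization, every topologically contractible smooth surface other than ${\mathbb A}^2$ has nontrivial $\pi_1^\infty$, hence typically nontrivial $H_1^\infty$; the stable end space $e(X)$ of Definition~\ref{defn:endspace} should be refined enough to see this, while $e({\mathbb A}^2) \cong {\mathbb A}^2 \setminus 0$ is explicit. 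An $\aone$-contractible $X$ would force $e(X) \simeq \dot{X}/\infty[-1]$ with controlled cohomology, giving a computable invariant against which the known Danielewski/Fieseler-style calculations for these surfaces can be tested.

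The main obstacle is the $\bar\kappa = 2$ case: Mori hyperbolicity of topologically contractible smooth surfaces of log general type is an open problem, explicitly highlighted in the remarks above. Any genuinely motivic proof of the conjecture will either have to establish it or supply an $\aone$-invariant sensitive to the restricted supply of affine lines on such $X$ --- for example a direct computation of $\bpi_1^{\aone}(X)$ via Morel's machinery, exploiting that the \'etale fundamental group is already trivial so that only $\aone$-invariants unseen in classical topology can distinguish $X$ from ${\mathbb A}^2$.
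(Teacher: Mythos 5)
This statement is posed in the paper as a \emph{conjecture}, not a theorem: the paper offers no proof, so there is nothing to compare your argument against directly. What the paper offers in support is Proposition~\ref{prop:isotoaffineplane} (from \cite{DPO}): an $\aone$-contractible \emph{and} $\aone$-chain connected smooth affine surface over an algebraically closed field of characteristic $0$ is isomorphic to ${\mathbb A}^2$. Thus the paper's own reduction goes through $\aone$-chain connectedness, and the conjecture would follow from a positive answer to the (open) question of whether $\aone$-connectedness implies $\aone$-chain connectedness; you don't use this reduction at all, instead proposing hyperbolicity/rigidity for $\bar\kappa=2$ and behaviour at infinity for $\bar\kappa=1$. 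Your identification of the $\bar\kappa=2$ wall is accurate: the paper explicitly poses Mori hyperbolicity of log-general-type contractible surfaces as an open question, and the tom Dieck--Petrie surfaces at $\bar\kappa=1$ are flagged as stably $\aone$-contractible with $\aone$-contractibility open.

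There is, however, a concrete flaw in the $\bar\kappa=1$ part of your plan. You propose to detect non-$\aone$-contractibility via $H_1^\infty$, citing Ramanujam. But for \emph{any} topologically contractible smooth complex affine surface $X$, the link at infinity is a $\Z$-homology $3$-sphere: with $W$ a compact core of $X$ and $L=\partial W$, Poincar\'e--Lefschetz duality gives $H_k(W,L)\cong H^{4-k}(W)$, which vanishes for $k<4$ since $W$ is contractible, and the long exact sequence of the pair then forces $H_1(L)=0$. Hence $\pi_1^\infty(X)$ is a nontrivial \emph{perfect} group whenever $X\not\cong{\mathbb A}^2$, and $H_1^\infty(X)=0$ unconditionally. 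The Danielewski/Fieseler computation of $H_1^\infty\cong\Z/n$ that you cite concerns non-contractible surfaces and does not carry over. Consequently the $S^1$-stable end space $e(X)$ of Definition~\ref{defn:endspace}, which only retains abelianized/stable information, cannot distinguish $X$ from ${\mathbb A}^2$ by first homology at infinity; one would need an unstable or genuinely non-abelian invariant at infinity, which is precisely what the paper flags as missing. So even setting aside the $\bar\kappa=2$ obstacle you name, the $\bar\kappa=1$ branch of your argument would fail at this step as written.
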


The generalized van de Ven Question \ref{question:vandeVenquestion} asks whether all topologically contractible varieties are rational.  For $\aone$-contractible varieties, by ``soft" methods, one can establish ``near rationality" as we observed above. The upshot of this discussion is that $\aone$-contractibility is a significantly stronger restriction on a space than topological contractibility.  In support of the above conjecture, the following classification result was observed in \cite{DPO}.

\begin{prop}
\label{prop:isotoaffineplane}
An $\AA^{1}$-contractible and $\AA^{1}$-chain connected smooth affine surface over an algebraically closed field of characteristic $0$ is isomorphic to the affine plane $\AA^{2}$.
\end{prop}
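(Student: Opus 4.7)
The plan is to show $\bar\kappa(X)=-\infty$, whence a theorem of Miyanishi--Sugie--Fujita--Russell (\cite{MiyanishiSugie,FujitaCancellation,RussellCancellation}, recalled in Section~\ref{ss:cancellation}) identifies $X$ with $\AA^2$. Under the $\AA^1$-contractibility hypothesis, $\mathrm{Pic}(X)=0$ by Proposition~\ref{prop:picrepresentable}, while the $\AA^1$-rigidity of $\gm{}$ (Example~\ref{ex:curvesaonerigid} together with Corollary~\ref{cor:aonerigidfullyfaithful}) gives $\mathcal{O}(X)^\times=k^\times$; together with smooth affineness these are exactly the hypotheses required by Miyanishi--Sugie. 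After descending $X$ to a finitely generated subfield of $k$ and choosing an embedding into $\CC$, Lemma~\ref{lem:aonecontractibleimpliestopologicallycontractible} ensures that $X_{\CC}$ is topologically contractible; the classical classification of such surfaces (surveyed in \cite{ZaidenbergSurvey}) leaves only $\bar\kappa(X)\in\{-\infty,1,2\}$ as possibilities, and the task becomes to rule out the latter two using $\AA^1$-chain connectedness.

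The case $\bar\kappa(X)=2$ is handled immediately by the results of Zaidenberg \cite{Zaidenbergloggentype,ZaidenbergloggentypeII} and Miyanishi--Tsunoda \cite{MiyanishiTsunoda}: a topologically contractible smooth affine surface of log general type contains no contractible curves, so every morphism $\AA^1\to X$ is constant; this contradicts $\AA^1$-chain connectedness on $X(k)$, which has cardinality strictly greater than one. The case $\bar\kappa(X)=1$ is the essential one. Here the structure theory of open surfaces (Miyanishi, Kawamata) provides a fibration $\pi\colon X\to B$ onto a rational affine curve $B$ whose general fiber is isomorphic to $\gm{}$. For any morphism $f\colon\AA^1\to X$, the composition $\pi\circ f$ is a morphism $\AA^1\to B$; since $\gm{}$ admits no non-constant morphism from $\AA^1$, either $f$ lands in a single (possibly degenerate) fiber of $\pi$, or the image of $f$ meets each smooth fiber in at most finitely many points. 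A case-by-case analysis of the multiple and degenerate fibers of $\pi$, carried out using a smooth normal-crossings compactification of $X$, shows that every such $f$ is in fact contained in $\pi^{-1}(\Delta)$ for a fixed finite set $\Delta\subset B(k)$. Consequently, all chains of affine lines emanating from a given $k$-point remain in $\pi^{-1}(\Delta)$; since $|B(k)|$ is infinite, this again contradicts $\AA^1$-chain connectedness.

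The main obstacle is this last step: although the existence of the fibration $\pi$ on a topologically contractible surface of $\bar\kappa=1$ is classical, verifying that no ``horizontal" morphism $\AA^1\to X$ exists (equivalently, that every such morphism factors through finitely many fibers of $\pi$) requires a detailed treatment of the degenerate fibers of $\pi$ and the horizontal components of the boundary divisor in a good compactification. Once this is in hand, combining the two contradictions above with the Miyanishi--Sugie--Fujita characterization of the affine plane yields $X\cong\AA^2$, as required.
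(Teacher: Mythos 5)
The survey does not actually prove this proposition; it quotes it from \cite{DPO}, and the argument there shares your endgame --- Miyanishi's algebraic characterization of the plane: a smooth affine surface over an algebraically closed field of characteristic $0$ with trivial Picard group (hence factorial), only constant units, and $\overline{\kappa}=-\infty$ is isomorphic to $\AA^{2}$ \cite{MiyanishiSugie} --- but it reaches $\overline{\kappa}(X)=-\infty$ much more directly than you do. Your first paragraph is fine: $\mathrm{Pic}(X)=0$ via Proposition~\ref{prop:picrepresentable} and $\mathcal{O}(X)^{\times}=k^{\times}$ via the $\aone$-rigidity of $\gm{}$ (Example~\ref{ex:curvesaonerigid}, Corollary~\ref{cor:aonerigidfullyfaithful}) are correct, and for a smooth affine surface $\mathrm{Pic}=0$ gives factoriality; note, though, that what you need is the characterization theorem, not the cancellation theorem recalled in Section~\ref{ss:cancellation}.

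The genuine gap is in your elimination of $\overline{\kappa}(X)\in\{1,2\}$. For $\overline{\kappa}=2$, the inference ``no contractible curves, hence every morphism $\aone\to X$ is constant'' does not follow: the image of a non-constant morphism $\aone\to X$ is a closed rational curve whose normalization is $\aone$, but it may have non-unibranch singularities (a node, say), in which case it is not a topologically contractible curve, so the results of Zaidenberg \cite{Zaidenbergloggentype,ZaidenbergloggentypeII} and Miyanishi--Tsunoda \cite{MiyanishiTsunoda} do not exclude it; indeed the survey explicitly records Mori hyperbolicity of such surfaces as an open question (Section~\ref{ss:aonecontractibilityvstopologicalcontractibility}). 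For $\overline{\kappa}=1$ you concede yourself that the decisive step (ruling out horizontal $\aone$'s outside finitely many fibers) is not carried out. In addition, your reduction to $\CC$ tacitly assumes that $\aone$-contractibility can be transported through a descent to a finitely generated subfield and an embedding into $\CC$, which itself requires an argument and is unnecessary. The standard repair, which is essentially the proof in \cite{DPO}: since $X(k)$ has more than one point, $\aone$-chain connectedness already forces a non-constant morphism $\aone\to X$ through every $k$-point, i.e.\ $X$ is affine-uniruled, and for a smooth affine surface in characteristic zero affine-uniruledness implies $\overline{\kappa}(X)=-\infty$ (Miyanishi--Tsunoda; see Miyanishi's book on open algebraic surfaces). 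This single observation replaces the passage through topological contractibility, the classification of contractible surfaces, and the fibration analysis, and together with your first paragraph it completes the proof.
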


%\begin{ex}[\cite{DPO}]
%The fourfold $X:=\{x^{2}z=y_{1}^{2}+y_2^{3}+y_3^{5}+x\}\subset\AA^{5}$ is $\AA^{1}$-chain connected.
%An elementary $\AA^{1}$-equivalence between the $0$-fiber and the $a\neq 0$-fiber of the projection map $\pr_{x}:X\to \AA^{1}_{x}$ is given by
%$$
%s
%\mapsto
%(x(s),z(s),y_{1}(s),y_2(s),y_3(s))=(as,-2-as,as+1,-as-1,0).
%$$
%\end{ex}

\begin{ex}[\cite{DPO}]
\label{example:tomDieck-Petriesurfaces}
For coprime integers $k>l\geq 2$, the smooth tom Dieck-Petrie surface is defined as
\begin{equation}
\label{equation:tomDieck-Petriesurfaces}
\mathcal{V}_{k,l}
:=
\big\{\frac{(xz+1)^k-(yz+1)^l-z}{z}=0\big\}
\subset
\AA^{3}
\end{equation}
We note that $\mathcal{V}_{k,l}$ is topologically contractible and stably $\AA^{1}$-contractible.
However, $\mathcal{V}_{k,l}$ has logarithmic Kodaira dimension $\overline{\kappa}(\mathcal{V}_{k,l})=1$,
and thus it cannot be $\AA^{1}$-chain connected.
This example shows that the affine modification construction does not preserve $\AA^{1}$-chain connectedness.
It is an open question whether or not $\mathcal{V}_{k,l}$ is $\AA^{1}$-contractible.
\end{ex}

Establishing $\aone$-connectedness is a first step towards understanding $\aone$-homotopy type.  A first step toward answering Question~\ref{question:vandeVenquestion} in higher dimensions thus seems to begin with an analysis of the following problem.

\begin{problem}
Which classes of topologically contractible varieties are known to be $\aone$-chain connected?
\end{problem}

\subsubsection*{Chow groups and vector bundles on topologically contractible surfaces}
The only general result about vector bundles smooth topologically contractible varieties of arbitrary dimension pertains to the Picard group.

\begin{thm}[{\cite[Theorem 1]{GurjarPicard}}]
\label{thm:gurjarPic}
If $X$ is a topologically contractible smooth complex variety, then $Pic(X) = 0$.
\end{thm}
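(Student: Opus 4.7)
The plan is to compactify $X$ and reduce the computation of $\operatorname{Pic}(X)$ to a cohomological question on the compactification. By Hironaka's theorem, I would embed $X$ as the complement of a simple normal crossings divisor $D = D_1 \cup \cdots \cup D_m$ in a smooth projective complex variety $\bar X$. The standard localization sequence for Picard groups then yields an exact sequence
\[
\bigoplus_{i=1}^{m} \mathbb{Z}\cdot [\mathscr{O}_{\bar X}(D_i)] \longrightarrow \operatorname{Pic}(\bar X) \longrightarrow \operatorname{Pic}(X) \longrightarrow 0,
\]
so the task becomes to show that the classes $[\mathscr{O}_{\bar X}(D_i)]$ generate $\operatorname{Pic}(\bar X)$.

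Next, I would show that the first Chern class $c_1 \colon \operatorname{Pic}(\bar X) \to H^2(\bar X^{an}, \mathbb{Z})$ is injective. Its kernel is $\operatorname{Pic}^0(\bar X)$, an abelian variety of dimension $h^1(\bar X, \mathscr{O}_{\bar X})$. The long exact sequence in cohomology for the pair $(\bar X^{an}, D^{an})$ starts with the local cohomology term $H^1_{D}(\bar X^{an}, \mathbb{Z})$, which vanishes because $D$ has pure codimension $1$ in the smooth variety $\bar X$. Consequently the restriction $H^1(\bar X^{an}, \mathbb{Z}) \hookrightarrow H^1(X^{an}, \mathbb{Z})$ is injective, and topological contractibility forces $H^1(\bar X^{an}, \mathbb{Z}) = 0$. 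The Hodge decomposition for the smooth projective $\bar X$ then gives $H^1(\bar X, \mathscr{O}_{\bar X}) = 0$, so $\operatorname{Pic}^0(\bar X) = 0$ and $c_1$ is injective as claimed.

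With $c_1$ injective, it suffices to show that the cohomology classes $[D_i] = c_1(\mathscr{O}_{\bar X}(D_i))$ generate the image of $c_1$, which in turn equals $\operatorname{Pic}(\bar X)$. I would continue the long exact sequence of the pair one more step to
\[
\bigoplus_{i=1}^m H^0(D_i^{an}, \mathbb{Z}) \longrightarrow H^2(\bar X^{an}, \mathbb{Z}) \longrightarrow H^2(X^{an}, \mathbb{Z}),
\]
where the left arrow is the Gysin pushforward sending the generator of $H^0(D_i^{an}, \mathbb{Z})$ to the Poincar\'e dual $[D_i]$. Contractibility of $X$ kills the right-hand term, so the $[D_i]$ span $H^2(\bar X^{an}, \mathbb{Z})$; combined with injectivity of $c_1$, this forces $\operatorname{Pic}(\bar X)$ to be generated by $\{[\mathscr{O}_{\bar X}(D_i)]\}_{i=1}^m$, completing the argument via the first paragraph.

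The main obstacle is carefully arranging the Gysin-type sequence when $D$ is reducible and only simple normal crossings: one must justify that the local cohomology $H^k_D(\bar X)$ vanishes for $k \leq 1$ and is generated in degree $2$ by the fundamental classes of the smooth components $D_i$. The cleanest framework is Deligne's weight filtration for the smooth pair $(\bar X, D)$, which organizes the contributions of the deeper strata $D_I = \bigcap_{i \in I} D_i$ in codimensions $\geq 2$ and confirms that they do not affect the statements we need in degrees $\leq 2$.
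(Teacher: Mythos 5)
Your argument is correct in substance, but it takes a genuinely different route from the survey: the paper does not reprove Gurjar's theorem at all. It cites Gurjar's result for \emph{affine} varieties and only explains how to drop affineness, either by noting that Gurjar's proof never uses it, or --- more in the spirit of the survey --- by combining $\aone$-representability/invariance of $\Pic$ (Proposition~\ref{prop:picrepresentable}) with the Jouanolou--Thomason device (Lemma~\ref{lem:jouanolouthomason}), which replaces $X$ by an $\aone$-weakly equivalent topologically contractible smooth affine variety. What you wrote is essentially a self-contained proof of Gurjar's theorem itself (close in spirit to the original): compactify, use the localization sequence for $\Pic$, kill $\Pic^0(\bar{X})$ via $H^1(\bar{X}^{an},\Z)=0$ and Hodge theory, and use the cohomology-with-supports/Gysin sequence plus contractibility to see that $H^2(\bar{X}^{an},\Z)$ is spanned by the boundary divisor classes, so injectivity of $c_1$ finishes. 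Two points to tidy up: (i) a general smooth complex variety need not be quasi-projective, so you should compactify to a smooth \emph{proper} $\bar{X}$ (Nagata plus Hironaka) and invoke GAGA and Hodge theory for smooth proper varieties (Deligne), which is all the argument needs; (ii) the worry in your final paragraph is unnecessary --- purity/Alexander duality gives $H^k_D(\bar{X}^{an},\Z)\cong H^{BM}_{2n-k}(D^{an},\Z)$, which vanishes for $k\leq 1$ and in degree $2$ is free on the codimension-one irreducible components of $D$, for any closed subset of pure codimension one; no simple normal crossings hypothesis or weight filtration is required. The trade-off: your route buys a self-contained topological/Hodge-theoretic proof valid for all smooth complex varieties, while the paper's route buys brevity and showcases the homotopy-theoretic reduction (Jouanolou device plus representability) that recurs throughout the survey.
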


\begin{proof}
Gurjar states this result for affine varieties.  To remove the affineness assumption, one may either inspect the proof and see that the assumption is never used, or one may reduce to the affine case by simply observing that $Pic(X)$ is $\aone$-invariant by Proposition~\ref{prop:picrepresentable}, and any topologically contractible smooth complex variety is is $\aone$-weakly equivalent to a topologically contractible smooth affine scheme by appeal to Lemma~\ref{lem:jouanolouthomason}.
\end{proof}

On the other hand, Theorem ~\ref{thm:vbrepresentable} together with standard techniques of obstruction theory allows one to understand vector bundles on topologically contractible smooth affine varieties.  Indeed, the ideas of \cite{AsokFasel,AsokFaselSplitting} show that classification of vector bundles on smooth affine schemes of low dimensions are reduced to analysis of Chow groups.  If $X$ is a topologically contractible smooth affine complex variety of dimension $d$, then one may relate the structure of $CH^d(X)$ to geometry.  Indeed, if $Y$ is any smooth complex affine variety of dimension $d$, then a theorem of Roitman implies that $CH^d(Y)$ is uniquely divisible.  Thus, if $CH^d(Y)$ is furthermore finitely generated, it must be trivial.  The latter condition may be guaranteed by imposing conditions on the geometry of compactifications and is thus related to the generalized van de Ven question \ref{question:vandeVenquestion}.  The Chow groups of topologically contractible surfaces, which were originally computed by Gurjar and Shastri \cite{GSI,GSII}, may be computed in this way.  Indeed, Gurjar and Shastri show that the generalized van de Ven question has a positive answer in dimension $2$, so it follows immediately that if $X$ is a topologically contractible smooth complex affine surface, then $CH^2(X)$ is trivial.

\begin{thm}
\label{thm:vbtopcontractiblesurfaces}
If $X$ is a topologically contractible smooth complex surface, then every algebraic vector bundle on $X$ is trivial.
\end{thm}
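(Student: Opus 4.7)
The plan is to reduce the problem to a computation of Chow groups by exploiting representability of vector bundles, proceeding in three stages: a geometric reduction to low-rank bundles on a smooth affine surface, an obstruction-theoretic reduction of rank $2$ bundles to $CH^2$, and a vanishing of $CH^2(X)$ using Roitman's theorem together with the Gurjar--Shastri rationality result.

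First, Fujita's theorem (quoted in the introduction) guarantees that any topologically contractible smooth complex surface is automatically affine, so I may assume $X$ is a smooth affine complex surface. Given a vector bundle $E$ on $X$ of rank $r$, Serre's splitting theorem applies whenever $r > \dim X = 2$ and furnishes a decomposition $E \cong \mathscr{O}_X \oplus E'$ with $E'$ of rank $r-1$. Iterating this splitting reduces the theorem to the two cases $r = 1$ and $r = 2$. The rank $1$ case is immediate from Theorem~\ref{thm:gurjarPic}, which says $\Pic(X) = 0$.

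Second, for a rank $2$ bundle $E$, the determinant $\det(E)$ is a line bundle, hence trivial by the previous step, so without loss of generality $E$ has trivial determinant. On a smooth affine surface over an algebraically closed field of characteristic $0$, rank $2$ bundles with trivial determinant are classified by their second Chern class in $CH^2(X)$; this statement can be extracted from the representability theorem~\ref{thm:vbrepresentable} combined with motivic obstruction theory in the spirit of \cite{AsokFasel, AsokFaselSplitting}, once one observes that the relevant Chow--Witt obstruction collapses to ordinary $CH^2$ over $\mathbb{C}$ (and with trivial $\Pic$). Thus it suffices to show $CH^2(X) = 0$.

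Third, for the vanishing, I appeal to Roitman's theorem: for any smooth affine complex variety $Y$ of dimension $d$, the group $CH^d(Y)$ is uniquely divisible. A uniquely divisible finitely generated abelian group must be zero, so it suffices to verify that $CH^2(X)$ is finitely generated. By the Gurjar--Shastri theorem, which resolves the generalized van de Ven Question~\ref{question:vandeVenquestion} in dimension $2$, any smooth projective compactification $\bar X \supset X$ is rational; hence $CH^2(\bar X)$ is finitely generated, and the usual localization sequence exhibits $CH^2(X)$ as a quotient of $CH^2(\bar X)$, which is therefore finitely generated.

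The main obstacle in this strategy is the second step: pinning down the cohomological invariant classifying rank $2$ bundles with trivial determinant and verifying that it reduces to $CH^2(X)$. Over arbitrary base fields the natural invariant is a Chow--Witt class in $\widetilde{CH}^2(X)$, and controlling the quadratic refinement in the unstable obstruction theory requires nontrivial input; the hypothesis that the base field is $\mathbb{C}$, coupled with $\Pic(X) = 0$, is what allows the argument to terminate cleanly in $CH^2$.
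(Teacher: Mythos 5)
Your proposal is correct and follows essentially the same route as the paper: reduce via Fujita and Serre splitting to ranks $1$ and $2$, kill $\Pic(X)=CH^1(X)$ by Gurjar's theorem, classify rank $2$ bundles by Chern classes using \cite[Theorem 1]{AsokFasel}, and show $CH^2(X)=0$ by combining Roitman's divisibility theorem with the Gurjar--Shastri rationality of a compactification and the localization sequence. The only cosmetic difference is that you normalize the determinant and invoke the obstruction-theoretic machinery informally, where the paper simply cites the bijectivity of $(c_1,c_2)$ on $\mathscr{V}_2(X)$.
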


\begin{proof}
Suppose $X$ is a smooth affine surface over an algebraically closed field.  By Serre's splitting theorem, it suffices to prove that rank $1$ and rank $2$ bundles are trivial.  However, $Pic(X) \cong CH^1(X)$.  The results of \cite[Theorem 1]{AsokFasel} imply that the canonical map
\[
(c_1,c_2): \mathscr{V}_2(X) \longrightarrow CH^1(X) \times CH^2(X)
\]
is a bijection.  If $k = \cplx$ and $X$ is furthermore topologically contractible, then Theorem~\ref{thm:gurjarPic} implies that $CH^1(X) = Pic(X) = 0$.  The argument that $CH^2(X) = 0$ is given before the statement.
\end{proof}

\begin{rem}
In fact, the results of Gurjar and Shastri give a much more refined result than $CH^2(X) = 0$ for a topologically contractible smooth complex affine variety.  In \cite{AAcyclic}, the Voevodsky motive of a topologically contractible smooth complex affine surface is seen to be isomorphic to that of a point.  This implies that the Chow groups are {\em universally trivial}, i.e., for every finitely generated extension $L/\cplx$, $CH^2(X_L) = 0$ as well.
\end{rem}

Similarly, the generalized Serre Question \ref{questionintro:generalizedserrequestion} in dimension $3$ may be reduced to a question that is purely cohomological.

\begin{thm}
\label{thm:vbonthreefolds}
If $X$ is a topologically contractible smooth complex threefold, then every algebraic vector bundle on $X$ is trivial if and only if $CH^2(X)$ and $CH^3(X)$ are trivial.
\end{thm}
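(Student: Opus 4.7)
The plan is to adapt the argument of Theorem~\ref{thm:vbtopcontractiblesurfaces} to dimension three. First I would reduce to the smooth affine case via the Jouanolou--Thomason lemma (Lemma~\ref{lem:jouanolouthomason}): if $\pi\colon \tilde X\to X$ is an affine vector bundle torsor, then $\pi^{\ast}$ is a bijection both on isomorphism classes of vector bundles (using representability, Theorem~\ref{thm:vbrepresentable}, together with $\aone$-invariance on smooth affine schemes) and on Chow groups, so we may assume $X$ is a smooth topologically contractible complex \emph{affine} threefold. By Serre's splitting theorem, it then suffices to analyze vector bundles of rank at most three on $X$; rank one bundles are already trivial by Theorem~\ref{thm:gurjarPic}.

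For the direction $(\Leftarrow)$, the strategy is to invoke the Asok--Fasel obstruction-theoretic classification of vector bundles on smooth affine threefolds. In rank three, after normalizing the determinant (possible because $\Pic(X)=0$), an analysis of the Postnikov tower of the relevant classifying space shows that $c_3\in CH^3(X)$ is the sole remaining obstruction to triviality, so $CH^3(X)=0$ forces every rank three bundle to be trivial. In rank two, the primary obstruction is an Euler class landing in a Chow--Witt group $\CH^2(X)$. Here I would argue that topological contractibility forces the natural comparison map $\CH^2(X)\to CH^2(X)$ to be an isomorphism, via the Gersten--Pardon-type filtration whose graded pieces are controlled by Witt-sheaf cohomology; this contribution vanishes for a topologically contractible complex threefold by realization arguments analogous to those underlying Theorem~\ref{thm:gurjarPic}. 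Granted the isomorphism, the hypothesis $CH^2(X)=0$ forces the Euler class, and hence every rank two bundle with trivial determinant, to be trivial.

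For the direction $(\Rightarrow)$, I would realize every class in $CH^2(X)$ and $CH^3(X)$ as a Chern class of a vector bundle. By Murthy's theorems on smooth affine threefolds, every element of $CH^3(X)$ is $c_3$ of some rank three projective module, and every element of $CH^2(X)$ is $c_2$ of some rank two projective module. Triviality of all vector bundles then forces the vanishing of these Chern classes, whence $CH^2(X)=CH^3(X)=0$.

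The main obstacle will be the rank two case in direction $(\Leftarrow)$: the Asok--Fasel classification there is phrased in terms of Chow--Witt rather than ordinary Chow groups, and one must genuinely use topological contractibility, not merely the vanishing of $CH^2(X)$ and $CH^3(X)$, in order to collapse the quadratic refinement to its Chow-theoretic shadow. Verifying that the Witt-cohomological obstructions actually vanish on a topologically contractible complex threefold is the technical heart of the argument, and it is here that the hypothesis on $X$ is doing real work beyond its Chow-theoretic consequences.
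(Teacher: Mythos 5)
Your overall skeleton coincides with the paper's: reduce to ranks $\leq 3$ by Serre splitting, handle rank $1$ via Theorem~\ref{thm:gurjarPic}, rank $2$ via the Asok--Fasel classification, rank $3$ via Mohan Kumar--Murthy \cite{MKM}, and obtain the converse by realizing Chow classes as Chern classes (the paper leaves that direction implicit, so making it explicit via Murthy-type surjectivity of $(c_1,c_2)$ and $c_3$ is a welcome addition). However, two steps as you describe them would not go through. First, the Jouanolou reduction is unsound: Theorem~\ref{thm:vbrepresentable} identifies $[X,Gr_r]_{\aone}$ with $\mathscr{V}_r(X)$ only for smooth \emph{affine} $X$, and for non-affine $X$ the pullback $\mathscr{V}_r(X)\to\mathscr{V}_r(\tilde X)$ along a Jouanolou device need not be injective --- precisely because $\mathscr{V}_r$ fails $\aone$-invariance off the affine category. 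Indeed Example~\ref{ex:nontrivialvectorbundles} exhibits an $\aone$-contractible quasi-affine variety carrying a nontrivial rank $2$ bundle, while any Jouanolou device over it is affine and $\aone$-contractible, hence has only trivial bundles; so the nontrivial bundle dies under $\pi^*$. Chow groups do pull back isomorphically, but that does not rescue the vector-bundle side. The theorem's real content (and the paper's own proof, which uses Serre splitting and classification results valid only for affine schemes) concerns the affine case; the general case cannot be reduced to it this way.

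Second, the step you single out as the ``technical heart'' of the rank $2$ case is misplaced. Over $\cplx$, \cite[Theorem 1]{AsokFasel} already asserts that $(c_1,c_2)\colon \mathscr{V}_2(X)\to CH^1(X)\times CH^2(X)$ is a bijection for \emph{every} smooth affine threefold over an algebraically closed field of characteristic $\neq 2$; no topological contractibility is used there, and the collapse of the quadratic ($\CH^2$) information to its Chow-theoretic shadow is part of that theorem, coming from the vanishing of the relevant powers-of-the-fundamental-ideal cohomology over an algebraically closed base --- not from contractibility of $X$. Contractibility enters the paper's argument only through $Pic(X)=0$ (Theorem~\ref{thm:gurjarPic}) and, of course, the hypotheses $CH^2(X)=CH^3(X)=0$. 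Moreover, your sketch understates what the classification actually requires: since the rank ($2$) is smaller than the dimension ($3$), vanishing of the Euler class in $\CH^2(X)$ is a primary obstruction only and is not known to imply triviality; the cited result rests on a Moore--Postnikov analysis with a secondary obstruction governed by $H^3(X,\bpi_2^{\aone}({\mathbb A}^2\setminus 0))$, whose contribution vanishes over algebraically closed fields. Your proposed route to the Chow--Witt comparison via complex realization would in any case not control Witt-sheaf phenomena, which complex realization does not see --- fortunately it is unnecessary. Finally, in rank $3$ the Chern class $c_2$ also enters; with $CH^2(X)=0$ assumed this is harmless, since \cite{MKM} gives uniqueness of a rank $3$ bundle with prescribed $(c_1,c_2,c_3)$.
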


\begin{proof}
As in the proof of Theorem~\ref{thm:vbtopcontractiblesurfaces}, it suffices to treat the case of ranks $1$, $2$ and $3$ by Serre's splitting theorem.  It is known that the Picard group of any topologically contractible threefold is trivial, so the rank $1$ case follows.  The results of \cite[Theorem 1]{AsokFasel} imply that if, furthermore, $CH^2(X)$ is trivial, then every rank $2$ vector bundle is trivial.  Finally, classical results of Mohan Kumar and Murthy imply that there is a unique rank $3$ vector bundle with given $(c_1,c_2,c_3)$ \cite{MKM}.  In particular, if $CH^3(X)$ is also trivial, it follows that every rank $3$ bundle on such an $X$ is trivial.
\end{proof}

\begin{rem}
In \cite{AAcyclic}, it is observed that one may produce threefolds with trivial Chow groups by means of the technique of affine modifications, so one may produce many examples of topologically contractible threefolds satisfying the above hypotheses.  Similar observations are used in \cite{MR3549169} to establish triviality of vector bundles on Koras--Russell threefolds (see Section~\ref{ss:KRthreefolds} and the discussion after Theorem~\ref{thm:MC-Vanish} for more details).
\end{rem}

As sketched above, one may use geometry to analyze vector bundles on topologically contractible smooth complex affine threefolds.  Indeed, suppose $X$ is a topologically contractible smooth complex affine threefold admitting a compactification $\bar{X}$ that is rationally connected.  Of course, this is weaker than assuming the generalized van de Ven Question \ref{question:vandeVenquestion} has a positive solution in dimension $3$.  In that case $CH^3(\bar{X}) = \Z$ and thus $CH^3(X)$ is trivial.  \cite[Theorem 1.3]{TianZong} implies that $CH^2(\bar{X})$ is generated by classes of rational curves.  Using this, and the localization sequence, one can sometimes establish that $CH^2(X)$ is itself torsion.  In that case, \cite[Appendix Theorem]{KumarConner} due to Srinivas implies that $CH^2(X)$ is actually trivial.  For example, it follows from \cite[Corollary 18]{KumarConner} that if $X$ is a topologically contractible smooth complex affine threefold that admits a finite morphism from ${\mathbb A}^3$, then all vector bundles on $X$ are trivial.

\begin{rem}
For some comments on the situation in dimension $\geq 4$, see Conjecture~\ref{conj:stablecontractibility}.
\end{rem}

\subsection{Cancellation problems and the Russell cubic}
\label{ss:KRthreefolds}
We now investigate $\aone$-contractible smooth affine varieties over fields having characteristic $0$.  For concreteness, it's useful to focus on one particular case:  let $\mathcal{KR}$ be the so-called Russell cubic, i.e., the smooth variety in ${\mathbb A}^4$ defined by the equation:
\[
x + x^2y + z^3 + t^2 = 0
\]

There is a natural $\Gm$-action on $\mathcal{KR}$ given by
\begin{equation}
\label{equation:KRGmaction}
\Gm\times\mathcal{KR}
\to
\mathcal{KR};
(\lambda,x,y,z,t)\mapsto (\lambda^{6}x,\lambda^{-6}y,\lambda^{2}z,\lambda^{3}t).
\end{equation}
With respect to this action we have the $\Gm$-invariant variables $u:=xy, v:=yt^{2}\in\mathscr{O}(\mathcal{KR})^{\Gm}$.
In fact,
the GIT-quotient for \eqref{equation:KRGmaction} is given by
\begin{equation}
\label{equation:KRGITquotient}
\pi
:
\mathcal{KR}
\to
\A^{2}_{u,v};
(x,y,z,t)\mapsto (u,v).
\end{equation}
The fiber $\pi^{-1}(\alpha,\beta)\subset\mathcal{KR}$ is described by the equations $\alpha=xy$, $\beta=yt^{2}$, and
\begin{equation}
\label{equation:KRfiber1}
x+x^{2}y+z^{3}+t^{2}=0.
\end{equation}
Multiplying \eqref{equation:KRfiber1} by $y$ yields the equation
\begin{equation}
\label{equation:KRfiber2}
\alpha+\alpha^{2}+yz^{3}+\beta=0.
\end{equation}
Using these equations one checks that there is exactly one closed orbit in each fiber.
Thus the set of closed orbits is parameterized by the normal scheme $\A^{2}_{u,v}$ and we conclude the corresponding GIT-quotient is indeed \eqref{equation:KRGITquotient}.
\vspace{0.1in}

Makar-Limanov succeeded in showing that $\mathcal{KR}$ is non-isomorphic to ${\mathbb A}^3$ \cite{zbMATH00997597} by calculating his eponymous invariant.
We recall that the Makar-Limanov invariant of an affine algebraic variety $X$ is the subring $\ML(X)$ of $\Gamma(X,\mathscr{O}_{X})$ comprised of
regular functions that are invariant under all $\GG_{a}$-actions on $X$.
Using the bijection between $\GG_{a}$-actions on $X$ and locally nilpotent derivations $\partial$ on the $k$-algebra $\Gamma(X,\mathscr{O}_{X})$ one finds that
\[
\ML(X)=\bigcap_{\partial}\ker(\partial).
\]
Clearly we have $\ML(\AA^{3})=k$ and similarly for all affine spaces, while extensive calculations reveal that $\ML(\mathcal{KR})=k[x]$.
That is,
$\mathcal{KR}$ admits in a sense fewer $\GG_{a}$-actions than $\AA^{3}$.
Here we observe the inclusion $\ML(\mathcal{KR})\subset k[x]$:
the locally nilpotent derivations $x^{2}\partial_{z}-2z\partial_{y}$ and $x^{2}\partial_{t}-3t^{2}\partial_{y}$ of $k[x,y,z,t]$ induce locally nilpotent derivations on the coordinate ring $k[\mathcal{KR}]$.
One easy checks that their kernels intersect in $k[x]$.
The interesting part of Makar-Limanov's calculation is to show that $\partial(x)=0$ for every locally nilpotent derivation of $k[\mathcal{KR}]$.
Alternatively one can use Kaliman's result in \cite{MR1895930} saying that if the general fibers of a regular function ${\mathbb A}^3\to{\mathbb A}^1$ are isomorphic to ${\mathbb A}^2$
then all its fibers are isomorphic to ${\mathbb A}^2$.
All the closed fibers of the projection map $\mathcal{KR}\to{\mathbb A}^1_x$ are isomorphic to ${\mathbb A}^2$ except for over the origin,
which yields a copy of the cylinder on the cuspidal curve $\{z^{3}+t^{2}=0\}$.
\vspace{0.1in}

Dubouloz \cite{MR2534798} showed that the Makar-Limanov invariant cannot distinguish between the cylinder $\mathcal{KR}\times\aone$ on the Russell cubic and the affine space ${\mathbb A}^4$.
Furthermore,  M.P. Murthy showed that all vector bundles on $\mathcal{KR}$ are trivial \cite{Murthy} (it is also known that the Chow groups of $\mathcal{KR}$ are trivial).
However, the $\gm{}$-action on $\mathcal{KR}$ has an isolated fixed point.
If $\mathcal{KR}$ is not stably isomorphic to ${\mathbb A}^3$, is there an $\aone$-homotopic obstruction to stable isomorphism?

\begin{question}
Is the Russell cubic $\mathcal{KR}$ $\aone$-contractible?
\end{question}

This question, which has recently been solved in the affirmative, has guided much of the research in the area.  First, one might try to compute the $\aone$-homotopy groups; for this even to be sensible, we should make sure that the first obstruction to $\aone$-contractibility vanishes.
For a generalization of the following observation we refer the reader to \cite{DPO}.

\begin{prop}[B. Antieau (unpublished)]
The Russell cubic $\mathcal{KR}$ is $\aone$-chain connected.
\end{prop}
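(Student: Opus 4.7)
The plan is to establish $\aone$-chain connectedness of $\mathcal{KR}$ by combining a fibration argument over the $x$-axis with the construction of one explicit polynomial curve that meets every fiber. Fix a base point $p_{\star}:=(0,-1,-1,1)\in\mathcal{KR}(k)$, which satisfies the defining equation since $(-1)^{3}+1^{2}=0$; it then suffices to show that every $K$-point of $\mathcal{KR}$ is $\aone$-chain equivalent to $p_{\star}$ for every finitely generated separable extension $K/k$.

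First, I would analyze the fibers of the projection $\pi\colon\mathcal{KR}\to\aone_{x}$. For $x_{0}\in K^{\times}$, the defining equation solves for $y$ as $y=-(x_{0}+z^{3}+t^{2})/x_{0}^{2}$, so the fiber $\pi^{-1}(x_{0})$ is isomorphic to $\aone_{K}^{2}$ with coordinates $(z,t)$ and is thus $\aone$-chain connected. For $x_{0}=0$, the fiber is $\aone_{y}\times\{z^{3}+t^{2}=0\}$; the normalization $\aone\to\{z^{3}+t^{2}=0\}$, $s\mapsto(-s^{2},s^{3})$, is a bijection on $K$-points (injectivity is clear, and surjectivity uses $s_{0}=-t_{0}/z_{0}$ when $z_{0}\neq 0$), which provides an $\aone$-chain connection between any two $K$-points of this fiber.

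Next, I propose the cross-fiber curve $\psi\colon\aone\to\mathcal{KR}$ defined by $\psi(s):=(s,-1-s,-1+s,1-2s)$. A direct expansion of $s+s^{2}(-1-s)+(s-1)^{3}+(1-2s)^{2}$ yields $0$, so $\psi$ factors through $\mathcal{KR}$. Since the $x$-coordinate of $\psi(s)$ is $s$, the curve $\psi$ visits every fiber of $\pi$, and in particular $\psi(0)=p_{\star}$. Given $p=(x_{0},y_{0},z_{0},t_{0})\in\mathcal{KR}(K)$, a chain of affine lines inside $\pi^{-1}(x_{0})$ first moves $p$ to $\psi(x_{0})$, and then the reparametrized curve $s\mapsto\psi(sx_{0})$ moves $\psi(x_{0})$ to $\psi(0)=p_{\star}$, completing the chain.

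The only non-routine step is locating the curve $\psi$. An infinitesimal obstruction shows that a polynomial curve $s\mapsto(s,y(s),z(s),t(s))$ in $\mathcal{KR}$ cannot satisfy $(z(0),t(0))=(0,0)$, since otherwise the linear-in-$s$ coefficient of the defining equation would force $1=0$. The search therefore begins from a non-cusp $K$-point of the fiber over $x=0$, say $(z(0),t(0))=(-1,1)$, and the linear ansatz $z=-1+as$, $t=1+bs$ forces $1+3a+2b=0$ in order to cancel the linear-in-$s$ contribution of $z^{3}+t^{2}$, after which $y(s)$ is determined polynomially. The choice $a=1$, $b=-2$ then produces the curve above.
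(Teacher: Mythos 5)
Your proposal is correct, and I verified the computations that carry the argument: $p_{\star}=(0,-1,-1,1)$ does lie on $\mathcal{KR}$, the expansion $s+s^{2}(-1-s)+(s-1)^{3}+(1-2s)^{2}=0$ holds identically (so $\psi$ really maps $\aone$ into $\mathcal{KR}$ with $\psi(0)=p_{\star}$), the fiber of $\mathrm{pr}_x$ over $x_{0}\neq 0$ is the graph of $y=-(x_{0}+z^{3}+t^{2})/x_{0}^{2}$ and hence a copy of $\mathbb{A}^{2}_{z,t}$, and the fiber over $0$ is $\mathbb{A}^{1}_{y}\times\{z^{3}+t^{2}=0\}$ whose normalization $s\mapsto(-s^{2},s^{3})$ is indeed bijective on $K$-points for every field $K$, so chains inside each fiber exist and are genuinely given by morphisms $\aone_{K}\to\mathcal{KR}$. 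Since the statement in the text is only attributed to Antieau (unpublished) with a pointer to the generalization in the cited work on Koras--Russell fiber bundles, there is no in-text proof to compare against; your argument is in the same spirit as that general strategy, which also exploits that $\mathrm{pr}_x$ has $\aone$-chain connected fibers (trivial $\mathbb{A}^{2}$-bundle away from $0$, a cylinder on a cuspidal curve over $0$) and then connects distinct fibers. What your write-up adds is the completely explicit cross-fiber curve $\psi(s)=(s,-1-s,s-1,1-2s)$, which replaces any appeal to multisections or to the general fiber-bundle machinery and makes the proof self-contained and uniform in the extension $K/k$; the only cosmetic point is the orientation of the final link, since $s\mapsto\psi(sx_{0})$ runs from $p_{\star}$ to $\psi(x_{0})$, which you can fix by the reparametrization $s\mapsto\psi((1-s)x_{0})$ or by invoking symmetry of the chain-equivalence relation.
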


\begin{entry}[Approach 1]
Can one detect nontriviality of any of the higher $\aone$-homotopy groups of $\mathcal{KR}$?  One approach to this problem is to think ``naively" of, e.g., the $\aone$-fundamental group.  Think of chains of maps from $\aone$ that start and end at a fixed point up to ``naive" homotopy equivalence (this naturally forms a monoid rather than a group).  The resulting object maps to the actual $\aone$-fundamental group, but what can one say about its image?
\end{entry}

\begin{entry}[Approach 2]
Since to disprove $\aone$-contractiblity, we only need one cohomology theory that is $\aone$-representable that detects nontriviality, it is useful to look at invariants that are not as ``universal" as $\aone$-homotopy groups.  For another approach, using group actions, let us mention that J. Bell showed that rational $\gm{}$-equivariant $K_0$ of $\mathcal{KR}$ is actually nontrivial \cite{Bell}.  Unfortunately, his computations together with the Atiyah-Segal completion theorem in equivariant algebraic K-theory also show that the ``Borel style" equivariant $K_0$ is isomorphic to the Borel style equivariant $K_0$ of a point \cite{AtiyahSegal}.  Nevertheless, $\aone$-homotopy theory gives a wealth of new cohomology theories with which to study the Russell cubic.  For example, it would be interesting to know if one of the more ``refined" Borel style equivariant theories is refined enough to detect failure of $\aone$-contractibility.
\vspace{0.1in}

If $\mu_n \subset \gm{}$ is a sufficiently ``large" subgroup then the $\mu_n$-equivariant $K_0$ of $\mathcal{KR}$ is also nontrivial.  Moreover, the fixed-point loci for the $\mu_n$-actions are all affine spaces (indeed, if $n$ is prime, then the only nontrivial subgroup is the trivial subgroup which has the total space as fixed point locus).  Thus, for many purposes, one might simply look at $\mu_n$-equivariant geometry.
\vspace{0.1in}

In equivariant topology, a map is a ``fine" equivariant weak equivalence if it induces a weak equivalence on fixed point loci for all subgroups.  Transplanting this to $\aone$-homotopy theory: if we knew that equivariant algebraic K-theory was representable on an appropriate equivariant $\aone$-homotopy category, such a category has been constructed for finite groups by Voevodsky \cite{Deligne},
and we knew enough about the weak equivalences in the theory, then Bell's result might formally imply that $X$ is not $\aone$-contractible.
\end{entry}

\subsubsection*{The Russell cubic and equivariant K-theory}
\label{subsection:a1cotkrt}
For representability of equivariant algebraic K-theory let us work relative to a regular Noetherian commutative unital ring $k$ of finite Krull dimension.  We assume that $G \to k$ is a finite constant group scheme (or more generally that it satisfies the resolution property: every coherent $G$-module on $X$ in $\Sch^G_k$ is the equivariant quotient of a $G$-vector bundle).  Under these assumptions, Nisnevich descent for equivariant algebraic $K$-theory of smooth schemes over $k$ was established in \cite{MR3431674}.  The fact that equivariant algebraic $K$-theory satisfies equivariant Nisnevich descent for smooth schemes implies that it is representable in the equivariant motivic homotopy category.
\vspace{0.1in}

Let $X$ be a $G$-scheme over $k$.
Write $\mcal{P}^{G}(X)$ for the exact category of $G$-vector bundles.
Then the equivariant algebraic $K$-groups are the homotopy groups $K_{i}^{G}(X) := \pi_{i}\mcal{K}(\mcal{P}^{G}(X))$ of the associated $K$-theory space, defined by Waldhausen's $S_{\bullet}$-construction.
We obtain a presheaf of simplicial sets ${\mathcal K}^G$ on $\Sch^G_k$ such that $\pi_{i}{\mathcal K}^{G}(X) = K_{i}^{G}(X)$ for all $X$ by applying a rectification procedure to the pseudo-functor
$X\mapsto {\mathcal P}^{G}(X)$.
With the same hypothesis as above, there is a natural isomorphism
\[
K^G_i(X)
\iso
[S^i \wedge X_+,{\mathcal K}^{G}]_{\mathscr{H}^{G}_{\bullet}(k)}
\]
for any $X$ in $\Sm_{k}^{G}$ and the pointed $G$-equivariant motivic homotopy category $\mathscr{H}^{G}_{\bullet}(k)$ of $k$.
This is the desired representability result for equivariant algebraic $K$-theory mentioned above.
\vspace{0.1in}

An explicit computation of the $\mu_p$-equivariant Grothendieck groups of Koras--Russell threefolds was carried out in \cite{MR3549169}.
For concreteness we specialize to the case $k=\CC$.
When $X$ is a complex variety with an action of an algebraic group $G$,
we let $R(G)\simeq K^G_0(k)$ denote the representation ring of $G$.
If $H \subseteq G$ is a closed subgroup,
we note there is a restriction map $K^G_0(X) \to K^{H}_0(X)$.
Let $X$ be a smooth affine variety with $\CC^{\times}$-action and let $n>0$ be an integer.
There is a natural ring isomorphism
\begin{equation}
\label{equation:K0CCton}
\phi\colon K^{\CC^{\times}}_0(X)
{\underset{R(\CC^{\times})}\otimes} R(\mu_n)
\xrightarrow{\cong}
K^{\mu_n}_0(X).
\end{equation}

An algebraic $\CC^{\times}$-action on a smooth complex affine variety is called hyperbolic if it has a unique fixed point
and the weights of the induced linear action on the tangent space at this fixed point are all non-zero and their product is negative.
Recall from \cite{MR1487230} that a {\sl Koras--Russell threefold} $X$ is a smooth hypersurface in $\A^4_\CC$ which is
\begin{enumerate}[noitemsep,topsep=1pt]
\item
topologically contractible,
\item
has a hyperbolic $\CC^{\times}$-action, and
\item
the quotient $X//{\CC^{\times}}$ is isomorphic to the quotient of the linear $\CC^{\times}$-action on the tangent space at the fixed point (in the sense of GIT).
\end{enumerate}
It is shown in \cite[Theorem~4.1]{MR1487230} that the coordinate ring of a threefold $X$ satisfying (1)-(3) has the form
\begin{equation}
\label{eqn:KRCR}
\CC[X]
=
\frac{\CC[x,y, z,t]}{t^{\alpha_2} - G(x, y^{\alpha_1}, z^{\alpha_3})}.
\end{equation}
Here $\alpha_1, \alpha_2, \alpha_3$ are pairwise coprime positive integers.
We let $r$ denote the $x$-degree of the polynomial $G(x, y^{\alpha_1},0)$ and set $\epsilon_X = (r-1)(\alpha_2-1)(\alpha_3-1)$.
A Koras--Russell threefold $X$ is said to be {\sl nontrivial} if $\epsilon_X \neq 0$.
\vspace{0.1in}

Bell \cite{Bell} showed that the $\CC^{\times}$-equivariant Grothendieck group of $X$ is of the form
\begin{equation}
\label{eqn:Bell-*}
\begin{array}{lll}
K^{\CC^{\times}}_0(X) & = & {R(\CC^{\times})} \oplus
\left(\frac{R(\CC^{\times})}{(f(t))}\right)^{\rho-1} \\
& = & \Z[t, t^{-1}] \oplus \left(\frac{\Z[t, t^{-1}]}{(f(t))}\right)^{\rho-1} \\
& = & \Z[t, t^{-1}] \oplus \Z^{(\alpha_2 -1)(\alpha_3-1)},
\end{array}
\end{equation}
where
\begin{equation}
\label{eqn:Bell-*-1}
f(t) = \frac{(1-t^{\alpha_2 \alpha_3})(1-t)}{(1-t^{\alpha_2})(1-t^{\alpha_3})}
\end{equation}
is a polynomial of degree $(\alpha_2-1)(\alpha_3-1)$
and $\rho\ge 2$ is the number of irreducible factors of
$G(x,y^{\alpha_1},0)\in \CC[x,y]$.
In particular, $K^{\CC^{\times}}_0(X)$ is nontrivial.
A combination of \eqref{equation:K0CCton} - \eqref{eqn:Bell-*-1} together with explicit calculations reveal that the $\mu_p$-equivariant Grothendieck group of $X$ is trivial for almost all primes $p$.
This implies that the suggested approach to showing non-$\A^1$-contractibility of a Koras--Russell threefold via $\mu_p$-equivariant Grothendieck groups cannot work.

\begin{thm}
\label{thm:KR-trivial}
Let $p$ be a prime and let $n \ge 1$ be an integer.
Let $\mu_{p^n}$ act on a Koras--Russell threefold $X$ via the inclusion $\mu_{p^n} \subset\CC^{\times}$.
Then the following hold.
\begin{enumerate}[noitemsep,topsep=1pt]
\item
The structure map $X \to \Spec(\CC)$ induces an isomorphism $R(\mu_{p^n}) \oplus F_{p^n} \xrightarrow{\cong} K^{\mu_{p^n}}_0(X)$.
\item
$F_{p^n}$ is a finite abelian group which is nontrivial if and only if $X$ is nontrivial and $p|\alpha_2\alpha_3$.
\end{enumerate}
\end{thm}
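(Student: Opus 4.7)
The strategy is to combine Bell's computation \eqref{eqn:Bell-*} of $K_0^{\CC^\times}(X)$ with the base-change isomorphism \eqref{equation:K0CCton}, and then to analyze the resulting quotient $R(\mu_{p^n})/(f(t))$ using the cyclotomic factorization of $f(t)$ together with Apostol's classical resultant formulas for cyclotomic polynomials.

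Applying $\phi$ from \eqref{equation:K0CCton} to Bell's decomposition, and distributing tensor product over direct sum, I obtain an isomorphism of $R(\mu_{p^n})$-modules
\[
K_0^{\mu_{p^n}}(X) \;\cong\; R(\mu_{p^n})\oplus \bigl(R(\mu_{p^n})/(f(t))\bigr)^{\rho-1}.
\]
The first summand is identified with the image of the pullback $R(\mu_{p^n}) = K_0^{\mu_{p^n}}(\Spec \CC) \to K_0^{\mu_{p^n}}(X)$ along the structure map: in Bell's description the $R(\CC^\times)$-factor is exactly the pullback from a point, and this identification is preserved under restriction from $\CC^\times$ to $\mu_{p^n}$. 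Setting $F_{p^n}:=(R(\mu_{p^n})/(f(t)))^{\rho-1}$ establishes (1). For finiteness, I use the cyclotomic factorizations $t^{p^n}-1=\prod_{j=0}^{n}\Phi_{p^j}$ and, derived by inclusion-exclusion from \eqref{eqn:Bell-*-1},
\[
f(t)\;=\;\prod_{\substack{d_1\mid\alpha_2,\ d_2\mid\alpha_3\\ d_1>1,\ d_2>1}}\Phi_{d_1 d_2}(t).
\]
A common cyclotomic factor would require $d_1 d_2 = p^j$ with $d_1, d_2 > 1$, forcing $p$ to divide both $\alpha_2$ and $\alpha_3$, contradicting $\gcd(\alpha_2,\alpha_3)=1$. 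Hence $\gcd_{\Q[t]}(f,\,t^{p^n}-1)=1$, and $F_{p^n}$, being a $\Z$-torsion quotient of the free $\Z$-module $R(\mu_{p^n})^{\rho-1}$ of rank $(\rho-1)p^n$, is finite.

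For the nontriviality criterion in (2), since $\rho\geq 2$ it suffices to decide when $R(\mu_{p^n})/(f(t))$ vanishes; equivalently, whether the resultant $|\mathrm{Res}(f(t),\,t^{p^n}-1)| = \prod_{(d_1,d_2),\,j}|\mathrm{Res}(\Phi_{d_1 d_2},\Phi_{p^j})|$ equals $1$. Apostol's formula states that $\mathrm{Res}(\Phi_m,\Phi_n)$ is a positive power of a prime $\ell$ precisely when the larger of $m$ and $n$ is the smaller times a positive power of $\ell$, and equals $1$ otherwise. If $X$ is trivial then either $\alpha_2=1$ or $\alpha_3=1$ forces $f(t)\equiv 1$, or $r=1$ forces $\rho=1$; in each case $F_{p^n}=0$. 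If $X$ is nontrivial but $p\nmid\alpha_2\alpha_3$, then $p\nmid d_1 d_2$ for every factor, and the coprimality of $d_1$ and $d_2$ prevents either $d_1 d_2/p^j$ or $p^j/(d_1 d_2)$ from being a nontrivial prime power, so every factor of the resultant is $1$. Conversely, when $p\mid\alpha_2$ and a prime $q$ divides $\alpha_3$ (using $\alpha_3>1$ from nontriviality of $X$), the pair $(d_1,d_2,j)=(p,q,1)$ contributes $|\mathrm{Res}(\Phi_{pq},\Phi_p)|=q^{p-1}>1$; since all contributions are non-negative prime powers, the full resultant exceeds $1$, and $F_{p^n}$ is nontrivial.

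The main obstacle is the nontriviality analysis in the third step: applying Apostol's resultant formula systematically to enumerate the contributing triples $(d_1,d_2,j)$, ruling out every contribution in the $p\nmid\alpha_2\alpha_3$ direction, and confirming that the nontrivial $q$-power contributions in the converse direction cannot conspire (with other prime-power contributions) to collapse the resultant back to $1$. The remainder is essentially formal bookkeeping once Bell's formula and the base-change isomorphism are in place.
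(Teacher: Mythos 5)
Your overall route is the same one the paper indicates: it proves this theorem (following \cite{MR3549169}) precisely by tensoring Bell's computation \eqref{eqn:Bell-*} over $R(\CC^{\times})$ with $R(\mu_{p^n})$ via \eqref{equation:K0CCton} and then analyzing $\Z[t]/(t^{p^n}-1,f(t))$ by explicit calculation with the polynomial \eqref{eqn:Bell-*-1}. Your cyclotomic factorization $f=\prod_{d_1\mid\alpha_2,\,d_2\mid\alpha_3,\,d_1,d_2>1}\Phi_{d_1d_2}$ is correct (the degree count $(\alpha_2-1)(\alpha_3-1)$ checks out), the coprimality argument gives finiteness, and the Apostol-resultant bookkeeping correctly yields triviality when $p\nmid\alpha_2\alpha_3$ and nontriviality when $X$ is nontrivial and $p\mid\alpha_2\alpha_3$ (the contribution $|\mathrm{Res}(\Phi_{pq},\Phi_p)|=q^{p-1}$ cannot be cancelled since every factor is a positive prime power; you should add the word ``symmetrically'' for the case $p\mid\alpha_3$, but that is cosmetic).

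There is one genuine gap, in the ``only if'' direction: your treatment of the case where $X$ is trivial because $r=1$ while $\alpha_2,\alpha_3\geq 2$ rests on the assertion that $r=1$ forces $\rho=1$. Nothing in the setup supports this: $r$ is the $x$-degree of $G(x,y^{\alpha_1},0)$ and $\rho$ is its number of irreducible factors in $\CC[x,y]$, and a polynomial of $x$-degree one can perfectly well be reducible; moreover the paper explicitly states $\rho\geq 2$, so your claim contradicts the standing hypotheses rather than following from them. This matters, because if such a configuration occurred with $p\mid\alpha_2\alpha_3$, your own resultant computation would make $(R(\mu_{p^n})/(f(t)))^{\rho-1}$ nontrivial while $X$ is trivial, contradicting the statement you are proving. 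To close the gap you need an actual input about Koras--Russell threefolds in the normal form \eqref{eqn:KRCR}: either show that $r=1$ with $\alpha_2,\alpha_3\geq 2$ cannot occur (so that triviality is equivalent to $\alpha_2=1$ or $\alpha_3=1$, the case you do handle, where $f\equiv 1$), or dispose of trivial $X$ directly (e.g.\ a trivial Koras--Russell threefold is equivariantly an affine $3$-space, whence $K_0^{\mu_{p^n}}(X)\cong R(\mu_{p^n})$ by equivariant homotopy invariance), rather than via an unsubstantiated implication between $r$ and $\rho$.
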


If the integers $p$ and $\alpha_2\alpha_3$ are coprime it follows that every $\mu_{p^n}$-equivariant vector bundle on $X$ is stably trivial,
i.e.,
for any $\mu_{p^n}$-equivariant vector bundle $E$ on $X$,
there exist $\mu_{p^n}$-representations $F_1$ and $F_2$ such that $E \oplus F_1\simeq F_2$.
\vspace{0.1in}

\subsubsection*{Higher Chow groups and stable $\aone$-contractibility}
Another natural idea is to study the higher Chow groups of Koras--Russell threefolds.
Showing triviality of the said groups goes a long way in concluding $\aone$-contractibility of $\mathcal{KR}$.

\begin{prop}\label{prop:Contr-1}
Let $X$ be a Koras--Russell threefold of the first kind with coordinate ring
\begin{equation*}\label{eqn:Contr-1-1}
\CC[X] = \frac{\CC[x,y,z,t]}{\left(ax + x^my + z^{\alpha_2} + t^{\alpha_3}
\right)},
\end{equation*}
where $m>1$ is an integer, $a \in {\CC}^{*}$, and $\alpha_2, \alpha_3 \ge 2$ are coprime.
For $Y$ any smooth complex affine variety,
the pullback map $CH^{\ast}(Y) \to CH^{\ast}(X \times Y)$ is an isomorphism.
\end{prop}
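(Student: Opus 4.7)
The plan is to exploit the fibration $\pi\colon X \to \AA^{1}_{x}$, $(x,y,z,t)\mapsto x$, which presents $X$ as a pencil whose generic fibers are affine planes and whose unique special fiber $X_{0} := \pi^{-1}(0)$ is the cylinder $\Gamma \times \AA^{1}_{y}$ over the cuspidal curve $\Gamma = V(z^{\alpha_{2}} + t^{\alpha_{3}}) \subset \AA^{2}_{z,t}$. Over $\GG_{m}$, the defining equation can be solved for $y$, yielding $X_{U} := \pi^{-1}(\GG_{m}) \cong \GG_{m} \times \AA^{2}_{z,t}$. A local calculation at any smooth point of $X_{0}$ (where $z,t \neq 0$) shows that $x$ is a uniformizer, so $\ddiv_{X}(x) = [X_{0}]$ and hence $[X_{0}] = 0$ in $CH^{1}(X)$.

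With this structural input, localization for the closed immersion $X_{0}\times Y \hookrightarrow X\times Y$ with open complement $X_{U}\times Y$ yields
\[
CH^{p-1}(X_{0}\times Y) \xrightarrow{i_{\ast}} CH^{p}(X\times Y) \xrightarrow{j^{\ast}} CH^{p}(X_{U}\times Y) \to 0.
\]
Homotopy invariance of Chow groups together with the identity $CH^{\ast}(\GG_{m} \times W) \cong CH^{\ast}(W)$ (deduced from $\ddiv(t) = [0]$ in $CH^{1}(\AA^{1})$) identify $CH^{\ast}(X_{U}\times Y)$ with $CH^{\ast}(Y)$ via pullback. Since the composition $CH^{p}(Y) \xrightarrow{\pi^{\ast}} CH^{p}(X\times Y) \xrightarrow{j^{\ast}} CH^{p}(Y)$ is the identity, $\pi^{\ast}$ is split injective, and it remains to show that $\mathrm{im}(i_{\ast}) \subseteq \pi^{\ast}CH^{p}(Y)$.

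The key input for this final step is the normalization $\nu\colon \AA^{2} \to X_{0}$, obtained from the $\AA^{1}$-weak equivalence $\AA^{1} \to \Gamma$, $s \mapsto (-s^{\alpha_{3}}, s^{\alpha_{2}})$, which is an isomorphism of Nisnevich sheaves on $\Sm_{k}$ (cf.~the cuspidal curve example earlier in the paper). Since $X_{0} \hookrightarrow X$ is a principal Cartier divisor with globally trivialized normal bundle, the homotopy purity Theorem~\ref{thm:homotopypurity} produces a cofiber sequence
\[
(X_{U})_{+} \to X_{+} \to \mathrm{Th}(N_{X_{0}/X}) \simeq (X_{0})_{+}\wedge\pone \simeq (\AA^{2})_{+}\wedge\pone \simeq \pone
\]
in a suitable $\AA^{1}$-homotopy category, where the final equivalence uses $\AA^{1}$-contractibility of $\AA^{2}$. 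Smashing with $Y_{+}$, stabilizing, and applying the motivic cohomology functor $H^{2p,p}(-,\Z) = CH^{p}$ to the resulting triangle yields the sought isomorphism upon inspecting the connecting map, which factors through the $\Sigma^{1,1}\sphere$-summand of $\Sigma^{\infty}_{+}\GG_{m} \simeq \sphere \vee \Sigma^{1,1}\sphere$.

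The hardest part will be rigorously handling the purity/Gysin triangle when $X_{0}$ is singular, since the classical formulation requires both the ambient and embedded schemes to be smooth. I would bypass this either by working in the cdh-local $\AA^{1}$-homotopy category, where $\nu$ is already a weak equivalence and $X_{0}$ can be replaced by the smooth scheme $\AA^{2}$ throughout, or by directly passing to Voevodsky's motivic category $\DM(k,\Z)$, where the identifications $M(X_{0}) \simeq M(\AA^{2}) \simeq \sphere$ and the Gysin triangle for principal Cartier divisors apply unconditionally, producing $M(X) \simeq \sphere$ and hence the Künneth formula $M(X\times Y) \simeq M(Y)$ via tensor product.
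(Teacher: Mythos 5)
Your geometric skeleton (the projection $\rho\colon X\to\AA^1_x$, the identification $\rho^{-1}(\GG_m)\cong\GG_m\times\AA^2_{z,t}$, the special fiber as the cylinder on the cuspidal curve, and a localization argument) is the same one underlying the actual proof, but the step you yourself flag as hardest is a genuine gap, and your two proposed fixes do not repair it. Theorem~\ref{thm:homotopypurity} requires the center of the immersion to be smooth, and purity is a statement about the embedded pair $(X,X_0)$, not about the abstract homotopy type of $X_0$: the Morel--Voevodsky argument needs the pair to be \'etale-locally that of a vector bundle and its zero section, which fails along the singular locus of $X_0=\Gamma\times\AA^1_y$. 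So replacing $X_0$ by $\AA^2$ cdh-locally (or via the presheaf-level normalization isomorphism) does not license the identification $X/(X\setminus X_0)\simeq (X_0)_+\wedge\pone$, and there is no off-the-shelf cdh-local or $\DM$-level ``Gysin triangle for principal Cartier divisors with singular center'' to invoke. What is true in $\DM(\CC)$ is that the cone of $M(X\setminus X_0)\to M(X)$ computes motivic cohomology of $X$ with supports in $X_0$, which by Bloch's localization theorem is exactly the higher Chow groups (Borel--Moore motivic homology) of the singular scheme $X_0$; hence identifying that cone with $M(X_0)(1)[2]\simeq\Z(1)[2]$ is \emph{equivalent} to the computation that $\Spec\CC[u,v]/(u^{a}+v^{b})$ has the higher Chow groups of a point --- which is precisely the key input the paper's proof rests on, and which your argument nowhere establishes. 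In short, the purity step either fails or silently presupposes the main content.

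A corrected version of your $\DM$ route does exist in characteristic zero, but it must go through compactly supported motives rather than purity: cdh descent and nil-invariance of $M^c$ applied to the abstract blow-up square of the normalization give $M^c(\Gamma)\simeq\Z(1)[2]$, hence $M^c(X_0)\simeq\Z(2)[4]$, and duality with compact supports for the smooth threefold $X$ identifies the cone of $M(X\setminus X_0)\to M(X)$ with $\Z(1)[2]$; alternatively one runs, as the paper indicates, the localization sequence for higher Chow groups directly. Even then, your final phrase ``upon inspecting the connecting map'' hides a needed argument: you must prove the boundary $\Z(1)[2]\to M(\GG_m\times\AA^2)[1]$ is an isomorphism onto the $\Z(1)[2]$-summand (e.g.\ by restricting the localization triangle to a curve meeting $X_0$ transversally at a smooth point, or by identifying the boundary with the class of the principal divisor $\{x=0\}$), in the spirit of the $\partial'$-computation in Section~\ref{ss:krthreefoldI}. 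Two minor points: in your split-injectivity step the map called $\pi^{\ast}$ should be pullback along the projection $X\times Y\to Y$, not along $\rho$; and once $M(X)\simeq\Z$ is in hand, your preliminary Chow-group localization sequence is redundant, since $M(X\times Y)\simeq M(Y)$ already gives the statement for all smooth $Y$.
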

A related calculation shows the same conclusion holds for Koras--Russell threefolds of the second kind.
As for the proof of Proposition \ref{prop:Contr-1} a key input is the observation that the ring homomorphism
$$
\CC
\to
{\CC[u, v]}/{(u^a + v^b)}
$$
induces an isomorphism on higher Chow groups for coprime integers $a, b\geq 2$.
\vspace{0.1in}

Combined with the isomorphism between higher Chow groups and motivic cohomology,
as shown by Voevodsky \cite[Corollary~2]{MR1883180},
we obtain the following.
\begin{thm}
\label{thm:MC-Vanish}
Let $X$ be a Koras--Russell threefold of the first or second kind,
and let $Y$ be any smooth complex affine variety.
Then the pullback map $H^{*, *}(Y,\Z) \to H^{*, *}(X \times Y,\Z)$ induced by the projection $X \times Y \to Y$
is an isomorphism of (bigraded) integral motivic cohomology rings.
\end{thm}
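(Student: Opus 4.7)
The strategy is essentially a transport argument: Proposition~\ref{prop:Contr-1} (together with its analog for Koras--Russell threefolds of the second kind) already gives the desired statement for higher Chow groups, and Voevodsky's theorem supplies a natural, multiplicative comparison with motivic cohomology. The plan is to invoke these two inputs and chase the resulting commutative diagram.

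First, I would recall the Voevodsky comparison: for any smooth $\CC$-scheme $W$, there is a canonical isomorphism $CH^{p}(W, q) \cong H^{2p-q,\,p}(W, \Z)$ of bigraded rings. This comparison is natural with respect to flat (indeed, arbitrary) pullback, so the projection $\pr\colon X \times Y \to Y$ fits into a commutative square
\[
\xymatrix{
CH^{*}(Y,*) \ar[r]^{\pr^{*}}\ar[d]_{\cong} & CH^{*}(X \times Y,*) \ar[d]^{\cong} \\
H^{*,*}(Y, \Z) \ar[r]^{\pr^{*}} & H^{*,*}(X \times Y, \Z),
}
\]
in which both vertical arrows are ring isomorphisms. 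Apply Proposition~\ref{prop:Contr-1} (or its second-kind analog) to conclude that the top horizontal arrow is an isomorphism; the square then forces the bottom horizontal arrow to be an isomorphism as well.

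Second, I would verify that the resulting bijection is compatible with the ring structures. The higher Chow group pullback $\pr^{*}$ is a ring homomorphism with respect to the intersection product, and Voevodsky's comparison identifies this product with the cup product in motivic cohomology. So the induced map on motivic cohomology is automatically a ring homomorphism, and being a bijection it is a ring isomorphism.

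The main obstacle would be whether the compatibility between pullback and products in Voevodsky's comparison really holds in the generality we need (smooth affine varieties over $\CC$, allowing a singular or at least non-proper product factor $X$ of the second kind). However, since $X$ and $X\times Y$ are smooth (the Koras--Russell threefolds $X$ are smooth hypersurfaces in $\A^{4}_{\CC}$ and $Y$ is smooth affine), and Voevodsky's isomorphism is stated precisely for smooth schemes over a perfect field, there is no real obstruction; the only nontrivial step of the whole argument is the input Proposition~\ref{prop:Contr-1}, which has already been established. Thus the proof of Theorem~\ref{thm:MC-Vanish} amounts to assembling these known facts.
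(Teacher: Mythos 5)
Your proposal is correct and is essentially the paper's argument: the paper deduces Theorem~\ref{thm:MC-Vanish} directly from Proposition~\ref{prop:Contr-1} (and the analogous computation for threefolds of the second kind) combined with Voevodsky's identification of higher Chow groups with motivic cohomology, exactly the transport-of-structure argument you describe. Your extra remarks on naturality of the comparison and compatibility with products just make explicit what the paper leaves implicit.
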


A consequence of Theorem \ref{thm:MC-Vanish} is that every vector bundle on $X$ is trivial.
This was originally shown by Murthy \cite[Corollary~3.8]{Murthy} by a completely different method.
\vspace{0.1in}

Theorem \ref{thm:MC-Vanish} is a key input in the approach to $\aone$-contractibility of Koras--Russell threefolds in \cite{MR3549169}.
To proceed it is convenient to employ some techniques from stable motivic homotopy theory.
In particular,
the slice filtration on the stable motivic homotopy category $\mathscr{SH}(\CC)$ will be put to good use.
Recall the objects of $\mathscr{SH}(\CC)$ are sequences of pointed motivic spaces related by structure maps with respect to $(\mathbb{P}^{1},\infty)$.
We note that $\mathscr{SH}(\CC)$ is a triangulated category with shift functor $E\mapsto E[1]$ given by smashing with the topological circle.
Denote by $\Sigma^\infty_{{\mathbb P}^{1}} (X,x)\in\mathscr{SH}(\CC)$ the $(\mathbb{P}^{1},\infty)$-suspension spectrum of $X\in\Sm_\CC$ and a rational point $x\in X(\CC)$.
For fixed $F\in\mathscr{SH}(\CC)$,
we say that $E\in\mathscr{SH}(\CC)$ is
\begin{enumerate}[noitemsep,topsep=1pt]
\item \emph{$F$-acyclic} if $E\wedge F\simeq \ast$;
\item \emph{$F$-local} if $\Hom_{\mathscr{SH}(\CC)}(D,E)=0$ for every $F$-acyclic spectrum $D$.
\end{enumerate}
It is clear that the $F$-local spectra form a colocalizing subcategory of $\mathscr{SH}(\CC)$.
Note that if $F$ is a ring spectrum,
then any $F$-module $E$ is $F$-local (every map $D\to E$ factors through $D\wedge F$ and hence it is trivial if $D$ is $F$-acyclic).
\vspace{0.1in}

Let $\MZ\in\mathscr{SH}(\CC)$ denote the motivic ring spectrum that represents motivic cohomology,
i.e.,
for every $X\in\Sm_\CC$ and integers $n,i\in\Z$ there is an isomorphism
\begin{equation}
\label{eqn:motivic-coh-SH}
H^{n,i}(X,\Z)
\simeq
\Hom_{\mathscr{SH}(\CC)}(\Sigma^\infty_{{\mathbb P}^{1}} X_+,\MZ(i)[n]).
\end{equation}
Here,
for $E\in\mathscr{SH}(\CC)$,
the Tate twist $E(1)$ is defined by $E(1)=E\wedge\Sigma^\infty_{{\mathbb P}^{1}}(\mathbb{G}_{m},1)[-1]$.
The Betti realization of $\MZ$ identifies with the classical Eilenberg-Mac Lane spectrum $\HZ$ representing singular (co)homology of topological spaces.

\begin{lem}
\label{lem:HZ-local}
For every $X\in\Sm_\CC$ and closed point $x\in X$ the suspension $\Sigma^\infty_{{\mathbb P}^{1}} (X,x)\in\mathscr{SH}(\CC)$ is $\MZ$-local.
\end{lem}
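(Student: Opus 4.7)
The plan is to exploit Voevodsky's slice filtration on $\mathscr{SH}(\CC)$ together with its convergence in characteristic zero, due to Levine.  The starting observations are purely formal: the class of $\MZ$-local spectra is closed under shifts, cofiber sequences, retracts, and homotopy limits (being defined by a right-orthogonality condition), and any $\MZ$-module $M$ is automatically $\MZ$-local.  Indeed, the module structure $\MZ\wedge M \to M$ splits the unit $M \to \MZ\wedge M$, and by the free-forgetful adjunction one has $\Hom_{\mathscr{SH}(\CC)}(D,\MZ\wedge M) \cong \Hom_{\MZ\text{-}\mathrm{mod}}(\MZ\wedge D,\MZ\wedge M) = 0$ whenever $D$ is $\MZ$-acyclic, so $\Hom_{\mathscr{SH}(\CC)}(D,M)$ is a retract of zero.

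Next I would reduce to the unpointed statement.  The closed $\CC$-point $x$ yields a splitting of the structure map $X\to\Spec\CC$, so the cofiber sequence $\Sigma^\infty_{{\mathbb P}^1}\{x\}_+ \to \Sigma^\infty_{{\mathbb P}^1}X_+ \to \Sigma^\infty_{{\mathbb P}^1}(X,x)$ splits; in particular $\Sigma^\infty_{{\mathbb P}^1}(X,x)$ is a retract of $E := \Sigma^\infty_{{\mathbb P}^1}X_+$, and by the closure properties above it suffices to show that $E$ is $\MZ$-local.

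The core of the argument uses the slice tower $\cdots \to f_{n+1}E \to f_n E \to \cdots \to f_0 E = E$ with slices $s_n E := f_n E/f_{n+1}E$.  Levine's proof of Voevodsky's slice conjecture gives $s_0\sphere \simeq \MZ$ in characteristic zero, and subsequent work of Pelaez equips each slice $s_n E$ with an $\MZ$-module structure; hence each $s_n E$ is $\MZ$-local by the preceding observation, and induction on $n$ together with closure under cofiber sequences shows every finite quotient $E/f_n E$ is $\MZ$-local.  The final ingredient is Levine's convergence theorem applied to the suspension spectrum of a smooth $\CC$-scheme: $\holim_n f_n E \simeq \ast$, equivalently $E \simeq \holim_n E/f_n E$.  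As a homotopy limit of $\MZ$-local objects, $E$ is $\MZ$-local, and the lemma follows.

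The hard part will be invoking Levine's convergence of the slice tower, which rests on Hironaka's resolution of singularities and a delicate analysis of the effective covers of the sphere.  All other steps are formal consequences of the triangulated structure on $\mathscr{SH}(\CC)$ and the Bousfield localization defining the $\MZ$-local subcategory.
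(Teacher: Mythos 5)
Your argument is correct and matches the paper's proof in all essentials: both rest on the slice filtration, the observation that slices are $\MZ$-modules because $s_0\sphere\simeq\MZ$ and slices are $s_0\sphere$-modules, and Levine's convergence theorem for the slice tower, together with the formal closure properties of the $\MZ$-local subcategory. The one place where the paper is more scrupulous is the convergence input: Levine's theorem is cited there for suspension spectra of \emph{smooth projective} varieties, and resolution of singularities is invoked explicitly to place $\Sigma^\infty_{\PP^1}(X,x)$ in the thick subcategory generated by such; you instead invoke convergence directly for $\Sigma^\infty_{\PP^1}X_+$ with $X$ an arbitrary smooth $\CC$-scheme (after a retract reduction that the paper leaves implicit). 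This is fine provided you know that Levine's convergence does extend to that generality, which it does over $\CC$ for exactly the reason the paper spells out, but it is worth flagging that the published statement you are leaning on is usually formulated for the projective case, and the passage to general smooth $X$ is itself a resolution-of-singularities argument rather than a free lunch.
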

\begin{proof}
Resolution of singularities allows one to show that
$\Sigma^\infty_{{\mathbb P}^{1}}(X,x)$ is in the smallest thick subcategory of $\mathscr{SH}(\CC)$ containing $\Sigma^\infty_{{\mathbb P}^{1}} Y_+$ for any smooth projective variety $Y$.
It suffices now to show that $\Sigma^\infty_{{\mathbb P}^{1}} Y_+$ is $\MZ$-local for such $Y$.
Voevodsky's slice filtration for any $E\in\mathscr{SH}(\CC)$ is a tower of spectra
\[
\dotsb \to f_{q+1}E \to f_qE \to f_{q-1}E \to \dotsb \to E,
\quad q\in\Z.
\]
Here the $q$th slice $s_qE$ of $E$ is defined by the distinguished triangle
\[
f_{q+1}E \to f_qE \to s_qE \to f_{q+1}E[1].
\]
Levine \cite{Levine} has shown that the slice filtration of $\Sigma^\infty_{{\mathbb P}^{1}}Y_+$  for $Y$ any smooth projective variety is complete in the sense that
\[
\holim_{q\to\infty} f_q(\Sigma^\infty_{{\mathbb P}^{1}} Y_+)
\simeq
\ast.
\]
Equivalently,
if we define $c_qE$ by the distinguished triangle $f_qE \to E \to c_qE \to f_qE[1]$,
then
\[
\Sigma^\infty_{{\mathbb P}^{1}} Y_+\simeq \holim_{q\to \infty} c_q(\Sigma^\infty_{{\mathbb P}^{1}} Y_+).
\]
Since the subcategory of $\MZ$-local spectra is colocalizing,
it now suffices to prove that $c_q(\Sigma^\infty_{{\mathbb P}^{1}} Y_+)$ is $\MZ$-local for every $q\in \Z$.
By definition of the slice filtration, we have $c_q(\Sigma^\infty_{{\mathbb P}^{1}} Y_+)\simeq\ast$ for $q\le 0$.
Using the distinguished triangles
\[
s_qE \to c_qE \to c_{q-1}E \to s_qE[1]
\]
and induction on $q$,
we are reduced to proving the slices $s_q(\Sigma^\infty_{{\mathbb P}^{1}} Y_+)$ are $\MZ$-local.
In fact,
all slices in $\mathscr{SH}(\CC)$ are $\MZ$-local:
any slice $s_qE$ is a module over the zeroth slice $s_0(\sphere)$ of the sphere spectrum,
and hence it is $s_0(\sphere)\simeq\MZ$-local.
\end{proof}

\begin{thm}
\label{thm:Main-KR}
Let $X$ be a Koras--Russell threefold of the first or second kind.
Then there exists an integer $n \ge 0$ such that the suspension $\Sigma^n_{{\mathbb P}^{1}}(X, 0)$ is $\A^1$-contractible.
\end{thm}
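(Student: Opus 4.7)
The plan is to reduce first to showing that $\Sigma^\infty_{{\mathbb P}^{1}}(X,0)\simeq\ast$ in $\mathscr{SH}(\CC)$ and then to invoke Lemma \ref{lem:compactness}. The key formal observation is that an object $E\in\mathscr{SH}(\CC)$ that is simultaneously $\MZ$-local and $\MZ$-acyclic must be zero: the defining property of $\MZ$-locality applied to $D=E$ itself forces $\mathrm{id}_E$ to vanish. Since Lemma \ref{lem:HZ-local} already provides $\MZ$-locality of $\Sigma^\infty_{{\mathbb P}^{1}}(X,0)$, the whole problem collapses to verifying $\MZ$-acyclicity, i.e., the triviality of the reduced Voevodsky motive
\[
\tilde M(X,0) := \cofiber\bigl(M(\Spec\CC)\to M(X)\bigr)
\]
in $\DM(\CC,\ZZ)$; here we use the equivalence between $\MZ$-modules in $\mathscr{SH}(\CC)$ and Voevodsky's category of motives, under which $\MZ\wedge\Sigma^\infty_{{\mathbb P}^{1}}(X,0)$ corresponds to $\tilde M(X,0)$.

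To extract the vanishing of $\tilde M(X,0)$ from Theorem \ref{thm:MC-Vanish}, I would first observe that for any smooth affine $Y/\CC$ the projection $p\colon X\times Y\to Y$ is split by the inclusion $s\colon Y\to X\times Y$, $y\mapsto (0,y)$; combined with Theorem \ref{thm:MC-Vanish} this forces the long exact sequence of motivic cohomology attached to the cofiber sequence $Y_+\to(X\times Y)_+\to(X,0)\wedge Y_+$ to degenerate, so that
\[
H^{p,q}\bigl((X,0)\wedge Y_+,\ZZ\bigr)=0\quad\text{for all }p,q\in\ZZ.
\]
The Jouanolou--Thomason device (Lemma \ref{lem:jouanolouthomason}) together with $\A^1$-invariance of motivic cohomology then extends this vanishing to every $Y\in\Sm_\CC$, not just affine ones. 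Translating via the representability \eqref{eqn:motivic-coh-SH} and the tensor--Hom adjunction in $\DM(\CC,\ZZ)$, this reads
\[
\Hom_{\DM}\bigl(M(Y)\otimes\tilde M(X,0),\,\ZZ(q)[p]\bigr)=0
\]
for every $Y\in\Sm_\CC$ and every $p,q\in\ZZ$.

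For the final step I would invoke rigidity of geometric motives. Since $X$ is smooth of finite type over a field of characteristic zero, the motive $M(X)$ -- and hence its summand $\tilde M(X,0)$ -- lies in the subcategory of geometric motives, which by Voevodsky's duality theorem is a rigid tensor subcategory of $\DM(\CC,\ZZ)$; in particular $\tilde M(X,0)$ is strongly dualizable, with dual $\tilde M(X,0)^\vee$. The vanishing above rewrites via adjunction as
\[
\Hom_{\DM}\bigl(M(Y),\,\tilde M(X,0)^\vee(q)[p]\bigr)=0
\]
for all $Y\in\Sm_\CC$ and all $p,q\in\ZZ$. Since the objects $M(Y)(q)[p]$ form a set of compact generators of $\DM(\CC,\ZZ)$ as a localizing triangulated category, this forces $\tilde M(X,0)^\vee=0$, and biduality then yields $\tilde M(X,0)=0$. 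Combining with Lemma \ref{lem:HZ-local} gives $\Sigma^\infty_{{\mathbb P}^{1}}(X,0)\simeq\ast$, and Lemma \ref{lem:compactness} delivers the desired integer $n\geq 0$.

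The main obstacle is the duality step in the third paragraph. With rational coefficients, rigidity of geometric motives is routine; with integral coefficients one must exploit resolution of singularities (available because $\CC$ has characteristic zero) to reduce dualizability of $M(X)$ to the classical Poincar\'e duality for smooth projective varieties containing $X$ as an open. An alternative route avoiding this technicality is to verify $\MZ\wedge\Sigma^\infty_{{\mathbb P}^{1}}(X,0)\simeq\ast$ directly by computing the motivic homology of $(X,0)$ via the Suslin chain complex and using that the section $s$ realizes it as a retract of the trivial contribution from $\Spec\CC$; but either way, the heart of the argument is the propagation of the cohomological vanishing of Theorem \ref{thm:MC-Vanish} through $\DM(\CC,\ZZ)$ to the triviality of $\tilde M(X,0)$ itself.
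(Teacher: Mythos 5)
Your proposal is correct and follows essentially the same route as the paper: reduce via Lemma \ref{lem:compactness} to showing $\Sigma^\infty_{\pone}(X,0)\simeq\ast$, get $\MZ$-locality from Lemma \ref{lem:HZ-local}, derive $\MZ$-acyclicity from Theorem \ref{thm:MC-Vanish} by playing strong dualizability off against the generators coming from smooth (affine) varieties, and conclude with the formal observation that an $\MZ$-local, $\MZ$-acyclic object has vanishing identity. The only deviation is where you perform the duality step --- inside $\DM(\CC,\Z)$ via Voevodsky's rigidity of geometric motives rather than inside $\mathscr{SH}(\CC)$ via strong dualizability of $\Sigma^\infty_{\pone}(X,0)$ as in the paper --- which is an equivalent reformulation under $\Mod_{\MZ}\simeq\DM(\CC,\Z)$ and changes nothing essential.
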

\begin{proof}
We first reformulate Theorem \ref{thm:MC-Vanish} as an equivalence in $\mathscr{SH}(\CC)$,
using its structure of a closed symmetric monoidal category, see \S\ref{ss:stablestory}.
The structure map $X\to \Spec(\CC)$ induces a morphism in $\mathscr{SH}(\CC)$
\begin{equation}
\label{equation:MZmap}
\MZ
\simeq
\sHom(\Sigma^\infty_{{\mathbb P}^{1}} \Spec(\CC)_+,\MZ)
\to
\sHom(\Sigma^\infty_{{\mathbb P}^{1}}X_+,\MZ).
\end{equation}
In view of \eqref{eqn:motivic-coh-SH},
Theorem \ref{thm:MC-Vanish} asserts that for every smooth complex affine variety $Y$ and $n,i\in\Z$,
there is an induced isomorphism
\[
\Hom_{\mathscr{SH}(\CC)}(\Sigma^\infty_{{\mathbb P}^{1}}Y_+(i)[n],\MZ)
\to
\Hom_{\mathscr{SH}(\CC)}(\Sigma^\infty_{{\mathbb P}^{1}}Y_+(i)[n], \sHom(\Sigma^\infty_{{\mathbb P}^{1}} X_+,\MZ)).
\]
The objects $\Sigma^\infty_{{\mathbb P}^{1}} Y_+(i)[n]$ form a family of generators of $\mathscr{SH}(\CC)$,
because every smooth variety admits an open covering by smooth affine varieties.
Thus \eqref{equation:MZmap} and its retraction $\sHom(\Sigma^\infty_{{\mathbb P}^{1}} X_+,\MZ)\to\MZ$ induced by the base point $0\in X$ are isomorphisms.
From the distinguished triangle
\[
\MZ[-1]
\to
\sHom(\Sigma^\infty_{{\mathbb P}^{1}} (X,0),\MZ)
\to
\sHom(\Sigma^\infty_{{\mathbb P}^{1}} X_+,\MZ)
\to
\MZ,
\]
we deduce that $\sHom(\Sigma^\infty_{{\mathbb P}^{1}} (X,0),\MZ)\simeq\ast$.
By \cite[Theorems 1.4 and 2.2]{Riou} or \cite[Theorem 52]{RO:mz},
$\Sigma^\infty_{{\mathbb P}^{1}} (X,0)$ is strongly dualizable in $\mathscr{SH}(\CC)$,
so that
\[
\sHom(\Sigma^\infty_{{\mathbb P}^{1}} (X,0),\MZ)
\simeq
\sHom(\Sigma^\infty_{{\mathbb P}^{1}} (X,0),\1)\wedge\MZ.
\]
Thus $\sHom(\Sigma^\infty_{{\mathbb P}^{1}} (X,0),\1)$ is $\MZ$-acyclic,
and for any $E\in\mathscr{SH}(\CC)$ we obtain
\[
\Hom_{\mathscr{SH}(\CC)}(E,\Sigma^\infty_{{\mathbb P}^{1}}(X,0)\wedge\MZ)
\simeq
\Hom_{\mathscr{SH}(\CC)}(E\wedge \sHom(\Sigma^\infty_{{\mathbb P}^{1}} (X,0),\1),\MZ)
\simeq
\ast,
\]
since $E\wedge \sHom(\Sigma^\infty_{{\mathbb P}^{1}}(X,0),\1)$ is $\MZ$-acyclic and $\MZ$ is $\MZ$-local (being an $\MZ$-module).
By the Yoneda lemma, this implies $\Sigma^\infty_{{\mathbb P}^{1}}(X,0)$ is $\MZ$-acyclic,
i.e.,
\[
\Sigma^\infty_{{\mathbb P}^{1}}(X,0)\wedge\MZ\simeq\ast,
\]
On the other hand,
by Lemma \ref{lem:HZ-local},
$\Sigma^\infty_{{\mathbb P}^{1}}(X,0)$ is $\MZ$-local.
It follows that every endomorphism of $\Sigma^\infty_{{\mathbb P}^{1}}(X,0)$ is trivial,
and hence $\Sigma^\infty_{{\mathbb P}^{1}}(X,0)\simeq\ast$.
Owing to Lemma \ref{lem:compactness} this completes the proof.
\end{proof}

\begin{rem}
In general, $\AA^{1}$-weak equivalences do not desuspend.  To illustrate this, for simplicity, take $k = \cplx$, and let $\{p_1,\ldots,p_n\}$ and $\{q_1,\ldots,q_n\}$ be two collections of complex points in ${\mathbb A}^1$, but the argument works much more generally.  If $n \geq 1$, then $\aone \setminus \{ p_1,\ldots,p_n \}$ is an $\aone$-rigid variety in the sense of Definition~\ref{defn:A1rigid}.  In particular, the $\aone$-weak equivalences $\aone \setminus \{ p_1,\ldots,p_n \} \cong \aone \setminus \{ q_1,\ldots,q_n \}$ are simply isomorphisms of varieties by appeal to Corollary~\ref{cor:aonerigidfullyfaithful}.  However, any isomorphism of varieties of this form is induced by an automorphism of the affine line.  The automorphism group of the affine line acts $2$-transitively, but not $n$-transitively for any $n \geq 3$.  In fact, as soon as $n \geq 3$, there is a moduli space of configurations of dimension $n-2$.
\vspace{0.1in}

The variety $\Sigma_{\pone} \aone \setminus \{ p_1,\ldots,p_n \}$ is $\aone$-weakly equivalent to ${\mathbb A}^2 \setminus \{ x_1,\ldots, x_n\}$ for any collection of $n$ $\cplx$-points of ${\mathbb A}^2$.  Indeed, it is not hard to show that the algebraic  automorphism group of ${\mathbb A}^2$ acts $n$-fold transitively on ${\mathbb A}^2(\cplx)$ for any $n \geq 1$, in contrast to the situation for the affine line.  Thus, if we choose coordinates $x,y$ on ${\mathbb A}^2$, we may move the points $x_1,\ldots,x_n$ to lie on the $x$-axis and then cover ${\mathbb A}^2 \setminus \{ x_1,\ldots, x_n\}$ by $\aone \setminus \{p_1,\ldots,p_n\} \times \aone$ and $\aone \times \gm{}$ with intersection $\aone \setminus \{p_1,\ldots,p_n\} \times \gm{}$.  The required weak equivalence then follows by the same homotopy colimit argument used to prove that ${\mathbb A}^2 \setminus 0 \cong \pone \wedge \gm{}$.  By increasing the number of points, we see that there are arbitrary dimensional moduli of smooth varieties that become $\aone$-weakly equivalent after a single $\pone$-suspension.
\end{rem}

%\aravind{I modified the discussion of the failure of $\aone$-weak equivalences to desuspend so that it is more elementary and connected with things that we have discussed earlier together with things I expect affine algebraic geometers know well.}
%\begin{rem}
%In general, $\AA^{1}$-weak equivalences do not desuspend, e.g., by homotopy purity we have $(\GG_{m}\vee\GG_{m})\wedge\PP^{1}\sim_{\AA^{1}} (\PP^{1}\setminus\{0,1,\infty\} = \Spec(F[t^{\pm 1},(1-t)^{-1}]))\wedge\PP^{1}$, but there is no $\AA^{1}$-weak equivalence between $\GG_{m}\vee\GG_{m}$ and $\PP^{1}\setminus\{0,1,\infty\}$ by \cite{MR3521594}.
%\end{rem}

The discussion above also has implications for topologically contractible smooth complex varieties.  The motivic conservativity conjecture (see, e.g., \cite[Proposition 3.4]{Huber} or \cite[Conjecture 2.1`]{AyoubConjectures}) implies the rational Voevodsky motive of a topologically contractible variety is that of a point.  In dimension $2$, the {\em integral} Voevodsky motive of topologically contractible surface is trivial by the results of \cite{AAcyclic}; it is also observed there that triviality holds for a number of higher dimensional examples.  It thus is not inconsistent with known examples to suggest that the integral Voevodsky motive of a topologically contractible smooth complex variety is always that of a point.  In conjunction with the proof of Theorem~\ref{thm:Main-KR}, the following seems reasonable.

\begin{conj}
\label{conj:stablecontractibility}
If $X$ is a topologically contractible smooth complex affine variety, then there exists an integer $n \geq 0$ such that $\Sigma^n_{\pone}(X,x)$ is $\aone$-contractible; in fact, $n = 2$ should suffice.
\end{conj}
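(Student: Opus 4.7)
The plan is to reduce the conjecture to showing that the integral Voevodsky motive of a topologically contractible smooth complex affine variety is trivial, and then apply verbatim the machinery of Theorem~\ref{thm:Main-KR}. More precisely, I would first aim to prove the intermediate statement that $\Sigma^\infty_{{\mathbb P}^{1}}(X,x)\wedge \MZ \simeq \ast$ in $\mathscr{SH}(\CC)$, i.e.\ the reduced integral motive $\widetilde{M}(X)$ vanishes. Granting this, the rest of the argument is formal: by Lemma~\ref{lem:HZ-local}, $\Sigma^\infty_{{\mathbb P}^{1}}(X,x)$ is $\MZ$-local; since $X$ is smooth and projective-compactifiable (or more generally, its suspension spectrum is strongly dualizable in $\mathscr{SH}(\CC)$ by the results of Riou and Hu cited in the proof of Theorem~\ref{thm:Main-KR}), the same Yoneda-plus-duality argument shows that $\MZ$-acyclicity combined with $\MZ$-locality forces $\Sigma^\infty_{{\mathbb P}^{1}}(X,x)\simeq\ast$ in $\mathscr{SH}(\CC)$. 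A final application of Lemma~\ref{lem:compactness} then yields an integer $n\geq 0$ such that $\Sigma^n_{{\mathbb P}^{1}}(X,x)$ is $\A^1$-contractible.

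The heart of the proof therefore lies in establishing the triviality of $\widetilde{M}(X)$. The strategy I would pursue is the following. By the Jouanolou--Thomason lemma (Lemma~\ref{lem:jouanolouthomason}) combined with resolution of singularities in characteristic $0$, one may choose a good compactification $\bar X$ with simple normal crossings boundary $D = \bigcup D_i$, and exploit the Gysin-type decomposition of $\widetilde{M}(X)$ in terms of motives of strata of $D$ together with their normal bundles. The input from topology is that all the integral singular (co)homology of $X$ vanishes; by the Artin comparison theorem and the universal coefficient theorem, this gives vanishing of motivic cohomology with finite coefficients in the appropriate Betti-realized range. The second input is rational: the motivic conservativity conjecture, or known rational variants thereof, gives $\widetilde{M}(X)_{\mathbb{Q}}=0$. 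The remaining task is to upgrade these two pieces of information to an integral statement, using the fact that motivic cohomology of $X$ is bounded in weights (because $X$ is smooth affine of known dimension) and the slice filtration technology used in Lemma~\ref{lem:HZ-local}.

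The main obstacle, and the part I expect to be genuinely hard, is precisely this last upgrade: showing that the torsion part of $\widetilde{M}(X)$ vanishes integrally for every topologically contractible smooth affine $X$. The reason this is subtle is that the analog of Roitman's theorem and the Srinivas-type results cited after Theorem~\ref{thm:vbonthreefolds} only control top Chow groups modulo torsion or after passing to suitable compactifications; controlling all bigraded motivic cohomology groups integrally requires either a dimension-by-dimension analysis (which is currently only known through dimension $2$ by \cite{AAcyclic}, and for special families like Koras--Russell threefolds via the explicit affine-modification computations of Proposition~\ref{prop:Contr-1}), or a general structural result about the motive of a topologically contractible variety that we do not yet possess. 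In effect, this step is a strong form of the motivic Hurewicz-type principle: topological acyclicity should imply motivic acyclicity integrally, not just rationally or modulo $n$ prime to the characteristic.

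Finally, to secure the sharper claim $n=2$, I would need to use unstable connectivity to control how far one may desuspend. Heuristically, for a topologically contractible smooth affine variety $X$ of dimension $d$, the Morel connectivity theorem gives $\Sigma^\infty_{{\mathbb P}^{1}}(X,x)$ a connectivity bound, and $(X,x)$ itself is $\A^1$-simply-connected after one ${\mathbb P}^{1}$-suspension (since ${\mathbb P}^{1}$-suspension kills $\bpi_0^{\aone}$ and $\bpi_1^{\aone}$ phenomena). A second ${\mathbb P}^{1}$-suspension should then place one in the Freudenthal range where the stable vanishing of Step~1 propagates back to an unstable $\A^1$-contractibility, by an $\A^1$-Whitehead theorem argument in the spirit of the comparison between the unstable and stable Hurewicz maps. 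Making this quantitative would be the technical refinement needed beyond the qualitative existence of $n$ provided by Lemma~\ref{lem:compactness}.
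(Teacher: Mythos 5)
The statement you set out to prove is a conjecture in the paper, not a theorem: the paper offers no proof, only the motivation that (i) rational triviality of the reduced motive follows from conservativity-type conjectures, (ii) integral triviality is known in dimension $\leq 2$ and for special families, and (iii) granting integral triviality, the machinery of Theorem~\ref{thm:Main-KR} applies. Your reduction reproduces exactly this reasoning, and its formal part is sound; in fact, once you posit $\Sigma^\infty_{{\mathbb P}^{1}}(X,x)\wedge\MZ\simeq\ast$, the appeal to strong dualizability is superfluous (in the paper it is used only to pass from universal triviality of motivic cohomology to $\MZ$-acyclicity of the suspension spectrum), and $\MZ$-acyclicity together with $\MZ$-locality (Lemma~\ref{lem:HZ-local}) and Lemma~\ref{lem:compactness} yields some $n$. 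But the step you correctly isolate as the heart of the matter --- integral vanishing of the reduced motive of an arbitrary topologically contractible smooth complex affine variety --- is precisely the open content of the conjecture. The topological input via Artin comparison controls only finite-coefficient \'etale realizations, the rational input is itself conjectural, and no slice/weight bootstrap of the kind you sketch is currently available beyond dimension $2$ (\cite{AAcyclic}) and special families such as the Koras--Russell threefolds treated in Proposition~\ref{prop:Contr-1}. So what you have is a conditional reduction, not a proof, and the paper's own text makes the same reduction without claiming more.

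Concerning the refinement $n=2$: the paper attributes it to a (likewise conjectural) conservativity statement for $\Gm$-stabilization \cite{BachmannYakerson}, not to a Freudenthal-type argument. Your proposed route --- one $\pone$-suspension to kill $\bpi_0^{\aone}$ and $\bpi_1^{\aone}$ phenomena, a second to land ``in the stable range'' --- would not go through as stated: there is no motivic Freudenthal theorem for $\pone$-suspension that converts $\pone$-stable contractibility into unstable contractibility after finitely many explicit suspensions, and the connectivity gained by suspending does not by itself supply the conservativity input that the paper's formulation relies on.
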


\begin{rem}
The stronger assertion here is obtained by combining the weaker assertion and a conjecture about conservativity of $\gm{}$-stabilization \cite{BachmannYakerson}.  Conjecture~\ref{conj:stablecontractibility} is reminiscent of an open version of the Cannon--Edwards double suspension theorem \cite{Cannon,Edwards}.  Conjecture~\ref{conj:stablecontractibility} in conjunction with $\aone$-representability of Chow groups (see Theorem ~\ref{thm:chowrepresentable}) implies that if $X$ is any topologically contractible smooth complex variety, then $CH^i(X) = 0$ for every $i > 0$.  In conjunction with Theorem~\ref{thm:vbonthreefolds}, Conjecture~\ref{conj:stablecontractibility} thus implies that the generalized Serre question has a positive answer in dimension $3$.
\end{rem}

\subsection{$\aone$-contractibility of the Koras--Russell threefold}
\label{ss:krthreefoldI}
In what follows we will outline how Theorem \ref{thm:Main-KR} can be used to show the Russell threefold $\mathcal{KR}$ is $\A^1$-contractible over any base field of characteristic zero.
This was carried out by Dubouloz and Fasel in \cite{MRDF}.
\vspace{0.1in}

Observe that $\mathcal{KR}$ contains both the affine line $\aone_{y}$ and the affine plane $\AA^{2}_{z,t}$
intersecting transversally in the origin.
The idea is now to show that the inclusion $\AA^{2}_{z,t}\to\mathcal{KR}$ is an $\A^1$-equivalence.
There is a naturally induced commutative diagram of homotopy cofiber sequences
\begin{equation}
\label{equation:KRdiagram}
\xymatrix{
\AA^{2}_{z,t}\setminus\{(0,0)\} \ar[r]\ar[d]_{i} & \AA^{2}_{z,t} \ar[r]\ar[d] & \PP^{1}_{z}\wedge\PP^{1}_{t} \ar[d] \\
\mathcal{KR}\setminus \AA^{1}_{y} \ar[r] & \mathcal{KR} \ar[r] & \mathcal{KR}/(\mathcal{KR}\setminus \AA^{1}_{y}).
}
\end{equation}

Here the rightmost vertical map is an $\A^1$-equivalence induced by the inclusion $\{0\}\subset\AA^{1}_{y}$:
This follows since the normal bundle of $\AA^{1}_{y}$ in $\mathcal{KR}$ is trivial,
so that by homotopy purity \ref{thm:homotopypurity} we obtain $\AA^{1}$-equivalences
\[
\mathcal{KR}/(\mathcal{KR}\setminus \AA^{1}_{y})
\sim_{\AA^{1}}
(\AA^{1}_{y})_{+}\wedge(\mathbb{P}^{1})^{\wedge2}
\sim_{\AA^{1}}
(\mathbb{P}^{1})^{\wedge2}.
\]

By a general result we are reduced to showing that $i$ is an $\AA^{1}$-equivalence.  The perhaps most technical argument in the proof consists of showing that $\mathcal{KR}\setminus \AA^{1}_{y}$ is $\AA^{1}$-weak equivalent to the punctured affine space $\AA^{2}_{z,t}\setminus\{(0,0)\}$.  We discuss this part later in this section.  As a consequence the leftmost vertical map in \eqref{equation:KRdiagram} is a self-map of $\AA^{2}_{z,t}\setminus\{(0,0)\}$ up to $\AA^{1}$-equivalence.  As a consequence, its $\aone$-homotopy class is determined by its motivic Brouwer degree (see Theorem~\ref{thm:degree}).  The computation of this degree may be turned into a understanding a certain map in sheaf cohomology with coefficients in Milnor-Witt $K$-theory.

\begin{lem}
\label{lem:Brouwer}
Let $f:\A^n\setminus \{0\}\to \A^n\setminus \{0\}$ be a morphism in $\ho{k}$.
Then $f$ is an isomorphism if and only if
\[
f^*:H^{n-1}(\A^n\setminus \{0\},\KMW_n)\to H^{n-1}(\A^n\setminus \{0\},\KMW_n)
\]
is an isomorphism.
\end{lem}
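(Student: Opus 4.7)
The plan is to identify both sides of the claimed equivalence with invertibility in $GW(k)$. Write $X := \A^n\setminus\{0\}$, and recall that $X \simeq S^{n-1,n}$ in $\ho{k}$. By Theorem~\ref{thm:degree}, composition identifies the monoid $[X,X]_{\aone}$ with the ring $GW(k)$, so $f$ is an isomorphism in $\ho{k}$ if and only if its motivic Brouwer degree $\deg(f) \in GW(k)$ lies in $GW(k)^\times$.

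The first real step is to compute $H^{n-1}(X, \KMW_n)$. Since $X$ is $\aone$-$(n-2)$-connected with first nonvanishing $\aone$-homotopy sheaf $\KMW_n$ in degree $n-1$, I would invoke Morel's machinery for strictly $\aone$-invariant sheaves---either the Gersten-type resolution of $\KMW_n$ evaluated on the standard covering of $\A^n\setminus\{0\}$, or the motivic Hurewicz theorem applied to the first nonvanishing homotopy sheaf---to produce a canonical isomorphism
\[
H^{n-1}(X, \KMW_n) \cong (\KMW_n)_{-n}(k) \cong \KMW_0(k) \cong GW(k),
\]
where the middle isomorphism is $n$-fold contraction and the last identification appears in the discussion of Milnor--Witt $K$-theory. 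The second step is to check compatibility: under this identification, together with $[X,X]_{\aone} \cong GW(k)$ from Theorem~\ref{thm:degree}, the pullback $f^*$ on $H^{n-1}(X,\KMW_n)$ should correspond to multiplication by $\deg(f)$. This is a Yoneda-style naturality statement: both sides are free rank-one $GW(k)$-modules, composition of self-maps of $X$ acts compatibly on each, and the identity of $X$ pulls back to $1 \in GW(k)$ on both sides.

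Granted these ingredients the lemma is immediate. By Theorem~\ref{thm:degree}, $f$ is an automorphism of $X$ in $\ho{k}$ if and only if $\deg(f) \in GW(k)^\times$, while multiplication by $\deg(f)$ on the rank-one free $GW(k)$-module $H^{n-1}(X, \KMW_n)$ is an isomorphism if and only if $\deg(f) \in GW(k)^\times$. The hard part is the first step: identifying $H^{n-1}(\A^n\setminus\{0\}, \KMW_n)$ with $GW(k)$ compatibly with the degree depends on Morel's theory of strictly $\aone$-invariant sheaves and their contractions, which is not developed in the survey and has to be invoked as a black box.
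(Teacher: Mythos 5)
The paper states Lemma~\ref{lem:Brouwer} without proof---it is asserted and then immediately used to reduce the surjectivity claim to Proposition~\ref{prop:connectivesurjective}---so there is no "paper proof" to compare against. Evaluated on its own terms, your argument is correct in outline and makes the right reductions. The trivial direction (an isomorphism $f$ induces an isomorphism $f^*$) you implicitly dispose of; the real content is the converse, and your three ingredients---the degree isomorphism $[X,X]_{\aone} \cong GW(k)$ compatible with composition, the computation $H^{n-1}(\A^n\setminus\{0\}, \KMW_n) \cong GW(k)$, and the fact that $f^*$ is multiplication by $\deg(f)$ under this identification---together give exactly $\deg(f)\in GW(k)^\times$ on both sides, which is what is needed.

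Two small points worth being honest about. First, Theorem~\ref{thm:degree} as stated in the survey only asserts a bijection $[X,X]_{\aone}\cong GW(k)$; the multiplicativity of degree under composition is implicit in the name but is an additional input from Morel that you are (reasonably) invoking silently. Second, the "Yoneda-style naturality" justifying that $f^*$ acts by $\deg(f)$ is a plausibility argument rather than a proof: the clean way to pin it down is that the Hurewicz/Gersten identification $H^{n-1}(X,\KMW_n)\cong\hom(\bpi_{n-1}^{\aone}(X),\KMW_n)\cong GW(k)$ sends $f^*$ to precomposition with $\bpi_{n-1}^{\aone}(f)$, which is multiplication by $\deg(f)$ essentially by definition of the motivic degree. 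You correctly flag that all of this lives in Morel's theory of strictly $\aone$-invariant sheaves, which the survey treats as a black box, so your honest assessment of what is being assumed is accurate.
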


According to Lemma \ref{lem:Brouwer} we are reduced to showing that
\[
i^*:H^{1}(\mathcal{KR}\setminus \AA^{1}_{y},\KMW_2)\to H^1(\A^2_{z,t}\setminus \{0\},\KMW_2)
\]
is an isomorphism.
In effect, we consider the commutative diagram
\[
\xymatrix{H^1(\mathcal{KR}\setminus \AA^{1}_{y},\KMW_2)\ar[r]^-\partial\ar[d]_-{i^*} & H^2((\pone)^{\wedge 2},\KMW_2)\ar[r]\ar@{=}[d]
& H^2(\mathcal{KR},\KMW_2)\ar[d]\ar[r] & H^2(\mathcal{KR}\setminus \AA^{1}_{y},\KMW_2)\ar[d] \\
H^1(\A^2_{z,t}\setminus \{0\},\KMW_2)\ar[r]^-{\partial^\prime} & H^2((\pone)^{\wedge 2},\KMW_2)\ar[r] & H^2(\A^2_{z,t},\KMW_2)\ar[r]
& H^2(\A^2_{z,t}\setminus \{0\},\KMW_2)}
\]
obtained from \eqref{equation:KRdiagram}.
One checks readily that $\partial^\prime$ is an isomorphism,
so that $i^*$ is an isomorphism if and only if $\partial$ is an isomorphism.
Since $\partial$ is a $\KMW_0(k)$-linear map between free $\KMW_0(k)$-modules of rank one,
it suffices to show the following assertion.

\begin{prop}
\label{prop:connectivesurjective}
The connecting homomorphism $\partial:H^{1}(\mathcal{KR}\setminus \AA^{1}_{y},\KMW_2)\to H^2((\pone)^{\wedge 2},\KMW_2)$ is surjective.
\end{prop}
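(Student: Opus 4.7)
The plan is to reduce surjectivity of $\partial$ to the vanishing of the Chow--Witt group $\CH^{2}(\mathcal{KR})$ via exactness of the Gysin sequence, and then to establish that vanishing using the stable $\aone$-contractibility of $\mathcal{KR}$ furnished by Theorem~\ref{thm:Main-KR}.

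First, I would extend the displayed sequence using the Gysin long exact sequence associated to the codimension-two closed immersion $\mathbb{A}^{1}_{y} \hookrightarrow \mathcal{KR}$, whose normal bundle is trivial (as used already in the transversality discussion leading to \eqref{equation:KRdiagram}):
\[
H^{1}(\mathcal{KR} \setminus \mathbb{A}^{1}_{y}, \KMW_{2}) \xrightarrow{\partial} H^{2}((\pone)^{\wedge 2}, \KMW_{2}) \longrightarrow H^{2}(\mathcal{KR}, \KMW_{2}).
\]
By exactness, $\partial$ is surjective precisely when the map on the right is zero. Since the middle term is the free rank-one $GW(k)$-module generated by the Thom class $1 \in H^{0}(\mathbb{A}^{1}_{y}, \KMW_{0}) \cong GW(k)$, and $H^{2}(\mathcal{KR}, \KMW_{2}) = \CH^{2}(\mathcal{KR})$, this reduces the proposition to showing $\CH^{2}(\mathcal{KR}) = 0$.

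Second, I would invoke Theorem~\ref{thm:Main-KR}: its proof establishes the much stronger statement $\Sigma^{\infty}_{\pone}(\mathcal{KR}, 0) \simeq \ast$ in $\mathscr{SH}(k)$. Chow--Witt groups are representable in $\mathscr{SH}(k)$ by a motivic Chow--Witt spectrum (the Milnor--Witt analog of $\MZ$) in the diagonal bidegree, so $\CH^{n}(-)$ factors through the image in $\mathscr{SH}(k)$. Combined with the basepoint splitting $\mathcal{KR}_{+} \simeq (\mathcal{KR}, 0) \vee \Spec(k)_{+}$, this yields $\CH^{2}(\mathcal{KR}) \cong \CH^{2}(\Spec k) = 0$, completing the argument.

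The main obstacle lies in the second step: ensuring that Chow--Witt cohomology is accessible as an invariant of the $\pone$-stable $\aone$-homotopy type at the level of generality required. While representability of ordinary motivic cohomology via $\MZ$ is classical (and was invoked in proving Theorem~\ref{thm:Main-KR}), the Milnor--Witt analog requires a separate setup of the Chow--Witt spectrum and its compactness properties. A more geometric alternative would be to directly construct a class in $H^{1}(\mathcal{KR} \setminus \mathbb{A}^{1}_{y}, \KMW_{2})$ whose image under $\partial$ generates $GW(k)$, by exploiting the explicit $\aone$-weak equivalence $\mathcal{KR} \setminus \mathbb{A}^{1}_{y} \simeq_{\aone} \mathbb{A}^{2} \setminus \{0\}$ which serves as the key technical input of the whole argument, and then computing the relevant motivic Brouwer degree via Theorem~\ref{thm:degree}.
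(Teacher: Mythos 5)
Your argument is correct, and its skeleton is the paper's: the first step (exactness of the Gysin/cofiber long exact sequence reduces surjectivity of $\partial$ to the vanishing of $H^{2}(\mathcal{KR},\KMW_{2})=\CH^{2}(\mathcal{KR})$) is exactly what the paper does. The only real divergence is how that vanishing is obtained. The paper stays unstable: it quotes Fasel's projective bundle theorem to get the suspension isomorphism $H^{i}(\mathcal{KR},\KMW_{j})\cong H^{i+n}(\mathcal{KR}_{+}\wedge(\pone)^{\wedge n},\KMW_{j+n})$, feeds in the finite-suspension contractibility $\mathcal{KR}\wedge(\pone)^{\wedge n}\simeq\ast$ for $n\gg 0$ from Theorem~\ref{thm:Main-KR} via the cofiber sequence $(\pone)^{\wedge n}\to\mathcal{KR}_{+}\wedge(\pone)^{\wedge n}\to\mathcal{KR}\wedge(\pone)^{\wedge n}$, and concludes $H^{2}(\mathcal{KR},\KMW_{2})\cong H^{2+n}((\pone)^{\wedge n},\KMW_{2+n})=0$. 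You instead pass to $\mathscr{SH}(k)$, using $\Sigma^{\infty}_{\pone}(\mathcal{KR},0)\simeq\ast$ together with representability of Chow--Witt groups by a Milnor--Witt motivic (homotopy-module) spectrum and the stable splitting $\Sigma^{\infty}_{\pone}\mathcal{KR}_{+}\simeq\Sigma^{\infty}_{\pone}(\mathcal{KR},0)\vee\sphere$. This is a legitimate route over a field of characteristic zero, but, as you yourself flag, it imports a representability statement the survey never sets up (only $\MZ$ is introduced), whereas the paper's version needs nothing beyond the cited projective bundle formula and the already-proved unstable consequence of Theorem~\ref{thm:Main-KR}; morally both proofs rest on the same fact, namely that $H^{\ast}(-,\KMW_{\ast})$ is a $\pone$-suspension invariant. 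Your proposed "more geometric alternative" of exhibiting an explicit class with unit image under $\partial$ is essentially the alternate proof by symbol calculations of Dubouloz--Fasel that the paper mentions immediately after the proposition, but as written it is only a sketch and would still require computing the boundary map explicitly.
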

\begin{proof}
As the first row in the above diagram is exact, it is sufficient to prove that $H^2(\mathcal{KR},\KMW_2)=0$.
Fasel's projective bundle theorem \cite{zbMATH06170873} implies
\[
H^i(\mathcal{KR},\KMW_j)
\cong
H^{i+n}(\mathcal{KR}_+\wedge (\pone)^{\wedge n},\KMW_{j+n})
\]
for all $i,n\in \N$ and $j\in \Z$.
Theorem \ref{thm:Main-KR} holds more generally over  fields of characteristic zero and shows that $\mathcal{KR}\wedge (\pone)^{\wedge n}=*$ for $n\gg 0$.
Combined with the homotopy cofiber sequence
\[
(\pone)^{\wedge n}\to \mathcal{KR}_+\wedge (\pone)^{\wedge n} \to \mathcal{KR}\wedge (\pone)^{\wedge n}
\]
we find $H^i(\mathcal{KR},\KMW_j)=H^{i+n}((\pone)^{\wedge n},\KMW_{j+n})$ for $i\geq 1$,
and the latter group is trivial.
\end{proof}

Dubouloz and Fasel also give an alternate proof of Proposition \ref{prop:connectivesurjective} by means of explicit symbol calculations \cite{MRDF}.  Moreover, the proof of $\aone$-contractibility for $\mathcal{KR}$ works more generally for Koras--Russell threefolds of the first kind.  It is unclear whether a similar proof works for Koras--Russell threefolds of the second kind.  The main issue at stake for such a threefold $X$ is whether there exists an $\aone$-weak equivalence between the complement $X\setminus\A^1_{y}$ and some punctured affine plane.  On the other hand, in light of Theorem~\ref{thmintro:uncountableexistence} from the introduction the proof is robust enough to provide many new examples of affine $\aone$-contractible varieties of dimension $3$.

\begin{thm}[{\cite[Corollary 1.3]{MRDF}}]
\label{thm:3dimlfamilies}
Assume $k$ is a field.  For every integer $m \geq 2$, there exists a smooth affine morphism $\pi: X \to {\mathbb A}^{m-2}$ of relative dimension $3$ whose fibers are all $\aone$-contractible.  Furthermore, fibers of $\pi$ over $k$-points are pairwise non-isomorphic and stably isomorphic.
\end{thm}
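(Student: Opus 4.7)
The plan is to construct an explicit $(m-2)$-parameter family of deformations of the Koras--Russell threefold of the first kind and then run the three-step argument: fiberwise $\aone$-contractibility by the method of Section~\ref{ss:krthreefoldI}, stable isomorphism of fibers by a Danielewski-type cylinder argument, and pairwise non-isomorphism via a Makar--Limanov computation. Concretely, I would look at a closed subscheme $X \subset {\mathbb A}^{m+2}_k$ with coordinates $(x,y,z,t,s_1,\ldots,s_{m-2})$ cut out by a single equation of Koras--Russell-of-the-first-kind shape
\[
x + x^2 y + z^3 + t^2 + x^2 h(x, z, t, s_1, \ldots, s_{m-2}) = 0,
\]
where $h$ is chosen so that $\pi\colon X \to {\mathbb A}^{m-2}_{\underline s}$ is smooth affine of relative dimension $3$, so that each fiber $X_{\underline s}$ still contains a transverse pair $\AA^1_y \cup \AA^2_{z,t}$ at the origin, and so that the $\ga$-action on $\mathcal{KR}$ extends to the family. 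The $h$ should be taken sufficiently general and polynomial in $\underline s$ so that distinct fibers look different along $\AA^1_y$ but share the same $\aone$-homotopy data.

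Fiberwise $\aone$-contractibility will proceed exactly as in the proof of Theorem~\ref{thm:Main-KR} and Section~\ref{ss:krthreefoldI}. For each $\underline s \in {\mathbb A}^{m-2}(k)$, I would first show that $X_{\underline s}$ has trivial reduced motivic cohomology by Proposition~\ref{prop:Contr-1}-style higher Chow group computations (since $X_{\underline s}$ is cut out by a Koras--Russell-type equation over $k$, the same cellular decomposition and comparison with the cuspidal curve $\{z^3+t^2=0\}$ applies); by Theorem~\ref{thm:MC-Vanish} and the $\MZ$-local/$\MZ$-acyclic argument of Theorem~\ref{thm:Main-KR}, each $\Sigma^\infty_{{\mathbb P}^1}(X_{\underline s}, 0)$ is then stably trivial in $\mathscr{SH}(k)$. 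Now I apply the argument of Section~\ref{ss:krthreefoldI}: form the diagram analogous to \eqref{equation:KRdiagram}, observe that $X_{\underline s} \setminus \AA^1_y$ is $\aone$-weakly equivalent to $\AA^2 \setminus \{0\}$ (checked by the same computation using the $\ga$-action on $X_{\underline s}$), and reduce the inclusion $\AA^2_{z,t}\hookrightarrow X_{\underline s}$ to an $\aone$-weak equivalence by Lemma~\ref{lem:Brouwer} and surjectivity of the connecting homomorphism $\partial$ in $\KMW_2$-cohomology. Surjectivity of $\partial$ follows from the vanishing $H^2(X_{\underline s},\KMW_2)=0$, which in turn follows from Fasel's projective bundle theorem together with the stable $\aone$-contractibility of $X_{\underline s}$ just established.

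For stable isomorphism of the fibers, I would engineer the family so that the projection $X \to {\mathbb A}^{m-2}$ factors, after pulling back by a suitable affine base change, as a torsor under a line bundle over a product $Y \times {\mathbb A}^{m-2}$ where $Y$ is a single ``universal'' Koras--Russell threefold, mimicking the Danielewski construction of Section~\ref{ss:Danielewski}. Concretely, this means exhibiting, for any two parameters $\underline s, \underline s'$, a common smooth scheme $Z_{\underline s, \underline s'}$ admitting $\ga$-torsor projections to both $X_{\underline s}$ and $X_{\underline s'}$, so that $X_{\underline s}\times\AA^1 \cong Z_{\underline s,\underline s'} \cong X_{\underline s'}\times\AA^1$. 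Alternatively, one can aim to show $X \cong X_{\underline 0}\times {\mathbb A}^{m-2}$ via a vector bundle trivialization, which would yield stable (indeed $1$-stable) isomorphism of all fibers.

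To distinguish the fibers on the nose, I would compute the Makar--Limanov invariant $\ML(X_{\underline s})$ as in the case of $\mathcal{KR}$ itself and arrange the deformation $h$ so that this invariant, or a finer derivative of it detecting the shape of the cuspidal-type fiber over the origin of the projection $X_{\underline s}\to\AA^1_x$, varies with $\underline s$. The main obstacle is precisely this last balancing act: producing a family general enough that its $k$-point fibers are mutually non-isomorphic (which by Proposition~\ref{prop:isotoaffineplane}-style rigidity normally requires genuine variation in the defining equation) while rigid enough that the motivic-cohomology vanishing of Theorem~\ref{thm:MC-Vanish} and the stable-isomorphism clause both persist across the family. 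Verifying that the explicit $h$ chosen does not accidentally trivialize the Makar--Limanov distinction, yet is mild enough that the Section~\ref{ss:krthreefoldI} argument runs verbatim fiberwise, is where the real work lies.
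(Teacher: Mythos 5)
The paper itself does not prove this statement; it quotes it from \cite{MRDF}, and your overall template (a parameter family of deformed Koras--Russell equations, fiberwise $\aone$-contractibility via the machinery of Sections~\ref{ss:krthreefoldI}--\ref{ss:KRfiberbundles}, stable isomorphism via $\ga$-torsor/cylinder tricks, non-isomorphism via Makar--Limanov-type invariants) is indeed the shape of the argument there. However, your concrete family is fatally flawed: for fixed parameters $\underline{s}$ the fiber is the hypersurface $\{x + x^2y + z^3 + t^2 + x^2h(x,z,t,\underline{s})=0\}$, and since $h$ does not involve $y$, the shear automorphism $(x,y,z,t)\mapsto (x,\,y+h(x,z,t,\underline{s}),\,z,\,t)$ of ${\mathbb A}^4$ carries it isomorphically onto the Russell cubic $\mathcal{KR}$. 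Thus every fiber of your family is isomorphic to $\mathcal{KR}$, the pairwise non-isomorphism clause fails outright, and no ``sufficiently general'' choice of such an $h$ can help, because any perturbation of the defining equation by a multiple of $x^2$ (the coefficient of the linear variable $y$) is absorbable in this way; for the same reason the Makar--Limanov invariant cannot vary across your family. A genuinely varying family must deform by terms divisible by $x$ but not by that top power, i.e., one should use the deformed Koras--Russell threefolds of the first kind $\{x^nz = y^{\alpha_1}+t^{\alpha_2}+xq(x)\}$ of Example~\ref{ex:deformedKR}, with the coefficients of the polynomial $q$ (subject to $q(0)\neq 0$) providing the ${\mathbb A}^{m-2}$ of parameters; this is the family underlying \cite[Corollary 1.3]{MRDF}.

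Even after correcting the family, the two clauses you treat most lightly are exactly the substantive inputs. Fiberwise $\aone$-contractibility is the main theorem of \cite{MRDF} (your outline of that part is faithful to Section~\ref{ss:krthreefoldI}, granting the characteristic-zero hypotheses under which Theorem~\ref{thm:Main-KR} and Lemma~\ref{lem:HZ-local} operate), but the stable isomorphism of the fibers and their pairwise non-isomorphism are not things one can ``engineer'' or ``arrange'' in passing: they are the non-cancellation results for precisely these deformations (cylinders over all such threefolds with fixed $(n,\alpha_1,\alpha_2)$ are isomorphic, while the threefolds themselves are distinguished for distinct $q$), established by Makar--Limanov/Derksen-type computations in the work cited as \cite{DM-JP}. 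As written, your proposal asserts these two conclusions rather than proving them, and the one place where you do commit to an explicit construction is the place where the construction collapses.
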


\begin{question}
Looking further forward: can one characterize affine spaces among affine $\aone$-contractible varieties in a motivic version of the Poincar\'e conjecture
(e.g., by defining some notion of $\aone$-fundamental group at infinity)?
\end{question}

\subsection{Koras--Russell fiber bundles}
\label{ss:KRfiberbundles}
The geometry becomes much more pronounced in the proof showing that $\mathcal{KR}\setminus \AA^{1}_{y}$ is $\AA^{1}$-weak equivalent to the punctured affine space $\AA^{2}_{z,t}\setminus\{(0,0)\}$;
the salient geometric features of Koras--Russell threefolds of the first kind have been further developed into the context of Koras--Russell fiber bundles introduced in \cite{DPO}.
In the following we assume $k$ is an algebraically closed field of characteristic zero.

\begin{defn}
\label{def:KRBundle}
Suppose $s(x)\in k[x]$ has positive degree and let $R(x,y,t)\in k[x,y,t]$.
Define the closed subscheme $\mathcal{X}(s,R)$ of $\AA^{1}_{x}\times\AA^{3}=\Spec(k[x][y,z,t])$ by the equation
\[
\{s(x)z=R(x,y,t)\}.
\]
We say that the projection map
\begin{equation}
\label{equation:KRfiberbundleprojection}
\rho
:=
\pr_{x}:
\mathcal{X}(s,R)
\to
\AA^{1}_{x}
\end{equation}
defines a {\it Koras--Russell fiber bundle }if
\begin{itemize}[noitemsep,topsep=1pt]
\item[(a)] $\mathcal{X}(s,R)$ is a smooth scheme, and
\item[(b)] For every zero $x_{0}$ of $s(x)$, the zero locus in $\AA^2=\Spec(k[y,t])$ of the polynomial $R(x_0,y,t)$ is an integral rational plane curve with a unique place at infinity
and at most unibranch singularities.
\end{itemize}
\end{defn}
\begin{rem}
\label{remark:KRaffinespace}
One can show that a Koras--Russell fiber bundle is isomorphic to $\AA^{3}$ if and only if for every zero $x_0$ of $s(x)$
the curve $\{R(x_0,y,t)=0\}$ is isomorphic to $\AA^{1}$.
\end{rem}

For concreteness we discuss two classes of examples of Koras--Russell fiber bundles.

\begin{ex}
\label{ex:deformedKR}
Deformed Koras--Russell threefolds of the first kind are defined as
\begin{equation}
\label{equation:firstkindpolynomial}
\mathcal{X}(n,\alpha_{i},p)
:=
\{
x^{n}z=
y^{\alpha_{1}}+t^{\alpha_{2}}+xp(x,y,t)
\},
\end{equation}
where $n,\alpha_{i}\geq 2$ are integers, $\alpha_{1}$ and $\alpha_{2}$ are coprime, and $p(x,y,t)\in k[x,y,t]$ satisfies $p(0,0,0)\in k^{\ast}$.
By \cite{MRDF} it is known that $\mathcal{X}(n,\alpha_{i},p)$ is $\AA^{1}$-contractible when $p(x,y,t)=q(x)\in k[x]$ and $q(0)\in k^{\ast}$.
Note that $\mathcal{X}(n,\alpha_{i},p)$ is smooth according to the Jacobian criterion since $p(0,0,0)\in k^{\ast}$.
Moreover,
the unique singular fiber of the projection map
\begin{equation}
\label{equation:firstkindpolynomialprojection}
\pr_{x}:
\mathcal{X}(n,\alpha_{i},p)
\to
\AA^{1}_{x}
\end{equation}
is a cylinder on the cuspidal curve $\Gamma_{\alpha_{1},\alpha_{2}}:=\{y^{\alpha_{1}}+t^{\alpha_{2}}=0\}\subset\AA^{2}$, which is $\AA^{1}$-contractible
\begin{equation}
\label{equation:firstkindpolynomialinverse}
\pr_{x}^{-1}(0)
=
\Gamma_{\alpha_{1},\alpha_{2}}\times \AA^{1}_{z}
\sim_{\AA^{1}}
\ast.
\end{equation}
Here \eqref{equation:firstkindpolynomialprojection} is a flat $\AA^{2}$-fibration restricting to a trivial $\AA^{2}$-bundle over $\AA^{1}_{x}\setminus\{0\}$ and $\mathcal{X}(n,\alpha_{i},p)$ is factorial.
The $\AA^{1}$-homotopy theory of deformed Koras--Russell threefolds of the first kind \eqref{equation:firstkindpolynomial} is essentially governed by \eqref{equation:firstkindpolynomialprojection}
and \eqref{equation:firstkindpolynomialinverse}.
\end{ex}

\begin{ex}
For $1\leq i\leq m$,
choose distinct linear forms $l_{i}(x)=(x-x_{i})\in k[x]$,
$n_{i},\alpha_{i},\beta_{i}\geq 2$,
where $\alpha_{i}$ and $\beta_{i}$ are coprime,
and $a\in k^{\times}$.
We define $\mathcal{X}_{m}(n_{i},\alpha_{i},\beta_{i},a)$ or simply $\mathcal{X}_{m}$ by the equation
\begin{equation}
\label{equation:mdegeneratefibers}
\mathcal{X}_{m}
=
\mathcal{X}_{m}(n_{i},\alpha_{j},\beta_{j},a)
:=
\big\{\left(\prod_{i=1}^{m} l_{i}(x)^{n_i}\right)z
=
\sum_{i=1}^{m}\left(\left(\prod_{j\neq i}l_{j}(x)\right)(y^{\alpha_{i}}+t^{\beta_{i}})\right)+a\prod_{i=1}^{m}l_{i}(x)\big\}.
\end{equation}
In the case of two degenerate fibers,
\eqref{equation:mdegeneratefibers} takes the form
\begin{equation}
\label{equation:twodegeneratefibers}
\mathcal{X}_{2}
=
\{(x-x_{1})^{n_{1}}(x-x_{2})^{n_{2}}z
=
(x-x_{1})(y^{\alpha_{2}}+t^{\beta_{2}})
+
(x-x_{2})(y^{\alpha_{1}}+t^{\beta_{1}})
+
a(x-x_{1})(x-x_{2})\}.
\end{equation}
For all $m$ the Makar-Limanov invariant of $\mathcal{X}_{m}(n_{i},\alpha_{j},\beta_{j},a)$ equals $k[x]$;
hence it is non-isomorphic to $\AA^{3}$.
Moreover,
the projection map
\begin{equation}
\label{equation:degeneratefirstkindpolynomialprojection}
\pr_{x}:
\mathcal{X}_{m}(n_{i},\alpha_{j},\beta_{j},a)
\to
\AA^{1}_{x}
\end{equation}
defines a trivial $\AA^{2}$-bundle over the punctured affine line $\AA^{1}_{x}\setminus\{x_{1},\dots,x_{m}\}$.
Its fiber over the closed point $x_{i}\in\AA^{1}_{x}$ is isomorphic to the cylinder on the cuspidal curve $\Gamma_{\alpha_{i},\beta_{i}}:\{y^{\alpha_{i}}+t^{\beta_{i}}=0\}$.
By counting closed fibers non-isomorphic to $\AA^{2}$ one concludes that $\mathcal{X}_{m}$ and $\mathcal{X}_{m'}$ are non-isomorphic when $m\neq m'$.
\end{ex}

Now we turn to the geometric properties of deformed Koras--Russell threefolds as in Example \ref{ex:deformedKR}.
In particular,
this will explain the $\AA^{1}$-equivalence between $\mathcal{KR}\setminus \AA^{1}_{y}$ and the punctured affine plane.

There is an induced $\mathbb{G}_a$-action on $\mathcal{X}(n,\alpha_{i},p)$ determined by the locally nilpotent derivation
$$
\partial
=
x^{n}\frac{\partial}{\partial y}+(\alpha_{1}y^{\alpha_{1}-1}+x\frac{\partial}{\partial y}p(x,y,t))\frac{\partial}{\partial z},
$$
on the coordinate ring of $\mathcal{X}(n,\alpha_i,p)$,
with fixed point locus the affine line $\{x=y=t=0\}\cong\mathbb{A}^1_z$.
The geometric quotient $\mathcal{X}(n,\alpha_i,p)\rightarrow \mathcal{X}(n,\alpha_i,p)/\mathbb{G}_a$ yields an $\mathbb{A}^1$-bundle
$\mathcal{X}(n,\alpha_i,p)\setminus \mathbb{A}^1_z\rightarrow\mathfrak{S}(\alpha_1,\alpha_2)$
in the category of algebraic spaces \cite{MR0302647}.
In fact there exists a factorization
\begin{equation}
\label{equation:asfactorization}
\xymatrix{
\mathcal{X}(n,\alpha_{i},p)\setminus \mathbb{A}^{1}_{z}  \ar[dr]_-{\rho} \ar[rr]^-{\pi_{\vert}} & & \mathbb{A}^{2}_{x,t}\setminus\{(0,0)\} \\
& \mathfrak{S}(\alpha_{1},\alpha_{2}), \ar[ur]_-{\delta} &
}
\end{equation}
where $\pi_{\vert}$ is the restriction of $\pi=\text{pr}_{x,t}:\mathcal{X}(n,\alpha_i,p)\rightarrow \mathbb{A}^2_{x,t}$ to $\mathcal{X}(n,\alpha_i,p)\setminus \mathbb{A}^1_z$.
To construct \eqref{equation:asfactorization} we form a cyclic Galois cover of $\mathbb{A}^{2}_{x,t}$ of order $\alpha_{1}$ and hence of $\mathcal{X}(n,\alpha_{i},p)$ by pullback via $\pi$.
The maps arise as geometric quotients for $\mu_{\alpha_{1}}$-equivariant maps by gluing copies of $\mathbb{A}^{2}\setminus\{(0,0)\}$ via a family of cuspidal curves.
Here $\rho$ is an \'etale locally trivial $\mathbb{A}^{1}$-bundle,
we have $\mathfrak{S}(\alpha_{1},\alpha_{2})\cong\mathfrak{S}(\alpha_{1},1)$,
and both of the projection maps for the smooth quasi-affine $4$-fold
\begin{equation}
\label{equation:asfiberproduct}
(\mathcal{X}(n,\alpha_{i},p)\setminus \mathbb{A}^{1}_{z})\times_{\mathfrak{S}(\alpha_{1},\alpha_{2})} (\mathcal{X}(n,\alpha_{1},1,p)\setminus \mathbb{A}^{1}_{z})
\end{equation}
are Zariski  locally trivial $\mathbb{A}^{1}$-bundles,
and hence $\mathbb{A}^{1}$-weak equivalences.
The fiber product in \eqref{equation:asfiberproduct} is formed in algebraic spaces over the punctured affine plane $\mathbb{A}^{2}_{x,t}\setminus\{(0,0)\}$.
Furthermore,
the projection map $\text{pr}_{x}:\mathcal{X}(n,\alpha_{1},1,p)\to\mathbb{A}^{1}_{x}$ is a trivial $\mathbb{A}^{2}$-bundle.
It follows that $\mathcal{X}(n,\alpha_{1},1,p)\cong\mathbb{A}^{3}_{x,y,z}$ and we can finally conclude that there exist $\aone$-weak equivalences
\[
\mathcal{X}(n,\alpha_{i},p)\setminus \mathbb{A}^{1}_{z}
\sim_{\mathbb{A}^{1}}
\mathcal{X}(n,\alpha_{1},1,p)\setminus \mathbb{A}^{1}_{z}
\sim_{\mathbb{A}^{1}}
\mathbb{A}^{2}_{x,t}\setminus\{(0,0)\}.
\]

The currently most general result concerning $\AA^{1}$-contractibility of Koras--Russell fiber bundles was shown in \cite{DPO}.
\begin{thm}
\label{thm:KRBundle-Stable-A1Cont}
Suppose $\rho:\mathcal{X}(s,R)\to\AA^{1}_{x}$ is a Koras--Russell fiber bundle with basepoint the origin.
The $S^{1}$-suspension and hence the $\PP^{1}$-suspension of $\mathcal{X}(s,R)$ are $\AA^{1}$-contractible:
\[
\mathcal{X}(s,R)\wedge S^{1}\sim_{\AA^{1}}
\mathcal{X}(s,R)\wedge\PP^{1}\sim_{\AA^{1}}
\ast.
\]
\end{thm}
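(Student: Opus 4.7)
The plan is to apply Voevodsky's cdh comparison theorem (Theorem \ref{thm:voevodskyresolutionofsings}) to reduce the statement to showing that the projection $\rho\colon\mathcal{X}(s,R)\to\aone_{x}$ is an isomorphism in the cdh-local category $\mathrm{H}_{\aone}^{cdh}(k)$. Once this is established, $\Sigma\rho$ is an $\aone$-weak equivalence in $\mathrm{H}_{\aone}(k)$, and since $\aone_{x}\simeq_{\aone}\ast$, the $S^{1}$-suspension $\mathcal{X}(s,R)\wedge S^{1}$ is $\aone$-contractible; the $\pone$-suspension statement then follows from $\pone\simeq_{\aone}S^{1}\wedge\Gm$.

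Let $D=V(s)=\{x_{1},\dots,x_{m}\}\subset\aone_{x}$ with open complement $U$, and write $V=\rho^{-1}(U)$ and $Z_{i}=\rho^{-1}(x_{i})\cong\Gamma_{i}\times\aone$. The restriction $\rho|_{V}\colon V\to U$ is a trivial ${\mathbb A}^{2}$-bundle, hence a Nisnevich-local $\aone$-weak equivalence. Comparing the cofiber sequences
\begin{equation*}
V\to \mathcal{X}(s,R)\to \mathcal{X}(s,R)/V \qquad \text{and} \qquad U\to \aone_{x}\to \aone_{x}/U\simeq_{\aone}\bigvee_{i=1}^{m}\pone
\end{equation*}
(the right-hand equivalence follows from homotopy purity applied to the smooth closed immersion $\{x_{1},\dots,x_{m}\}\hookrightarrow\aone_{x}$), we are reduced to showing that the induced map on right-hand cofibers is an isomorphism in $\mathrm{H}_{\aone}^{cdh}(k)$.

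To analyze $\mathcal{X}(s,R)/V$ cdh-locally, I would resolve the singularities of the $Z_{i}=\Gamma_{i}\times\aone$ by an iterated blow-up $\pi\colon\tilde{\mathcal{X}}\to\mathcal{X}(s,R)$ centered on the smooth curves $\Sigma_{i}\times\aone\subset\mathcal{X}(s,R)$. The crucial geometric input is that each $\Gamma_{i}$ is a rational plane curve with a unique place at infinity and only unibranch singularities, so its normalization is $\aone$ and the normalization map $\aone\to\Gamma_{i}$ is an isomorphism of presheaves on $\Sm_{k}$ (as in the cuspidal example from Section \ref{s:concreteaoneweakequivalences}). Consequently the strict transform $\tilde{Z}_{i}\subset\tilde{\mathcal{X}}$ is smooth and isomorphic to ${\mathbb A}^{2}$. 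The attached abstract blow-up square is cdh-distinguished, so in $\mathrm{H}_{\aone}^{cdh}(k)$ the cofiber $\mathcal{X}(s,R)/V$ is computable from the analogous cofiber on $\tilde{\mathcal{X}}$. Applying homotopy purity to each smooth inclusion $\tilde{Z}_{i}\hookrightarrow\tilde{\mathcal{X}}$ yields a Thom space, and since $\tilde{Z}_{i}\cong{\mathbb A}^{2}$ is $\aone$-contractible, one expects these Thom spaces to contribute exactly one factor of $\pone$ per degenerate fiber. A diagram chase should then identify $\mathcal{X}(s,R)/V$ with $\bigvee_{i}\pone$ in $\mathrm{H}_{\aone}^{cdh}(k)$, compatibly with $\aone_{x}/U$.

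The main obstacle lies in making the cdh-local diagram chase precise. Since the pullback divisor $\pi^{*}Z_{i}$ has the form $\tilde{Z}_{i}+\sum_{j}m_{ij}E_{ij}$ with multiplicities $m_{ij}\geq 2$ (already the case for a cuspidal $\Gamma_{i}$, which has multiplicity $2$ at the cusp), the normal bundle of $\tilde{Z}_{i}$ in $\tilde{\mathcal{X}}$ is not simply the pullback of the trivial normal bundle of $Z_{i}\subset\mathcal{X}(s,R)$, and one must carefully track the Thom spaces of the exceptional divisors $E_{ij}$ and verify that their contributions cancel in the long cofiber sequence. Handling $\Gamma_{i}$ with multiple distinct unibranch singularities adds further bookkeeping. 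An alternative strategy, perhaps preferable, would be to bypass the resolution entirely and work directly with the presheaf-level isomorphism $\Gamma_{i}\simeq\aone$ on $\Sm_{k}$, but this then requires a purity statement for the non-smooth closed immersion $Z_{i}\hookrightarrow\mathcal{X}(s,R)$ that goes beyond Theorem \ref{thm:homotopypurity}.
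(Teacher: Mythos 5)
Your overall strategy (reduce to a cdh-local statement about $\rho\colon\mathcal{X}(s,R)\to\aone_x$ and then invoke Theorem~\ref{thm:voevodskyresolutionofsings}) has a genuine logical gap at the reduction step. Knowing that $V\to U$ and $\mathcal{X}(s,R)/V\to\aone_x/U$ are equivalences does \emph{not} imply that the middle map $\rho$ is an equivalence: this is the converse of the gluing lemma, and there is no unstable five lemma for cofiber sequences (the classical counterexamples with nontrivial fundamental-group action apply, and you have no $\aone$-simple-connectivity of $\mathcal{X}(s,R)$ at your disposal --- that is essentially what is being proved). The standard repair is to suspend once, or to work $S^1$-stably, where the two-out-of-three property for triangles is available; but then what you can hope to establish is that $\Sigma(\pi^*\rho)$ is an isomorphism in $\mathrm{H}_{\aone}^{cdh}(k)$, and feeding $\Sigma\rho$ into Theorem~\ref{thm:voevodskyresolutionofsings} only yields that $\Sigma^2\rho$ is an isomorphism in $\mathrm{H}_{\aone}(k)$, i.e.\ $S^1\wedge S^1\wedge\mathcal{X}(s,R)\sim_{\aone}\ast$. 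That is strictly weaker than the theorem (which asserts contractibility after a \emph{single} $S^1$-suspension), and it does not give the $\pone$-suspension statement either, since $\pone\wedge\mathcal{X}(s,R)\simeq \Gm\wedge S^1\wedge\mathcal{X}(s,R)$ is not controlled by $\Sigma^2\mathcal{X}(s,R)\simeq\ast$.

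In addition, the computational heart of your argument --- identifying $\mathcal{X}(s,R)/V$ with $\bigvee_{i}\pone$ cdh-locally, and, more importantly, showing that the \emph{specific} map to $\aone_x/U\simeq\bigvee_i\pone$ induced by $\rho$ is an equivalence summandwise --- is exactly the point you leave open: as you note, the exceptional divisors occur in $\pi^*Z_i$ with multiplicities $\geq 2$, the normal bundle of the strict transform is twisted accordingly, and no argument is given that these Thom-space contributions cancel; for $\Gamma_i$ with several unibranch singular points the bookkeeping only gets worse. For comparison, the proof in \cite{DPO} (to which the survey defers; no proof is given in the text) takes a different and more economical route: the equation $s(x)z=R(x,y,t)$ exhibits $\mathcal{X}(s,R)$ as an affine modification of $\AA^3_{x,y,t}$ along the divisor $\{s=0\}$ with center $\{s=R=0\}=\coprod_i\Gamma_i$, and one proves a general suspension theorem for affine modifications whose divisor and center are $\aone$-contractible --- the contractibility of the center being precisely the presheaf-level identification $\Gamma_i\cong\aone$ on $\Sm_k$ that you invoke. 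This compares $\mathcal{X}(s,R)$ with $\AA^3$ rather than with $\aone_x$, avoids resolution of singularities and the cdh topology altogether, and the cofiber-sequence comparison is carried out after one simplicial suspension, which is how the sharp conclusion $\mathcal{X}(s,R)\wedge S^1\sim_{\aone}\ast$ is obtained.
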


\begin{question}
\label{question:KRbundles}
Can one generalize the notion of a Koras--Russell fiber bundle and show Theorem \ref{thm:KRBundle-Stable-A1Cont} over arbitrary fields of characteristic zero?
\end{question}

\begin{rem}
Work on Question \ref{question:KRbundles} is likely to involve base change arguments to an algebraic closure.
\end{rem}

\subsection*{Acknowledgements}
Asok was partially supported by National Science Foundation Awards DMS-1254892 and DMS-1802060.
{\O}stv{\ae}r was partially supported by RCN Frontier Research Group Project no.~250399,
Friedrich Wilhelm Bessel Research Award from the Humboldt Foundation,
and Nelder Visiting Fellowship from Imperial College London.
The authors are grateful to Ben Antieau, Brent Doran, Adrien Dubouloz, Jean Fasel, Marc Hoyois, Amalendu Krishna, Fabien Morel, Sabrina Pauli, and Ben Williams for collaborative efforts and many interesting discussions on the topics surveyed in this paper.

\newpage
{\begin{footnotesize}
\raggedright
\bibliographystyle{alpha}
\bibliography{A1contractibilitySurvey}
\end{footnotesize}}

\Addresses

\end{document}